\newtheorem{lemma}{Lemma}[section]
\newtheorem{thm}[lemma]{Theorem}
\newtheorem*{thm*}{Theorem}
\newtheorem{prop}[lemma]{Proposition}
\newtheorem*{prop*}{Proposition}
\newtheorem{cor}[lemma]{Corollary}
\newtheorem{conj}[lemma]{Conjecture}
\theoremstyle{definition}
\newtheorem{defi}[lemma]{Definition}
\newtheorem*{quest*}{Question}
\newtheorem{example}[lemma]{Example}
\newtheorem{rem}[lemma]{Remark}
\newtheorem{question}[lemma]{Question}
\newtheorem{setup}[lemma]{Setup}
\theoremstyle{definition}
\newcommand\norm{\bBigg@{0.8}}
 \newcommand{\indnorm}[2][flex]{\csname #1l\endcsname\|#2%
                                 \csname #1r\endcsname\|\mathclose{}}
                                  \newcommand{\indnorml}[4][flex]{\csname #1l\endcsname\|#2%
                                 \csname #1r\endcsname\|_{#3}^{#4}\mathclose{}}
\newcommand{\sv}[2][flex]{\indnorm[#1]{#2}}
\newcommand{\ifsv}[2][norm]{\!\csname #1l\endcsname\bracevert\!#2\!%
                            \csname #1r\endcsname\bracevert\!}
\DeclareMathOperator{\mcg}{\textup{MCG}}
\DeclareMathOperator{\mcgc}{\textup{MCG}_c}
\DeclareMathOperator{\symp}{\textup{Symp}_c}
\DeclareMathOperator{\sympo}{\textup{Symp}^0_c}
\DeclareMathOperator{\ham}{\textup{Ham}_c}
\DeclareMathOperator{\vol}{\textup{vol}}
\DeclareMathOperator{\homeo}{\textup{Homeo}_c}
\DeclareMathOperator{\diff}{\textup{Diff}_c}
\DeclareMathOperator{\diffo}{\textup{Diff}^0_c}
\DeclareMathOperator{\diffvol}{\textup{Diff}_{c, \vol}}
\DeclareMathOperator{\diffovol}{\textup{Diff}^0_{c, \vol}}
\DeclareMathOperator{\homeoleb}{\textup{Homeo}_{c, \textup{Leb}}}
\DeclareMathOperator{\supp}{supp}
\DeclareMathOperator{\cont}{\textup{Cont}_c}
\DeclareMathOperator{\conto}{\textup{Cont}^0_c}
\DeclareMathOperator{\iet}{\textup{IET}}
\DeclareMathOperator{\comp}{comp}
\DeclareMathOperator{\im}{im}
\DeclareMathOperator{\Aut}{Aut}
\DeclareMathOperator{\id}{id}
\DeclareMathOperator{\X}{\mathfrak{X}}
\DeclareMathOperator{\Xdual}{\mathfrak{X}^{\textup{dual}}}
\DeclareMathOperator{\Xsep}{\mathfrak{X}^{\textup{sep}}}
\DeclareMathOperator{\Xr}{\mathfrak{X}^{\{\R\}}}
\DeclareMathOperator{\heza}{\textup{HEZA}}
\newcommand{\N}{\ensuremath {\mathbb{N}}}
\newcommand{\R} {\ensuremath {\mathbb{R}}}
\newcommand{\Z} {\ensuremath {\mathbb{Z}}}
\renewcommand{\rho}{\varrho}
\def\phi{\varphi}
\def\actson{\curvearrowright}
\long\def\forget#1{}
\def\longrightarrow{\rightarrow}
\def\longmapsto{\mapsto}
\def\emptyset{\varnothing}
\begin{document}

\title[]{An algebraic criterion for the vanishing of bounded cohomology}

\author[]{Caterina Campagnolo}
\address{Departamento de Matem\'aticas, Universidad Aut\'onoma de Madrid, Spain}
\email{caterina.campagnolo@uam.es}

\author[]{Francesco Fournier-Facio}
\address{Department of Pure Mathematics and Mathematical Statistics, University of Cambridge, UK}
\email{ff373@cam.ac.uk}

\author[]{Yash Lodha}
\address{Department of Mathematics, Purdue University, USA}
\email{yashlodha763@gmail.com}

\author[]{Marco Moraschini}
\address{Dipartimento di Matematica, Universit{\`a} di Bologna, Italy}
\email{marco.moraschini2@unibo.it}

\thanks{}

\keywords{Bounded cohomology, bounded acyclicity, commuting cyclic conjugates, wreath products, colimit groups, homological stability, transformation groups, big mapping class groups, Thompson groups, braid groups, linear groups}

\subjclass[2020]{
Primary: 20J05, 57M07; Secondary: 20E22, 57M60, 57S05}



\date{\today.\ \copyright{ C.~Campagnolo, F.~Fournier-Facio, Y.~Lodha and M.~Moraschini}.
}

\begin{abstract}
We prove the vanishing of bounded cohomology with separable dual coefficients for many groups of interest in geometry, dynamics, and algebra. These include compactly supported structure-preserving diffeomorphism groups of certain manifolds; the group of interval exchange transformations of the half line; piecewise linear and piecewise projective groups of the line, giving strong answers to questions of Calegari and Navas; direct limit linear groups of relevance in algebraic $K$-theory, thereby answering a question by Kastenholz and Sroka and a question of two of the authors and L{\"o}h; and certain subgroups of big mapping class groups, such as the stable braid group and the stable mapping class group, proving a conjecture of Bowden. Moreover, we prove that in the recently introduced framework of enumerated groups, the generic group has vanishing bounded cohomology with separable dual coefficients. At the heart of our approach is an elementary algebraic criterion called the \emph{commuting cyclic conjugates condition} that is readily verifiable for the aforementioned large classes of groups.
\end{abstract}
\maketitle

\setcounter{tocdepth}{1}

\section{Introduction}

On the flip side of its wide applicability, bounded cohomology of groups is infamously hard to compute. One of the few strong and general results is that all amenable groups are \emph{boundedly acyclic}, namely their bounded cohomology $H^n_b(-; \mathbb{R})$ vanishes for all $n \geq 1$ \cite{Johnson}. This is not a characterization of amenability: Matsumoto and Morita proved that the group of compactly supported homeomorphisms of $\mathbb{R}^n$ is boundedly acyclic \cite{Matsu-Mor}.
Indeed, Johnson \cite{Johnson} provided the following criterion for amenability by appealing to a larger class of coefficients: $\Gamma$ is an amenable group if and only if $H^n_b(\Gamma; E) = 0$ for every \emph{dual} Banach $\Gamma$-module $E$ and all $n\geq 1$.
Beyond amenable groups, the strongest vanishing result known to date was recently obtained by Monod, who showed that for every group $\Gamma$ the wreath product $\Gamma \wr \Z$ has vanishing bounded cohomology with all \emph{separable} dual coefficients~\cite{monod:thompson}. 

The subject has a history of the deployment of certain topologically flavoured \emph{displacement techniques}, such as those emerging in the work of Matsumoto and Morita ~\cite{fisher1960group, mather1971vanishing, Matsu-Mor}. Such techniques also led to the proof of bounded acyclicity for certain homeomorphism groups~\cite{fflm2}, and diffeomorphism groups~\cite{monodnariman}. They also played a crucial role in the study of the second bounded cohomology and stable commutator length \cite{kotschick}, \cite[Example~3.66]{calegari_scl} (which are closely related via Bavard duality \cite{bavard}).
More recently the vanishing of the second bounded cohomology has been studied via the notion of commuting conjugates~\cite{fflodha}.
However, as remarked by Monod~\cite[p.~664]{monod:thompson}, there is no reason why all groups studied with these techniques~\cite{kotschick, fflodha} should exhibit vanishing beyond degree $2$ or beyond trivial coefficients.

In this paper, we take an algebraic approach, as captured by the following definition. 

\begin{defi}[commuting cyclic conjugates]\label{intro:def:comm:cycl:conj}
A group $\Gamma$ has \emph{commuting cyclic conjugates} if for every finitely generated subgroup $H \leq \Gamma$ there exist $t \in \Gamma$ and $n \in \N_{\geq 2} \cup \{\infty\}$ such that: 
\begin{enumerate}
\item $[H, t^pHt^{-p}] = 1$ for all $1\leq p< n$;
\item $[H, t^n] = 1$. 
\end{enumerate}
Here we read that $t^\infty = 1$.
\end{defi}

We state the property algebraically; however as we mentioned before the motivation comes from displacement properties satisfied by many groups of dynamical origin (Lemma \ref{lem:ccc:dynamical}). Observe that Definition \ref{intro:def:comm:cycl:conj} only applies to groups that cannot be finitely generated (unless they are abelian); however we will see that many finitely generated groups (even finitely presented and type $F_\infty$) contain subgroups with commuting cyclic conjugates, which provides a starting point for further computations.

Our primary goal is to show that this notion has powerful consequences for the vanishing of bounded cohomology for a large family of coefficients, in particular the following:

\begin{defi}[$\Xsep$-bounded acyclicity]
\label{intro:def:sepbac}
A group $\Gamma$ is said to be \emph{$\Xsep$-boundedly acyclic} if $H^n_b(\Gamma; E) = 0$ for all $n \geq 1$ and every separable dual Banach $\Gamma$-module $E$.
\end{defi}

$\Xsep$-bounded acyclicity is a very strong property with powerful consequences in rigidity theory: See Section \ref{s:rigidity} for a discussion. We can now state our main result:

\begin{thm}[Main Theorem]
\label{intro:thm:ccc}
Let $\Gamma$ be a group with commuting cyclic conjugates. Then $\Gamma$ is $\Xsep$-boundedly acyclic.
\end{thm}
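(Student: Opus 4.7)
The plan is to reduce the claim, via the CCC condition, to Monod's theorem~\cite{monod:thompson} that every wreath product $H \wr \Z$ is $\Xsep$-boundedly acyclic, and then to propagate $\Xsep$-bounded acyclicity through a directed limit argument over finitely generated subgroups of $\Gamma$.

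First, given a finitely generated $H \leq \Gamma$ with associated CCC data $(t,n)$, I would consider the subgroup $\Lambda_H := \langle H, t \rangle \leq \Gamma$. When $n = \infty$, the commutation relations $[H, t^p H t^{-p}] = 1$ for all $p \geq 1$ are exactly the defining relations of the restricted wreath product $H \wr \Z$, so the universal property yields a surjective homomorphism $\pi : H \wr \Z \twoheadrightarrow \Lambda_H$ extending the inclusion of $H$ and sending the generator of $\Z$ to $t$; by Monod's theorem the source $H \wr \Z$ is $\Xsep$-boundedly acyclic. The finite-$n$ case yields instead a surjection from the twisted wreath-like group $H^n \rtimes \Z$ with $\Z$ acting by a cyclic permutation of order $n$, which does not slot directly into Monod's theorem and will require a small separate argument, for instance by iterating CCC to reach a subgroup for which $n = \infty$, or by establishing a finite-period analogue of Monod's acyclicity result.

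Next, I would transfer $\Xsep$-bounded acyclicity from the source of $\pi$ down to its image $\Lambda_H$. Since this class is not automatically closed under quotients, the natural route is a direct cocycle-level descent: a bounded cocycle on $\Lambda_H$ with values in a separable dual module $E$ inflates via $\pi$ to a bounded cocycle on $H \wr \Z$, which is a bounded coboundary by Monod, and the CCC relations are then leveraged to show the resulting primitive factors back through $\Lambda_H$. Alternatively, one would analyse the kernel of $\pi$ and verify that it is amenable or boundedly acyclic enough for the Hochschild--Serre inflation $H^n_b(\Lambda_H; E) \hookrightarrow H^n_b(H \wr \Z; E) = 0$ to be injective.

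Finally, writing $\Gamma$ as the directed union of its finitely generated subgroups, every such subgroup lies inside a $\Xsep$-boundedly acyclic $\Lambda_H \leq \Gamma$. Continuity of bounded cohomology with separable dual coefficients along directed unions of subgroups --- available through weak-$\ast$ compactness of balls in separable duals via Banach--Alaoglu --- then yields $H^n_b(\Gamma; E) = 0$ for all $n \geq 1$ and every separable dual Banach $\Gamma$-module $E$, as required. The \emph{main obstacle} is the descent step in the previous paragraph: it is here that the full algebraic strength of CCC, rather than mere abstract commutation, must be leveraged to control the kernel of $\pi$ and force a primitive to descend; the finite-$n$ case is similarly delicate as it falls outside the immediate scope of Monod's wreath-product theorem.
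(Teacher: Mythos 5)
Your skeleton---map a wreath product onto $\langle H, t\rangle$, deal with the kernel, then pass to the directed union of finitely generated subgroups---matches the paper's strategy in outline, and your treatment of the $n=\infty$ case (including the expectation that $\ker\pi$ is amenable, which is exactly what the paper verifies) is sound. However, the two points you flag as needing ``a small separate argument'' are precisely where the real content lies, and neither of your proposed fixes works. For finite $n$, you cannot iterate the CCC condition to reach a subgroup with $n=\infty$: if that were possible for every finitely generated $H$, the group would have commuting $\Z$-conjugates, and the paper's main examples ($\textup{GL}(R)$, the stable mapping class group, compactly supported symplectomorphism groups) have commuting cyclic conjugates but \emph{not} commuting $\Z$-conjugates. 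A ``finite-period analogue of Monod's theorem'' is also false: the group you obtain for finite $n$ (whether viewed as $H \wr_{\Z/n}\Z$ or $H\wr \Z/n$) contains $H^n$ with finite index modulo an amenable piece, so for $H=F_2$ it already has non-vanishing $H^2_b(-;\R)$ by a transfer/averaging argument; no ergodicity argument can apply because the orbits on $\Z/n$ are finite. The paper's way around this is the inductive construction of Proposition \ref{lem:cac:wreathproduct}: starting from $\Lambda_0=H$ one repeatedly applies CCC to $\langle \Lambda_{i-1}, t_i\rangle$, builds iterated permutational wreath products $A_{i+1}=A_i\wr_{\Z/n_{i+1}}\Z$ together with an infinite-index subgroup $B<A$, and produces a homomorphism $H\wr_{A/B} A\to\Gamma$ with amenable kernel, where $A$ is amenable and acts on $A/B$ with infinite orbits; this in turn requires extending Monod's theorem from $H\wr\Z$ to such permutational wreath products (Section \ref{sec:wp}).

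The second gap is your final step. ``Continuity of bounded cohomology with separable dual coefficients along directed unions of subgroups'' is not available: it is open whether a directed union of $\Xsep$-boundedly acyclic groups is $\Xsep$-boundedly acyclic. The Banach--Alaoglu argument you invoke only closes the union if primitives over all finitely generated subgroups can be chosen in a \emph{single} norm ball, i.e.\ if the vanishing moduli $\|H^n_b(\Lambda_H;E)\|$ are bounded uniformly in $H$ and in $E$ (Proposition \ref{prop:dirun}). Establishing this uniformity over all the relevant quotients of permutational wreath products and all separable dual modules is the technical core of the paper (Propositions \ref{prop:dirsum} and \ref{prop:vanishingmodulus:wreathproducts}, proved by a direct-sum trick combined with a HEZA space for the direct sum), and it is entirely missing from your outline; without it, knowing that each $\Lambda_H$ is $\Xsep$-boundedly acyclic does not yield the vanishing for $\Gamma$.
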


Theorem \ref{intro:thm:ccc} provides a very general tool that
applies to many groups of interest in geometry, dynamics, and algebra, as shown in the following examples; they include all the groups whose stable commutator length and second bounded cohomology were previously studied with displacement techniques~\cite{kotschick, burago2008conjugation, fflodha}.
Our criterion applies immediately to various families of non-finitely generated groups. We also apply it to several important classes of finitely generated and finitely presented groups, via coamenability: Indeed $\Xsep$-bounded acyclicity passes to co-amenable supergroups (see Theorem \ref{thm:co-amenable}).

In a companion paper~\cite{companion}, we survey all the displacement techniques available in the literature for the study of bounded cohomology and stable commutator length and we discuss the implications between them. In particular, to our knowledge having commuting cyclic conjugates turns out to be the weakest condition that ensures $\Xsep$-bounded acyclicity.

\subsection{Subgroups of big mapping class groups}
The first source of applications are \emph{stable} mapping class groups.

\subsubsection{The stable mapping class group $\Gamma_\infty$}\label{subsect:stable:mapping:class:group}
Since we can embed a genus $g$ surface with $k$ boundary components $\Sigma_{g, k}$ into $\Sigma_{g+1, k}$ by attaching a two-holed torus along a single boundary component, every diffeomorphism of $\Sigma_{g, k}$ fixing each boundary component extends to the larger surface.\ This leads to a group inclusion $\Gamma_g^{k} \leq \Gamma_{g+1}^k$, where $\Gamma_g^k$ denotes the group of isotopy classes of diffeomorphisms of $\Sigma_{g, k}$ with compact support in the interior (i.e.\ those that fix the boundary pointwise). The direct union of these groups is denoted by $\Gamma_\infty^{k-1}$; it is known as the \emph{stable mapping class group} for $k = 1$, and denoted by either $\Gamma_\infty^0$ or $\Gamma_\infty$. The rational cohomology ring of $\Gamma_\infty$ was conjectured by Mumford and then computed by Madsen and Weiss (also in the integral case)~\cite{madsen2007stable}.

Moreover, Kotschick first proved that the stable commutator length of $\Gamma_\infty$ vanishes~\cite[Theorem 3.1]{kotschick} and then Bowden proved that $H^2_b(\Gamma_\infty; \R)$ is zero~\cite[Prop. 4.7]{bowden}; this together with the fact that stable classes in stable groups tend to unbounded classes led Bowden to conjecture that the bounded cohomology of the stable mapping class group vanishes in all positive degrees:

\begin{conj}[{Bowden~\cite{bowden}}]
The stable mapping class group $\Gamma_\infty$ is boundedly acyclic.
\end{conj}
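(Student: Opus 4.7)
The plan is to verify that $\Gamma_\infty$ itself satisfies the commuting cyclic conjugates condition (Definition~\ref{intro:def:comm:cycl:conj}) with $n = 2$, and then apply the Main Theorem~\ref{intro:thm:ccc}. Since $\mathbb{R}$ is a separable dual Banach $\Gamma_\infty$-module, the resulting $\Xsep$-bounded acyclicity subsumes the bounded acyclicity asserted by the conjecture. The key element $t$ will be modelled on the pair-of-pants half-twist that generates the braid group $B_2$, so that conjugation by $t$ swaps two disjoint copies of a fixed finite-type subsurface while $t^2$ is merely a boundary-parallel Dehn twist.

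Let $H \leq \Gamma_\infty$ be finitely generated. Since $\Gamma_\infty = \bigcup_g \Gamma_g^1$, there exists $g$ with $H \leq \Gamma_g^1$, so $H$ is represented by diffeomorphisms supported on a subsurface $A_0 \cong \Sigma_g^1$ of $\Sigma_\infty$. Embed $A_0$ into a larger subsurface $S \cong \Sigma_{2g}^1$ of the form $S = A_0 \cup P \cup A_1$, where $A_1 \cong \Sigma_g^1$ is a second copy disjoint from $A_0$ and $P$ is a pair of pants whose three boundary components are $\partial A_0$, $\partial A_1$, and $\partial S$. Fix an identification $\phi\colon A_0 \to A_1$, and let $\tau$ be the standard pair-of-pants half-twist on $P$, i.e.\ the generator of $B_2$, which swaps the two inner boundaries while fixing $\partial S$ pointwise. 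Choosing $\phi$ compatibly with $\tau$ on collars of $\partial A_0, \partial A_1$, glue the swap $\phi^{\pm 1}$ on $A_0 \cup A_1$ to $\tau$ on $P$ to produce a diffeomorphism $\sigma$ of $S$ fixing $\partial S$ pointwise. Set $t := [\sigma] \in \Gamma_{2g}^1 \leq \Gamma_\infty$.

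By construction, $\sigma(A_0) = A_1$ is disjoint from $A_0 = \supp H$, so $t H t^{-1}$ is represented by diffeomorphisms supported on $A_1$ and commutes with $H$, yielding $[H, t H t^{-1}] = 1$. Moreover, $\sigma^2$ restricts to the identity on $A_0 \cup A_1$ by involutivity of the swap, and restricts to $\tau^2$ on $P$; the classical "full twist" computation in $B_2$ identifies $\tau^2$ as the Dehn twist along a curve parallel to $\partial S$, whose support lies in a collar neighbourhood of $\partial S$ and is therefore disjoint from $A_0$. Hence $[H, t^2] = 1$, Definition~\ref{intro:def:comm:cycl:conj} holds with $n = 2$, and Theorem~\ref{intro:thm:ccc} delivers $\Xsep$-bounded acyclicity of $\Gamma_\infty$, proving the conjecture. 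The main technical point is the identification of $\sigma^2$ as boundary-parallel, with no extra Dehn twists around $\partial A_0$ or $\partial A_1$ appearing from the gluing; this amounts to matching $\phi$ with $\tau$ on collar neighbourhoods, which is routine but must be checked carefully.
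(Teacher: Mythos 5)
Your proposal is correct and takes essentially the same route as the paper's proof of Corollary~\ref{cor:stablemcg}: both verify the commuting cyclic conjugates condition with $n=2$ by viewing $\Sigma_{2g}^1$ as two copies of $\Sigma_g^1$ joined near the boundary and taking $t$ to be a half-turn/half-twist swapping them, so that ${}^t H$ is supported on the second copy and $t^2$ is a boundary-parallel Dehn twist supported in an annulus disjoint from $\supp(H)$, after which Theorem~\ref{intro:thm:ccc} applies. Your final technical worry is in fact immaterial: since $\sigma^2$ is the identity on $A_0 \cup A_1$ by construction, its support lies in $P$, hence away from the compactly supported representatives of $H$ in the interior of $A_0$, regardless of which twists about $\partial A_0$ or $\partial A_1$ the class of $\tau^2$ happens to contain.
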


We confirm here the conjecture in a strong way:

\begin{cor}[Corollary \ref{cor:stablemcg}]
\label{corintro:stablemcg}
The stable mapping class group $\Gamma_\infty$ is $\Xsep$-boundedly acyclic.
\end{cor}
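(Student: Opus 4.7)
The plan is to verify that $\Gamma_\infty$ has commuting cyclic conjugates in the sense of Definition~\ref{intro:def:comm:cycl:conj}; then Theorem~\ref{intro:thm:ccc} delivers the conclusion.

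Let $H \leq \Gamma_\infty$ be a finitely generated subgroup. Since $\Gamma_\infty = \bigcup_{g \geq 1} \Gamma_g^1$ is a directed union, $H$ is contained in some $\Gamma_g^1$; fix such a $g$, and view $\Sigma_{g,1}$ as a subsurface $X_0$ of $\Sigma_{2g,1}$ supporting $H$. We will verify the condition with $n = 2$.

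Realize $\Sigma_{2g,1}$ as the union of an embedded pair of pants $P$ with two disjoint copies $X_0$ and $X_1$ of $\Sigma_{g,1}$ glued to two of the three boundary circles of $P$, the third boundary of $P$ being $\partial \Sigma_{2g,1}$. The half-turn of $P$ around the axis through its free boundary swaps the two glued boundaries of $P$, hence swaps $X_0$ and $X_1$, inducing an involution $\tau$ of $\Sigma_{2g,1}$ that restricts to a half-turn on $\partial \Sigma_{2g,1}$. A standard collar interpolation near $\partial \Sigma_{2g,1}$ corrects $\tau$ to an element $t \in \Gamma_{2g}^1$ fixing the boundary pointwise; the trade-off is that $t^2$ equals a Dehn twist along a curve parallel to $\partial \Sigma_{2g,1}$, supported entirely in the collar. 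We now view $t \in \Gamma_{2g}^1 \leq \Gamma_\infty$.

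It remains to check the two conditions of Definition~\ref{intro:def:comm:cycl:conj}. For the first, $t$ maps $X_0$ onto $X_1$ as subsurfaces, so $tHt^{-1}$ is supported in $X_1$, which is disjoint from $X_0 \supseteq \supp(H)$; since mapping classes with disjoint supports commute, $[H, tHt^{-1}] = 1$. For the second, $t^2$ is supported in the collar of $\partial \Sigma_{2g,1}$, disjoint from $X_0$, so $[H, t^2] = 1$. Theorem~\ref{intro:thm:ccc} then yields that $\Gamma_\infty$ is $\Xsep$-boundedly acyclic.

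The principal technical step is the passage from the na\"ive involution $\tau$ of $\Sigma_{2g,1}$, which rotates the boundary, to an honest element of the relative mapping class group $\Gamma_{2g}^1$ that fixes the boundary pointwise. The collar construction accomplishes this at the mild cost of introducing a boundary-parallel Dehn twist in $t^2$; this cost is harmless because the resulting support is disjoint from $X_0$, so the Dehn twist still commutes with $H$.
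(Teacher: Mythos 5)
Your proof is correct and follows essentially the same route as the paper: both arguments exhibit commuting cyclic conjugates (with $n=2$) by embedding $\Sigma_{g,1}$ into $\Sigma_{2g,1}$, taking $t$ to be a half-turn swapping the two genus-$g$ pieces while fixing the boundary pointwise, and observing that $t^2$ is a boundary-parallel Dehn twist supported in a collar disjoint from $\supp(H)$, then invoking Theorem~\ref{intro:thm:ccc}. This is precisely the paper's adaptation of Kotschick's construction, only phrased via a pair-of-pants decomposition instead of a boundary connected sum.
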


This contrasts with the case $k \geq 2$ since Baykur, Korkmaz and Monden showed that the stable commutator length of $\Gamma_\infty^{k-1}$ does not vanish~\cite[Section 3.2]{baykur2013sections}. This is also in contrast with other compactly supported subgroups of big mapping class groups (see Section \ref{subsec:big:mcg}).

\subsubsection{Stable braid group and the bounded cohomology of the braided Ptolemy--Thompson group}

The \emph{stable braid group} is the direct union of the braid groups under the inclusion $B_n \to B_{n+1}$ given by adding a strand. Kotschick showed that the stable commutator length of such a group vanishes~\cite[Theorem 3.5]{kotschick}. Here we prove the following:

\begin{cor}[Corollary \ref{cor:braid}]
\label{corintro:braid}
    The stable braid group $B_\infty$ is $\Xsep$-boundedly acyclic. The same is true for its commutator subgroup $[B_\infty, B_\infty]$.
\end{cor}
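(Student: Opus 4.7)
The plan is to apply Theorem~\ref{intro:thm:ccc} to both groups, reducing the problem to verifying the commuting cyclic conjugates condition in each case. Since $B_\infty = \varinjlim B_n$, every finitely generated subgroup $H \leq B_\infty$ is contained in some $B_N = \langle \sigma_1, \dots, \sigma_{N-1}\rangle$. Moreover, the abelianisation $B_\infty \twoheadrightarrow \Z$ sending every $\sigma_i$ to $1$ restricts to the abelianisation of each $B_n$, so $[B_\infty, B_\infty] \cap B_N = [B_N, B_N]$; hence a finitely generated subgroup of $[B_\infty, B_\infty]$ is also contained in some $B_N$.

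For $B_\infty$, I would take $t := \Delta_{2N} \in B_{2N} \hookrightarrow B_\infty$, the Garside fundamental element, together with $n = 2$. The classical identities $\Delta_{2N}\sigma_i\Delta_{2N}^{-1} = \sigma_{2N-i}$ for $1 \leq i \leq 2N-1$ imply that $tHt^{-1}$ lies in $\langle \sigma_{N+1}, \dots, \sigma_{2N-1}\rangle$, which commutes elementwise with $H \leq \langle \sigma_1, \dots, \sigma_{N-1}\rangle$ by the far-commutation relations $[\sigma_i, \sigma_j] = 1$ whenever $|i - j| \geq 2$. Since $\Delta_{2N}^2$ generates the centre of $B_{2N}$, one also has $[H, t^2] = 1$, which verifies Definition~\ref{intro:def:comm:cycl:conj}.

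For $[B_\infty, B_\infty]$ this choice is not admissible: $\Delta_{2N}$ has nonzero image $N(2N-1)$ in the abelianisation. The plan is to correct by a commuting factor: pick any generator $\sigma_j$ with $j \geq 2N+1$ and set
\[ t := \Delta_{2N}\,\sigma_j^{-N(2N-1)}. \]
Since $|j - i| \geq 2$ for every $i \leq 2N-1$, the element $\sigma_j$ commutes with both $\Delta_{2N}$ and $H$. Therefore $tHt^{-1} = \Delta_{2N}H\Delta_{2N}^{-1}$ and $t^2 = \Delta_{2N}^2\,\sigma_j^{-2N(2N-1)}$, and both commute with $H$ by the argument used in the previous paragraph. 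By construction $t$ has trivial abelianisation, so $t \in [B_\infty, B_\infty]$, and the condition is verified again with $n = 2$.

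The main subtle point is the commutator subgroup case: the natural conjugator $\Delta_{2N}$ lies outside $[B_\infty, B_\infty]$, so one has to engineer a correction term that simultaneously commutes with $\Delta_{2N}$ and with $H$ while killing the abelianisation. The abundance of far-commuting generators in $B_\infty$ makes this a mild inconvenience rather than a real obstacle, and no deeper input about the structure of $[B_\infty, B_\infty]$ is needed.
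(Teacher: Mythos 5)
Your argument is correct. For $B_\infty$ itself it is essentially the paper's proof: the paper conjugates by the block-swap braid that passes the first $N$ strands over the next $N$ (so that ${}^{t}H$ is supported on the displaced strands and $t^2$ acts trivially on the strands supporting $H$), while you use the Garside half-twist $\Delta_{2N}$, whose conjugation action $\sigma_i \mapsto \sigma_{2N-i}$ achieves the same displacement and whose square is central in $B_{2N}$; the two choices are interchangeable. Where you genuinely diverge is the commutator subgroup: the paper does not redo any construction inside $[B_\infty, B_\infty]$, but invokes its general Lemma~\ref{lem:derived}, which shows that derived subgroups of groups with commuting cyclic conjugates again have commuting cyclic conjugates (its proof replaces $t$ by a commutator $[t,s]$ with $s$ a second displacing element, much in the spirit of your correction term). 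Your alternative is a direct verification: you kill the exponent sum of $\Delta_{2N}$ by multiplying with $\sigma_j^{-N(2N-1)}$ for a far-away generator $\sigma_j$, $j \geq 2N+1$, which commutes with both $\Delta_{2N}$ and $H$, so conjugation of $H$ and the square are unchanged while $t$ lands in $[B_\infty, B_\infty]$; this is valid (note that your preliminary observation $[B_\infty, B_\infty] \cap B_N = [B_N, B_N]$ is true but never actually used -- you only need that $H$ sits in some $B_N$). What the paper's route buys is generality: Lemma~\ref{lem:derived} covers all higher derived subgroups at once and needs no information about the abelianization, whereas your correction is tailored to the exponent-sum homomorphism and to the first derived subgroup. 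What your route buys is an explicit, self-contained conjugator inside $[B_\infty, B_\infty]$, with no appeal to the lemma.
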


Recall that Thompson's group $T$ can be identified with the asymptotically rigid mapping classes of the two-dimensional closed disc $D$~\cite{kimT}. As an immediate consequence of Corollary~\ref{corintro:braid}, we can fully compute the bounded cohomology of the \emph{braided Ptolemy--Thompson group} $T^\ast$~\cite{funar2008braided} with trivial real coefficients. Recall that the group $T^\ast$ consists in all asymptotically rigid mapping classes of $D^\ast$, where $D^\ast$ denotes $D$ with a countable infinite locally finite collection of punctures~\cite{kimT}.

\begin{cor}[Corollary \ref{cor:ptolemy}]
\label{corintro:ptolemy}
    The bounded cohomology ring (with the cup product structure) of $T^\ast$ with trivial real coefficients is the following:
    \[
    H_b^\bullet(T^\ast; \R) \cong \R[x],
    \]
    where the degree of $x$ is $2$. Under this isomorphism, $x$ corresponds to the Euler class of the defining action on the circle of the quotient $T$.
\end{cor}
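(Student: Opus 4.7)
The strategy is to reduce to the case of Thompson's group $T$ via the Funar--Kapoudjian extension
\[
1 \longrightarrow K \longrightarrow T^\ast \longrightarrow T \longrightarrow 1,
\]
in which the kernel $K$ is a version of the stable braid group and the defining action of $T^\ast$ on $S^1$ factors through the quotient onto $T$.

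First I would identify, from the description of $T^\ast$ and $T$ in terms of asymptotically rigid mapping classes of $D^\ast$ and $D$ respectively, the kernel $K$ of the forgetful map $T^\ast \twoheadrightarrow T$ with either the stable braid group $B_\infty$ or its commutator subgroup $[B_\infty,B_\infty]$ appearing in Corollary~\ref{corintro:braid}. That corollary then ensures that $K$ is $\Xsep$-boundedly acyclic, and in particular $H_b^n(K; \R) = 0$ for every $n \geq 1$.

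Next I would apply the Hochschild--Serre spectral sequence in bounded cohomology to this extension with trivial $\R$ coefficients. Since $K$ is boundedly acyclic, the $E_2$ page collapses onto the row $q=0$, and inflation yields an isomorphism
\[
H_b^\bullet(T; \R) \xrightarrow{\;\cong\;} H_b^\bullet(T^\ast; \R).
\]
Naturality of the cup product under inflation upgrades this to an isomorphism of graded $\R$-algebras.

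Finally, the ring $H_b^\bullet(T; \R)$ is known to be the polynomial algebra $\R[e]$ on the bounded Euler class $e \in H_b^2(T; \R)$ of the standard action of $T$ on $S^1$. Setting $x := \mathrm{infl}(e)$ and using naturality of the Euler class along the factorization of the $T^\ast$-action through $T$ delivers the claimed description. The main subtlety is to pin down the kernel $K$ precisely and match it with a group covered by Corollary~\ref{corintro:braid}; once that is done, everything else is a direct application of the standard extension theorem for bounded cohomology together with the known ring structure on $H_b^\bullet(T; \R)$.
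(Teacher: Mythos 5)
Your argument is correct and follows essentially the same route as the paper: both use the extension $1 \to B_\infty \to T^\ast \to T \to 1$ of Funar--Kapoudjian, the $\Xsep$-bounded acyclicity of the kernel $B_\infty$ from Corollary~\ref{cor:braid}, and the known computation $H_b^\bullet(T;\R) \cong \R[x]$ with $x$ the bounded Euler class. The only difference is the middle step: the paper invokes the generalised mapping theorem of Moraschini--Raptis, which directly yields that the pullback is a ring isomorphism in all degrees, whereas your Hochschild--Serre argument would additionally require justifying the identification of the $E_2$-page with $H_b^p(T; H_b^q(B_\infty;\R))$, a point that is delicate in bounded cohomology (though the paper notes the spectral-sequence route as a viable alternative).
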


This approach can produce analogous results for several other groups defined this way, such as the braided Higman--Thompson groups~\cite{funarmore} and other related groups~\cite[Section 2]{funarlike}.

\subsubsection{Braided Thompson groups}

The braided Thompson group $bV$ was introduced independently by Brin \cite{bV:brin} and Dehornoy \cite{bV:dehornoy}, and can be realized as a dense subgroup of the mapping class group of a disc minus a Cantor set \cite{bV:dense}. Analogously to the braided Ptolemy--Thompson group, there exists an epimorphism $bV \to V$ whose kernel $bP$ is a direct union of pure braid groups. However this directed union is not the standard one, and corresponds to splitting strands, rather than adding unbraided strands. In fact, both $bV$ and $bP$ were shown to admit an infinite-dimensional space of homogeneous quasimorphisms \cite{bV:qm}.

On the other hand, $bV$ contains a subgroup $\widehat{bV}$, also defined by Brin \cite{bV:brin}, which can be seen as a braided version of a point stabilizer in $V$. The commutator subgroup of $\widehat{bV}$ was shown to have commuting conjugates \cite{bV:qm}, and it turns out that it also has commuting cyclic conjugates. So we obtain:

\begin{cor}[Corollary \ref{cor:bV}]
\label{corintro:bV}
The Brin group $\widehat{bV}$ is $\Xsep$-boundedly acyclic.
\end{cor}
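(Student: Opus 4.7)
The plan is to verify commuting cyclic conjugates for the commutator subgroup $[\widehat{bV}, \widehat{bV}]$, apply the Main Theorem \ref{intro:thm:ccc} to obtain $\Xsep$-bounded acyclicity there, and then propagate this to $\widehat{bV}$ via co-amenability. Since $\widehat{bV}/[\widehat{bV}, \widehat{bV}]$ is abelian, hence amenable, the commutator subgroup is co-amenable in $\widehat{bV}$, so Theorem \ref{thm:co-amenable} does the second step for free. Thus the substance of the proof is the verification of Definition \ref{intro:def:comm:cycl:conj} for $[\widehat{bV}, \widehat{bV}]$.

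For this, I would build on the construction of \cite{bV:qm}, which already produces commuting conjugates in this group. Recall that $\widehat{bV}$ is Brin's braided analogue of a point stabilizer in $V$, and can be realised as asymptotically rigid braided mapping classes of a disc minus a Cantor set that fix a designated boundary end. Elements of $\widehat{bV}$ are supported on a finite collection of "caret subtrees" of the infinite binary tree, with strands braided above. Given a finitely generated subgroup $H \leq [\widehat{bV}, \widehat{bV}]$, one can pass to a sufficiently deep standard subdivision so that every generator of $H$ is supported in a single proper subtree $\mathcal{T}_0$; the complement contains infinitely many disjoint translates $\mathcal{T}_1, \mathcal{T}_2, \dots$ of $\mathcal{T}_0$. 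I would take $t \in [\widehat{bV}, \widehat{bV}]$ to be an element that cyclically permutes finitely many of these subtrees (together with an appropriate braiding correction so that $t$ actually lies in the commutator subgroup), and verifies $t^n = 1$ for some $n \geq 2$ (or use an element of infinite order that eventually stabilises $\mathcal{T}_0$ trivially, corresponding to $n = \infty$).

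The main obstacle is ensuring that the cyclic powers $t^p H t^{-p}$ genuinely commute with $H$ for every intermediate $1 \leq p < n$, not just that some conjugate disjoint from $H$ exists. This is where the braided structure, as opposed to the simpler $V$-picture, demands care: distinct supports in the underlying Cantor set do not automatically imply commutation, because strands above the disc may still braid nontrivially. The key point to check is that the cyclic shift $t$ can be chosen so that the strands over $\mathcal{T}_0, \dots, \mathcal{T}_{n-1}$ remain unbraided with one another throughout the orbit; this follows from choosing the subtrees at disjoint levels and letting $t$ act by a product of pure half-twists that untangle precisely along the supports involved. The cyclic condition $[H, t^n] = 1$ is then arranged by either closing the cycle (finite $n$) or noting that a translate of $\mathcal{T}_0$ far enough from the support of $H$ is fixed by $t$ up to elements commuting with $H$.

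Once commuting cyclic conjugates for $[\widehat{bV}, \widehat{bV}]$ is established, Theorem \ref{intro:thm:ccc} yields $\Xsep$-bounded acyclicity for the commutator subgroup, and Theorem \ref{thm:co-amenable} transfers this to $\widehat{bV}$, completing the proof. I expect the braiding bookkeeping in the middle step to be the only delicate part; the overall skeleton is identical to the pattern used for the other subgroups of big mapping class groups treated in this section.
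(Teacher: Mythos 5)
Your skeleton coincides with the paper's: the commutator subgroup of $\widehat{bV}$ is exactly $\widehat{D} = \ker \chi_1$ (the right-depth homomorphism realizes the abelianization), it is co-amenable since the quotient is $\Z$, and the whole content is to verify commuting cyclic conjugates for $\widehat{D}$ and then invoke Theorems \ref{intro:thm:ccc} and \ref{thm:co-amenable}. The gaps are in that verification. The element $t$ you propose --- a braided lift of a cyclic permutation of finitely many disjoint subtrees, ``closing the cycle'' so that $t^n = 1$ --- does not exist: the $n$-th power of a braid that cyclically permutes blocks of strands is a nontrivial full-twist pure braid, never the identity (indeed $\widehat{bV}$, like the braid groups it is built from, is torsion-free). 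Definition \ref{intro:def:comm:cycl:conj} only asks for $[H, t^n] = 1$, and this slack is exactly what the paper exploits: it takes $t = [T, \beta, T]$, with $T$ a caret with a second caret hanging on the right and $\beta$ the first generator of $B_3$, an infinite-order element of $\widehat{D}$ whose square is a full-twist-type pure braid, and proves $[\widehat{bV}(1), t^2] = 1$ by the same picture as for the stable braid group in Corollary \ref{cor:braid}. Your fallback $n = \infty$ (commuting $\Z$-conjugates) is not a way out either: you would need all translates ${}^{t^p}H$, $p \geq 1$, to be pairwise unbraided with $H$, and arranging that is precisely the obstruction --- it is the same reason braid-type groups have commuting cyclic, but not commuting $\Z$-, conjugates.

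Second, your reduction step --- passing to a deep subdivision so that every generator of $H$ is ``supported in a single proper subtree'' --- is not justified and is genuinely delicate in the braided setting: an element of $\widehat{D}$ can have trivial underlying tree pair (hence trivial support on the Cantor set) while its braid involves strands lying arbitrarily close to the distinguished end, so any usable notion of support must account for the braiding. What is true, and what the paper cites, is that every finitely generated subgroup of $\widehat{D}$ is conjugate into $\widehat{bV}(1)$ \cite[Lemma 2.12]{bV:qm}; after this reduction it suffices to exhibit a single $t \in \widehat{D}$ with $[\widehat{bV}(1), {}^t\widehat{bV}(1)] = 1$, which is \cite[Lemma 2.16]{bV:qm}, together with the computation $[\widehat{bV}(1), t^2] = 1$ described above. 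Your ``product of pure half-twists that untangle precisely along the supports involved'' gestures at these two facts but does not prove them; as written, the braiding bookkeeping you defer is the entire mathematical content of the corollary.
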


\subsection{Direct limit groups}

Many groups involved in constructions of algebraic $K$-theory are $\Xsep$-boundedly acyclic. Such groups also arise naturally as direct limits of linear groups, and other groups of this form can be treated with the same approach.

\subsubsection{Linear groups and $K$-theory}
Recall that the higher $K$-theory groups of a ring $R$ are defined as the higher homotopy groups of $\textup{BGL}(R)^+$, i.e. Quillen's plus construction applied to the classifying space of the direct limit general linear group $\textup{GL}(R)$ \cite{kbook}. By construction, $K_1(R)$ is the abelianization of $\textup{GL}(R)$, explicitly the quotient $\textup{GL}(R)/\textup{E}(R)$, where $\textup{E}(R)$ is the commutator subgroup of $\textup{GL}(R)$, generated by elementary matrices. Moreover, Milnor's definition of $K_2$ is given in terms of the kernel of the homomorphism $\textup{St}(R) \to \textup{E}(R)$ from the Steinberg group $\textup{St}(R)$ onto $\textup{E}(R)$ \cite{milnor1971introduction}. Finally, Gersten proved that $K_3(R) = H_3(\textup{St}(R), \Z)$ \cite{gersten}.

We show that all the groups appearing in this context are $\Xsep$-boundedly acyclic. In particular, the case of $\textup{GL}(R)$ (in the special case in which $R$ is the suspension of a ring) provides a strong positive answer to the question about its bounded acyclicity~\cite[Question 4.8]{fflm2}:

\begin{cor}[Corollary \ref{cor:linear}]
\label{corintro:linear}

Let $R$ be a ring with identity. Then the following groups are $\Xsep$-boundedly acyclic:
\begin{enumerate}
    \item The direct limit general linear group $\textup{GL}(R)$;
    \item The direct limit special linear group $\textup{SL}(R)$;
    \item The direct limit symplectic group $\textup{Sp}(R)$;
    \item The direct limit elementary group $\textup{E}(R)$;
    \item The Steinberg group $\textup{St}(R)$.
\end{enumerate}
\end{cor}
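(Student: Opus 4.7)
By Theorem~\ref{intro:thm:ccc}, it suffices to verify that each of the five groups has commuting cyclic conjugates in the sense of Definition~\ref{intro:def:comm:cycl:conj}. The uniform strategy is a block-permutation swindle: any finitely generated subgroup of a direct limit of matrix groups lies in a finite block, and conjugation by a matrix that cyclically permutes three disjoint copies of that block produces two further commuting copies. This will yield $n = 3$ in the definition.

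Given a finitely generated $H \leq \textup{GL}(R)$, choose $N$ so that $H \leq \textup{GL}_N(R)$ sits in the upper-left $N \times N$ corner, and let $t \in \textup{GL}_{3N}(R) \subseteq \textup{GL}(R)$ be the permutation matrix that cyclically permutes three consecutive $N$-blocks. Then $t^3 = 1$, and for $p = 1, 2$ the conjugate $t^p H t^{-p}$ lies in a different $N$-block from $H$, hence commutes with $H$ elementwise. Both conditions of Definition~\ref{intro:def:comm:cycl:conj} are satisfied with $n = 3$.

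Essentially the same element $t$ works for $\textup{SL}(R)$, $\textup{E}(R)$ and, with a mild variant, for $\textup{Sp}(R)$, once we verify membership. For $\textup{SL}(R)$: the block $3$-cycle decomposes into $N$ disjoint coordinate $3$-cycles, so $\det t = (+1)^N = +1$. For $\textup{E}(R)$: replace $t$ by a product of Weyl elements $w_{jk}(1) = e_{jk}(1)e_{kj}(-1)e_{jk}(1)$ realizing the same block $3$-cycle; such a $t$ is manifestly a product of elementary matrices, a direct computation shows $t^3 = 1$, and conjugation by $t$ still sends each $N$-block to the next (possibly modifying basis vectors by signs, which is harmless for the commuting conclusion). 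For $\textup{Sp}(R)$: cyclically permute three symplectic $2N$-blocks, which preserves the block-diagonal symplectic form.

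The Steinberg case is the main subtlety, owing to the central extension $1 \to K_2(R) \to \textup{St}(R) \to \textup{E}(R) \to 1$. Given $H \leq \textup{St}(R)$ finitely generated, its generators $x_{ij}(a)$ involve indices only in some $\{1, \ldots, N\}$. Let $t \in \textup{St}(R)$ be any lift of the block $3$-cycle $P$ constructed above in $\textup{E}(R)$. Since $t^3$ projects to the identity, it lies in the central subgroup $K_2(R)$, so $[H, t^3] = 1$ automatically. For condition~(1), each conjugate $t x_{ij}(a) t^{-1}$ equals $x_{P(i)P(j)}(a)$ times a central element, and the Steinberg relation $[x_{ab}(r), x_{cd}(s)] = 1$ whenever $b \neq c$ and $a \neq d$ applies, since the disjoint-block structure of the indices forces these conditions; the argument for $t^2$ is identical. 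The main difficulty of the whole proof lies here, in using both the centrality of $K_2(R)$ and the explicit Steinberg relations to compensate for the failure of the swindle to exist literally as a permutation matrix in $\textup{St}(R)$, whereas the other four cases reduce to straightforward block matrix manipulations.
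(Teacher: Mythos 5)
Your proof is correct. For $\textup{GL}(R)$, $\textup{SL}(R)$ and $\textup{Sp}(R)$ it is essentially the paper's argument: the paper also displaces a finitely generated $H$ contained in a finite block by a permutation of disjoint blocks, except that it uses an order-two element of $\mathrm{Alt}_\infty$ (a product of disjoint transpositions, embedded diagonally as $\sigma \mapsto \mathrm{diag}(M_\sigma,M_\sigma)$ in the symplectic case), so the definition is verified with $n=2$ rather than your $n=3$; for $\textup{Sp}(R)$ your "three symplectic $2N$-blocks" should be spelled out as an element of the form $\mathrm{diag}(P,P)$ in the paper's coordinates, since the standard form is not block-diagonal in contiguous $2N$-blocks, but this is routine. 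Where you genuinely diverge is in $\textup{E}(R)$ and especially $\textup{St}(R)$. The paper gets $\textup{E}(R)$ for free from $\textup{GL}(R)$ via the Whitehead lemma $\textup{E}(R)=[\textup{GL}(R),\textup{GL}(R)]$ together with Lemma~\ref{lem:derived} on derived subgroups, and it never verifies commuting cyclic conjugates for $\textup{St}(R)$ at all: since $\textup{St}(R)\to\textup{E}(R)$ is a central extension, its kernel is abelian hence amenable, and the inflation form of Gromov's Mapping Theorem (Remark~\ref{rem:mapping:converse}) transfers $\Xsep$-bounded acyclicity from $\textup{E}(R)$ to $\textup{St}(R)$ in one line — so what you call the main difficulty is, in the paper, the easiest case. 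Your direct verification inside $\textup{St}(R)$ is nevertheless sound: conjugation by a lift $t$ depends only on its image in $\textup{E}(R)$ (lifts differ by central elements), the sign bookkeeping needed to arrange $P^3=1$ for a product of Weyl elements can indeed be done, $[H,t^3]=1$ follows from centrality of $K_2(R)$, and the Steinberg relation for disjoint index pairs gives $[H,{}^{t^p}H]=1$ for $p=1,2$. What your route buys is slightly more than the statement: it shows that $\textup{St}(R)$ itself has commuting cyclic conjugates, so for instance the uniform bound on vanishing moduli in Theorem~\ref{thm:cac} and the stability under quotients of Lemma~\ref{lemma:quotient:has:ccc} apply to $\textup{St}(R)$ directly, which the paper's mapping-theorem shortcut does not immediately give.
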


Note that the previous result has consequences also for the (acyclic) \emph{Volodin space} $X(R)$. Indeed, since the fundamental group of $X(R)$ is $\textup{St}(R)$, Gromov's mapping theorem with coefficients~\cite{vbc, clara:book, moraschini_raptis_2023} implies that the Volodin space is also boundedly acyclic for all coefficients in $\Xsep$. This situation witnesses the fact that Quillen's plus construction is \emph{intrinsically unbounded}, since each of the spaces appearing in the homotopy fibration $X(R) \to \textup{BGL}(R) \to \textup{BGL}(R)^+$ is boundedly acyclic~\cite{kbook, Raptis-acyclic, moraschini_raptis_2023}.

\subsubsection{Automorphisms of products} Similar to the previous cases, we can also stabilize the free group on $n$ generators as follows: We consider the inclusion $F_n \to F_{n+1} = F_n \ast \Z$ and then we consider the direct limit
\[
\textup{Aut}_{\infty}(F_\infty) = \bigcup_{n \in \N} \textup{Aut}(F_n) \leq \textup{Aut}(F_\infty).
\]
This is an instance of a more general construction for free products of arbitrary groups, which leads to the definition of the groups $\textup{Aut}_{\infty}(\Gamma^{* \infty})$, and can also be adapted for direct products and direct sums, yielding the groups $\textup{Aut}_{\infty}(\Gamma^{\times \infty})$ and $\textup{Aut}_{\infty}(\Gamma^{\oplus \infty})$ \cite{stability:auto}.\ We show that all groups arising in this way are $\Xsep$-boundedly acyclic. This extends Kotschick's result on the vanishing of the stable commutator length of $\textup{Aut}_{\infty}(F)$~\cite{kotschick}.

\begin{cor}[Corollary \ref{cor:auto}]
\label{corintro:auto}
    Let $\Gamma$ be any group.\ Then the groups $\textup{Aut}_{\infty}(\Gamma^{* \infty}), \textup{Aut}_{\infty}(\Gamma^{\times \infty})$ and $\textup{Aut}_{\infty}(\Gamma^{\oplus \infty})$ are $\Xsep$-boundedly acyclic. In particular, $\textup{Aut}_{\infty}(F_\infty)$ is $\Xsep$-boundedly acyclic.
\end{cor}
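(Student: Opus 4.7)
My plan is to verify the commuting cyclic conjugates condition (Definition~\ref{intro:def:comm:cycl:conj}) for each of the three groups and then invoke the Main Theorem~\ref{intro:thm:ccc} to conclude $\Xsep$-bounded acyclicity. A single uniform construction should handle all three cases simultaneously; the special case $\textup{Aut}_\infty(F_\infty)$ then follows by specializing $\Gamma = \Z$.

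For the construction, fix $\odot \in \{\ast, \times, \oplus\}$ and write $G := \textup{Aut}_\infty(\Gamma^{\odot \infty}) = \bigcup_{m \geq 1} \textup{Aut}(\Gamma^{\odot m})$, where each $\textup{Aut}(\Gamma^{\odot m})$ is embedded into $\textup{Aut}(\Gamma^{\odot(m+1)})$ by acting as the identity on the newly added copy. Given a finitely generated subgroup $H \leq G$, I would pick $m$ large enough that $H \leq \textup{Aut}(\Gamma^{\odot m})$, and use the canonical decomposition
\[
\Gamma^{\odot \infty} \;\cong\; \Gamma^{\odot m}_{[1,m]} \;\odot\; \Gamma^{\odot m}_{[m+1,2m]} \;\odot\; \Gamma^{\odot \infty}_{[2m+1, \infty)}
\]
to define $t \in G$ as the involution that swaps the first two blocks by the permutation $i \leftrightarrow i + m$ for $1 \leq i \leq m$ and is the identity on the third block. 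The universal property of $\odot$ makes this a legitimate element of $\textup{Aut}(\Gamma^{\odot 2m}) \leq G$, with $t^2 = 1$.

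It would then remain to check that $[H, tHt^{-1}] = 1$: by construction, every element of $H$ fixes each copy of index $> m$, whereas every element of $tHt^{-1}$ fixes each copy of index in $[1,m] \cup [2m+1, \infty)$. Two automorphisms with such disjoint ``supports'' in the $\odot$-decomposition commute, so the bracket vanishes. Together with $t^2 = 1$, this verifies Definition~\ref{intro:def:comm:cycl:conj} with $n = 2$, and the Main Theorem finishes the argument. The only delicate point is confirming that block-swapping genuinely defines a group automorphism in each of the three settings, but this is immediate from the universal properties of the free product, direct product, and direct sum, so I do not anticipate any significant obstacle.
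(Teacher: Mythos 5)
Your proposal is correct and follows essentially the same route as the paper: exhibit, for a finitely generated $H \leq \textup{Aut}(\Gamma^{\odot m})$, an order-two block-swapping automorphism $t$ supported on the first $2m$ factors, check $[H,{}^tH]=1$ via the disjointness of the blocks and $[H,t^2]=1$ since $t^2=\id$, and conclude by Theorem~\ref{intro:thm:ccc}. The only cosmetic difference is that the paper writes out the free product case and declares the other two analogous, whereas you phrase the block swap uniformly for $\ast$, $\times$ and $\oplus$.
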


\subsubsection{Cremona groups}
Let $K$ be a field, and consider the Cremona groups $\mathrm{Cr}_n(K)=\mathrm{Aut}(K(x_1, \ldots, x_n)|K)$, which are the automorphism groups of the field of rational functions~$K(x_1, \ldots, x_n)$ over $K$. These groups can also be viewed as the birational transformation groups of the projective spaces $\mathbb{P}^n_K$ over $K$. They have been studied for centuries in classical algebraic geometry, and more recently they were approached via geometric group theory \cite{cremona:hyperbolic, cremona:survey}. Standout results include acylindrical hyperbolicity in the rank $2$ case \cite{cremona:ah1, cremona:ah2, DGO} and actions on CAT(0) cube complexes for all ranks \cite{cremona:cat}. As above, we can stabilize these groups along the natural inclusions $\mathrm{Cr}_n(K)\hookrightarrow \mathrm{Cr}_{n+1}(K)$, fixing the coordinate $x_{n+1}$.\ We consider their direct limit 
\[\mathrm{Cr}_{\infty}(K)=\bigcup_{n\in\mathbb{N}}\mathrm{Cr}_n(K).\]
We show that this group is $\Xsep$-boundedly acyclic:
\begin{cor}[Corollary \ref{cor:cremona}]
\label{corintro:cremona}
   The group $\mathrm{Cr}_{\infty}(K)$ is $\Xsep$-boundedly acyclic.
\end{cor}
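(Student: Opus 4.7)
The plan is to invoke Theorem \ref{intro:thm:ccc} by verifying that $\mathrm{Cr}_\infty(K)$ satisfies the commuting cyclic conjugates condition of Definition \ref{intro:def:comm:cycl:conj}. Since $\mathrm{Cr}_\infty(K) = \bigcup_{n \in \N} \mathrm{Cr}_n(K)$ is a directed union, any finitely generated subgroup $H \leq \mathrm{Cr}_\infty(K)$ must already lie in $\mathrm{Cr}_n(K)$ for some sufficiently large $n$, so every element of $H$ fixes the variables $x_{n+1}, x_{n+2}, \dotsc$ and acts as a birational self-map on $K(x_1, \ldots, x_n)$. This observation is the whole point of working with the direct limit, and it is where the proof draws its strength.

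With such an $n$ fixed, I would take $t \in \mathrm{Cr}_{2n}(K) \leq \mathrm{Cr}_\infty(K)$ to be the involution that swaps $x_i \leftrightarrow x_{n+i}$ for $1 \leq i \leq n$ and fixes all other coordinates. A direct computation shows that for $h \in H$ one has $t h t^{-1}(x_i) = x_i$ for $1 \leq i \leq n$, while $t h t^{-1}$ acts on $K(x_{n+1}, \ldots, x_{2n})$ exactly as $h$ acts on $K(x_1, \ldots, x_n)$ via the shift of variables. In particular, $H$ and $tHt^{-1}$ act nontrivially only on disjoint sets of variables, so every element of $H$ commutes with every element of $tHt^{-1}$, which gives $[H, tHt^{-1}] = 1$. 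Since $t$ is an involution, we also have $[H, t^2] = 1$ trivially, so the conditions of Definition \ref{intro:def:comm:cycl:conj} hold with $n = 2$. Theorem \ref{intro:thm:ccc} then gives the desired $\Xsep$-bounded acyclicity.

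There is no serious obstacle here: the only step that requires any care is the verification that the coordinate-swap $t$ is a legitimate element of $\mathrm{Cr}_{2n}(K)$ (it is the linear automorphism of $\mathbb{P}^{2n}_K$ given by a permutation matrix) and that the inclusions $\mathrm{Cr}_m(K) \hookrightarrow \mathrm{Cr}_{m+1}(K)$ used in the paper really do fix the adjoined coordinates, so that the disjointness of the supports of $H$ and $tHt^{-1}$ forces commutation. Once these bookkeeping points are clear, the proof reduces to citing the Main Theorem.
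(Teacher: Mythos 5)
Your argument is correct and essentially identical to the paper's proof: the paper also embeds the finitely generated subgroup $H$ into some $\mathrm{Cr}_n(K)$, conjugates by an order-two coordinate-swapping element of $\mathrm{Cr}_{2n}(K)$ exchanging the first $n$ variables with the last $n$, and deduces $[H,{}^tH]=1$ and $[H,t^2]=1$ before invoking Theorem \ref{intro:thm:ccc}. The only difference is the cosmetic choice of which involution realizes the swap, which does not affect the argument.
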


\subsection{Stability}
The previous results have interesting implications for the theory of \emph{homological stability} (we refer the reader to work of Randal-Williams and Wahl~\cite{stability:auto} and the references therein). 
A family of groups $\Gamma_1 \hookrightarrow \Gamma_2 \hookrightarrow \cdots$ satisfies homological stability if, for all $i \geq 1$, the maps $H_i(\Gamma_n) \to H_i(\Gamma_{n+1})$ are eventually isomorphisms (this is usually meant with respect to trivial integral coefficients). Homological stability has been an important tool for studying the homology of certain families of groups. Typically the approach splits into three steps: Prove homological stability, then compute the homology of the colimit group $\textup{colim}_{n \to \infty} \Gamma_n$ (which in this setting is a direct limit), and use it to compute the homology of the groups $\Gamma_n$, at least for $n$ large enough.

The following families of groups encountered so far satisfy homological stability \cite{stability:auto, cremona}: The mapping class groups with one boundary component $\{ \Gamma^1_g \}_{g \geq 1}$; the braid groups $\{ B_n \}_{n \geq 1}$; the linear groups $\{ \textup{GL}_n(R) \}_{n \geq 1}$ and some subgroups thereof; the automorphism groups $\{ \Aut(\Gamma^{*n})\}_{n \geq 1}$ and $\{ \Aut(\Gamma^{\times n})\}_{n \geq 1}$; the Cremona groups $\{\mathrm{Cr}_n(K)\}_{n\geq 1}$.

These results therefore show that the colimit groups coming from homologically stable families tend to be $\Xsep$-boundedly acyclic (in fact, it was suggested to us by Thorben Kastenholz that Theorem \ref{intro:thm:ccc} might apply every time that the homologically stable system has an underlying $E_\infty$ structure, see \cite{stability:auto}). This provides an obstacle for adapting homological stability arguments to the setting of bounded cohomology, and answers a question of Kastenholz--Sroka~\cite[Question 1.4]{kastenholz-sroka} about the scope of this approach, for the homologically stable families above. Let us however remark that bounded cohomological stability has been proved and applied with success in various contexts, where the bounded cohomology of the colimit groups plays no role \cite{monod04, monod:semiseparable, delacruzH1, delacruzH2, cdcm, kastenholz, kastenholz-sroka}.

\subsection{Transformation groups}
Monod and Nariman recently showed that the compactly supported diffeomorphism and homeomorphism groups of certain \emph{portable} manifolds, in the sense of Burago--Ivanov--Polterovich~\cite{burago2008conjugation} are $\Xsep$-boundedly acyclic~\cite{monodnariman}. More precisely, they considered manifolds that are products of a closed manifold $M$ with $\R^n$ for some $n \geq 1$. Their argument is based on the observation that every bounded set can be displaced infinitely many times within a bounded set; thus there is no hope to extend it to the case of compactly supported symplectic or volume-preserving diffeomorphism groups. We show here how our algebraic criterion can still apply in this rigid setting.

Let $(M, \omega)$ be a symplectic manifold and let $\symp(M, \omega)$ be  the group of compactly supported diffeomorphisms that preserve the $2$-form $\omega$. We denote by $\sympo(M, \omega)$ the path-connected component of the identity.\ An important subgroup of $\sympo(M, \omega)$ is the group of compactly supported Hamiltonian diffeomorphisms that is denoted by $\ham(M, \omega)$.\ Kotschick proved that the stable commutator length of $\sympo(\R^{2m}, \omega)$ and $\ham(\R^{2m}, \omega)$ vanishes, as well as the one of the group of compactly supported volume-preserving diffeomorphisms $\diffvol(\R^n)$ for $n \geq 2$~\cite[Theorem 4.2]{kotschick}.

These groups are part of the \emph{classical diffeomorphism groups} of $\R^n$ \cite{banyaga}. To complete the picture, we also consider the group of compactly supported contactomorphisms $\cont(\R^{2k+1}, \xi)$. We are able to show that all classical diffeomorphism groups of $\R^n$ are $\Xsep$-boundedly acyclic, as are their identity components, in all regularities.

\begin{cor}[Corollaries \ref{cor:tg:symp}, \ref{cor:tg:volume} and \ref{cor:contact}]
\label{corintro:transformation}
    Let $n \geq 2$, and let $\R^n$ be endowed with its standard volume form, with its standard symplectic form $\omega$ if $n = 2m$ is even, and with its standard contact structure $\xi$ if $n = 2k+1$ is odd. Then the following groups are $\Xsep$-boundedly acyclic:
    \begin{enumerate}
        \item $\diffvol(\R^n)$;
        \item $\symp(\R^{2m}, \omega)$ and $\ham(\R^{2m}, \omega)$;
        \item $\cont(\R^{2k+1}, \xi)$.
    \end{enumerate}
    Moreover, the same holds for the identity components of these groups, for all regularities, and for commutator subgroups.
\end{cor}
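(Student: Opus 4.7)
The plan is to apply Theorem~\ref{intro:thm:ccc}, reducing the claim to verifying that each listed transformation group satisfies the commuting cyclic conjugates condition. By Lemma~\ref{lem:ccc:dynamical}, which codifies the passage from a finite dynamical displacement property to the algebraic CCC condition, it suffices to show, for every finitely generated subgroup $H$ of the ambient group with compact support $K \subset \R^n$, the existence of an integer $N \geq 2$ and a compactly supported structure-preserving diffeomorphism $t$ of $\R^n$ such that $K, t(K), \ldots, t^{N-1}(K)$ are pairwise disjoint and $t^N$ is the identity on a neighborhood of $K$ (and hence commutes with $H$).

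The element $t$ will be built as a ``cyclic rotation'' adapted to each geometric structure. Using a Darboux-type product chart, write $\R^n = \R^2 \times \R^{n-2}$ with $K$ contained in $D_0 \times B$ for a small disk $D_0 \subset \R^2$ and a compact $B \subset \R^{n-2}$; pick a larger disk $D \subset \R^2$ and fix $N$ large enough that the rotation by $2\pi/N$ about the center of $D$ carries $D_0$ to disjoint cyclic copies inside $D$. Define $t$ as the time-$1$ flow of a radially symmetric generator whose angular velocity equals $2\pi/N$ on an annulus containing these copies and tapers smoothly to zero outside, so that $t^N$ is the identity on a neighborhood of $K$ and $\supp(t)$ is compact. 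In the volume-preserving case this generator is a radial divergence-free vector field; in the symplectic/Hamiltonian case on $\R^{2m}$ it is the Hamiltonian vector field of $F(x,y) = f(x_1^2 + y_1^2)$ with $f$ compactly supported, which lies automatically in $\ham(\R^{2m}, \omega) \subset \sympo(\R^{2m}, \omega)$; in the contact case on $\R^{2k+1}$ we use a compactly supported contact Hamiltonian adapted to a Darboux ball, producing a contactomorphism with the same cyclic behavior on $K$.

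For the identity components, by construction $t$ is the time-$1$ map of a smooth isotopy from the identity, so it lies in $\diffovol(\R^n)$, $\sympo(\R^{2m}, \omega)$, or $\conto(\R^{2k+1}, \xi)$, as appropriate; the same construction goes through in every regularity in which the generating functions are available. For the commutator subgroups, we invoke the simplicity theorems of Thurston and Banyaga: each such commutator subgroup is simple (hence perfect), and the displacement element $t$, as the time-$1$ of a compactly supported isotopy, is a product of commutators inside it; since the CCC condition only depends on the action of $t$ on a neighborhood of $K$, the same rotation construction (with generators chosen in the commutator subgroup) verifies CCC directly.

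The principal technical difficulty is ensuring simultaneously that $t^N$ acts as the identity on a neighborhood of $K$, that $t$ is compactly supported, and that $t$ preserves the geometric structure. This forces the angular velocity of the generating function to be exactly $2\pi/N$ on a neighborhood of all cyclic translates of $K$ and to taper smoothly to zero outside, without destroying the structure in the transition region. The volume and symplectic cases reduce to a routine choice of radial generators; the contact case is the subtlest, because contact vector fields couple the Reeb direction with the contact planes, and the tapering must be arranged in a Darboux chart in a way that does not spoil the cyclic rotation on the support of $H$.
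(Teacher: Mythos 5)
Your treatment of the full groups and identity components in the symplectic and volume-preserving cases is essentially the paper's argument (Corollaries \ref{cor:tg:symp} and \ref{cor:tg:volume}): a structure-preserving ``rotation'' that displaces the support $K$ of $H$ and whose appropriate power is the identity on $K$, fed into Lemma \ref{lem:ccc:dynamical} and Theorem \ref{intro:thm:ccc}. Two slips there are fixable but real: with the larger disk $D$ fixed first, taking $N$ large makes the angle $2\pi/N$ too small to displace $D_0$ (the paper fixes $N=2$ and places the centre of rotation far from $K$); and $F=f(x_1^2+y_1^2)$ has non-compact, cylindrical support when $m\geq 2$, so your $t$ becomes compactly supported only after multiplying the Hamiltonian by a further cut-off in the remaining coordinates, together with the check that the region where the cut-off equals $1$ is invariant along the trajectories issuing from $K$ --- this is exactly the paper's function $\eta$. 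For the contact case you merely assert the existence of a suitable compactly supported contact Hamiltonian at the point you yourself call the subtlest; the paper does not attempt such a construction and instead quotes the displacement lemma for contact portable manifolds, obtaining a $t$ with $t^p(U_0)\cap U_0=\emptyset$ for all $p\geq 1$ and running the portable-manifold argument through Item (1) of Lemma \ref{lem:ccc:dynamical}.

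The genuine gap is the commutator-subgroup step. The claim that the displacing element $t$, being the time-one map of a compactly supported isotopy, is automatically a product of commutators is false: $\ham(\R^{2m})$ and $\diffovol(\R^{n})$ are not perfect (this is precisely why the statement for commutator subgroups is a strict strengthening), and in the symplectic case your $t$ is generated by a non-negative, not identically vanishing Hamiltonian, so its Calabi invariant is strictly positive and $t\notin[\ham(\R^{2m}),\ham(\R^{2m})]$; ``choosing the generators inside the commutator subgroup'' is exactly what is not available for the rotation generator. Moreover, for contactomorphisms it appears to be open whether any derived subgroup of $\conto(\R^{2k+1},\xi)$ is perfect, as the paper points out, so the appeal to Thurston--Banyaga simplicity theorems is unavailable there altogether. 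The paper circumvents all of this with an elementary commutator trick (Lemma \ref{lem:derived}, built into the statement of Lemma \ref{lem:ccc:dynamical} via the sandwich $T^{(d)}\leq G\leq \homeo(X)$): given $H$ supported in $K$ and the displacing element $t$, pick a further element $s$ of the ambient structure-preserving group displacing a compact set containing $\supp(H)\cup\supp(t)$, and replace $t$ by $c=[t,s]$, which lies in the commutator subgroup (iterating, in any derived subgroup) and conjugates $H$ exactly as $t$ does. Your argument needs this, or an explicit Calabi-correcting modification of $t$, to go through; as written, the commutator-subgroup part fails.
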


In fact, we prove that the conclusion of Corollary \ref{corintro:transformation} holds for all compactly supported groups of homeomorphisms that contain one of the groups in the statement: This is the advantage of proving $\Xsep$-bounded acyclicity using the commuting cyclic conjugates condition. So for example it also holds for the group $\homeoleb(\R^n)$ of Lebesgue-measure-preserving homeomorphisms of $\R^n$, or for the group of compactly supported Hamiltonian homeomorphisms of $\R^{2m}$ \cite{hamhomeo1, hamhomeo2}. These statements can be generalized from $\R^n$ to manifolds of the form $\R^n \times M$, where $M$ is closed (Corollary \ref{cor:tg:products}). Notice that the statement for commutator subgroups is a strict generalization, since these groups are not perfect \cite[Chapters 4 and 5]{banyaga} (for contactomorphisms, this seems to be an open question \cite[Remark after Theorem 6.3.7]{banyaga}).

The ordinary homology of diffeomorphism groups of $\R^n$ has often been studied in the past. Its study is typically split into two parts: On the one hand, one can investigate the homology of the \emph{topological group} $\diffvol(\R^n)^\tau$, with its usual $C^\infty$-topology.\ This is the same as the homology of the topological group $\textup{Diff}_{\vol}(\mathbb{D}^n, \partial \mathbb{D}^n)^\tau$ \cite{mcduff:local2}, which in turn is the same as the homology of the topological group $\textup{Diff}(\mathbb{D}^n, \partial \mathbb{D}^n)^\tau$, as follows by an application of Banyaga's version of Moser's trick \cite{moser, banyaga:moser}.\ The latter group has a very rich and mysterious homology theory (when $n \geq 4$) \cite{rw:survey}. On the other hand, there is the study of the \emph{local homology} of $\diffvol(\R^n)$, i.e.\ the homology of the homotopy fiber of the map $B\diffvol(\R^n) \to B\diffvol(\R^n)^\tau$.\ This has been extensively studied, especially by McDuff \cite{mcduff:local1, mcduff:local2, mcduff:local3, mcduff:canonical, hurder}.

The bounded cohomology of transformation groups as in Corollary \ref{corintro:transformation} is well-studied in the cases of compact (or more generally finite-volume) manifolds \cite{gambaudoghys, polterovich:qm, py, polterovich:qm2, brandemarci2, nitsche, brandemarci1, qm:sphere, brandemarci3}. In those cases it is generally non vanishing, even in high degree, in stark contrast with our result. Note that the inclusion $\textup{Diff}_{\vol}(\mathbb{D}^n, \partial\mathbb{D}^n) \to \diffvol(\R^n)$ induces an isomorphism in ordinary cohomology \cite{mcduff:local2}. However, this is not the case in bounded cohomology, at least for $n = 2$ and degrees $2$ and $3$ \cite{bargeghys, kimura_2023} (as far as we know, nothing is known about the bounded cohomology of $\textup{Diff}_{\vol}(\mathbb{D}^n, \partial\mathbb{D}^n)$ for $n \geq 3$ \cite[Open Problem, p. 230]{burago2008conjugation}).

\medskip

If we consider the full group of compactly supported diffeomorphisms, we can in fact prove $\Xsep$-bounded acyclicity for all portable manifolds:

\begin{cor}[Corollary \ref{cor:portable}]
\label{corintro:portable}
    Let $M$ be a portable manifold. Then, $\diffo(M)$ and $\diff(M)$ are $\Xsep$-boundedly acyclic.
\end{cor}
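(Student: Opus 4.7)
The plan is to verify the commuting cyclic conjugates condition (Definition~\ref{intro:def:comm:cycl:conj}) for $\diff(M)$ (and \emph{a fortiori} for $\diffo(M)$) and then invoke Theorem~\ref{intro:thm:ccc}. This should be the natural next step after Corollary~\ref{corintro:transformation}, which handled the special case $M = \R^n \times N$; the same dynamical mechanism underpinning Lemma~\ref{lem:ccc:dynamical} should extend to the broader portable setting.

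The key consequence of portability we plan to use is the following displacement statement: \emph{for every compact subset $K \subset M$, there exists $t = t_K \in \diffo(M)$ such that $t^p(K) \cap K = \emptyset$ for all $p \geq 1$.} Assuming this, let $H \leq \diff(M)$ be a finitely generated subgroup, with generators $h_1, \ldots, h_k$, and set $K := \bigcup_{i=1}^k \supp(h_i)$. Then $K$ is compact and contains the support of every element of $H$. Choosing $t$ as above, for any $h, h' \in H$ and any $p \geq 1$, the elements $h$ and $t^p h' t^{-p}$ have supports contained in the disjoint compacta $K$ and $t^p(K)$, hence they commute. This gives $[H, t^p H t^{-p}] = 1$ for all $p \geq 1$, which is exactly condition~(1) in Definition~\ref{intro:def:comm:cycl:conj} with $n = \infty$ (so condition~(2) is vacuous, per the convention $t^\infty = 1$). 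Since $t \in \diffo(M) \leq \diff(M)$, both $\diff(M)$ and $\diffo(M)$ satisfy commuting cyclic conjugates, and Theorem~\ref{intro:thm:ccc} concludes the proof.

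The main obstacle is producing the compactly supported displacing diffeomorphism $t$ from the definition of portable manifold. In the sense of Burago--Ivanov--Polterovich, $M$ admits a complete (but not necessarily compactly supported) vector field $X$ whose flow $\phi^s$ pushes every compact set arbitrarily far; we need to upgrade a single displacement $\phi^T(K) \cap K = \emptyset$ to an iterated displacement carried out by a compactly supported $t$. The standard trick, already implicit in the proof of Corollary~\ref{corintro:transformation}, is to multiply $X$ by a bump function equal to $1$ on a sufficiently large compact set containing $K$ together with all its iterates $\phi^{pT}(K)$ of interest; the resulting compactly supported vector field generates a $t \in \diffo(M)$ with $t^p(K) = \phi^{pT}(K)$ for every $p \geq 1$. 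Making this bump-function argument valid for \emph{all} iterates $p \geq 1$ simultaneously — rather than for finitely many at a time — is the only place where care is required, and it follows from the fact that the forward $\phi^T$-orbit of $K$ escapes to infinity along the flow lines of $X$ and so admits a smooth exhaustion by compacta on which a global bump function equal to $1$ can be built.
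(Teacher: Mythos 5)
Your overall scheme is the right one, and it matches the paper's: establish, for every compact $K \subset M$, a compactly supported $t \in \diffo(M)$ with $t^p(K) \cap K = \emptyset$ for all $p \geq 1$, deduce commuting $\Z$-conjugates (the case $n=\infty$ of Definition~\ref{intro:def:comm:cycl:conj}, exactly as packaged in Lemma~\ref{lem:ccc:dynamical}.\emph{(1)}), and conclude by Theorem~\ref{intro:thm:ccc}. However, the justification you give for the displacement statement contains a genuine gap: it rests on a misreading of portability. In the Burago--Ivanov--Polterovich definition, the complete vector field $X$ does \emph{not} push compact sets arbitrarily far; its flow is an \emph{attractor} onto the compact core $M_0$, i.e.\ for every compact $K$ there is $\tau>0$ with $X^\tau(K) \subset M_0$. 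In particular there is in general no time $T$ with $\phi^T(K)\cap K=\emptyset$, the forward iterates $\phi^{pT}(K)$ do not escape to infinity (they accumulate at the core, which may meet $K$), and they need not be pairwise disjoint. The only displacement the definition provides is a \emph{single} displacement of the core by some compactly supported $\theta$ with $\theta(M_0)\cap M_0=\emptyset$. Moreover, even if the flow did push $K$ to infinity, your cut-off would have to equal $1$ on the union of \emph{all} forward iterates, which is non-compact, so the truncated field would not be compactly supported and its time-one map would not lie in $\diffo(M)$. Thus the bump-function construction does not produce the element $t$ you need, and the step you flag as ``the only place where care is required'' is precisely where the argument breaks.

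The missing ingredient is the non-trivial upgrade from one displacement of the core to infinitely many: this is Burago--Ivanov--Polterovich's Lemma~3.1, quoted in the paper as Lemma~\ref{lem:portable:displacement}, which yields $t \in \diffo(M)$ with $t^p(M_0)\cap M_0=\emptyset$ for all $p\geq 1$. Given that lemma, the paper handles a general compact $K$ by conjugation rather than by cutting off the flow: choose $\tau$ with $X^\tau(K)\subset M_0$ and set $s \coloneqq X^{-\tau} t X^{\tau}$; then $s\in\diffo(M)$ because $\diffo(M)$ is normal in the full diffeomorphism group (this matters, since $X^\tau$ itself need not be compactly supported), and $s^p(K)\cap K \subset X^{-\tau}\big(t^p(M_0)\cap M_0\big)=\emptyset$ for all $p\geq 1$. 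With this substitution for your construction of $t$, the rest of your argument goes through and coincides with the paper's proof of Corollary~\ref{cor:portable}.
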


Examples of portable manifolds include (open) manifolds that are the product of $\R^n$ and a closed manifold $M$.\ Moreover, an (open) manifold $M$ is portable if it admits an exhausting Morse function $f$ such that $f$ has only finitely many critical points and they all have index strictly less than $\frac{1}{2}\dim(M)$.\ For instance, the latter property is satisfied by every $3$-dimensional (open) handlebody \cite{burago2008conjugation}. In fact, our result on contactomorphisms of $\R^{2k+1}$ is part of a more general result on contact portable manifolds \cite{contact:portable}; however the only other examples of contact portable manifolds seem to be contact handlebodies.

\subsection{One-dimensional actions}

\subsubsection{Piecewise linear groups of the interval, and piecewise projective groups of the line}
These groups have a rich subgroup structure: They both contain the famous Thompson's group $F$ \cite{cfp_96}, and the latter contains Monod's groups $H(A)$ where $A$ is a dense subring of $\R$ \cite{monod:pp1, monod:pp2}, as well as the Lodha--Moore group $G_0$ \cite{LodhaMoore}.
The groups $H(A)$ where $A$ is a dense subring of $\R$, and $G_0$, are examples of non-amenable groups without non-abelian free subgroups (and the Lodha--Moore group $G_0$ is a finitely presented and type $F_{\infty}$ example).
These groups have the striking property that they do not contain non-abelian free subgroups, yet do not satisfy a law~\cite{BrinSquier}.\ Their homology has been computed in various cases, and it is typically non-zero in infinitely many degrees \cite{cohoF, cohoT, greenberg, stein, brown:cohoF}.

As a consequence of our main theorem, we obtain the following corollary:

\begin{cor}[Corollaries \ref{cor:bsupp}, \ref{cor:coherent} and \ref{cor:PL}]
\label{corintro:PL}
Let $\Gamma$ be a group that lies in one of the following families: 
\begin{enumerate}
\item A group of piecewise linear homeomorphisms of the interval, or a group of piecewise projective homeomorphisms of the line;
\item A group of compactly supported homeomorphisms of the line acting without a global fixpoint;
\item A chain group of homeomorphisms of the line~\cite{chain};
\item A group that admits a coherent action on the line~\cite{coherent}.
\end{enumerate}

Then $\Gamma$ is $\Xsep$-boundedly acyclic.
\end{cor}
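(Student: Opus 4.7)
My plan is to prove each of the four items by verifying the commuting cyclic conjugates condition (Definition \ref{intro:def:comm:cycl:conj}) for $\Gamma$, possibly after passing to a coamenable subgroup and invoking Theorem \ref{thm:co-amenable}, and then applying the Main Theorem \ref{intro:thm:ccc}. Throughout, the common mechanism is the elementary dynamical fact that two homeomorphisms with disjoint supports commute: if $H \leq \Gamma$ is finitely generated with support contained in a compact set $K$ in the interior of the interval or line, and if $t \in \Gamma$ is such that $\{t^p K\}_{p \geq 1}$ is a pairwise disjoint family of subsets of the complement of $K$, then $[H, t^p H t^{-p}] = 1$ for every $p \geq 1$, giving commuting cyclic conjugates with $n = \infty$. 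The proof thus reduces to producing such a displacement element $t \in \Gamma$ in each case.

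For (1), a finitely generated subgroup of a piecewise linear group on $[0,1]$, or of a piecewise projective group on the line, has support compactly contained in the interior. A translation-like element of $\Gamma$—say a PL homeomorphism without interior fixed points, or a non-trivial projective dilation—can be raised to a sufficiently high power $t^N$ so that the iterated images of $K$ under $t^N$ are pairwise disjoint; in degenerate cases (e.g.\ $\Gamma$ abelian) commuting cyclic conjugates are trivially satisfied with $t = 1$. If such an element does not lie in $\Gamma$ itself, the standard approach in the PL/PP literature is to pass to a natural coamenable overgroup containing Thompson's $F$. Case (2) runs in parallel: the absence of a global fixpoint lets one build, out of compactly supported elements of $\Gamma$, an element whose iterates displace any prescribed compact set off itself—this is precisely where the no-fixpoint hypothesis enters, since it prevents $\Gamma$ from being contained in a single stabilizer that would block such a displacement.

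Cases (3) and (4) are expected to follow directly from the defining structures. In a chain group \cite{chain}, elements supported far along the defining chain of overlapping supports displace any fixed initial compact set by construction; for coherent actions on the line \cite{coherent}, the self-similar coherence of the action directly supplies the required translation-like element. The main anticipated obstacle is case (2), whose hypotheses are the leanest of the four: the construction of $t \in \Gamma$—rather than merely in the ambient homeomorphism group $\homeo_c(\R)$—must be carried out carefully from compactly supported generators of $\Gamma$ without recourse to any self-similar algebraic structure, so some care is needed to rule out pathological situations. Once the displacement element is produced in every case, combining Theorem \ref{intro:thm:ccc} with Theorem \ref{thm:co-amenable} closes the argument.
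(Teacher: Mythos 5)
Your proposal correctly identifies the high-level strategy — verify commuting cyclic conjugates via a dynamical displacement argument (Lemma \ref{lem:ccc:dynamical}), possibly after passing through co-amenability (Theorem \ref{thm:co-amenable}), then apply Theorem \ref{intro:thm:ccc} — and your treatment of item (2) is essentially the argument the paper gives (Corollary \ref{cor:bsupp}): for $K\subset[a,b]$ the no-global-fixpoint hypothesis supplies $t\in\Gamma$ with $t(a)>b$, and orientation-preservation (automatic for compactly supported homeomorphisms of $\R$) then forces $t^p(a)>t^{p-1}(b)$, so the $t^p(K)$ are pairwise disjoint and $\Gamma$ has commuting $\Z$-conjugates. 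However, there is a genuine gap in your treatment of item (1). You suggest passing to ``a natural coamenable \emph{overgroup} containing Thompson's $F$'' when $\Gamma$ itself has no translation-like element; this has the coamenability implication backwards. Theorem \ref{thm:co-amenable} says: if $\Lambda\leq\Gamma$ is a co-amenable \emph{subgroup} and $\Lambda$ is $\Xsep$-boundedly acyclic, then so is $\Gamma$. Placing $\Gamma$ as a coamenable subgroup of a larger group $\Lambda$ that is boundedly acyclic gives you only injectivity of $H^n_b(\Lambda;E)\to H^n_b(\Gamma;E)$, which is vacuous once $H^n_b(\Lambda;E)=0$ and yields nothing about $\Gamma$.

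What the paper actually does in Corollary \ref{cor:PL} is to invoke the structural theorem of \cite{fflodha} (their Theorem 1.3), which, for any orientation-preserving group $\Gamma$ of PL homeomorphisms of the interval (or PP homeomorphisms of the line), produces a co-amenable \emph{subgroup} $\Gamma_0\leq\Gamma$ with the following displacement structure: any finitely generated $H\leq\Gamma_0$ is supported on finitely many pairwise disjoint open intervals $J_1,\ldots,J_k$, each contained in a pairwise disjoint family of $\Gamma_0$-invariant intervals $I_i\supset J_i$, and there is $t\in\Gamma_0$ moving each $J_i$ off itself. Orientation-preservation and $\Gamma_0$-invariance of the $I_i$ then force $t^p(J_i)$ to stay in $I_i$ and remain disjoint from $J_i$ for all $p\geq 1$, so $\Gamma_0$ has commuting $\Z$-conjugates. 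This orbit/interval decomposition is essential; there is no reason an arbitrary PL subgroup should contain a global dilation, and your ``raise a translation-like element to a high power'' heuristic has no justification in this generality. The paper also handles orientation-reversing elements by passing to the index-two orientation-preserving subgroup $\Gamma_+\leq\Gamma$, which is co-amenable and to which the above applies; your proposal omits this step. Finally, for items (3) and (4) you correctly anticipate that the same displacement structure is available, but as in item (1) this rests on the analysis in \cite{fflodha} and \cite{chain,coherent}, not on an ad hoc translation-like element; the paper treats these uniformly by expressing the groups as amenable extensions of groups with commuting $\Z$-conjugates, which has the bonus (Corollary \ref{cor:PL:modulus}) that the vanishing moduli are uniformly bounded over the whole class.
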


This corollary recovers not only the $\Xsep$-bounded acyclicity of Thompson's group $F$~\cite{monod:thompson}, but also generalizes the result to all subgroups thereof.\ The subgroup structure of Thompson's group $F$ is wild \cite{subgroups1, subgroups2, subgroups3}, and so very few general statements are known to be valid for all subgroups. Among these are the Brin--Squier Theorem stating that they have no free subgroups~\cite{BrinSquier}; the result of Calegari stating that they have vanishing stable commutator length~\cite{scl_pl}; and that they have vanishing second bounded cohomology~\cite{fflodha}. Each result is a strengthening of the previous one, and Corollary~\ref{corintro:PL} extends all the aforementioned results.

The same is true in the case of piecewise projective groups. In particular, we can prove that the group $G_0$~\cite{LodhaMoore} has the following surprising property:

\begin{cor}[Corollary \ref{cor:alternative}]
\label{corintro:alternative}
There exists a group of type $F_\infty$ that is non-amenable, but all of whose subgroups are $\Xsep$-boundedly acyclic.
\end{cor}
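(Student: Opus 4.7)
The plan is to take $\Gamma = G_0$, the Lodha--Moore group of piecewise projective homeomorphisms of the real line~\cite{LodhaMoore}, and check the three requirements separately. The whole argument is an immediate extraction from Corollary~\ref{corintro:PL}, once one notices that the class of groups treated there is closed under taking subgroups.

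First I would recall from~\cite{LodhaMoore} the two non-trivial algebraic properties of $G_0$: it is of type $F_{\infty}$ and it is non-amenable (in fact, it is one of the earliest finitely presented non-amenable examples without non-abelian free subgroups). These give the first two requirements of the statement.

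The remaining requirement is the hereditary one, and here Corollary~\ref{corintro:PL} does all of the work. Since $G_0$ is by definition a subgroup of the group of piecewise projective homeomorphisms of the line, any subgroup $H \leq G_0$ is itself a group of piecewise projective homeomorphisms of the line, by restricting the ambient action. Applying Corollary~\ref{corintro:PL}(1) to $H$ directly yields that $H$ is $\Xsep$-boundedly acyclic, and since $H$ was arbitrary, this concludes the proof.

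There is no genuine obstacle at this stage: all the difficulty has been absorbed into Corollary~\ref{corintro:PL}, whose strength lies in being phrased as a property of a dynamical class rather than of a single group, so that its conclusion is automatically hereditary. The only conceptual subtlety worth highlighting is precisely this contrast with the usual behaviour of bounded cohomology, where even vanishing with trivial real coefficients is not inherited by arbitrary subgroups; here, however, the criterion of commuting cyclic conjugates applies uniformly to every subgroup of $G_0$, which is what makes the combination with the Lodha--Moore construction produce such a strong conclusion.
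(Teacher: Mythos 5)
Your proposal is correct and is essentially the paper's own argument: the paper likewise takes $G_0$, observes that every subgroup is itself a group of piecewise projective homeomorphisms of the line, and applies Corollary~\ref{cor:PL} to get $\Xsep$-bounded acyclicity of all subgroups, with non-amenability and finiteness properties of $G_0$ quoted from the literature. The only nitpick is attribution: the type $F_\infty$ property of $G_0$ comes from~\cite{LodhaFinf}, whereas~\cite{LodhaMoore} provides finite presentability and non-amenability.
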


Calegari proposed a homological version of the von Neumann Day problem~\cite{scl_pl}, stating that if $\Gamma$ is a (finitely presented, torsion-free) group such that every subgroup has vanishing stable commutator length, then $\Gamma$ is amenable. This has been recently proved to fail, even for vanishing of second bounded cohomology \cite{fflodha}. Corollary \ref{cor:alternative} shows that this fails also when including higher degrees, and non-trivial coefficients.

\subsubsection{Highly transitive actions}
\label{intro:highlytrans}

Bounded acyclicity results can also be used as a basis for more complicated computations. Indeed, if a group admits a highly transitive action on a set with (uniformly) boundedly acyclic stabilizers, this can lead to full computations of bounded cohomology rings. This approach was carried out with success in various contexts \cite{fflm2, monodnariman, fflodha, konstantin}, and here we can push these methods further. First, we generalize the computation of the bounded cohomology of Thompson's group $T$ to many more groups of homeomorphisms of the circle:

\begin{cor}[Corollary \ref{cor:circleaction}]
\label{corintro:circleaction}

Let $\Gamma$ be a group of orientation-preserving piecewise linear homeomorphisms of the circle, or piecewise projective homeomorphisms of the projective line. Suppose that there exists an infinite orbit $S$ of $\Gamma$ such that, for all $n \geq 1$, $\Gamma$ acts transitively on positively oriented $n$-tuples in $S$. Then there exists an isomorphism
\[H^\bullet_b(\Gamma; \R) \cong \R[x],\]
where the degree of $x$ is $2$, and $x$ corresponds to the Euler class of the defining action on the circle of $\Gamma$.
\end{cor}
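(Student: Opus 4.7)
The plan is to reduce the computation to the standard framework for computing bounded cohomology of groups with a highly transitive action on a circularly ordered set, as developed in~\cite{fflm2, monodnariman, fflodha, konstantin}. The key new input required is the bounded acyclicity of stabilizers of positively oriented tuples, which will come from Corollary~\ref{corintro:PL}.

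First I would verify the stabilizer condition. Let $s_0, \ldots, s_k \in S$ be distinct points in positive cyclic order and let $H \leq \Gamma$ be their pointwise stabilizer. Cutting the circle (respectively the projective line) open at $s_0$ yields an interval (respectively a line) on which $H$ acts by orientation-preserving piecewise linear (respectively piecewise projective) homeomorphisms, so $H$ falls under Corollary~\ref{corintro:PL}(1). In particular $H$ is $\Xsep$-boundedly acyclic, hence trivially boundedly acyclic with coefficients in $\R$.

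Next I would run the semi-normed resolution argument. Let $X_n \subseteq S^{n+1}$ be the set of $(n+1)$-tuples of distinct points in positive cyclic order, with the diagonal $\Gamma$-action and the standard simplicial face/degeneracy maps; by the transitivity hypothesis $\Gamma$ acts transitively on each $X_n$, with stabilizers as in the previous paragraph. Following the argument used in~\cite{fflm2, monodnariman, fflodha} for Thompson's group $T$ and other highly transitive groups of orientation-preserving circle homeomorphisms, one obtains a semi-normed resolution whose $\Gamma$-invariant complex computes $H^\bullet_b(\Gamma; \R)$, and the resulting combinatorics of circularly ordered tuples are identical to those of the standard computation for $\mathrm{Homeo}^+(S^1)$. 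This identifies $H^\bullet_b(\Gamma; \R)$ with $\R[x]$ as a graded vector space, with $x$ concentrated in degree~$2$.

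Finally I would identify $x$ with the Euler class. In the invariant complex, $x$ is represented by the orientation cocycle on ordered triples in $S$, which is the pull-back of the standard orientation cocycle on $S^1$ along the defining action of $\Gamma$; this is exactly the bounded Euler class. The polynomial ring structure then follows by matching the simplicial cup product with the bounded cup product. I expect the main technical point to be this last identification of the multiplicative structure, since the graded-vector-space isomorphism is immediate from the resolution but the ring structure requires explicit simplicial bookkeeping, as in the prior work on Thompson's~$T$.
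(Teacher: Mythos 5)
Your proposal is correct and follows essentially the same route as the paper: use Corollary~\ref{cor:PL} (via cutting the circle open at a fixed point) to show that stabilizers of positively oriented tuples are boundedly acyclic, then plug this into the existing highly transitive action machinery to get the resolution by invariant cochains on circularly ordered tuples and the identification with $\R[x]$ via the bounded Euler class. The paper is terser — it simply cites \cite[Proposition 6.9]{fflm2}, which packages the semi-normed resolution argument, the computation of the invariant complex, and the cup-product identification that you spell out by hand — but the substance and the key new ingredient (applying Corollary~\ref{cor:PL} to drop the hypothesis on stabilizers) are the same.
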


This applies to Thompson's group $T$, but also to Stein--Thompson groups of the form $T_{2, n_2, \ldots, n_k}$ \cite{stein}, to the golden ratio Thompson's group $T_\tau$ \cite{cleary, ttau}, and to the piecewise projective group $S$~\cite{lodhaS}. See Example \ref{ex:circleaction} for more details.

In this work we push this method further to encompass also group actions on the line, and obtain the following:

\begin{cor}[Corollary \ref{cor:lineaction}]
\label{corintro:lineaction}

Let $\Gamma$ be a group of orientation-preserving homeomorphisms of the line, and let $S$ be an infinite orbit of $\Gamma$. Suppose that for all $n \geq 1$, the action of $\Gamma$ on linearly ordered $n$-tuples in $S$ is transitive. Suppose moreover that, for $s \in S$, the stabilizer $\Gamma_s$ admits a co-amenable subgroup $\Gamma_s^0$, such that every finitely generated subgroup of $\Gamma_s^0$ is isomorphic to a group of piecewise linear homeomorphisms of the interval. Then $\Gamma$ is boundedly acyclic.
\end{cor}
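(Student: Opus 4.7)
The plan is to argue in two steps. First, I show that the point stabilizer $\Gamma_s$ is boundedly acyclic; second, I use the high transitivity of the action on $S$ to promote this to bounded acyclicity of $\Gamma$, following the template of the proof of Corollary~\ref{corintro:circleaction}. The key simplification compared to the circle case is that ordered tuples on $\R$ carry trivial orientation combinatorics, so no class analogous to the Euler class survives and the whole positive-degree bounded cohomology vanishes rather than collapsing onto a polynomial ring in the Euler class.

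For the first step, $\Gamma_s^0$ is the directed union of its finitely generated subgroups, and each such subgroup is, by hypothesis, isomorphic to a group of piecewise linear homeomorphisms of the interval; hence it is $\Xsep$-boundedly acyclic by Corollary~\ref{corintro:PL}. Bounded acyclicity with trivial real coefficients is preserved under directed unions of subgroups (a standard fact, following from the compatibility of $H^\bullet_b(\args;\R)$ with directed colimits along injective maps on the group side), so $\Gamma_s^0$ itself is boundedly acyclic. Co-amenability of $\Gamma_s^0$ in $\Gamma_s$ combined with Theorem~\ref{thm:co-amenable} then yields bounded acyclicity of $\Gamma_s$.

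For the second step, I would resolve $\R$ as a trivial $\Gamma$-module by the cosimplicial system $[n]\mapsto\ell^\infty(S_<^{n+1};\R)$ of bounded functions on ordered $(n+1)$-tuples in $S$. The hypothesis gives $\Gamma$-transitivity on each $S_<^{n+1}$, and the stabilizer of an ordered tuple $(s_0,\dots,s_n)$ is a finite intersection of point stabilizers. For each such stabilizer, the candidate co-amenable subgroup $\Gamma_{s_0,\dots,s_n}\cap\Gamma_{s_0}^0$ has all its finitely generated subgroups piecewise linear on an interval (being subgroups of $\Gamma_{s_0}^0$), so the same argument as in Step~1 should deliver bounded acyclicity; the resulting spectral sequence then collapses to give vanishing of $H^n_b(\Gamma;\R)$ in all positive degrees.

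The main obstacle will be verifying that $\Gamma_{s_0,\dots,s_n}\cap\Gamma_{s_0}^0$ remains co-amenable in $\Gamma_{s_0,\dots,s_n}$, which is the only non-formal ingredient required to complete the iteration for higher stabilizers. I expect this to follow by combining co-amenability of $\Gamma_{s_0}^0$ in $\Gamma_{s_0}$ with the high transitivity of $\Gamma$ applied to tuples extending $(s_0,\dots,s_n)$, but it needs to be set up carefully. Once this is in place, the rest of the argument is the by-now-standard resolution-and-spectral-sequence machinery that underlies Corollary~\ref{corintro:circleaction}.
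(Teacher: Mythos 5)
Your overall architecture matches the paper's: reduce to bounded acyclicity of stabilizers of linearly ordered tuples, then exploit high transitivity through the resolution by bounded functions on tuples of $S$. For that second step the paper (Proposition \ref{prop:lineaction:general}) needs no spectral sequence: once stabilizers are boundedly acyclic, the complex of invariant alternating cochains $\ell^\infty_{alt}(S^{\bullet+1},\R)^\Gamma$ computes $H^\bullet_b(\Gamma;\R)$, and by transitivity each term is one-dimensional with differentials alternating between $0$ and the identity, whence vanishing in all positive degrees; this short computation is the precise content behind your remark that ``no Euler class survives'' and should be written out rather than asserted. The genuine gap is in your Step 1: the claim that bounded acyclicity with trivial real coefficients passes to directed unions of subgroups as a ``standard fact'', via compatibility of $H^\bullet_b$ with directed colimits, is not true as stated. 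Bounded cohomology does not commute with directed colimits of groups, and the paper explicitly records that it is unknown whether a direct union of boundedly acyclic groups is boundedly acyclic. This corollary is exactly where the quantitative machinery is indispensable: Corollary \ref{cor:PL:modulus} provides, for each degree $n$, a single constant $K_n$ bounding the vanishing modulus uniformly over all groups of piecewise linear homeomorphisms of the interval (and all separable dual coefficients), and Proposition \ref{prop:dirun} converts this uniform bound into vanishing for the directed union, giving that $\Gamma_s^0$ (indeed every subgroup of $\Gamma_s^0$) is $\Xsep$-boundedly acyclic; only then does Theorem \ref{thm:co-amenable} yield bounded acyclicity of $\Gamma_s$. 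Without the uniform vanishing modulus input, your Step 1 does not go through and cannot be repaired by a formal colimit argument.

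Concerning the obstacle you flag for higher stabilizers: your suspicion is warranted, since co-amenability does not pass to subgroups in general, so co-amenability of $\Gamma_{s_0,\dots,s_n}\cap\Gamma_{s_0}^0$ in the tuple stabilizer is neither formal nor produced by transitivity. The paper does not argue tuple by tuple in this way: it observes that stabilizers of tuples are subgroups of a single point stabilizer $\Gamma_s$ (all point stabilizers of points of $S$ being conjugate) and aims directly at bounded acyclicity of the relevant subgroups of $\Gamma_s$; the part of its argument that applies to arbitrary subgroups is again the uniform-modulus one (any subgroup of $\Gamma_s^0$ is a directed union of finitely generated piecewise linear groups with uniformly bounded moduli, hence boundedly acyclic via Proposition \ref{prop:dirun}), while Theorem \ref{thm:co-amenable} is invoked only for the pair $\Gamma_s^0\le\Gamma_s$. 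So while the point you raise does deserve care, the decisive ingredient missing from your write-up is the uniform vanishing modulus, which is the technical heart of the paper's approach to all such directed-union arguments.
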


This corollary applies to several constructions of finitely generated simple left orderable groups:\ the groups $G_\rho$ of the third author and Hyde \cite{grho}, the groups $T(\varphi)$ of Matte Bon and Triestino \cite{tphi}, and the groups $T(\varphi, \sigma)$ of Le Boudec and Matte Bon \cite{tphisigma}.

In his 2018 ICM address, Navas asked whether there exist finitely generated left orderable groups which do not surject onto $\Z$, and with vanishing second bounded cohomology with trivial coefficients \cite[Question 8]{navas}. The groups $G_\rho$ were proved to have this property \cite{fflodha}, and now it is possible to strengthen this to include vanishing in higher degrees, thus giving a strong positive answer to Navas's question.

\medskip

Since the first version of this paper, this approach has been used successfully in more contexts, always using Theorem \ref{intro:thm:ccc} as a starting point: see \cite{WWZZ, ffmn, zhou}.

\subsubsection{Interval exchange transformations}

Another group of interest arising from one-dimensional dynamics is the group of interval exchange transformations $\iet([0, 1))$.\ This is defined as the group of right-continuous bijections of $[0, 1)$ given by cutting the interval into finitely many pieces and rearranging by translations.\ The group, as well as many generalizations, has been widely studied \cite{iet:free, extensive, iet:solvable, iet:up, RET}.\ The two main open problems about it are whether it is amenable (known as de Cornulier's conjecture \cite{iet:cornulier}), or contains non-abelian free subgroups (known as Katok's conjecture \cite{iet:free}).\ Therefore it is natural to wonder whether $\iet([0, 1))$ has vanishing bounded cohomology (Question \ref{q:iet}):\ This can be seen as an analogue to Grigorchuk's question about bounded acyclicity of Thompson's group $F$ \cite{grigorchuk}, which was answered by Monod \cite{monod:thompson}.

We consider instead the group of interval exchange transformations of the half line interval $[0, \infty)$, which is defined the same way:\ In particular we allow only finitely many intervals to be exchanged.\ Note that this group is amenable, respectively contains non-abelian free subgroups, if and only if $\iet([0, 1))$ has the same property (Lemma \ref{lem:iet:dirun}).

\begin{cor}[Corollary \ref{cor:iet}]
    \label{corintro:iet}

    The group $\iet([0, \infty))$ is $\Xsep$-boundedly acyclic, and so are all of its derived subgroups.
\end{cor}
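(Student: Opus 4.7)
The plan is to verify the commuting cyclic conjugates condition for $\iet([0, \infty))$ and, simultaneously, for all of its derived subgroups, and then to invoke Theorem \ref{intro:thm:ccc}. The preliminary observation is that an element of $\iet([0, \infty))$ is specified by a finite partition of $[0, \infty)$ together with translations on each piece; since the partition is finite while the half-line is unbounded, at least one piece must be an unbounded ray $[M, \infty)$, on which the translation is forced to vanish. Hence every element of $\iet([0, \infty))$ fixes some ray pointwise, and in particular every finitely generated subgroup $H \leq \iet([0, \infty))$ fixes $[N, \infty)$ pointwise for some common $N > 0$.

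Given such $H$ and $N$, I would take $t \in \iet([0, \infty))$ to be the IET that cyclically permutes the three consecutive blocks $[0, N)$, $[N, 2N)$, $[2N, 3N)$ by translations and acts as the identity on $[3N, \infty)$. Then $t$ has order $3$, and for $p \in \{1, 2\}$ the conjugate $t^p H t^{-p}$ is supported in $[pN, (p+1)N)$, which is disjoint from $[0, N)$. Hence $[H, t^p H t^{-p}] = 1$ for $p = 1, 2$ and $[H, t^3] = [H, 1] = 1$, verifying commuting cyclic conjugates for $\iet([0, \infty))$ with $n = 3$.

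The key remaining point is that this same $t$ lies in every derived subgroup $\iet([0, \infty))^{(k)}$, so that the argument above goes through verbatim for each of them. To see this, for any $N > 0$ I would consider the subgroup $\Sigma_N \leq \iet([0, \infty))$ of IETs that permute the blocks $\{[jN, (j+1)N) : j \in \N\}$ by translations, moving only finitely many of them; as an abstract group, $\Sigma_N$ is the finitary symmetric group on a countable set, and its commutator subgroup $A_N \cong A_\infty$ is non-abelian and simple, hence perfect. A straightforward induction on $k$ then shows that $A_N \leq \iet([0, \infty))^{(k)}$ for every $k \geq 0$; since $t$ corresponds to a $3$-cycle and is therefore an even permutation, $t \in A_N$ belongs to every derived subgroup. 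Applying Theorem \ref{intro:thm:ccc} to each $\iet([0, \infty))^{(k)}$ gives the conclusion. The one nontrivial step I anticipate is precisely this perfect-subgroup argument securing $t$ in all derived subgroups; the other steps are direct from the definition of $\iet([0, \infty))$.
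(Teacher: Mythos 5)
Your proof is correct. The argument establishing commuting cyclic conjugates for $\iet([0,\infty))$ itself is essentially the paper's, up to a cosmetic choice: the paper takes $t$ to be a swap of two blocks (so $n=2$ in Definition~\ref{intro:def:comm:cycl:conj}), whereas you take a $3$-cycle of blocks (so $n=3$); and your preliminary observation that a finitely generated subgroup fixes a ray pointwise is exactly the content of Lemma~\ref{lem:iet:dirun}. Where you genuinely diverge is in handling the derived subgroups. The paper invokes Lemma~\ref{lem:derived}, a general structural fact that commuting cyclic conjugates always pass to derived subgroups, proved abstractly by replacing the displacing element $t$ by a suitable commutator $c=[t,s]$. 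Your route is more concrete and tailored to the specific group: you observe that the finitary alternating group $A_N$ permuting the blocks $\{[jN,(j+1)N)\}_{j\geq 0}$ is a perfect subgroup of $\iet([0,\infty))$, hence contained in every derived subgroup, and that your $t$, being a $3$-cycle of blocks and therefore even, already lies in $A_N$. This explains and justifies your choice of $n=3$ rather than $n=2$: a transposition of blocks would be odd and would miss $A_N$, so the paper's displacing element could not be used directly for your perfect-subgroup argument. Both approaches are valid: the paper's Lemma~\ref{lem:derived} is a black box applicable to any group with commuting cyclic conjugates, while yours exploits the abundant permutational symmetry of $\iet([0,\infty))$ and has the mild advantage of using the same displacing element $t$ uniformly for the group and for all its derived subgroups.
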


\subsection{Algebraically closed and generic groups}

Our next two applications of Theorem~\ref{intro:thm:ccc} come from combinatorial group theory. A group $\Gamma$ is said to be \emph{algebraically closed} if every finite system of equations with constants in $\Gamma$, which has a solution in some overgroup of $\Gamma$, has a solution in $\Gamma$. This class of groups was introduced by Scott \cite{algclosed:scott}, and it turned out to coincide with the class of \emph{existentially closed groups}, where one includes also inequations in the definition. Baumslag--Dyer--Heller proved that algebraically closed groups are mitotic \cite{BDH}, from which it follows that they are acyclic \cite{BDH} and boundedly acyclic \cite{Loeh:dim}. We extend this to include non-trivial coefficients:

\begin{cor}[Corollary \ref{cor:algclosed}]
\label{corintro:algclosed}
    Algebraically closed groups are $\Xsep$-boundedly acyclic.
\end{cor}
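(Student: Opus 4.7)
The plan is to verify the commuting cyclic conjugates condition directly and then invoke Theorem~\ref{intro:thm:ccc}. Crucially, the definition allows for finite $n$, so we can take $n = 2$ and work with only finitely many equations, which meshes perfectly with the finite nature of algebraic closedness.

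Let $\Gamma$ be algebraically closed, and let $H = \langle h_1, \dots, h_k\rangle \leq \Gamma$ be finitely generated. Consider the following finite system of equations in a single unknown $t$ with constants in $H$:
\begin{align*}
[h_i, t h_j t^{-1}] &= 1 \quad \text{for all } 1 \leq i,j \leq k, \\
[h_i, t^2] &= 1 \quad \text{for all } 1 \leq i \leq k.
\end{align*}
If this system has a solution in some overgroup of $\Gamma$, then by algebraic closedness it has a solution $t \in \Gamma$, and this $t$ together with $n = 2$ will witness the commuting cyclic conjugates condition for $H$, since the first family of equations says $[H, tHt^{-1}] = 1$ while the second says $[H, t^2] = 1$.

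To produce the required overgroup, form the wreath product $G = H \wr (\Z/2\Z) = (H \times H) \rtimes \Z/2\Z$, where the non-trivial element $s$ swaps the two factors. Since $H$ embeds diagonally as the first factor $H \times \{1\}$, we may form the amalgamated free product $\widetilde{\Gamma} = \Gamma \ast_H G$, which contains $\Gamma$ as a subgroup. In $\widetilde\Gamma$ the element $s$ satisfies $s^2 = 1$ (so in particular $[h_i, s^2] = 1$) and $s h_j s^{-1}$ lies in the second factor $\{1\} \times H$, which commutes elementwise with the first factor, so $[h_i, s h_j s^{-1}] = 1$. Hence $t = s$ solves the system in the overgroup $\widetilde\Gamma$.

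Thus algebraic closedness produces the desired $t \in \Gamma$, so every finitely generated subgroup $H \leq \Gamma$ admits such a $t$ with $n = 2$, showing that $\Gamma$ has commuting cyclic conjugates. Theorem~\ref{intro:thm:ccc} then yields $\Xsep$-bounded acyclicity. The only potential obstacle is that the commuting cyclic conjugates condition appears to require an infinite family of commutations when $n = \infty$, but this is circumvented entirely by exploiting the finite case $n = 2$, which matches the finiteness restriction in the definition of algebraic closedness; the amalgamated product construction is standard and requires no delicate verification.
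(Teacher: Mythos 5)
Your proof is correct and follows essentially the same route as the paper: verify commuting cyclic conjugates with $n=2$ by writing down the finite system $[h_i,{}^t h_j]=1$, $[h_i,t^2]=1$, show it is consistent by realizing it in a wreath product with $\Z/2$, and apply Theorem~\ref{intro:thm:ccc}. The only (cosmetic) difference is that the paper takes the overgroup to be $\Gamma \wr \Z/2$ directly, whereas you pass through the slightly more elaborate amalgam $\Gamma \ast_H (H \wr \Z/2)$, which works just as well.
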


Perhaps the most surprising application of Theorem~\ref{intro:thm:ccc} is the following. The \emph{space of enumerated groups} has been recently introduced~\cite{GKL}; it is a Polish space that parametrizes all countable groups with a fixed enumeration. The structure of Polish space allows to use the language of descriptive set theory to talk about properties of groups: In this sense, a property is called \emph{generic} if the subspace of groups that enjoy it is comeager in the sense of Baire category. We show that the generic enumerated group has commuting cyclic conjugates deducing the following:

\begin{cor}[Corollary \ref{cor:generic}]
\label{corintro:generic}
The generic enumerated group is $\Xsep$-boundedly acyclic.
\end{cor}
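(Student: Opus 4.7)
The plan is to combine Theorem \ref{intro:thm:ccc} with a Baire category argument. By that theorem it suffices to show that having commuting cyclic conjugates is a generic property in the Polish space $\mathcal{E}$ of enumerated groups from \cite{GKL}. I would express this property as a countable intersection: for each $k \geq 1$ and each tuple $F = (a_1, \ldots, a_k) \in \N^k$, let $U_F \subseteq \mathcal{E}$ be the set of those $\Gamma$ for which there exist $t \in \Gamma$ and a \emph{finite} integer $n \geq 2$ realizing conditions (1)--(2) of Definition \ref{intro:def:comm:cycl:conj} with $H = \langle a_1, \ldots, a_k \rangle_\Gamma$. Then $\bigcap_{k, F} U_F$ is contained in the set of enumerated groups with commuting cyclic conjugates, and since the index set is countable, it suffices to show that each $U_F$ is comeager. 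In fact I expect each $U_F$ to be open and dense, so that the intersection is a dense $G_\delta$.

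Openness is straightforward: for fixed $t \in \N$ and fixed finite $n \geq 2$, the requirements $[H, t^p H t^{-p}] = 1$ for $1 \leq p < n$ and $[H, t^n] = 1$ reduce to finitely many equalities between words of bounded length in $\{a_1, \ldots, a_k, t\}$, which depend on only finitely many entries of the multiplication table and hence cut out an open set; $U_F$ is then the countable union of these as $(t, n)$ varies. For density, fix a basic open set $V \subseteq \mathcal{E}$ determined by a partial multiplication on a finite subset $S \subseteq \N$, which I may enlarge so that $F \subseteq S$. By the extension properties of $\mathcal{E}$ established in \cite{GKL}, $V$ contains an enumeration of any countable group whose multiplication extends the specified data, so I would pick such a group $G$ and form the wreath product $\widetilde{G} := G \wr \Z/k\Z = G^k \rtimes \Z/k\Z$ for some $k \geq 2$. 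Inside $\widetilde{G}$, the subgroup $H = \langle F \rangle$ sits in the base copy $G_0$; the cyclic generator $t$ satisfies $t^p H t^{-p} \subseteq G_p$, which centralizes $G_0$ for $1 \leq p < k$ because distinct base factors commute, while $t^k = 1$ gives $[H, t^k] = 1$. Thus $(t, k)$ is a witness, and any enumeration of $\widetilde{G}$ extending the prescribed finite data yields a point of $V \cap U_F$.

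The main obstacle is the density step, which requires exploiting the Polish framework of \cite{GKL} to re-enumerate $\widetilde{G}$ so that the finite multiplication data defining $V$ is preserved while the new element $t$ acquires an index in $\N$; this is however a standard feature of that construction. Granting density, Baire's theorem shows that $\bigcap_{k, F} U_F$ is comeager, so the generic enumerated group has commuting cyclic conjugates, and Theorem \ref{intro:thm:ccc} then delivers $\Xsep$-bounded acyclicity.
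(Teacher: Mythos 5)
Your argument is correct, but it takes a genuinely different route from the paper. The paper does not work with open dense sets directly: it first checks that having commuting cyclic conjugates is an invariant, Baire measurable property (a countable conjunction over finite tuples of countable disjunctions of quantifier-free formulas), and then invokes the model-theoretic forcing machinery of the enumerated-groups framework (Theorem \ref{Theorem:modelforcing}, i.e.\ \cite[Theorem 5.2.3]{GKL}), which says that such a property is comeager if and only if it is \emph{enforceable}; the actual work is a strategy for player two, whose legality at each stage is witnessed by passing from the current group $\Gamma$ to the overgroup $(\Gamma_0\oplus\Gamma_1)\rtimes\Z$ in which the generator $t$ of $\Z$ swaps the two copies and $t^2$ centralizes each of them. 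Your proposal bypasses the game entirely: you strengthen the condition to require a finite $n$, observe (correctly) that for fixed $t$ and finite $n$ the condition reduces to finitely many word equations in the generators of $H$ and $t$, so that each $U_F$ is open, and you prove density by embedding a group from the given basic open set into $G\wr\Z/k$ and re-enumerating while keeping the finitely many constrained labels (and the labels in $F$) inside the base copy --- the same algebraic witness as the paper's, packaged as an open-dense/Baire argument rather than as a legal move in the forcing game. What your version buys is self-containedness: no appeal to the enforceability theorem and no separate verification of Baire measurability. What the paper's version buys is that the forcing framework transfers verbatim to the subspaces $\mathcal{G}_P$ of enumerated groups with a fixed property $P$ (left orderable, sofic, torsion-free, etc.), where the only thing left to check is that $P$ survives the semidirect-product construction; with your direct approach one would have to redo the re-enumeration/density step inside each such subspace, which amounts to the same closure check but is not pre-packaged. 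Two small points of care in your write-up: the basic open sets of $\mathcal{G}$ constrain finitely many values of the inversion function and the identity coordinate as well as the multiplication, so the re-enumeration should preserve those too (it does, since your embedding is a homomorphism); and the claim that the group relations cut out an open set needs the locally-constant-evaluation argument (the finitely many table entries consulted depend on the group, but any group agreeing on them gives the same evaluation), which is the honest content behind your phrase ``depend on only finitely many entries of the multiplication table.''
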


This theorem is in stark contrast with Gromov's model of random presentations of groups \cite[Section 9.B]{gromov:asymptotic}, where depending on the density the random group is either of order at most $2$ or hyperbolic \cite{random:hyperbolic1, random:hyperbolic2}, and in the latter case it has large second \cite{epstein:fujiwara} and third bounded cohomology in a large class of coefficients \cite{FPS}.\ We may interpret this discrepancy as meaning that, while bounded cohomology vanishing is rare among finitely presented groups, it becomes the norm among countable groups.

\subsection{Methods}

The technical tool underlying these results is the \emph{vanishing modulus}, a numerical invariant, introduced independently by different teams~\cite{monodnariman, fflm2}, that attaches a quantity to a vanishing bounded cohomology group. In particular, the bounded cohomology vanishes in degree $n$ if and only if its $n$-th vanishing modulus is finite~\cite{fflm2}. The importance of the vanishing modulus is encoded in the following: A direct union of boundedly acyclic groups stays boundedly acyclic if the vanishing moduli of all the groups are uniformly bounded~\cite[Proposition 4.13]{fflm2}. Here we generalize this result to $\Xsep$-bounded acyclicity. Therefore the technical core of this paper consists in a further study of the vanishing modulus, and in particular in a proof that the vanishing modulus of the class of permutational wreath products $\Gamma \wr_X A$ is uniformly bounded. 
To this end, we first give a complete proof of the $\Xsep$-bounded acyclicity of permutational wreath products, with the additional assumption that the action $A \actson X$ has infinite orbits~\cite[Section 4.2]{monod:thompson}.
Then, we have to show that those groups are not only $\Xsep$-boundedly acyclic (whence with finite vanishing modulus), but that their vanishing modulus is uniformly bounded independently of $\Gamma, X, A$ or even the coefficient module $E$.
The last step, which is purely algebraic, consists in expressing groups with commuting cyclic conjugates as a directed union of quotients of groups of the form $\Gamma \wr_X A$ by amenable normal subgroups.
This requires a careful inductive construction, since $A$ needs to be infinite for the previous results to hold, but the obvious wreath products arising from the definition of commuting cyclic conjugates may well have finite acting group.

\subsection*{Plan of the paper} In Section \ref{sec:preliminaries} we go through the basics of bounded cohomology, the framework of highly ergodic Zimmer-amenable actions, and discuss the main technical tool of this paper: The vanishing modulus and its application to directed unions. In Section \ref{sec:wp} we prove that certain permutational wreath products are $\Xsep$-boundedly acyclic, and estimate their vanishing modulus. In Section \ref{sec:ccc:bac} we prove Theorem \ref{intro:thm:ccc}. In Section \ref{sec:ex} we go through many examples, proving all corollaries from this introduction. This motivates some questions about possible generalizations of our $\Xsep$-bounded acyclicity results: See Questions \ref{q:braid}, \ref{q:cylinder}, \ref{q:largegroups} and \ref{q:iet}. In Section \ref{s:rigidity} we briefly discuss some implications of $\Xsep$-bounded acyclicity.

\subsection*{Acknowledgements} The authors are indebted to Javier Aramayona, Jon Berrick, Benjamin Bode, Jonathan Bowden, Luigi Caputi, Carlo Collari, Pierre de la Harpe, Rodrigo De Pool, Dominik Francoeur, Camille Horbez, Thorben Kastenholz, Nicol{\'a}s Matte Bon, Nicolas Monod, Sam Nariman, Pierre Py, Oscar Randal-Williams, George Raptis, Sven Sandfeldt, Arpan Saha, Lauro Silini, Marco Varisco, Xiaolei Wu and Matt Zaremsky for useful conversations.
They also thank the anonymous referees for their careful reading and helpful comments.

CC acknowledges support by the European Union in the form of a Mar\'ia Zambrano grant.\ FFF was supported by the Herchel Smith Postdoctoral Fellowship Fund.\ YL was partially supported by the NSF CAREER award 2240136.\ MM was partially supported by the INdAM--GNSAGA Project CUP E55F22000270001 and by the ERC ``Definable Algebraic Topology" DAT - Grant Agreement n. 101077154.

CC, FFF and YL thank also the Instituto de Ciencias Matem\'aticas (ICMAT) for its hospitality during the Thematic program on geometric group theory and low-dimensional geometry and topology (May-July 2023), where part of this research was conducted.\ MM also thanks the Forschungsinstitut f\"ur Mathematik (ETH Z\"urich) for its hospitality, where part of this research was conducted.

This work has been supported by the Madrid Government (Comunidad de Madrid - Spain) under the multiannual Agreement with UAM in the line for the Excellence of the University Research Staff in the context of the V PRICIT (Regional Programme of Research and Technological Innovation). This work has been funded by the European Union - NextGenerationEU under the National Recovery and Resilience Plan (PNRR) - Mission 4 Education and research - Component 2 From research to business - Investment 1.1 Notice Prin 2022 -  DD N. 104 del 2/2/2022, from title ``Geometry and topology of manifolds", proposal code 2022NMPLT8 - CUP J53D23003820001.

\section{Preliminaries}\label{sec:preliminaries}

\subsection{Bounded cohomology with dual coefficients}\label{subsec:bddcohom:def}

Every group in this article is assumed to be \emph{discrete}. In this section we recall the basic definitions of \emph{bounded cohomology} of groups~\cite{Frigerio:book, monod}.

Let $\Gamma$ be a group. We say that $E$ is a \emph{Banach} $\Gamma$-\emph{module} if $E$ is a complete normed $\R$-module admitting a $\Gamma$-action by linear isometries. We refer to $E$ as a \emph{trivial} module when the $\Gamma$-action is the trivial action.
In this paper the Banach $\Gamma$-module $\R$ will be always assumed to be trivial, unless specified otherwise.

Given a group $\Gamma$ and a Banach $\Gamma$-module $E$, we can then define the cochain complex of bounded functions:
\[
C_b^\bullet(\Gamma; E) \coloneqq \Big\{f \colon \Gamma^{\bullet + 1} \to E \, \mid \, \| f \|_\infty \coloneqq \sup_{\gamma_0, \ldots, \gamma_\bullet} \|f(\gamma_0, \ldots, \gamma_\bullet)\|_E< +\infty\Big\}
\]
endowed with the simplicial coboundary operator
\[
\delta^\bullet \colon C_b^\bullet(\Gamma; E) \to C_b^{\bullet+1}(\Gamma; E)
\]
$$\delta^\bullet f(g_0,\ldots,g_{\bullet+1})=\sum_{0\leq i\leq \bullet+1} (-1)^i f(g_0,\ldots,\hat{g_i},\ldots,g_{\bullet+1})$$
that sends bounded functions to bounded functions (we will typically simply denote it by $\delta$ when the degree is clear from the context). By construction $\Gamma$ acts on $C_b^\bullet(\Gamma; E)$ as follows: For every $f \in C_b^\bullet(\Gamma; E)$,
\begin{equation}\label{eq:action:on:functions}
(\gamma \cdot f)(\gamma_0, \ldots, \gamma_\bullet) = \gamma f (\gamma^{-1} \gamma_0, \ldots, \gamma^{-1}\gamma_\bullet)
\end{equation}
for all $\gamma, \gamma_0, \ldots, \gamma_\bullet \in \Gamma$.
We denote by $C_b^\bullet(\Gamma; E)^\Gamma$ the subspace of $\Gamma$-\emph{invariant} bounded functions. It is immediate to check that the coboundary operator is $\Gamma$-equivariant, so that $(C_b^\bullet(\Gamma; E)^\Gamma, \delta^\bullet)$ is also a cochain complex.
We denote by $Z^\bullet_b(\Gamma; E)$ its cocycles and by $B^\bullet_b(\Gamma; E)$ its coboundaries.

\begin{defi}[Bounded cohomology]
    We define the \emph{bounded cohomology of} $\Gamma$ \emph{with coefficients} $E$ as the cohomology of the cochain complex $(C_b^\bullet(\Gamma; E)^\Gamma, \delta^\bullet)$:
    \[
    H_b^\bullet(\Gamma; E) \coloneqq H^\bullet(C_b^\bullet(\Gamma; E)^\Gamma, \delta^\bullet) = Z^\bullet_b(\Gamma; E)/B^\bullet_b(\Gamma; E).
    \]
\end{defi}

\begin{rem}[Comparison map]
    Since the cochain complex of $\Gamma$-invariant bounded functions sits inside the cochain complex of $\Gamma$-invariant (possibly unbounded) functions, there exists a canonical map, called \emph{comparison map},
    \[
\comp^{\bullet} \colon H_b^\bullet(\Gamma; E) \to H^\bullet(\Gamma; E)
    \]
    induced by the inclusion.
\end{rem}

Following Monod's notation~\cite[Sec.~11.1]{monod}, given a group $\Gamma$ we denote by $\X$ a class of Banach $\Gamma$-modules. We are interested in the following classes: 

\begin{enumerate}
    \item $\Xdual$ denotes the class of all dual Banach $\Gamma$-modules.\ We recall that a Banach $\Gamma$-module lies in $\Xdual$ if it is isometrically $\Gamma$-isomorphic to the dual of some Banach $\Gamma$-module;
    \item $\Xsep$ denotes the class of all \emph{separable} dual Banach $\Gamma$-modules;    
   \item $\Xr$ denotes the class consisting only of the Banach $\Gamma$-module $\R$ endowed with the trivial $\Gamma$-action.
\end{enumerate}

By definition we have the following inclusions:
\begin{equation}\tag{*}\label{eq:coeff}
\Xr \subset \Xsep \subset \Xdual.
\end{equation}

\begin{defi}[$\X$-boundedly acyclic groups]
    Let $\Gamma$ be a group, let $\X$ be a class of Banach modules and let $n \geq 1$ be an integer or $n = \infty$. We say that $\Gamma$ is $\X$-\emph{boundedly} $n$-\emph{acyclic} if $H^k_b(\Gamma; E) = 0$
    for every $1 \leq k \leq n$ and every Banach $\Gamma$-module $E$ in $\X$. The group $\Gamma$ is said to be $\X$-\emph{boundedly acyclic} if it is $\X$-boundedly $\infty$-acyclic.
    $\Xr$-boundedly acyclic groups are usually simply called  \emph{boundedly acyclic}.
\end{defi}

The chain of inclusions \eqref{eq:coeff} shows that the classes of $\X$-boundedly acyclic (BAc) groups satisfy the following 
\[
\Xdual\mbox{-}\textup{BAc}  \subseteq \Xsep\mbox{-}\textup{BAc} \subseteq \Xr\mbox{-}\textup{BAc}.
\]
\begin{rem}[Strict inclusions]
As proved by Monod~\cite{monod:thompson}, there are $\Xsep$-boundedly acyclic groups that are not amenable whence not $\Xdual$-boundedly acyclic, e.g. $F_2 \wr \Z$, where $F_2$ denotes the non-abelian free group of rank two.
We do not know of an $\Xr$-boundedly acyclic group that is not $\Xsep$-boundedly acyclic.

However, there exists a group which is $\Xr$-boundedly $2$-acyclic, but is not $\Xsep$-boundedly $2$-acyclic, namely $\Gamma \coloneqq \overline{T} \rtimes \mathbb{Z}/2$, where $\overline{T}$ is the lift of Thompson's group to the real line, and the order $2$ automorphism is given by the change of orientation. Indeed, it follows by a combination of two results \cite[Example 6.12]{autinv} and \cite[Example 12.4.3]{monod} that $H^2_b(\Gamma; \R) = 0$. However, by Eckmann--Shapiro induction~\cite[Proposition 10.1.3]{monod} there exists an isomorphism $H^2_b(\Gamma; \ell^\infty(\Gamma / \overline{T}, \R)) \cong H^2_b(\overline{T}; \R) \neq 0$ (via the rotation quasimorphism~\cite{calegari_scl}); but $\ell^\infty(\Gamma / \overline{T}, \R)$ is $2$-dimensional, thus dual (reflexive) and separable.
\end{rem}

Here is a list of examples of $\X$-boundedly acyclic groups:

\begin{example}[$\X$-boundedly acyclic groups]
We have the following:
\begin{enumerate}
    \item All amenable groups are $\Xdual$-boundedly acyclic~\cite{Johnson};

     \item Thompson's group~$F$ is $\Xsep$-boundedly acyclic~\cite{monod:thompson};
    
    \item Let $\Lambda$ be a group. All wreath products 
    \[
    \Lambda \wr \Z = \big(\bigoplus_{\Z} \Lambda) \rtimes \Z
    \]
    are $\Xsep$-boundedly acyclic~\cite{monod:thompson};

    \item Every countable group embeds into a $\Xr$-boundedly acyclic group, and there exists a finitely presented $\Xr$-boundedly acyclic group that contains all finitely presented groups \cite{fflm1};

    \item All binate groups \cite{berrick} are $\Xr$-boundedly acyclic~\cite{Matsu-Mor, Loeh:dim, fflm2};

    \item The groups of homeomorphisms and diffeomorphisms of compact support of manifolds of the form $\R^n \times M$, with $M$ closed  and $n\geq 1$, are $\Xr$-boundedly acyclic \cite{monodnariman}.
    \end{enumerate}
\end{example}

We end this section with Gromov's foundational Mapping Theorem, that will be useful in some arguments along the text.
\begin{thm}[Gromov's Mapping Theorem~{\cite{vbc}, \cite[Theorem~4.24]{Frigerio:book}}]
\label{thm:mapping}
Let $n \geq 1$ be an integer.
Let $\Gamma$ be a group and let $N \leq \Gamma$ be an amenable normal subgroup. Let $\pi \colon \Gamma \to \Lambda \coloneqq \Gamma / N$ be the quotient, and let $E$ be a dual Banach $\Lambda$-module. Then the pullback $H^n_b(\Lambda; E) \to H^n_b(\Gamma; \pi^\ast E)$ is an isometric isomorphism in all degrees.

In particular, given a class $\X \subseteq \Xdual$ of Banach $\Lambda$-modules,
if $\Gamma$ is $\X$-boundedly $n$-acyclic, then $\Lambda$ is also $\X$-boundedly $n$-acyclic.
\end{thm}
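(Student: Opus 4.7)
The plan is to prove this classical result, attributed to Gromov (with the extension to dual coefficients in the spirit of Monod), by an explicit averaging construction over the amenable normal subgroup $N$; the averaging is possible because $E$ is a dual module. Throughout I will work at the cochain level in order to obtain the \emph{isometric} part of the conclusion, not just isomorphism.

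First I would set up the pullback at the cochain level: for $\phi \in C_b^n(\Lambda; E)$, put $\pi^*\phi(g_0, \ldots, g_n) \coloneqq \phi(\pi(g_0), \ldots, \pi(g_n))$. This is clearly norm non-increasing (in fact preserves the sup-norm) and sends $\Lambda$-invariant cochains to $\Gamma$-invariant cochains, inducing $\pi^* \colon H_b^n(\Lambda; E) \to H_b^n(\Gamma; \pi^*E)$. To obtain an isometric isomorphism it suffices to produce a norm-non-increasing chain map $\mathcal{M}$ back, together with chain homotopies implementing $\mathcal{M} \circ \pi^* = \id$ and $\pi^* \circ \mathcal{M} \simeq \id$ on the $\Gamma$-invariant complex.

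Since $N$ is amenable, fix a left-invariant mean $m \in \ell^\infty(N)^*$. Because $E = F^*$ is dual, any bounded $E$-valued function on $N$ admits a weak-$*$ integral against $m$, characterized by $\langle \int \psi\, dm,\, v\rangle = m(n \mapsto \langle \psi(n), v\rangle)$ for all $v \in F$. For $f \in C_b^n(\Gamma; \pi^*E)^\Gamma$ I would define
\begin{equation*}
\mathcal{M}(f)(\lambda_0, \ldots, \lambda_n) \coloneqq \int_N \cdots \int_N f(g_0 h_0, \ldots, g_n h_n)\, dm(h_0) \cdots dm(h_n),
\end{equation*}
where $g_i$ is any lift of $\lambda_i$. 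I would verify in order: (a) \emph{well-definedness}, since replacing $g_i$ by $g_i n$ amounts to a left translation of $h_i$, absorbed by the invariance of $m$ and by the fact that $N$ acts trivially on the output $\pi^*E$ (this is where duality of $E$ interacts crucially with the pullback structure); (b) \emph{$\Lambda$-invariance} of $\mathcal{M}(f)$, from $\Gamma$-invariance of $f$; (c) the bound $\|\mathcal{M}(f)\|_\infty \leq \|f\|_\infty$; (d) commutation with $\delta$, since the coboundary is a finite linear combination. The identity $\mathcal{M} \circ \pi^* = \id$ is immediate because $\pi^*\phi$ is constant along each $N$-coset in every argument and $m$ is normalized.

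The main step, and the main obstacle, is producing an explicit chain homotopy $\pi^* \circ \mathcal{M} \simeq \id$ on $C_b^\bullet(\Gamma; \pi^*E)^\Gamma$. I would introduce the partial averagings
\begin{equation*}
\mathcal{M}_k(f)(g_0, \ldots, g_n) \coloneqq \int_N \cdots \int_N f(g_0 h_0, \ldots, g_{k-1} h_{k-1}, g_k, \ldots, g_n)\, dm(h_0)\cdots dm(h_{k-1}),
\end{equation*}
so that $\mathcal{M}_0 = \id$ and $\mathcal{M}_{n+1} = \pi^* \circ \mathcal{M}$, and assemble them by the standard simplicial formula $h^{n-1}(f) = \sum_k (-1)^k T_k(f)$, where $T_k$ is a suitable partial-averaging operator inserting $g_k$ at two consecutive positions. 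This is a routine but delicate verification; the technicalities are invariance under the $\Gamma$-action and the interplay between left-invariance of $m$ and the normality of $N$. Together, the chain maps $\pi^*$ and $\mathcal{M}$ give the isometric isomorphism. The second assertion follows formally: for $E \in \Xdual$, writing $E = F^*$ as a $\Lambda$-module gives $\pi^*E = (\pi^*F)^*$ as a $\Gamma$-module, so $\pi^*E \in \Xdual$; similarly $\pi^*$ preserves separability, so it preserves $\Xsep$ and $\Xr$. Hence if $\Gamma$ is $\X$-boundedly $n$-acyclic for $\X \subseteq \Xdual$, the isomorphism forces $H^k_b(\Lambda; E) = 0$ for all $E \in \X$ and $1 \leq k \leq n$.
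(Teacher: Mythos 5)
Your argument is correct, but it is not the route the paper takes: the paper does not prove Theorem \ref{thm:mapping} at all, citing Gromov and Frigerio's book, where the standard proofs run through the homological-algebra machinery of strong, relatively injective resolutions (Ivanov--Monod style): one shows that $\ell^\infty(\Lambda^{\bullet+1};E)$, viewed as a $\Gamma$-resolution via $\pi$, is strong and relatively injective because $N$ is amenable, and then invokes the fundamental theorem of bounded cohomology, with the isometry coming from the comparison of resolutions (this more structural route is also what yields the inflation refinement quoted in Remark \ref{rem:mapping:converse}). Your proof is instead a direct cochain-level averaging: the backward map $\mathcal{M}$ by weak-$*$ integration against an invariant mean on $N$ (well defined and equivariant precisely because $E$ is a dual module, so the action is adjoint to one on the predual and commutes with the weak-$*$ integral), the exact identity $\mathcal{M}\circ\pi^*=\id$, and the prism-type homotopy $h^n=\sum_k(-1)^kT_k$ with $T_k$ averaging the first $k+1$ slots and repeating $g_k$. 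The formula you sketch does work: the verification telescopes exactly as in the classical simplicial-homotopy computation (I checked degrees $1$ and $2$; the pattern is the general one), and since both $\pi^*$ and $\mathcal{M}$ are norm non-increasing and mutually inverse on cohomology, the isometry is immediate. What your approach buys is an elementary, self-contained proof with explicit norm control; what the cited approach buys is flexibility (it generalizes to inflation with non-trivial $N$-action on $E$ and to amenable actions) without any explicit homotopy bookkeeping. Two small points you should make explicit when writing it up: fix once and for all the order of the iterated means (means do not satisfy a Fubini theorem, so the same order must be used in $\mathcal{M}$, in the $T_k$, and throughout the telescoping), and note that well-definedness of $\mathcal{M}$ under change of lifts only uses left-invariance of $m$ (the remark about $N$ acting trivially on $\pi^*E$ is not what is doing the work there), while normality of $N$ enters only in making $\Lambda$ a group and $\pi^*E$ a $\Gamma$-module.
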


\begin{rem}[Gromov's Mapping Theorem via inflation]
\label{rem:mapping:converse}
Gromov's Mapping Theorem can also be stated in the more general case of \emph{inflation}, namely also the induced map $H^n_b(\Lambda; E^N)\to H^n_b(\Gamma; E)$ is an isometric isomorphism in all degrees and for all dual Banach $\Gamma$-modules $E$~\cite[Theorem in the introduction]{moraschini_raptis_2023}. Here $E^N$ denotes the subspace of the $N$-fixed points of $E$.\ Thus, we also get a converse to the last conclusion: 
if $\Lambda$ is $\X$-boundedly $n$-acyclic for some family $\X \subseteq \Xdual$ of Banach $\Gamma$-modules, then $\Gamma$ is also $\X$-boundedly $n$-acyclic.
\end{rem}

Recall that a subgroup $H$ of a group $G$ is \emph{co-amenable} in $G$ if $\ell^{\infty}(G/H)$ admits a $G$-invariant mean.

\begin{thm}[co-amenability and bounded cohomology]{\cite[Proposition~3]{Monod:Popa}}\label{thm:co-amenable}
Let $n \geq 1$ be an integer.
Let $\Gamma$ be a group, let $\Lambda \leq \Gamma$ be a co-amenable subgroup, and let $E$ be a dual Banach $\Gamma$-module. Then the restriction $H^n_b(\Gamma; E) \to H^n_b(\Lambda; E)$ is injective in all degrees.

In particular, given a class $\X \subseteq \Xdual$ of Banach $\Gamma$-modules, if $\Lambda$ is $\X$-boundedly $n$-acyclic, then $\Gamma$ is also $\X$-boundedly $n$-acyclic.
\end{thm}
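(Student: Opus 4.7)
The plan is to reduce to a splitting of Banach $\Gamma$-modules via Eckmann--Shapiro induction. Recall from~\cite[Proposition~10.1.3]{monod} that for every dual Banach $\Lambda$-module $M$ there is a natural isometric isomorphism
\[
H^n_b(\Lambda; M) \;\cong\; H^n_b(\Gamma;\, \ell^\infty(\Gamma/\Lambda, M)),
\]
where $\ell^\infty(\Gamma/\Lambda, M)$ carries the $\Gamma$-action $(g\cdot f)(x\Lambda) = g\cdot f(g^{-1}x\Lambda)$. Applying this to $M = E$ (restricted to $\Lambda$), one checks that the restriction map $H^n_b(\Gamma; E) \to H^n_b(\Lambda; E)$ is induced by the $\Gamma$-equivariant inclusion of constants $\iota\colon E \hookrightarrow \ell^\infty(\Gamma/\Lambda, E)$. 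The first task is therefore to produce a $\Gamma$-equivariant bounded linear retraction $\pi$ of $\iota$; this will imply that the induced map on $H^n_b(\Gamma; -)$ is split injective, proving the theorem.

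To build $\pi$, I would invoke the defining property of co-amenability: a $\Gamma$-invariant mean $m \in \ell^\infty(\Gamma/\Lambda)^\ast$ with $m(\mathbbm{1})=1$. Since $E$ is a dual Banach $\Gamma$-module, write $E = F^\ast$ with $F$ some Banach space on which $\Gamma$ acts. For $f \in \ell^\infty(\Gamma/\Lambda, E)$ define
\[
\pi(f) \in E, \qquad \langle \pi(f), v\rangle \;=\; m\bigl(x\Lambda \longmapsto \langle f(x\Lambda), v\rangle\bigr) \qquad (v \in F).
\]
The right-hand side is a bounded linear functional on $F$ of norm $\leq \|f\|_\infty$ (so a Pettis-type integral against $m$ in the weak-$\ast$ topology), so $\pi$ is a well-defined contraction. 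It is $\Gamma$-equivariant: for $g \in \Gamma$, the $\Gamma$-invariance of $m$ gives
\[
\langle \pi(g\cdot f), v\rangle \;=\; m\bigl(x\Lambda \mapsto \langle g f(g^{-1}x\Lambda), v\rangle\bigr) \;=\; m\bigl(y\Lambda \mapsto \langle f(y\Lambda), g^{-1}v\rangle\bigr) \;=\; \langle g\cdot \pi(f), v\rangle.
\]
Finally $\pi \circ \iota = \mathrm{id}_E$ because $m$ has total mass one: for a constant function $f \equiv e$, $\langle \pi(f),v\rangle = m(\mathbbm{1})\langle e,v\rangle = \langle e,v\rangle$.

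With this retraction in hand, the induced maps on bounded cohomology satisfy $\pi_\ast \circ \iota_\ast = \mathrm{id}$ on $H^n_b(\Gamma; E)$; since $\iota_\ast$ agrees (under Eckmann--Shapiro) with restriction to $\Lambda$, we obtain the injectivity of $H^n_b(\Gamma; E) \to H^n_b(\Lambda; E)$ in every degree $n \geq 1$. The \emph{In particular} clause is then immediate: if $H^n_b(\Lambda; E) = 0$ for every $E$ in a class $\X \subseteq \Xdual$ of $\Gamma$-modules, the same vanishing transfers to $\Gamma$.

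The only genuine subtlety I foresee is the correct interpretation of integration against the mean in a dual Banach module; this is where duality of $E$ is essential, and using the weak-$\ast$ definition as above bypasses any measurability concerns. Everything else (Eckmann--Shapiro, identification of the restriction with $\iota_\ast$, and functoriality of bounded cohomology in the coefficient module) is standard and can be invoked from~\cite{monod}.
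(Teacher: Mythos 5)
Your argument is correct, and it is essentially the proof of the result the paper is quoting: the paper itself gives no proof, citing \cite[Proposition~3]{Monod:Popa}, whose argument is exactly this combination of Eckmann--Shapiro induction with the weak-$\ast$ integration of the invariant mean to produce a norm-one $\Gamma$-equivariant retraction $\ell^\infty(\Gamma/\Lambda, E) \to E$ of the inclusion of constants. All the steps you flag as standard (the untwisting $\ell^\infty(\Gamma, E)^\Lambda \cong \ell^\infty(\Gamma/\Lambda, E)$ for restricted coefficients, the identification of restriction with $\iota_\ast$, and the contractive, equivariant, unital properties of $\pi$) do check out in the discrete setting, so there is no gap.
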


\subsection{Highly ergodic Zimmer-amenable actions}\label{subsec:HEZA}

We recall in this section how to compute bounded cohomology using highly ergodic Zimmer-amenable actions. Recall that a Borel space is \emph{standard} if it is Borel isomorphic to the Borel space associated to a Polish space. Given a non-singular (i.e.\ measure class preserving) action of a group $\Gamma$ on a standard Borel measure space $S$, and a Banach $\Gamma$-module $E$, there is an induced action of $\Gamma$ on $L^\infty(S, E)$, defined in the same way as the action of $\Gamma$ on $C^n_b(\Gamma; E)$ (see Equation~\eqref{eq:action:on:functions}).

\begin{defi}[Zimmer-amenable space]
\label{def:zimmeramenable}
Let $\Gamma$ be a group.
Let $S$ be a standard Borel probability space, equipped with a non-singular action of $\Gamma$. A \emph{conditional expectation} 
\[
m \colon L^\infty(\Gamma \times S, \R) \to L^\infty(S, \R)
\]
is a norm one linear map such that:
\begin{enumerate}
\item $m(\mathbbm{1}_{\Gamma \times S}) = \mathbbm{1}_S$;
\item For all $f \in L^\infty(\Gamma \times S, \R)$ and every measurable subset $A \subset S$, it holds $m(f \cdot \mathbbm{1}_{\Gamma \times A}) = m(f) \cdot \mathbbm{1}_A$.
\end{enumerate}
We say that $S$ is a \emph{Zimmer-amenable} $\Gamma$-space, if there exists a conditional expectation $m \colon L^\infty(\Gamma \times S) \to L^\infty(S)$ that is moreover $\Gamma$-equivariant (on $\Gamma\times S$ we consider the diagonal $\Gamma$-action).
\end{defi}

The following proposition provides useful examples of Zimmer-amenable spaces that will be of crucial use for us~\cite[Section~2.3, Corollary 8 and Proposition 9]{monod:thompson}.

\begin{prop}[Zimmer-amenable spaces]\label{prop:zimmer:ame:spaces}
We have the following:
    \begin{enumerate}
        \item Let $\Gamma$ be a countable group and let $S$ be the standard Borel probability $\Gamma$-space obtained by taking $S = \Gamma$ and by endowing it with a distribution of full support. Then, the $\Gamma$-action on $S$ via left multiplication turns $S$ into a Zimmer-amenable $\Gamma$-space. Moreover, $S$ is a Zimmer-amenable $\Lambda$-space, for every subgroup $\Lambda < \Gamma$.
        \item Let $\{\Gamma_i\}_{i \in \N}$ be a countable collection of countable groups and let $\{S_i\}_{i \in \N}$ be a countable collection of Zimmer-amenable $\Gamma_i$-spaces. Then, the direct product $\prod_{i \in \N}S_i$ is a Zimmer-amenable $\bigoplus_{i \in \N} \Gamma_i$-space.
        \item Let $\Gamma$ be a countable group and let $\Lambda < \Gamma$ be a co-amenable subgroup. Let $S$ be a standard Borel probability space equipped with a non-singular action of $\Gamma$. If $S$ is a Zimmer-amenable $\Lambda$-space, then it is also a Zimmer-amenable $\Gamma$-space.
    \end{enumerate}
\end{prop}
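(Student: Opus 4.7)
For part (1), I would exploit the fact that the bijection $\Phi \colon \Gamma \times \Gamma \to \Gamma \times \Gamma$, $\Phi(g, s) = (s^{-1}g, s)$, intertwines the diagonal left-multiplication action $h \cdot (g, s) = (hg, hs)$ with the action that is trivial on the first factor and left multiplication on the second: the new coordinate $a = s^{-1}g$ satisfies $(hs)^{-1}(hg) = s^{-1}g = a$. Pulling back along $\Phi$ yields a $\Gamma$-equivariant identification $L^\infty(\Gamma \times \Gamma) \cong \ell^\infty(\Gamma, L^\infty(\Gamma))$ on which the $\Gamma$-action is concentrated in the second coordinate. Composing with any state on $\ell^\infty(\Gamma)$ (for instance, the Dirac functional at the identity) then yields a conditional expectation, automatically $\Gamma$-equivariant. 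Since the construction uses only the group multiplication, the same argument restricted to any subgroup $\Lambda < \Gamma$ produces a $\Lambda$-equivariant conditional expectation with no change.

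For part (2), I plan to exhaust the direct sum by finite subsums and pass to a weak-$\ast$ accumulation point. For each finite $F \subset \N$, the conditional expectation $m_F$ for $\bigoplus_{i \in F} \Gamma_i \actson \prod_i S_i$ is built by iteratively applying the given $m_i$'s; the order is immaterial because distinct $\Gamma_i$'s act on disjoint factors. This gives a net $\{m_F\}_F$ in the closed unit ball of bounded operators $L^\infty(\bigoplus_i \Gamma_i \times \prod_i S_i) \to L^\infty(\prod_i S_i)$, which is weak-$\ast$ compact by Banach--Alaoglu. Any accumulation point $m$ is again a conditional expectation (the two defining properties pass to weak-$\ast$ limits because they are affine), and $\bigoplus_i \Gamma_i$-equivariance passes to the limit as well: every $g \in \bigoplus_i \Gamma_i$ has finite support, so it lies in $\bigoplus_{i \in F} \Gamma_i$ for every sufficiently large $F$, where it acts by $m_F$-equivariance.

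For part (3), I would average a $\Lambda$-equivariant conditional expectation over the cosets of $\Lambda$ using the $\Gamma$-invariant mean from co-amenability. Fixing a set of coset representatives $\{\sigma_c\}_{c \in \Gamma/\Lambda}$ yields a $\Lambda$-equivariant identification $L^\infty(\Gamma \times S) \cong \ell^\infty(\Gamma/\Lambda, L^\infty(\Lambda \times S))$; applying the given $m$ pointwise in $c$ produces a map $\widehat{m} \colon L^\infty(\Gamma \times S) \to \ell^\infty(\Gamma/\Lambda, L^\infty(S))$. Picking a $\Gamma$-invariant mean $\nu \in \ell^\infty(\Gamma/\Lambda)^\ast$, the composition $m'(f)(s) = \nu_c\bigl(\widehat{m}(f)(c, s)\bigr)$ is the desired conditional expectation.

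The main technical point, and the most delicate step of the proof, is to promote the $\Lambda$-equivariance of $\widehat{m}$ to full $\Gamma$-equivariance of $m'$. The plan is to observe that the action of $g \in \Gamma$ permutes the cosets via $c \mapsto gc$, so the $g$-action on $\widehat{m}$ only twists by the induced $\Gamma$-action on $\ell^\infty(\Gamma/\Lambda)$, which is absorbed by invariance of $\nu$. Making this precise requires careful bookkeeping of how the coset representatives $\sigma_c$ interact with the $\Gamma$-action---one must split $g\sigma_c = \sigma_{gc}\lambda(g, c)$ with $\lambda(g, c) \in \Lambda$ and verify that the $\Lambda$-cocycle appearing in the computation is exactly compensated by the already-established $\Lambda$-equivariance of $m$.
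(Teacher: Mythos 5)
First, note that the paper itself does not prove this proposition: it is quoted from Monod \cite{monod:thompson}, so your argument is necessarily independent. Parts (1) and (3) of your plan are viable, but each needs a correction. In (1), the full-group case works (after your change of variables the action on the first coordinate is trivial, and evaluation at the identity gives $m(f)(s)=f(s,s)$, which is legitimate because the full-support assumption makes every null set empty). The ``Moreover'' clause, however, is not obtained ``with no change'': for a subgroup $\Lambda<\Gamma$ the transported group variable $a=s^{-1}l$ ranges over the coset $s^{-1}\Lambda$, which contains the identity only when $s\in\Lambda$, so the Dirac-at-$e$ recipe violates $m(\mathbbm{1}_{\Lambda\times S})=\mathbbm{1}_S$. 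The fix is to choose a $\Lambda$-equivariant selection: write $\Gamma=\bigsqcup_{t}\Lambda t$, decompose $s=\lambda(s)t(s)$, and set $m(f)(s)=f(\lambda(s),s)$. In (3), your strategy (average the $\Lambda$-expectation over $\Gamma/\Lambda$ against the invariant mean, applied via duality in the \emph{discrete} coset variable, which is unproblematic) is sound, and the cocycle cancellation you anticipate does occur --- but only if the ``pointwise application of $m$'' is conjugated in the $S$-variable as well: take $E_c=\sigma_c\circ m\circ\mathrm{res}_{\Lambda\times S}\circ\sigma_c^{-1}$, so that $E_{gc}(g\cdot f)=g\cdot E_c(f)$ because $\sigma_{gc}^{-1}g\sigma_c\in\Lambda$ is absorbed by the $\Lambda$-equivariance of $m$; then $\langle m'(f),\psi\rangle:=\nu_c\langle E_c f,\psi\rangle$ is the desired expectation. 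With the identification as you literally wrote it (the $s$-variable untouched in each block), the leftover twist involves translating $s$ by a general element of $\Gamma$, which $m$ has no equivariance to absorb.

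The genuine gap is in part (2), at the finite stage. The given $m_i$ is an abstract norm-one projection which is in general \emph{not} weak-$\ast$ continuous (it typically arises as a cluster point of averaging operators), so there is no canonical ``slice'' extension $m_i\otimes\mathrm{id}$ over the remaining continuous factors: for $f\in L^\infty(\Gamma_i\times S_i\times Z)$ with $Z=\prod_{j\neq i}S_j$, the assignment $z\mapsto m_i(f(\cdot,\cdot,z))$ need not even be weak-$\ast$ measurable, so ``iteratively applying the given $m_i$'s'' is not a defined operation. (Discrete auxiliary variables cause no such problem --- which is why your mean-averaging in (3) and your final limit in (2) are fine --- but here the parameters are the other probability spaces.) This fiberwise extension is precisely the content of the finite-product permanence of Zimmer-amenability, and justifying it requires upgrading Definition \ref{def:zimmeramenable} to one of its equivalent forms (a measurable equivariant field of means on $\Gamma_i$ indexed by $S_i$, relative injectivity, or amenability of the action groupoid); this is nontrivial and is presumably why the paper cites \cite{monod:thompson} instead of proving it. By contrast, your passage from finite sub-sums to $\bigoplus_i\Gamma_i$ --- restricting to $\bigoplus_{i\in F}\Gamma_i$, taking a pointwise weak-$\ast$ cluster point by Banach--Alaoglu, and using that every element of the direct sum has finite support while the defining properties and equivariance pass to such limits --- is correct once the finite case is in place.
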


Zimmer-amenable spaces play a fundamental role in the theory of bounded cohomology:

\begin{thm}[bounded cohomology and Zimmer-amenable spaces~{\cite[Theorem 7.5.3]{monod}}]
\label{thm:bc:via:zimmeramenable}

Let $\Gamma$ be a countable group, let $E \in \Xsep$ be a Banach $\Gamma$-module and let $S$ be a Zimmer-amenable $\Gamma$-space. Then the complex
\[0 \to L^\infty(S, E)^{\Gamma} \to L^\infty(S^2, E)^{\Gamma} \to L^\infty(S^3, E)^{\Gamma} \to \cdots,\]
endowed with the simplicial coboundary operator, isometrically computes the bounded cohomology of $\Gamma$ with coefficients in $E$.
\end{thm}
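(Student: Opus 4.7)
The statement is a citation to Monod's book, so the plan is to reconstruct the standard proof using the theory of strong relatively injective resolutions of Banach $\Gamma$-modules, specialised to separable dual coefficients and Zimmer-amenable spaces.

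My plan is to apply the general principle that, for a dual Banach $\Gamma$-module $E$, the bounded cohomology $H^\bullet_b(\Gamma;E)$ is canonically and isometrically isomorphic to the cohomology of the $\Gamma$-invariants of any strong relatively injective resolution of $E$ by dual Banach $\Gamma$-modules (this is Monod's fundamental homological algebra framework). Thus the whole task reduces to showing that the augmented complex
\[
0 \to E \to L^\infty(S,E) \to L^\infty(S^2,E) \to L^\infty(S^3,E) \to \cdots,
\]
with the simplicial coboundary and $E$ sitting in degree $-1$ via the constants, is (i) a well-defined complex of dual Banach $\Gamma$-modules, (ii) a \emph{strong} resolution of $E$, and (iii) relatively injective in every degree.

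For (i), since $E \in \Xsep$ is a separable dual, write $E = F^*$ with $F$ separable; then $L^\infty(S^{n+1},E)$ is isometrically identified with the dual of the Bochner space $L^1(S^{n+1},F)$, hence is a dual Banach $\Gamma$-module, and the coboundary is weak-$*$ continuous of norm at most $n+2$. Separability of $F$ ensures the standard measurability issues of Bochner integration are harmless. For (ii), one must exhibit a contracting homotopy of norm at most $1$ on the augmented complex as a complex of Banach spaces (not $\Gamma$-modules); this is done by integrating against a probability measure of full support on $S$ to contract in the last variable, a construction that does not need amenability. For (iii), one must show that for every admissible injection $A \hookrightarrow B$ of Banach $\Gamma$-modules and every $\Gamma$-morphism $A \to L^\infty(S^{n+1},E)$, there is a norm-preserving $\Gamma$-equivariant extension $B \to L^\infty(S^{n+1},E)$; this is where Zimmer-amenability enters decisively, via the $\Gamma$-equivariant conditional expectation $m\colon L^\infty(\Gamma\times S)\to L^\infty(S)$, which allows one to average a not-necessarily-equivariant extension over $\Gamma$-orbits to produce an equivariant one of the same norm.

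I expect the main obstacle to be (iii), the relative injectivity step, because it is precisely there that the amenability hypothesis must be leveraged and it is there that one must keep track of norms carefully to obtain the \emph{isometric} (not merely topological) isomorphism claimed in the statement. Concretely, the conditional expectation needs to be promoted from scalar-valued functions on $\Gamma\times S$ to functions valued in the dual Banach module $E$, and then used on the pointwise bounded map $\Gamma \ni g \mapsto g\cdot \tilde{\varphi}(g^{-1}\cdot b) \in L^\infty(S^{n+1},E)$, where $\tilde{\varphi}\colon B\to L^\infty(S^{n+1},E)$ is any (non-equivariant) Hahn--Banach extension of the given $\Gamma$-morphism $A\to L^\infty(S^{n+1},E)$. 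The weak-$*$ measurability and duality identification $L^\infty(S^{n+1},E)\cong L^1(S^{n+1},F)^*$ ensure that this averaging makes sense, and the norm-one property of $m$ together with the isometric action of $\Gamma$ gives the required norm control. Once (i)--(iii) are in place, the theorem follows by invoking the general uniqueness-up-to-canonical-and-isometric-isomorphism principle for such resolutions, and passing to $\Gamma$-invariants yields the stated complex.
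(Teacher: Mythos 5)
The paper does not prove this statement; it is quoted from Monod's book, and your reconstruction follows the same standard architecture as that source: realize $L^\infty(S^{\bullet+1},E)\cong L^1(S^{\bullet+1},F)^*$ as dual modules (using separability of the predual $F$), show the augmented complex is a strong resolution via the norm-one homotopy that integrates out one variable against the probability measure on $S$ (full support is not needed, and in the relative injectivity step the non-equivariant extension is simply $\alpha\circ\sigma$ for the norm-one splitting $\sigma$ furnished by admissibility -- there is no ``Hahn--Banach extension'' of a vector-valued operator -- after which the conditional expectation averages it into an equivariant extension of the same norm). Up to these cosmetic points, items (i)--(iii) are the correct and standard ingredients.

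The genuine gap is in your last step: there is no ``general uniqueness-up-to-canonical-and-\emph{isometric}-isomorphism principle'' for strong relatively injective resolutions. The abstract machinery only yields a canonical \emph{topological} isomorphism between $H^\bullet\big(L^\infty(S^{\bullet+1},E)^\Gamma\big)$ and $H^\bullet_b(\Gamma;E)$: the comparison morphisms produced inductively from relative injectivity and the contracting homotopies are not norm non-increasing (their norms grow with the degree, since each step composes with coboundaries), so even with norm-preserving extensions at each stage one does not get the isometry claimed in the statement. What the soft theory does give is a norm non-increasing chain map $L^\infty(S^{\bullet+1},E)\to\ell^\infty(\Gamma^{\bullet+1},E)$ extending $\mathrm{id}_E$ (built from the contracting homotopy and the isometric action), hence the canonical seminorm is bounded above by the one computed on $S$. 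The reverse inequality is exactly where amenability must be used a second time: the $\Gamma$-equivariant conditional expectation $m\colon L^\infty(\Gamma\times S)\to L^\infty(S)$ (promoted to $E$-valued functions via the duality with $L^1(\,\cdot\,,F)$) yields an explicit \emph{norm-one} equivariant cochain map $\ell^\infty(\Gamma^{\bullet+1},E)\to L^\infty(S^{\bullet+1},E)$ extending $\mathrm{id}_E$, obtained by applying the mean in each variable; uniqueness up to $\Gamma$-homotopy of comparison maps then shows the two induced maps in cohomology are mutually inverse, and since both are norm non-increasing the identification is isometric. Without this (or an equivalent norm-controlled comparison, as in Monod's actual proof of the cited theorem), your argument proves the theorem only up to topological isomorphism, which is strictly weaker than the statement being used in the paper.
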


This comes particularly handy when $L^\infty(S^{n+1}, E)^{\Gamma}$ is small. Recall the following definition:

\begin{defi}[ergodicity with coefficients]
\label{def:ergodic}

Let $S$ be a standard Borel probability space, equipped with a non-singular action of $\Gamma$. We say that the action is \emph{ergodic} if every $\Gamma$-invariant subset of $S$ has either zero or full measure. Equivalently, the $\Gamma$-action on $S$ is ergodic if every essentially bounded measurable $\Gamma$-invariant function $S \to \R$ is essentially constant; in symbols: $L^\infty(S, \R)^{\Gamma} \cong \R$ (recall that $\R$ is always endowed with the trivial $\Gamma$-action).
Now let $E$ be a Banach $\Gamma$-module.\ We say that the action of $\Gamma$ on $S$ is \emph{ergodic with coefficients in $E$} if every essentially bounded measurable $\Gamma$-equivariant function $S \to E$ is essentially constant; in symbols $L^\infty(S, E)^{\Gamma} \cong E^{\Gamma}$.
\end{defi}

Before moving on we introduce the following definitions:

\begin{defi}[high ergodicity]\label{defi:highly:ergodic}
    Let $\Gamma$ be a group and let $S$ be a standard Borel probability space with a non-singular $\Gamma$-action. We say that the action $\Gamma \actson S$ is \emph{highly ergodic} if the diagonal action $\Gamma \actson S^n$ is ergodic for all integers $n \geq 1$.
\end{defi}

\begin{defi}[the generalised Bernoulli shift]\label{ex:gen:bernoulli:is:ergodic}
Let $\Gamma$ be a countable group acting on a countable set $X$.
Let $Y$ be a standard Borel probability space and assume that the measure does not concentrate on a single point. We can then define
a new standard Borel probability space $Y^X = \prod_{x \in X} Y$. 
The \emph{generalised Bernoulli shift} is the dynamical system given by the $\Gamma$-action of $Y^X$ defined as follows:
\[
\gamma \cdot(y_x)_{x \in X} \coloneqq (y_{\gamma^{-1} \cdot x})_{x \in X},
\]
for all $\gamma \in \Gamma$ and $(y_x)_{x \in X} \in Y^X$. 
\end{defi}

We now characterise when the generalised Bernoulli shift is ergodic:
\begin{prop}[ergodicity of the generalised Bernoulli shift]\label{prop:KechrisTsankov:ergodic}
Let $Y$ be a standard Borel probability space and assume that the measure does not concentrate on a single point. Let $\Gamma$ be a countable group acting on a countable set $X$.
Consider the generalised Bernoulli shift action of $\Gamma$ on $Y^X$ (as in the example above).
The following are equivalent:
\begin{enumerate}

\item The action of $\Gamma$ on $Y^X$ is ergodic.

\item The action of $\Gamma$ on $Y^X$ is highly ergodic.

\item All the orbits of the action of $\Gamma$ on $X$ are infinite.

\end{enumerate}

\end{prop}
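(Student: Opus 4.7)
The plan is to establish the cycle of implications $(2) \Rightarrow (1) \Rightarrow (3) \Rightarrow (2)$. The implication $(2) \Rightarrow (1)$ is immediate by taking $n=1$ in the definition of high ergodicity. For $(1) \Rightarrow (3)$, I would argue contrapositively: if some orbit $O \subset X$ of $\Gamma$ is finite, then since $\mu_Y$ does not concentrate on a single point one can find a measurable $A \subset Y$ with $0 < \mu_Y(A) < 1$. The cylinder set $\{(y_x) \in Y^X : y_x \in A \text{ for every } x \in O\}$ has measure $\mu_Y(A)^{|O|} \in (0,1)$ and is $\Gamma$-invariant because $\Gamma$ preserves $O$ setwise, contradicting ergodicity.

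The heart of the argument is $(3) \Rightarrow (2)$. My first move is to reduce it to the statement $(3) \Rightarrow (1)$: the diagonal $\Gamma$-action on $(Y^X)^n$ is $\Gamma$-equivariantly measure-isomorphic to the generalised Bernoulli shift of $\Gamma \actson X$ with base $(Y^n, \mu_Y^{\otimes n})$, via the canonical reindexing $(Y^X)^n \cong (Y^n)^X$; the same action $\Gamma \actson X$ governs both shifts, so hypothesis (3) is preserved, and $\mu_Y^{\otimes n}$ still does not concentrate at any single point. Hence $(3) \Rightarrow (1)$ applied to the base $Y^n$ yields $(3) \Rightarrow (2)$ for $Y$. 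To prove $(3) \Rightarrow (1)$, let $E \subseteq Y^X$ be a $\Gamma$-invariant measurable set. The product $\sigma$-algebra is generated by the algebra of cylinder sets based on finite subsets of $X$, so for every $\varepsilon > 0$ one finds a finite $F \subset X$ and a cylinder set $B$ based on $F$ with $\mu(E \triangle B) < \varepsilon$. If we can produce $\gamma \in \Gamma$ with $\gamma F \cap F = \emptyset$, then $B$ and $\gamma B$ are cylinders based on disjoint finite sets, hence stochastically independent, so $\mu(B \cap \gamma B) = \mu(B)^2$. Combining this with $\mu(E \cap \gamma E) = \mu(E)$ (from $\Gamma$-invariance of $E$ and of $\mu$) and the approximation $|\mu(E) - \mu(B)| < \varepsilon$ yields $|\mu(E) - \mu(E)^2| \leq 4\varepsilon$, which forces $\mu(E) \in \{0, 1\}$ as $\varepsilon \to 0$.

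The main obstacle is the algebraic claim that, under (3), every finite $F \subset X$ admits some $\gamma \in \Gamma$ with $\gamma F \cap F = \emptyset$. The set $\{\gamma \in \Gamma : \gamma F \cap F \neq \emptyset\}$ is contained in a union of at most $|F|^2$ left cosets of the stabilizers $\mathrm{Stab}_\Gamma(x)$, $x \in F$; each such stabilizer has infinite index in $\Gamma$ because the orbit of $x$ is infinite by (3). By B.~H.~Neumann's classical lemma, a group cannot be covered by finitely many cosets of infinite-index subgroups, so the complement of this set is nonempty, supplying the required $\gamma$ and closing the cycle.
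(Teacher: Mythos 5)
Your proof is correct, and it takes a genuinely different route from the paper's. The paper outsources the equivalence of (1) and (3) to Kechris--Tsankov and only supplies the bootstrapping step (1) $\Rightarrow$ (2), which it does by re-reading the diagonal action on $(Y^X)^n$ as a generalised Bernoulli shift over the index set $X \sqcup \cdots \sqcup X$ with the \emph{same} base $Y$, so that (3) $\Rightarrow$ (1) applies verbatim. You instead prove the Kechris--Tsankov direction (3) $\Rightarrow$ (1) from scratch -- approximating an invariant set by a cylinder on a finite $F \subset X$, producing $\gamma$ with $\gamma F \cap F = \emptyset$ via B.~H.~Neumann's lemma on coverings by cosets of infinite-index subgroups (each stabilizer has infinite index precisely because its orbit is infinite), and using independence of cylinders over disjoint coordinate sets to force $\mu(E) = \mu(E)^2$ -- and you handle (1) $\Rightarrow$ (3) contrapositively with an invariant cylinder of intermediate measure. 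For the upgrade to high ergodicity you reindex the other way, $(Y^X)^n \cong (Y^n)^X$, changing the \emph{base} to $Y^n$ rather than the index set; this is equally valid, and you correctly note the one extra check it requires, namely that $\mu_Y^{\otimes n}$ is still not a point mass. What your approach buys is a self-contained proof with an explicit, elementary mechanism (approximation plus Neumann's lemma); what the paper's buys is brevity, and its reindexing avoids even the trivial non-degeneracy check on the base. Both reductions are correct, and your cycle $(2) \Rightarrow (1) \Rightarrow (3) \Rightarrow (2)$ closes cleanly.
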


\begin{proof}
    \emph{(1) $\Longleftrightarrow$ (3)} This is proved by Kechris and Tsankov~\cite[Proposition~2.1]{kechris:tsankov}.

    \emph{(2) $\Longrightarrow$ (1)} It is straightforward.

    \emph{(1) $\Longrightarrow$ (2)} By assumption the $\Gamma$-action on $Y^X$ is ergodic and so all the orbits of the $\Gamma$-action on $X$ are infinite by \emph{(3)}. We have to show that the diagonal action of $\Gamma$ on $\prod_{i = 1}^n Y^X$ is ergodic for every integer $n \geq 1$. For each $n$, this action is a generalised Bernoulli shift where $\Gamma \actson Y^{\sqcup_{i = 1}^n X}$ simply by acting diagonally on the disjoint union of the sets $X$. Moreover, all the orbits of the $\Gamma$-action on $X \sqcup \cdots \sqcup X$ are infinite. Hence, applying now \emph{(3)} $\Rightarrow$ \emph{(1)} to the generalised Bernoulli shift $\Gamma \actson Y^{\sqcup_{i = 1}^n X}$ we conclude that the action is ergodic, whence the thesis.
\end{proof}

\begin{rem}[ergodicity and subgroups]\label{rem:ergodicity:subgroups}
    Let $\Gamma$ be a group and let $Y$ be a standard Borel probability 
    $\Gamma$-space. Suppose that there exists a subgroup $\Lambda \leq 
    \Gamma$ such that the restricted action $\Lambda \actson Y$ is 
    ergodic. Then, the action $\Gamma \actson Y$ is also ergodic (indeed, 
    if all the $\Lambda$-invariant functions are essentially constant 
    also the $\Gamma$-invariant functions must be so).
\end{rem}

\begin{defi}[HEZA]
\label{intro:defi:heza}
Let $\Gamma$ be a group and let $S$ be a standard Borel probability $\Gamma$-space. We say that $S$ is \emph{highly ergodic Zimmer-amenable}, \emph{$\heza$} for short, if $S$ is a Zimmer-amenable $\Gamma$-space and the $\Gamma$-action on $S$ is highly ergodic.
\end{defi}

The following is an important criterion for the vanishing of bounded cohomology which will be of key use for us.

\begin{cor}[HEZA spaces and bounded acyclicity~{\cite[p.~668]{monod:thompson}}]
\label{cor:zimmer:ergodic}
Let $\Gamma$ be a countable group for which there exists a $\heza$-space.
Then $\Gamma$ is $\Xsep$-boundedly acyclic.
\end{cor}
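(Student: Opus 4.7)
My plan is to exploit the two properties of a $\heza$-space $S$ separately: Zimmer-amenability produces an isometric resolution computing $H^\bullet_b(\Gamma; E)$, while high ergodicity, combined with separability of $E$, collapses this resolution to a trivial alternating complex in positive degrees. Specifically, for any $E \in \Xsep$, Theorem~\ref{thm:bc:via:zimmeramenable} tells us that the complex
\[
0 \to L^\infty(S, E)^\Gamma \xrightarrow{\delta^0} L^\infty(S^2, E)^\Gamma \xrightarrow{\delta^1} L^\infty(S^3, E)^\Gamma \to \cdots,
\]
endowed with the simplicial coboundary, isometrically computes $H^\bullet_b(\Gamma; E)$. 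It therefore suffices to show acyclicity in every positive degree.

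The crucial step is to establish the identification
\[
L^\infty(S^{k+1}, E)^\Gamma \;\cong\; E^\Gamma
\]
for every $k \geq 0$, via the inclusion of constant functions. The inclusion from right to left is clear; the reverse inclusion requires upgrading the scalar ergodicity of the diagonal action $\Gamma \actson S^{k+1}$ (granted by high ergodicity of $\Gamma \actson S$) to ergodicity with coefficients in $E$, in the sense of Definition~\ref{def:ergodic}. This is where separability of a predual $F$ of $E$ is essential: pairing any $\Gamma$-equivariant essentially bounded $f \colon S^{k+1} \to E$ with vectors in a countable dense subset of $F$ reduces the vector-valued statement to countably many scalar ergodicity statements on $S^{k+1}$, each of which is handled by high ergodicity.

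Once this identification is in place, each summand in $\delta^k$ restricts to $\mathrm{id}_{E^\Gamma}$ on constants, so $\delta^k$ collapses to $\sum_{i=0}^{k+1}(-1)^i \mathrm{id}_{E^\Gamma}$. This vanishes when $k$ is even and equals $\mathrm{id}_{E^\Gamma}$ when $k$ is odd, so the complex becomes
\[
E^\Gamma \xrightarrow{0} E^\Gamma \xrightarrow{\mathrm{id}} E^\Gamma \xrightarrow{0} E^\Gamma \xrightarrow{\mathrm{id}} \cdots,
\]
whose cohomology is $E^\Gamma$ in degree $0$ and vanishes in every positive degree, as required.

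The main obstacle is the vector-valued ergodicity upgrade outlined above: without separability of the predual, the scalar ergodicity granted by the $\heza$ hypothesis would not suffice to reduce $L^\infty(S^{k+1}, E)^\Gamma$ to constants. This is precisely what distinguishes the scope of the statement (namely $\Xsep$) from the broader class $\Xdual$ of all dual Banach modules. The remaining steps are then a direct appeal to the resolution theorem and a one-line cohomology computation for the resulting alternating constant complex.
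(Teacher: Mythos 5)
Your overall plan — use Theorem~\ref{thm:bc:via:zimmeramenable} to replace $C^\bullet_b$ by $L^\infty(S^{\bullet+1},E)^\Gamma$, show each of these spaces reduces to $E^\Gamma$, and then observe that the resulting constant complex is acyclic in positive degrees — is exactly the route behind the cited result, and your final computation with the alternating coboundary is correct. The gap is in the step you yourself single out as crucial: the upgrade from plain (scalar) ergodicity of $S^{k+1}$ to ergodicity with coefficients in $E$. Pairing an equivariant $f\colon S^{k+1}\to E$ with a vector $\phi$ in the predual does \emph{not} produce a $\Gamma$-invariant scalar function: equivariance gives $\langle f(\gamma s),\phi\rangle=\langle f(s),\gamma^{-1}\phi\rangle$, so unless the module action is trivial the ergodicity hypothesis says nothing about these pairings, and the claimed reduction collapses. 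Indeed, ergodicity does not imply ergodicity with coefficients in general: for an irrational rotation acting on $S^1$ and $E=L^2(S^1)$ with the induced action, $s\mapsto g(\cdot-s)$ is an essentially bounded equivariant non-constant map even though the action is ergodic. So as written, the heart of your argument fails precisely for the nontrivial modules the statement is about.

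The correct mechanism — and the reason the hypothesis is \emph{high} ergodicity rather than ergodicity of each single power — is to use the doubled power. Given $f\in L^\infty(S^{k+1},E)^\Gamma$, the function $(s,t)\mapsto \|f(s)-f(t)\|_E$ on $S^{k+1}\times S^{k+1}=S^{2(k+1)}$ \emph{is} $\Gamma$-invariant (the action on $E$ is isometric) and is measurable because $E$ is separable (take a supremum of countably many pairings over a dense sequence in the unit ball of the predual). By ergodicity of $S^{2(k+1)}$, it is essentially constant, say equal to $c$. If $c>0$, push the measure forward to $E$ via $f$: separability of $E$ gives a ball of radius $c/3$ of positive measure, while for almost every point of that ball almost every other point lies at distance exactly $c$ from it, a contradiction. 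Hence $c=0$, so $f$ is essentially constant and its value lies in $E^\Gamma$ by equivariance. Note that what is used is separability of $E$ itself (which is the defining feature of $\Xsep$), not merely separability of a predual, and that the ergodicity you invoke must be that of $S^{2(k+1)}$, which high ergodicity supplies. With this replacement your remaining steps go through unchanged.
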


\subsection{Commuting conjugates}\label{subsec:cc}

In the sequel we are going to introduce sufficient conditions for $\X$-bounded acyclicity. We recall here the notion of \emph{commuting conjugates}~\cite{kotschick, fflodha} that we will extend later:

\begin{defi}[commuting conjugates]\label{defi:comm:conj}
    A group $\Gamma$ has \emph{commuting conjugates} if for every finitely generated subgroup $H \leq \Gamma$ there exists an element $g \in \Gamma$ such that 
    \[
    [H, gHg^{-1}] = 1.
    \]
    For simplicity from now on we will always denote the conjugation by ${}^{g}H \coloneqq gHg^{-1}$.
\end{defi}

This natural property detects bounded acyclicity in low degree:

\begin{thm}[{commuting conjugates and bounded $2$-acyclicity~\cite[Theorem~1.2]{fflodha}}]
    Let $\Gamma$ be a group with commuting conjugates. Then, $\Gamma$ is $\Xr$-boundedly $2$-acyclic.
\end{thm}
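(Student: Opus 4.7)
My plan is to proceed via the comparison-map exact sequence
\[
0 \to \mathrm{Hom}(\Gamma, \R) \to Q(\Gamma) \to H^2_b(\Gamma; \R) \to H^2(\Gamma; \R),
\]
where $Q(\Gamma)$ is the space of homogeneous quasimorphisms. Since $H^1_b(\Gamma; \R) = 0$ automatically, $\Xr$-bounded $2$-acyclicity reduces to establishing \textbf{(i)} every homogeneous quasimorphism on $\Gamma$ is a homomorphism, and \textbf{(ii)} the image of the comparison map $H^2_b(\Gamma; \R) \to H^2(\Gamma; \R)$ is trivial.

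For (i), let $\phi \in Q(\Gamma)$ have defect $D$, and recall the standard consequences of homogeneity: conjugation-invariance, and $\phi(ab) = \phi(a) + \phi(b)$ whenever $[a,b] = 1$. Given $g, h \in \Gamma$, I iterate commuting conjugates starting from $H_0 \coloneqq \langle g, h \rangle$: for each $k \geq 1$ pick $t_k \in \Gamma$ with $[H_{k-1}, t_k H_{k-1} t_k^{-1}] = 1$, and set $H_k \coloneqq \langle H_{k-1}, t_k H_{k-1} t_k^{-1}\rangle$. By induction, $H_k$ contains $n = 2^k$ pairwise commuting conjugate copies $C_1, \dots, C_n$ of $H_0$, with corresponding conjugates $g_i, h_i \in C_i$ of $g, h$. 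The product
\[
P_n \coloneqq (g_1 h_1)(g_2 h_2) \cdots (g_n h_n)
\]
consists of $n$ pairwise commuting conjugates of $gh$, so $\phi(P_n) = n\,\phi(gh)$. Using the cross-relations $[g_i, h_j] = [g_i, g_j] = [h_i, h_j] = 1$ for $i \neq j$, I rearrange $P_n = (g_1 \cdots g_n)(h_1 \cdots h_n)$; the defect inequality combined with $\phi(g_1\cdots g_n) = n\phi(g)$ and $\phi(h_1 \cdots h_n) = n\phi(h)$ gives $|\phi(P_n) - n\phi(g) - n\phi(h)| \leq D$. Dividing by $n$ and letting $n \to \infty$ yields $\phi(gh) = \phi(g) + \phi(h)$.

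For (ii), I would exploit the same iterated commuting conjugates to build, for any bounded $2$-cocycle $c$, a (possibly unbounded) primitive $f : \Gamma \to \R$ with $\delta f = c$. The rough idea is to define $f(\gamma)$ as a limit of averages of $c$-values over increasingly long systems of commuting conjugates of $\gamma$; the cocycle identity $\delta c = 0$ together with the boundedness of $c$ ensures the limit exists and serves as a primitive. A second route is to note that the iterated commuting conjugates embed $H_0$ into subgroups isomorphic (modulo commuting relations) to a direct product of many copies of $H_0$, whose $H^2$ is understood, and to piece together cochains via a direct-limit argument on finitely generated subgroups.

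The main obstacle is step (ii): step (i) is a clean displacement argument in the quasimorphism framework, whereas controlling the image of the comparison map requires extracting genuine $H^2$-level information from the purely algebraic commuting conjugates hypothesis. This is precisely the obstruction that motivates the strengthening to commuting cyclic conjugates in the present paper, which enables the wreath-product machinery of Section~\ref{sec:wp} to obtain $\Xsep$-bounded acyclicity in all degrees.
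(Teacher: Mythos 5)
Your step (i) is correct: the iterated commuting conjugates produce, inside a finitely generated subgroup, arbitrarily many pairwise commuting conjugate copies of $\langle g,h\rangle$ with a common conjugator for each copy, and the rearrangement plus the defect estimate shows every homogeneous quasimorphism is a homomorphism. However, via the exact sequence you quote this only proves that the comparison map $H^2_b(\Gamma;\R)\to H^2(\Gamma;\R)$ is \emph{injective} (its kernel is $Q(\Gamma)/\mathrm{Hom}(\Gamma,\R)$). The theorem asserts $H^2_b(\Gamma;\R)=0$, so the entire remaining content is your step (ii): every bounded $2$-cocycle must be shown to be an ordinary coboundary. This is precisely the part your proposal does not prove. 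The sketch ``define $f(\gamma)$ as a limit of averages of $c$-values over increasingly long systems of commuting conjugates'' is not an argument: bounded $2$-cocycles are not conjugation-invariant, no formula is given whose coboundary would recover $c$, and the existence of the proposed limits is not addressed; the alternative ``direct-limit of products'' route is equally unsubstantiated (and note that vanishing of bounded cohomology is not known to pass to direct unions without uniform control, which is exactly the vanishing-modulus issue this paper has to deal with). The gap is not cosmetic: for groups with commuting conjugates the ordinary $H^2$ can be huge (e.g.\ the stable mapping class group, where the Mumford--Morita--Miller class lives), so showing that no nonzero class admits a bounded representative is genuinely the hard half of the statement. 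Your step (i) alone recovers only the older stable-commutator-length--type vanishing (Kotschick, Calegari), and the paper explicitly points out that the $H^2_b$-vanishing of \cite{fflodha} is a strict strengthening of that.

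For context, the statement you were asked about is not proved in this paper at all: it is quoted from \cite[Theorem~1.2]{fflodha}, and the proof there does not follow your outline; it works at the level of bounded cochains (not just quasimorphisms) to kill the whole of $H^2_b$ with trivial real coefficients. So while your identification of (ii) as ``the main obstacle'' is accurate, the proposal as written establishes only injectivity of the comparison map in degree $2$ and therefore does not prove the theorem.
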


\begin{example}[groups with commuting conjugates]
    The following groups have commuting conjugates:
    \begin{enumerate}
        \item Let $R$ be a ring. The infinite general linear group $GL(R)$ has commuting conjugates~\cite[proof of Proposition~4.9]{fflm2}.
\item The group of symplectic (or Hamiltonian) diffeomorphisms of $\R^n$ with compact support and $n \geq 2$ has commuting conjugates. This result implicitly dates back to Kotschick's work on the stable commutator length of the identity components of symplectic (or Hamiltonian) diffeomorphism groups~\cite[Section~4]{kotschick}.

\item Every group $\Gamma$ of boundedly supported homeomorphisms of the real line with no global fixpoints has commuting conjugates~\cite[Proposition~3.8]{fflodha}.
    \end{enumerate}
\end{example}

\subsection{Direct unions}\label{subsec:direct:union}

We investigate here the most important combination result for the proof of Theorem \ref{intro:thm:ccc}.\ To our knowledge it is not known whether direct unions of $\X$-boundedly acyclic groups are $\X$-boundedly acyclic (besides the case $\Xdual$, of course, corresponding to amenable groups). However, direct unions with \emph{controlled} vanishing do preserve bounded acyclicity.\ This involves the notion of \emph{vanishing modulus with coefficients}, that extends the previous notion for trivial real coefficients~\cite{fflm2, monodnariman}. The notion of vanishing modulus for trivial real coefficients has already proved useful to show that bounded acyclicity is preserved under certain constructions:\ Namely if the vanishing modulus of a family of groups is uniformly bounded, then several constructions in bounded cohomology go through~\cite{fflm2, liloehmoraschini}. Our aim is to extend this philosophy and make it quantitative in the case of coefficients. Let us start with the definition:

\begin{defi}[vanishing modulus with coefficients]
\label{def:vanishingmodulus}

Let $\Gamma$ be a group, let $E$ be a Banach $\Gamma$-module and let $n$ be a positive integer.\ The $n$-th \emph{vanishing modulus of} $\Gamma$ \emph{with coefficients in} $E$, denoted by $\| H^n_b(\Gamma; E) \|$, is defined as the minimal $K \in [0, \infty]$ such that the following holds: For every cocycle $z \in Z^n_b(\Gamma; E)$ there exists a cochain $b \in C^{n-1}_b(\Gamma; E)^{\Gamma}$ such that $\delta^{n-1}(b) = z$ and $\| b \|_\infty \leq K \cdot \| z \|_\infty$.
\end{defi}

We apply the usual convention that $\inf \varnothing = \infty$, so if $H^n_b(\Gamma; E) \neq 0$, then $\| H^n_b(\Gamma; E) \| = \infty$. An easy application of the Open Mapping Theorem implies the converse:

\begin{lemma}[$\X$-boundedly acyclic groups have uniformly bounded vanishing modulus]
\label{lem:vm:finite}
Let $\Gamma$ be a group, let $E$ be a Banach $\Gamma$-module and let $n \geq 1$ be an integer. 
Then, we have the following:
\begin{enumerate}
    \item If $H^n_b(\Gamma; E) = 0$ then $\| H^n_b(\Gamma; E) \| < +\infty$;
    \item If $H^n_b(\Gamma; E) = 0$ for every $E \in \Xsep$, then $\| H^n_b(\Gamma; E) \|$ is uniformly bounded over all such $E$, i.e.\ 
    \[
    \sup_{E \in \Xsep} \| H_b^n(\Gamma; E) \| < +\infty .\]
\end{enumerate}
\end{lemma}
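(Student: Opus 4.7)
The first part is an immediate invocation of the Open Mapping Theorem. Both $C_b^{n-1}(\Gamma; E)^\Gamma$ and $Z_b^n(\Gamma; E)$ are Banach spaces with the sup norm, since the latter is the kernel of the continuous coboundary $\delta^n$ and so closed in $C_b^n(\Gamma; E)^\Gamma$. The coboundary $\delta^{n-1} \colon C_b^{n-1}(\Gamma; E)^\Gamma \to Z_b^n(\Gamma; E)$ is bounded (in fact of operator norm at most $n+1$), and the hypothesis $H_b^n(\Gamma; E) = 0$ means precisely that it is surjective. The Open Mapping Theorem then produces a constant $K$ such that every cocycle admits a primitive of norm at most $K$ times its own, which is exactly the statement $\|H_b^n(\Gamma; E)\| \leq K < +\infty$.

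For the second part, the natural idea is to argue by contradiction: if no uniform bound exists, pick separable duals $E_k \in \Xsep$ and cocycles $z_k \in Z_b^n(\Gamma; E_k)$ with $\|z_k\|_\infty = 1$ whose every primitive has norm at least $k$, and then assemble the $E_k$ into a single separable dual $\Gamma$-module to which part (1) can be applied. The delicate point is the assembly: the most obvious constructions fail to stay inside $\Xsep$, since the $\ell^\infty$-direct sum of separable spaces is typically non-separable, while the $c_0$-direct sum of duals is typically not a dual. I expect this to be the main technical obstacle in the proof.

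The plan is to use instead the $\ell^2$-direct sum $E \coloneqq \bigoplus_{\ell^2, k \in \N} E_k$. Each $E_k = F_k^*$ is a separable dual, and a standard Hahn--Banach argument shows that the predual $F_k$ is then separable too; hence $\bigoplus_{\ell^2} F_k$ is a separable Banach $\Gamma$-module whose dual is isometrically $\Gamma$-isomorphic to $E$, and $E$ itself is separable (countable rational combinations of fixed countable dense subsets of the $E_k$ are dense). Thus $E \in \Xsep$, and by hypothesis together with part (1) the vanishing modulus $K_E \coloneqq \|H_b^n(\Gamma; E)\|$ is finite.

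It remains to derive a contradiction. For each $k$, the canonical inclusion $\iota_k \colon E_k \to E$ is a $\Gamma$-equivariant isometric embedding, and the coordinate projection $\pi_k \colon E \to E_k$ is $\Gamma$-equivariant and $1$-Lipschitz, with $\pi_k \circ \iota_k = \id_{E_k}$. Pushing $z_k$ forward gives $\tilde z_k \in Z_b^n(\Gamma; E)^\Gamma$ with $\|\tilde z_k\|_\infty = 1$, so by definition of $K_E$ there exists $\tilde b_k \in C_b^{n-1}(\Gamma; E)^\Gamma$ with $\delta \tilde b_k = \tilde z_k$ and $\|\tilde b_k\|_\infty \leq K_E$. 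Applying $\pi_k$ componentwise yields $b_k \coloneqq \pi_k \circ \tilde b_k \in C_b^{n-1}(\Gamma; E_k)^\Gamma$, a primitive of $z_k$ of norm at most $K_E$; for any $k > K_E$ this contradicts the choice of $z_k$. Hence $\sup_{E \in \Xsep}\|H_b^n(\Gamma; E)\| < +\infty$, completing the proof.
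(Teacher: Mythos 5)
Your proposal is correct and follows essentially the same route as the paper: part (1) via the Open Mapping Theorem applied to the surjective coboundary onto $Z^n_b(\Gamma;E)$, and part (2) by contradiction, assembling the offending modules into the $\ell^2$-direct sum (which stays separable and dual) and transferring primitives back through the norm-one equivariant inclusion and projection. The only difference is cosmetic: you normalize cocycles of norm one with primitives of norm at least $k$, while the paper works directly with a sequence of vanishing moduli tending to infinity.
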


The first statement in the case of trivial real coefficients was already implicit in the work of Matsumoto and Morita~\cite{Matsu-Mor} but the details have been worked out only recently~\cite[Lemma 4.12]{fflm2} (compare also with the work by Monod and Nariman~\cite[after Definition 2.5]{monodnariman}).

\begin{proof}
\emph{Ad~(1)} Let $E$ be a Banach $\Gamma$-module and let us suppose that $H^n_b(\Gamma; E)=0$.\ Then, we can identify the space of $n$-cocycles with the space of $n$-coboundaries
\[
Z^n_b(\Gamma; E) = B^n_b(\Gamma; E) = \delta^{n-1}(C^{n-1}_b(\Gamma; E)^{\Gamma}).
\]
Since this is a closed subspace of the Banach space $C^{n}_b(\Gamma; E)^{\Gamma}$, it is itself a Banach space.\
Moreover, since $Z^{n-1}_b(\Gamma; E)$ is a closed subspace of the Banach space $C^{n-1}_b(\Gamma; E)^{\Gamma}$ there exists a natural quotient Banach space $C^{n-1}_b(\Gamma; E)^{\Gamma}/Z^{n-1}_b(\Gamma; E)$ endowed with the quotient norm.
It follows that the bounded linear operator $\delta^{n-1}$ is a surjective continuous linear map between Banach spaces, so the Open Mapping Theorem says that this is an open map. So the map
\[\delta^{n-1} \colon C^{n-1}_b(\Gamma; E)^{\Gamma} / Z^{n-1}_b(\Gamma; E) \to Z^n_b(\Gamma; E),\]
has a bounded inverse $\varphi$, which has finite operator norm $\| \varphi \|_{op}$.\ Then, for every cocycle $z \in Z^{n}_b(\Gamma; E)$, the class $\varphi(z)$ admits a representative $b \in C^{n-1}_b(\Gamma; E)^{\Gamma}$ of $\ell^\infty$-norm at most $2 \cdot \|\varphi\|_{op} \cdot \| z \|_\infty$. This shows that $\| H^n_b(\Gamma; E) \| < +\infty$, whence the thesis.

\medskip

\emph{Ad~(2)} Suppose now that $H^n_b(\Gamma; E) = 0$ for every Banach $\Gamma$-module $E \in \Xsep$. 
Assume by way of contradiction that: 
\[
\sup_{E \in \Xsep} \| H_b^n(\Gamma; E) \| = +\infty.
\]
Then there exists a countable sequence of Banach $\Gamma$-modules $E_i \in \Xsep$ such that $\| H_b^n(\Gamma; E_i) \| \to +\infty$ when $i \to +\infty$.
Then, we can consider their $\ell^2$-direct sum
$E = \bigoplus_{2} E_i$~\cite[p.~72]{conway}.\ 
It is easy to show that $E$ is a separable dual Banach $\Gamma$-module \cite[Chapter III, Proposition~4.4, Exercise~4.13 and Exercise~5.4]{conway}.
By our hypothesis $H^n_b(\Gamma; E) = 0$, and thus $K \coloneqq \| H^n_b(\Gamma; E) \| < +\infty$ by \emph{(1)}.

To achieve a contradiction, it suffices to show that $\| H^n_b(\Gamma; E_i) \| \leq K$ for all $i \in \N$.
Let us fix $i \in \N$. We consider the inclusion~$\iota \colon E_i \to E$ and the projection~$\pi \colon E \to E_i$.\ They are both $\Gamma$-invariant bounded maps~\cite[Chapter III, Proposition 4.4]{conway}.
Hence, the inclusion and the projection maps induce the following maps at the level of bounded cochain complexes:
\[
\pi_\ast^\bullet \colon C_b^\bullet(\Gamma; E)^\Gamma \to C_b^\bullet(\Gamma; E_i)^\Gamma
\]
and
\[
\iota_\ast^\bullet \colon C_b^\bullet(\Gamma; E_i)^\Gamma \to C_b^\bullet(\Gamma; E)^\Gamma.
\]
Let $z \in C^n_b(\Gamma; E_i)$ be a cocycle. We are going to show that we can find a primitive of norm at most $K \cdot \sv{z}_\infty$. First note that $\iota_*^n(z)\in C^n_b(\Gamma; E)^\Gamma$ is also a cocycle. By definition of vanishing modulus with coefficients in $E$ and the definition of $K$, we know that there exists a cochain $b \in C^{n-1}_b(\Gamma; E)^{\Gamma}$ such that $\delta^{n-1}(b) = \iota_*^n(z)$ and $\| b \|_\infty \leq K \cdot \| \iota_*^n(z) \|_\infty$. Then we define $\pi_*^{n-1}(b) \in C^{n-1}_b(\Gamma; E_i)^{\Gamma}$. By construction we have that
\[\| \pi_*^{n-1} (b) \|_\infty \leq \| b \|_\infty \leq K \cdot \| \iota_*^n (z) \|_\infty \leq K \cdot \| z \|_\infty.\]
Since $\delta^{n-1}(\pi^{n-1}_*(b)) = z$, this proves our claim.
\end{proof}

The reason why the vanishing modulus is relevant for us is the following result, that extends the corresponding statement for trivial real coefficients~\cite[Proposition 4.13]{fflm2} (in fact that statement is only about direct unions, but for our purposes we need to treat slightly more general unions):

\begin{prop}[vanishing modulus with coefficients and direct unions]
\label{prop:dirun}
Let $I$ be an arbitrary index set and let $n \geq 1$ be an integer. Let $\Gamma$ be a group, and let $\{\Gamma_i\}_{i \in I}$ be a family of subgroups of $\Gamma$ such that each finitely generated subgroup of $\Gamma$ is contained in some $\Gamma_i$. Let $E$ be a Banach $\Gamma$-module in $\Xdual$, thus by restriction also a Banach $\Gamma_i$-module for all $i \in I$. Then
\[\| H^n_b(\Gamma; E) \| \leq \sup\limits_{i \in I} \| H^n_b(\Gamma_i; E) \|.\]
In particular, if the right-hand side is finite, then $H^n_b(\Gamma; E) = 0$.
\end{prop}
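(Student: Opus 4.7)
The plan is to adapt the standard ultralimit/weak-$*$ compactness argument that proves the analogous statement for trivial real coefficients, pushing it through for dual coefficients. Fix a cocycle $z \in Z^n_b(\Gamma; E)$ and set $K \coloneqq \sup_{i \in I} \| H^n_b(\Gamma_i; E) \|$; we may assume $K < +\infty$, otherwise there is nothing to prove. Our task is to produce a primitive $b \in C^{n-1}_b(\Gamma; E)^\Gamma$ with $\delta^{n-1} b = z$ and $\| b \|_\infty \leq K \cdot \| z \|_\infty$.

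First, we fix an ultrafilter $\mathcal{U}$ on $I$ containing every set of the form $I_H \coloneqq \{ i \in I \mid H \leq \Gamma_i \}$ where $H$ ranges over the finitely generated subgroups of $\Gamma$. Such a $\mathcal{U}$ exists: by assumption each $I_H$ is nonempty, and given finitely many $H_1, \ldots, H_k$, the finitely generated subgroup $\langle H_1, \ldots, H_k \rangle$ is contained in some $\Gamma_i$, so the $I_{H_j}$'s have nonempty intersection and the $I_H$'s generate a filter that may be extended to an ultrafilter. For each $i \in I$, restriction gives a cocycle $z|_{\Gamma_i} \in Z^n_b(\Gamma_i; E)$ of norm at most $\| z \|_\infty$, and by definition of the vanishing modulus we can pick $b_i \in C^{n-1}_b(\Gamma_i; E)^{\Gamma_i}$ with $\delta^{n-1} b_i = z|_{\Gamma_i}$ and $\| b_i \|_\infty \leq K \cdot \| z \|_\infty$.

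Next, since $E \in \Xdual$ we may write $E = F^*$. By Banach--Alaoglu, the closed ball of radius $K \cdot \| z \|_\infty$ in $E$ is weak-$*$ compact, so weak-$*$ ultralimits along $\mathcal{U}$ exist on it. For each tuple $(g_0, \ldots, g_{n-1}) \in \Gamma^n$ the set $I_{\langle g_0, \ldots, g_{n-1}\rangle}$ lies in $\mathcal{U}$, so $b_i(g_0, \ldots, g_{n-1})$ is defined for $\mathcal{U}$-almost all $i$ and lies in this ball. We can then set
\[
b(g_0, \ldots, g_{n-1}) \coloneqq \operatorname*{w*-lim}_{i \to \mathcal{U}} b_i(g_0, \ldots, g_{n-1}) \in E.
\]
By weak-$*$ lower semicontinuity of the norm, $\| b \|_\infty \leq K \cdot \| z \|_\infty$.

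It remains to verify the cocycle identity and invariance. For any $(g_0, \ldots, g_n) \in \Gamma^{n+1}$, the finitely generated subgroup $\langle g_0, \ldots, g_n \rangle$ lies in $\Gamma_i$ for $\mathcal{U}$-almost all $i$, so the identity $\delta^{n-1} b_i(g_0, \ldots, g_n) = z(g_0, \ldots, g_n)$ holds for $\mathcal{U}$-almost all $i$. Since $\delta^{n-1}$ is an alternating sum of finitely many evaluations, it commutes with weak-$*$ ultralimits, yielding $\delta^{n-1} b = z$. For $\Gamma$-invariance, fix $\gamma \in \Gamma$ and a tuple $(g_0, \ldots, g_{n-1}) \in \Gamma^n$: for $\mathcal{U}$-almost all $i$ both $\gamma$ and all $g_k$ lie in $\Gamma_i$, so $\Gamma_i$-invariance of $b_i$ gives $\gamma \cdot b_i(\gamma^{-1} g_0, \ldots, \gamma^{-1} g_{n-1}) = b_i(g_0, \ldots, g_{n-1})$; since the $\Gamma$-action on $E = F^*$ is the dual of an isometric action on $F$ and hence weak-$*$ continuous, passing to the weak-$*$ ultralimit yields $(\gamma \cdot b)(g_0, \ldots, g_{n-1}) = b(g_0, \ldots, g_{n-1})$. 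The main subtlety to be careful with is precisely this interchange of the $\Gamma$-action with the weak-$*$ ultralimit, which relies crucially on $E$ being a \emph{dual} module; it is where the hypothesis $E \in \Xdual$ is used, and it is what forces the argument to work for this class of coefficients.
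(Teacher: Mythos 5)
Your proof is correct. Both your argument and the paper's rely on the same essential mechanism—weak-$*$ compactness for dual modules via Banach--Alaoglu, which is where the hypothesis $E \in \Xdual$ is used—but the packaging is genuinely different. The paper views $C^{n-1}_b(\Gamma; E) = \ell^\infty(\Gamma^n, E)$ as the dual of $\ell^1(\Gamma^n, F)$, introduces for each finitely generated $H \leq \Gamma$ the weak-$*$ closed bounded set
\[
B_H^z = \{ b \in C^{n-1}_b(\Gamma; E)^{H} \mid \delta^{n-1}(b|_H) = z|_H, \ \| b \|_\infty \leq K \cdot \| z \|_\infty \},
\]
and applies the finite intersection property to $\{B_H^z\}$: directedness together with $B_H^z \neq \varnothing$ (supplied by the hypothesis on the $\Gamma_i$, extending by zero) yields a global primitive in one shot, with invariance and the cocycle identity built into the definition of the sets. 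You instead fix an ultrafilter $\mathcal{U}$ on $I$ refining the filter of sets $I_H$, pick primitives $b_i$ on each $\Gamma_i$, and form $b$ as the pointwise weak-$*$ ultralimit in $E$; you then must separately check the norm bound (weak-$*$ lower semicontinuity), the cocycle identity (linearity of ultralimits), and $\Gamma$-invariance (weak-$*$ continuity of the contragredient action). The two routes are equivalent in substance—on bounded sets, the weak-$*$ topology of $\ell^\infty(\Gamma^n, E)$ is exactly that of pointwise weak-$*$ convergence in $E$, so a weak-$*$ cluster point in the paper's sense is precisely an ultralimit in yours—but the paper's set-theoretic packaging avoids the explicit verifications of invariance and the cocycle identity, while your pointwise construction makes the role of weak-$*$ continuity of the dual action more visible, which is a worthwhile thing to isolate.
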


\begin{proof}
Let $K := \textup{sup}_{i \in I} \| H^n_b(\Gamma_i; E) \|$ and suppose that $K < +\infty$, otherwise there is nothing to show. Since $E \in \Xdual$, there exists a predual $F$ with the corresponding $\Gamma$-action.\ This shows that also $C^n_b(\Gamma; E) = \ell^\infty(\Gamma^{n+1}, E) \in \Xdual$ because it is the dual of $\ell^1(\Gamma^{n+1}, F)$ with the contragradient $\Gamma$-action. In particular, it comes endowed with the weak-$\ast$ topology. Fix a cocycle $z \in Z^n_b(\Gamma; E)$, and for each finitely generated subgroup $H \leq \Gamma$ define
\[B_H^z := \{ b \in C^{n-1}_b(\Gamma; E)^{H} \mid \delta^{n-1}(b|_{H}) = z|_{H} \text{ and } \| b \|_\infty \leq K \cdot \| z \|_\infty \}.\]
It suffices to show that $\bigcap_{H \leq \Gamma \text{ fin. gen.}} B_H^z \neq \varnothing$: Then since $\Gamma$ is the direct union of its finitely generated subgroups, an element $b$ in the intersection will satisfy 
\begin{itemize}
    \item $b \in C^{n-1}_b(\Gamma; E)^{\Gamma}$;
    \item $\delta^{n-1}(b) = z$;
    \item $\| b \|_\infty \leq K \cdot \| z \|_\infty$.
\end{itemize}
Now each $B_H^z$ is a bounded weak-$\ast$ closed subset of $B_{K \cdot \sv{z}_\infty}\subset C^{n-1}_b(\Gamma; E)$.
Hence, by the Banach--Alaoglu Theorem, it suffices to show that the family $\{B_H^z\}_{H \leq \Gamma \text{ fin. gen.}}$ satisfies the finite intersection property.

Since $B_H^z \subset B_{H'}^z$ whenever $H' \leq H$, and the system is directed, the finite intersection property will follow as soon as each $B_H^z$ is non-empty. Now, fix $H \leq \Gamma$ and let $i$ be such that $H \leq \Gamma_i$. Since $\| H^n_b(\Gamma_i; E) \| \leq K$, there exists $b \in C^{n-1}_b(\Gamma_i, E)^{\Gamma_i}$ such that $\delta^{n-1}(b) = z|_{\Gamma_i}$ and $\| b \|_\infty \leq K \cdot \| z \|_\infty$; extending $b$ by $0$ we obtain an element in $B_H^z$, which concludes the proof.
\end{proof}

The next proposition describes a way to obtain a uniform upper bound on a family of groups without actually having to estimate the individual vanishing moduli. It exploits Lemma \ref{lem:vm:finite}.

\begin{prop}[vanishing modulus with coefficients and direct sums]
\label{prop:dirsum}
Let $I$ be an arbitrary index set and let $n \geq 1$ be an integer. Let $\{\Gamma_i\}_{i \in I}$ be a family of groups and let $\Gamma := \bigoplus_{i \in I} \Gamma_i$. Suppose that the classes of Banach $\Gamma_i$-modules $\X_i$ are in $\Xsep$ for all $i \in I$. Then, if $\Gamma$ is $\Xsep$-boundedly acyclic, we have that
\[
\sup_{i \in I, E_i \in \X_i} \| H^n_b(\Gamma_i; E_i) \| < +\infty.
\]
\end{prop}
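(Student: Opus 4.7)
The plan is to exploit the retract structure of $\Gamma_i$ inside $\Gamma = \bigoplus_{i \in I} \Gamma_i$, which gives us a section/retraction pair and hence upper bounds on the vanishing moduli of the summands in terms of the vanishing modulus of $\Gamma$. By Lemma~\ref{lem:vm:finite}~(2), the hypothesis that $\Gamma$ is $\Xsep$-boundedly acyclic yields a finite constant
\[K := \sup_{E \in \Xsep} \| H^n_b(\Gamma; E) \| < +\infty,\]
and the goal is to show that this same $K$ bounds $\| H^n_b(\Gamma_i; E_i) \|$ for every $i \in I$ and every $E_i \in \X_i$.

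Fix $i \in I$ and denote by $s \colon \Gamma_i \hookrightarrow \Gamma$ the canonical inclusion as a direct summand and by $p \colon \Gamma \to \Gamma_i$ the projection killing every other factor, so $p \circ s = \id_{\Gamma_i}$. Given $E_i \in \X_i \subseteq \Xsep$, pull back the $\Gamma_i$-action via $p$ to view $E_i$ as a Banach $\Gamma$-module; the underlying Banach space is unchanged, so separability is preserved, and the predual inherits the contragredient action through $p$, so $E_i$ still belongs to $\Xsep$ for $\Gamma$. I would then take an arbitrary cocycle $z_i \in Z^n_b(\Gamma_i; E_i)$, form its pullback $z := p^\ast z_i \in C^n_b(\Gamma; E_i)$ defined by $z(g_0, \dots, g_n) = z_i(p(g_0), \dots, p(g_n))$, and check the two routine properties: $z$ is a $\Gamma$-invariant cocycle because $z_i$ is, and $\| z \|_\infty = \| z_i \|_\infty$ since $p$ is surjective.

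By definition of $K$, for every $\varepsilon > 0$ there exists $b \in C^{n-1}_b(\Gamma; E_i)^\Gamma$ with $\delta b = z$ and $\| b \|_\infty \leq (K + \varepsilon) \| z \|_\infty$. Restricting back along $s$ yields $b_i := s^\ast b \in C^{n-1}_b(\Gamma_i; E_i)$, explicitly $b_i(g_0, \dots, g_{n-1}) = b(s(g_0), \dots, s(g_{n-1}))$. Since the $\Gamma$-action on $E_i$ factors through $p$ and $p \circ s = \id$, the cochain $b_i$ is $\Gamma_i$-invariant and satisfies $\delta b_i = s^\ast \delta b = s^\ast z = z_i$ as well as $\| b_i \|_\infty \leq \| b \|_\infty \leq (K + \varepsilon) \| z_i \|_\infty$. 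Letting $\varepsilon \to 0$ gives $\| H^n_b(\Gamma_i; E_i) \| \leq K$, and taking the supremum over $i \in I$ and $E_i \in \X_i$ yields the desired uniform bound. No step is really an obstacle here: the argument is essentially functoriality of bounded cohomology under the retraction pair $(s, p)$, and the only point worth verifying carefully is that the pullback of an $\Xsep$-module along $p$ remains in $\Xsep$, which is immediate because $p$ is a surjective group homomorphism.
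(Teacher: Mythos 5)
Your argument is correct and is essentially the paper's own proof: the paper likewise invokes Lemma~\ref{lem:vm:finite}~(2) to get a uniform constant for $\Gamma$ over all of $\Xsep$, and then bounds $\| H^n_b(\Gamma_i; E_i) \|$ via the retraction pair $(\pi_i, s)$, the only difference being that the paper packages the pullback-along-$p$/restrict-along-$s$ step as a separate statement (Lemma~\ref{lem:vm:retractions}) while you inline it. Your explicit check that $p^\ast E_i$ stays in $\Xsep$ and the $\varepsilon$-bookkeeping are fine and, if anything, slightly more careful than the paper's wording.
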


A similar result for trivial real coefficients and unrestricted products was stated by Monod and Nariman~\cite[Proposition 2.6]{monodnariman}.\ Here we provide an elementary, self-contained proof, which allows for non-trivial coefficients as well. The core of the proposition lies in the following more general fact. Recall that a surjective homomorphism $r \colon \Gamma \to \Lambda$ is a \emph{retraction} if there exists 
an injective homomorphism $\iota \colon \Lambda \to \Gamma$ such that $r \circ \iota = \textup{id}_{\Lambda}$.

\begin{lemma}[vanishing modulus with coefficients and retractions]
\label{lem:vm:retractions}
Let $r \colon \Gamma \to \Lambda$ be a retraction, let $E$ be a Banach $\Lambda$-module and let $n \geq 1$ be an integer. Then $\| H^n_b(\Gamma; r^\ast E) \| \geq \| H^n_b(\Lambda; E) \|$.
\end{lemma}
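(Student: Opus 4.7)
The plan is to pull back a cocycle on $\Lambda$ to one on $\Gamma$ via $r$, use the vanishing modulus hypothesis on $\Gamma$ to find a controlled primitive, and then push that primitive back to $\Lambda$ via the section $\iota$. The hypothesis $r \circ \iota = \textup{id}_\Lambda$ ensures that this round trip is the identity, which is exactly what is needed to recover the original cocycle.

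Set $K := \| H^n_b(\Gamma; r^\ast E) \|$; if $K = +\infty$ there is nothing to prove, so assume $K < +\infty$. Given a cocycle $z \in Z^n_b(\Lambda; E)$, define $\tilde z := r^\ast z \in C^n_b(\Gamma; r^\ast E)$ by $\tilde z(\gamma_0, \ldots, \gamma_n) := z(r(\gamma_0), \ldots, r(\gamma_n))$. A direct computation using the $\Lambda$-invariance of $z$ and the definition of the $\Gamma$-action on $r^\ast E$ (namely $\gamma \cdot e = r(\gamma) \cdot e$) shows that $\tilde z$ is $\Gamma$-invariant; moreover $\delta \tilde z = r^\ast \delta z = 0$, so $\tilde z \in Z^n_b(\Gamma; r^\ast E)$, and clearly $\| \tilde z \|_\infty \leq \| z \|_\infty$. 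By definition of $K$ (applied to the cocycle $\tilde z$) there exists $\tilde b \in C^{n-1}_b(\Gamma; r^\ast E)^\Gamma$ with $\delta \tilde b = \tilde z$ and $\| \tilde b \|_\infty \leq K \cdot \| \tilde z \|_\infty \leq K \cdot \| z \|_\infty$.

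Next, restrict $\tilde b$ along $\iota$: define $b \in C^{n-1}_b(\Lambda; E)$ by
\[
b(\lambda_0, \ldots, \lambda_{n-1}) := \tilde b(\iota(\lambda_0), \ldots, \iota(\lambda_{n-1})),
\]
where we identify $\iota^\ast r^\ast E$ with $E$ as a Banach $\Lambda$-module via $r \circ \iota = \textup{id}_\Lambda$. The $\Lambda$-equivariance of $b$ follows from the $\Gamma$-equivariance of $\tilde b$ together with the identity $r(\iota(\lambda)) = \lambda$. Since the coboundary operator commutes with the pullbacks and $r \circ \iota = \textup{id}_\Lambda$, one computes
\[
(\delta b)(\lambda_0, \ldots, \lambda_n) = (\delta \tilde b)(\iota(\lambda_0), \ldots, \iota(\lambda_n)) = \tilde z(\iota(\lambda_0), \ldots, \iota(\lambda_n)) = z(\lambda_0, \ldots, \lambda_n),
\]
so $\delta b = z$. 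Finally $\| b \|_\infty \leq \| \tilde b \|_\infty \leq K \cdot \| z \|_\infty$. As $z$ was an arbitrary cocycle, this proves $\| H^n_b(\Lambda; E) \| \leq K = \| H^n_b(\Gamma; r^\ast E) \|$.

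There is no real obstacle: the whole argument is a routine diagram chase. The only point that requires minimal care is keeping track of module structures, specifically that $\iota^\ast r^\ast E = E$ as a $\Lambda$-module, which is what makes $b$ live in $C^{n-1}_b(\Lambda; E)$ rather than in some twisted version of it.
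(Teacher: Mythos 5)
Your proof is correct and follows exactly the same route as the paper's: pull back the cocycle on $\Lambda$ to $\Gamma$ via $r$, apply the vanishing modulus of $\Gamma$ to get a controlled primitive, and restrict that primitive back along the section $\iota$, using $r \circ \iota = \textup{id}_\Lambda$ to recover $z$. You spell out the invariance checks and the identification $\iota^\ast r^\ast E = E$ a bit more explicitly than the paper, but the argument is the same.
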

\begin{proof}
Let $K \coloneqq \| H^n_b(\Gamma; r^\ast E) \|$ and suppose that $K < +\infty$, otherwise there is nothing to show. Let $z \in C^n_b(\Lambda; E)^\Lambda$ be a cocycle and let $C_b^n(r)(z) \in Z^n_b(\Gamma; r^\ast E)$ be its pullback along $r$. Then, by definition of $K$ there exists a cochain $b \in C_b^{n-1}(\Gamma; r^\ast E)^{\Gamma}$ such that $\delta^{n-1}(b) = C^n_b(r)(z)$ and $\| b \|_\infty \leq K \cdot \| C^n_b(r)(z) \|_\infty$. Then the cochain $C^{n-1}_b(\iota) (b) \in C_b^{n-1}(\Lambda; E)^{\Lambda}$ satisfies 
\[
\delta^{n-1} \circ C^{n-1}_b(\iota)(b) = C_b^n(\iota) \circ \delta^{n-1}(b) = C_b^n(\iota) \circ C^n_b(r) (z) = z
\] and
\[ \| C_b^{n-1}(\iota)(b) \|_\infty \leq \| b \|_\infty \leq K \cdot \| C_b^{n}(r)(z) \|_\infty \leq K \cdot \| z \|_\infty.
\]
This shows that $\| H^n_b(\Lambda; E) \| \leq K$, whence the thesis.
\end{proof}

\begin{proof}[Proof of Proposition \ref{prop:dirsum}]
By Lemma \ref{lem:vm:finite}.\emph{(2)} we know that there exists a uniform constant $K$ such that $\| H^n_b(\Gamma; E) \| \leq K$ for every Banach $\Gamma$-module $E$ in $\Xsep$.
In particular, for all $i \in I$ and for all Banach $\Gamma_i$-modules $E_i\in\mathfrak{X}_i$, letting $\pi_i \colon \Gamma \to \Gamma_i$ denote the corresponding retraction (that is the projection onto the $i$-th factor), Lemma \ref{lem:vm:retractions} implies that $\| H^n_b(\Gamma_i; E_i) \| \leq \| H^n_b(\Gamma; \pi_i^\ast E_i) \| \leq K$.\ Since $i \in I$ and $E_i$ were arbitrary chosen, the thesis follows.
\end{proof}

\begin{rem}[Uniform vanishing moduli]
    As we mentioned before, we do not know whether in general direct unions of $\Xsep$-boundedly acyclic groups are $\Xsep$-boundedly acyclic.\ It is hard to find a counterexample, since all known examples of $\Xsep$-boundedly acyclic groups have uniform vanishing modulus, as we will see later in this paper.\ For $\Xr$-bounded acyclicity, one may think that binate groups \cite{fflm2} provide potential counterexamples. However their vanishing moduli (with respect to $\Xr$) are also uniformly bounded: This follows by combining Proposition~\ref{prop:dirsum} with the fact that direct sums of binate groups are binate~\cite[Proposition 1.7]{binate:product}.
\end{rem}

\section{Wreath products}\label{sec:wp}

In this section, we extend the vanishing result of the bounded cohomology of wreath products~\cite{monod:thompson} by considering \emph{permutational} wreath products. That this is possible was already noticed by Monod \cite[Section 4.2]{monod:thompson}. Though at first glance a minor generalisation, this paves the way to our new criterion for $\Xsep$-bounded acyclicity.

\subsection{Permutational wreath products}\label{subsec:permutational:wp}

We extend Monod's construction for computing the bounded cohomology of wreath products to the case of \emph{permutational wreath products}.

Let $\Gamma, A$ be groups and let $X$ be an $A$-set. The \emph{permutational (restricted) wreath product} $\Gamma\wr_X A$ is defined as $\left(\bigoplus_{x\in X}\Gamma\right)\rtimes A$, where $A$ acts on the subgroup $\bigoplus_{x\in X}\Gamma$ by permutation of the factors. The case $X = A$ with the regular action recovers the usual \emph{regular wreath product}, or simply wreath product, denoted by $\Gamma\wr A$.

\begin{setup}[Permutational wreath products with amenable acting group]\label{setup:perm:wreath:products}
We consider the following setup:
\begin{itemize}
    \item Let $\Gamma$ and $A$ be two countable groups, with $A$ amenable;
    \item Let $X$ be a countable $A$-set such that all the $A$-orbits are infinite;
    \item Let $(Y_0, \mu_0)$ be the standard Borel probability $\Gamma$-space obtained by endowing $\Gamma$ with a distribution of full support (so $Y_0 = \Gamma$ as a set);
    \item Let $Y \coloneqq Y_0^X$ be the standard Borel probability space endowed with the product measure $\mu$;
    \item The permutational wreath product $\Gamma \wr_X A$ acts on $Y$ as follows: $\bigoplus_{x \in X} \Gamma$ acts coordinate-wise and $A$ acts by shifting the coordinates.
\end{itemize}    
\end{setup}

An analogous space was considered by Monod~\cite[Section 2.3]{monod:thompson} in the case of regular wreath products, and the generalization is straightforward. We have the following:

\begin{prop}[$A$ amenable $\Rightarrow$ $Y$ is $\heza$]\label{prop:Y:heza}
    In the Setup~\ref{setup:perm:wreath:products}, the $\Gamma \wr_X A$-space $Y$ is a $\heza$-space.
\end{prop}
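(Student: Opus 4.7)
The proposition makes two claims about $Y$ as a $\Gamma \wr_X A$-space: Zimmer-amenability and high ergodicity. My plan is to obtain each from the general machinery of Section~\ref{subsec:HEZA}, exploiting the semidirect product structure $\Gamma \wr_X A = N \rtimes A$ with $N \coloneqq \bigoplus_{x \in X} \Gamma$. The key structural observation, which will be used for Zimmer-amenability, is that since $A$ is amenable, the normal subgroup $N$ is co-amenable in $\Gamma \wr_X A$.

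For Zimmer-amenability, I would first apply Proposition~\ref{prop:zimmer:ame:spaces}(1) to each factor $Y_0 = \Gamma$, endowed with the full-support distribution $\mu_0$ and left multiplication, to conclude that $Y_0$ is Zimmer-amenable as a $\Gamma$-space. Proposition~\ref{prop:zimmer:ame:spaces}(2) then yields Zimmer-amenability of the product $Y = \prod_{x \in X} Y_0$ with respect to the coordinate-wise action of $N$. Finally, since $N$ is co-amenable in $\Gamma \wr_X A$ and the action on $Y$ is non-singular by construction, Proposition~\ref{prop:zimmer:ame:spaces}(3) promotes Zimmer-amenability from $N$ to the full group $\Gamma \wr_X A$.

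For high ergodicity, the plan is to ignore the $N$-component of the action entirely and argue solely via the subgroup $A$. By Remark~\ref{rem:ergodicity:subgroups}, it suffices to show that the diagonal $A$-action on $Y^n$ is ergodic for each $n \geq 1$. Identifying $Y^n = Y_0^{X_n}$, where $X_n$ is the disjoint union of $n$ copies of $X$, this diagonal $A$-action is precisely the generalised Bernoulli shift of $A$ on $Y_0^{X_n}$ associated to the obvious permutation action $A \actson X_n$. The $A$-orbits in $X_n$ are just $n$ copies of the $A$-orbits in $X$, hence infinite by hypothesis. Assuming $\Gamma$ is non-trivial (otherwise $\Gamma \wr_X A = A$ is amenable and there is nothing to prove), $\mu_0$ is not concentrated at a single point, so Proposition~\ref{prop:KechrisTsankov:ergodic} delivers the ergodicity of $A \actson Y^n$, and hence of $\Gamma \wr_X A \actson Y^n$.

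I do not anticipate any serious obstacle: the proof is essentially a matter of identifying which ingredient of the wreath product carries which property. The amenability of $A$ is needed only to upgrade Zimmer-amenability across $N \leq \Gamma \wr_X A$, while high ergodicity reduces, by passing to the subgroup $A$, to a pure generalised Bernoulli shift whose orbit hypothesis is built into Setup~\ref{setup:perm:wreath:products}.
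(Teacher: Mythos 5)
Your proposal is correct and follows essentially the same route as the paper: high ergodicity is obtained by restricting to the subgroup $A$ and invoking Proposition~\ref{prop:KechrisTsankov:ergodic} together with Remark~\ref{rem:ergodicity:subgroups} (your explicit identification $Y^n \cong Y_0^{X \sqcup \cdots \sqcup X}$ is exactly the argument inside that proposition's proof), while Zimmer-amenability is deduced via Proposition~\ref{prop:zimmer:ame:spaces}(1), then (2) for $\bigoplus_{x\in X}\Gamma$, then (3) using co-amenability of the base group coming from the amenability of $A$. Your extra remark handling the trivial-$\Gamma$ case is a harmless refinement not present in the paper.
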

\begin{proof}
    Under the hypothesis of Setup~\ref{setup:perm:wreath:products}, by Proposition \ref{prop:KechrisTsankov:ergodic}, we have that the action $A \actson (Y, \mu)$ is highly ergodic and so, by Remark~\ref{rem:ergodicity:subgroups},  the same holds for $\Gamma \wr_X A \actson (Y, \mu)$.

    We are left to show that the action is also Zimmer-amenable.
    By Proposition~\ref{prop:zimmer:ame:spaces}.\emph{(1)} the space $Y_0$ is a Zimmer-amenable $\Gamma$-space. Moreover, Proposition~\ref{prop:zimmer:ame:spaces}.\emph{(2)} shows that $Y$ is a Zimmer-amenable $\bigoplus_{x \in X} \Gamma$-space (because $X$ is countable).\ Since the base group $\bigoplus_{x \in X} \Gamma$ is co-amenable in $\Gamma \wr_X A$ (because $A$ is amenable), by Proposition~\ref{prop:zimmer:ame:spaces}.\emph{(3)} we conclude that $Y$ is a Zimmer-amenable $\Gamma \wr_X A$-space.
\end{proof}

\begin{rem}[Infinite orbits]
    The assumption on the cardinality of the $A$-orbits is essential, since as we observed in Proposition~\ref{prop:KechrisTsankov:ergodic} the $A$-action on a generalised Bernoulli shift is ergodic if and only if the cardinality of each orbit of the action $A \actson X$ is infinite.
\end{rem}

\begin{cor}[$A$ amenable $\Rightarrow$ $\Gamma \wr_X A$ is $\Xsep$-boundedly acyclic]
\label{cor:HEZA:permutational}
In the Setup~\ref{setup:perm:wreath:products}, the permutational wreath product $\Gamma \wr_X A$ is $\Xsep$-boundedly acyclic.
\end{cor}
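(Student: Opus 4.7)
The plan is to combine the two main ingredients already established in this section with the general criterion from Section~\ref{subsec:HEZA}. Concretely, Proposition~\ref{prop:Y:heza} shows that the probability space $Y = Y_0^X$ of Setup~\ref{setup:perm:wreath:products} is a $\heza$-space for $\Gamma \wr_X A$, and Corollary~\ref{cor:zimmer:ergodic} asserts that any countable group that admits a $\heza$-space is $\Xsep$-boundedly acyclic. So the proof reduces to checking that the hypotheses of Corollary~\ref{cor:zimmer:ergodic} are met.

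First I would verify countability of $\Gamma \wr_X A$. Since $\Gamma$, $A$, and $X$ are all countable by Setup~\ref{setup:perm:wreath:products}, the base group $\bigoplus_{x \in X} \Gamma$ is a countable direct sum of countable groups, hence countable; therefore the semidirect product $\Gamma \wr_X A = \bigl(\bigoplus_{x \in X} \Gamma\bigr) \rtimes A$ is also countable. Second, I would invoke Proposition~\ref{prop:Y:heza} to obtain that $Y$ is a $\heza$-space for $\Gamma \wr_X A$. Finally, I would apply Corollary~\ref{cor:zimmer:ergodic} to conclude that $\Gamma \wr_X A$ is $\Xsep$-boundedly acyclic.

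There is essentially no obstacle: all the hard work has already been carried out in Proposition~\ref{prop:Y:heza} (which in turn relies on the ergodicity criterion of Kechris--Tsankov via Proposition~\ref{prop:KechrisTsankov:ergodic}, the stability of Zimmer-amenability under countable direct sums via Proposition~\ref{prop:zimmer:ame:spaces}.\emph{(2)}, and the promotion to the full wreath product via co-amenability of the base in Proposition~\ref{prop:zimmer:ame:spaces}.\emph{(3)}, using amenability of $A$). The corollary is therefore an immediate consequence, and the proof should be no more than two or three lines of text.
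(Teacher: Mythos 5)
Your proposal is correct and follows exactly the same route as the paper: combine Proposition~\ref{prop:Y:heza} (which shows $Y$ is a $\heza$-space for $\Gamma \wr_X A$) with Corollary~\ref{cor:zimmer:ergodic}. The only addition is your explicit countability check, which the paper leaves implicit but which is indeed needed for Corollary~\ref{cor:zimmer:ergodic} to apply.
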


\begin{proof}
The proof is now a straightforward combination of Corollary~\ref{cor:zimmer:ergodic} and Proposition~\ref{prop:Y:heza}.
\end{proof}

\subsection{Vanishing modulus with coefficients}\label{subsec:van:mod:coeff}

Quite surprisingly, the vanishing result in Corollary~\ref{cor:HEZA:permutational} is very controlled. Our next goal is to make this precise. This fact will be at the heart of the proof of Theorem~\ref{intro:thm:ccc}:

\begin{prop}[vanishing modulus and permutational wreath products]
\label{prop:vanishingmodulus:wreathproducts}
For every integer $n \geq 1$ there exists $K_n \in [0, \infty)$ with the following property:

Let $\Gamma, X$ and $A$ be as in Setup~\ref{setup:perm:wreath:products}.
Let $\Lambda$ be a quotient of $\Gamma \wr_X A$ by a normal amenable group $N$. Then we have $\| H^n_b(\Lambda; E) \| \leq K_n$ for every separable dual Banach $\Lambda$-module $E$.
\end{prop}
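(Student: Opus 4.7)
The plan is to argue by contradiction, using the direct-sum and retraction machinery of Proposition~\ref{prop:dirsum} and Lemma~\ref{lem:vm:retractions} applied to a universal wreath product built from a putative sequence of counterexamples. First I would reduce to the case $N=1$. Given a cocycle $z\in Z^n_b(\Lambda;E)^{\Lambda}$, its pullback $\pi^*z$ along $\pi\colon\Gamma\wr_X A\to\Lambda$ has the same norm, and any $(\Gamma\wr_X A)$-invariant bounded primitive $b$ of $\pi^*z$ can be averaged over $N^n$ using an invariant mean (which exists because $N$ is amenable, and which is well-defined on $E$-valued bounded maps via the predual of $E$) to yield a primitive $\tilde b$ of $\pi^*z$ that is additionally invariant under right $N$-multiplication in each argument. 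Such $\tilde b$ descends to a $\Lambda$-invariant primitive of $z$ of norm at most $\|b\|_\infty$. Hence $\|H^n_b(\Lambda;E)\|\leq\|H^n_b(\Gamma\wr_X A;\pi^*E)\|$, so it suffices to bound $\|H^n_b(\Gamma\wr_X A;E')\|$ uniformly across all $(\Gamma,X,A)$ satisfying Setup~\ref{setup:perm:wreath:products} and all separable dual Banach $(\Gamma\wr_X A)$-modules $E'$.

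If no such uniform $K_n$ existed, I could extract a countable sequence of triples $(\Gamma_k,X_k,A_k)$ as in Setup~\ref{setup:perm:wreath:products} together with separable dual modules $E_k$ such that $\|H^n_b(\Gamma_k\wr_{X_k}A_k;E_k)\|\to\infty$. Setting $\Gamma':=\bigoplus_k\Gamma_k$, $A':=\bigoplus_k A_k$, and $X':=\bigsqcup_k X_k$ with the componentwise $A'$-action, the triple $(\Gamma',X',A')$ again satisfies Setup~\ref{setup:perm:wreath:products}: $A'$ is amenable as a direct sum of amenable groups, and every $A'$-orbit on $X'$ coincides with some $A_k$-orbit on $X_k$, hence is infinite. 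Corollary~\ref{cor:HEZA:permutational} then makes $\Gamma'\wr_{X'}A'$ into an $\Xsep$-boundedly acyclic group, and Lemma~\ref{lem:vm:finite}.(2) supplies a single $K<\infty$ with $\|H^n_b(\Gamma'\wr_{X'}A';E')\|\leq K$ for every separable dual Banach $(\Gamma'\wr_{X'}A')$-module $E'$. I would then define the natural inclusion $\iota\colon\bigoplus_k(\Gamma_k\wr_{X_k}A_k)\hookrightarrow\Gamma'\wr_{X'}A'$ extending each $g_k\colon X_k\to\Gamma_k$ by the identity off $X_k$, together with the projection $r$ reading off the $k$-th component over $X_k$; a routine verification (using that $A_j$ acts trivially on $X_k$ for $j\neq k$) shows that both are homomorphisms and $r\circ\iota=\textup{id}$. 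Lemma~\ref{lem:vm:retractions} applied to $r$, together with the observation that pullbacks preserve separable dual modules, gives
\[
\|H^n_b\bigl(\textstyle\bigoplus_k(\Gamma_k\wr_{X_k}A_k);E\bigr)\|\leq\|H^n_b(\Gamma'\wr_{X'}A';r^*E)\|\leq K
\]
for every separable dual $E$ over the direct sum. Thus $\bigoplus_k(\Gamma_k\wr_{X_k}A_k)$ is $\Xsep$-boundedly acyclic, and Proposition~\ref{prop:dirsum} delivers $\sup_{k,E_k}\|H^n_b(\Gamma_k\wr_{X_k}A_k;E_k)\|<\infty$, contradicting the choice of sequence.

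The conceptual heart of the argument is the universal construction: one must verify that assembling the $(\Gamma_k,X_k,A_k)$ into a single triple preserves all hypotheses of Setup~\ref{setup:perm:wreath:products}, in particular the infinite-orbit condition, and that the resulting ambient wreath product genuinely retracts onto the direct sum of the factors. The reduction step, while standard in spirit, also requires some care since the averaging over $N^n$ must be carried out for Banach-valued functions, which is precisely where the dual structure of the coefficient module is used. Once these algebraic and analytic verifications are in place, the combination of Lemma~\ref{lem:vm:finite}.(2), Lemma~\ref{lem:vm:retractions}, and Proposition~\ref{prop:dirsum} produces the uniform bound automatically, entirely bypassing any direct quantitative analysis of the vanishing modulus of a single $\Gamma\wr_X A$ beyond the qualitative finiteness guaranteed by $\Xsep$-bounded acyclicity.
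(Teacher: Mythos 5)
Your proposal is correct, and while its overall skeleton (argue by contradiction, assemble the counterexamples into a direct sum, and invoke Lemma~\ref{lem:vm:finite}.\emph{(2)} together with Proposition~\ref{prop:dirsum}) matches the paper, the two key steps are carried out differently. First, you dispose of the amenable kernel $N$ up front by a cochain-level averaging argument, which is really a quantitative strengthening of Gromov's Mapping Theorem at the level of vanishing moduli; the paper instead keeps the quotients $\Lambda_i = \Gamma_i \wr_{X_i} A_i / N_i$ in the contradiction sequence and absorbs all the kernels at once, applying the (qualitative) Mapping Theorem to the amenable group $\bigoplus_i N_i$ inside the direct sum. Your route needs the verification that averaging over right $N$-translates in each variable is norm non-increasing, commutes with the $G$-action (this is where duality of $E$ enters, via weak-$\ast$ integration against iterated means), and still produces a primitive of $\pi^\ast z$ --- all true, but worth spelling out --- whereas the paper's route needs nothing beyond Theorem~\ref{thm:mapping}. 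Second, to see that $\bigoplus_k (\Gamma_k \wr_{X_k} A_k)$ is $\Xsep$-boundedly acyclic, the paper re-runs the $\heza$-space construction by hand (a common base $Y$, a product of generalised Bernoulli shifts, and ergodicity of $\bigoplus_k A_k \actson \bigsqcup_k X_k$ via Proposition~\ref{prop:KechrisTsankov:ergodic}), while you realize the direct sum as a retract of the single permutational wreath product $\bigl(\bigoplus_k \Gamma_k\bigr) \wr_{\bigsqcup_k X_k} \bigl(\bigoplus_k A_k\bigr)$, which again satisfies Setup~\ref{setup:perm:wreath:products}, and then transfer the uniform bound through Lemma~\ref{lem:vm:retractions}; this is a cleaner, more formal argument that reuses Corollary~\ref{cor:HEZA:permutational} as a black box instead of repeating measured dynamics (and your retraction $r$, given by projecting the fibre over $x \in X_k$ onto its $\Gamma_k$-coordinate, is indeed a homomorphism because the $A'$-action preserves each $X_k$). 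Two small points to tighten: the displayed inequality only gives vanishing of the direct sum in the fixed degree $n$, so either run the same retraction argument in every degree or note that a group-theoretic retract of an $\Xsep$-boundedly acyclic group is $\Xsep$-boundedly acyclic, before quoting Proposition~\ref{prop:dirsum} as stated; and the averaging step in your reduction should explicitly record that $\pi^\ast z$ is already invariant under right $N$-translations, so the averaged cochain is still a primitive of it.
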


\begin{proof}
Let $n \geq 1$ be an integer. Suppose by contradiction that there does not exist such $K_n < +\infty$. Then we can find a countable sequence
\[
\{\Lambda_i = \Gamma_i \wr_{X_i} A_i \slash N_i\}_{i \in \N},
\]
where each $\Gamma_i \wr_{X_i} A_i$ is as in Setup~\ref{setup:perm:wreath:products}, and $N_i$ is a normal amenable subgroup; 
and for each $i \in \N$ a separable dual Banach $\Lambda_i$-module $E_i$ such that
\[
\sup_{i \in \N} \| H^n_b(\Lambda_i; E_i) \| = +\infty.
\]

{\bf Claim}: The direct sum $\bigoplus_{i \in \N} \Lambda_i$ is $\Xsep$-boundedly acyclic.

\medskip

Once the claim is proved, it will be in contradiction with Proposition~\ref{prop:dirsum}. This will imply that the original assumption was false and the required $K_n<\infty$ exists. 

\medskip
\emph{Proof of the claim:} Since the kernel of the quotient map $\bigoplus_{i \in \N} (\Gamma_i \wr_{X_i} A_i) \to \bigoplus_{i \in \N} \Lambda_i$ is the amenable group $\bigoplus_{i \in \N} N_i$ (being the direct sum of amenable groups), using Theorem~\ref{thm:mapping} it suffices to show the $\Xsep$-bounded acyclicity of the group $G \coloneqq \bigoplus_{i \in \N} (\Gamma_i \wr_{X_i} A_i)$.

We are going to show that there exists a $\heza$-space for $G$. Let $Y$ be $G$ itself endowed with a distribution of full support. This is a Zimmer-amenable space for $G$, and for each subgroup $\Gamma_i$, by Proposition~\ref{prop:zimmer:ame:spaces}.\emph{(1)}. Hence for each $i \in \N$ we can construct a generalised Bernoulli shift $A_i \actson Y^{X_i}$ and obtain an action of $\Gamma_i \wr_{X_i} A_i$ on $Y^{X_i}$ as explained in Setup~\ref{setup:perm:wreath:products}. Proposition~\ref{prop:Y:heza} still applies in this case (we only changed our space $Y$ in the Setup~\ref{setup:perm:wreath:products}) and so we have that for each $i \in \N$ the standard Borel probability space $Y^{X_i}$ is a $\heza$-space for $\Gamma_i \wr_{X_i} A_i$. 

We consider now the standard Borel probability space $\prod_{i \in \N} Y^{X_i}$ endowed with the product measure. We let $G$ act on it coordinate-wise. By Proposition~\ref{prop:zimmer:ame:spaces}.\emph{(2)} this action is still non-singular and Zimmer-amenable. We are left to prove that the action $G \actson \prod_{i \in \N} Y^{X_i}$ is also highly ergodic.

Let us consider the subgroup $\bigoplus_{i \in \N} A_i \leq G$. This group 
by construction acts on $\prod_{i \in \N} Y^{X_i} = Y^{\sqcup_{i \in \N} 
X_i}$ by the shifting coordinate-wise action $\bigoplus_{i \in \N} A_i 
\actson \sqcup_{i \in \N}X_i$. Hence by hypothesis of 
Setup~\ref{setup:perm:wreath:products} all the orbits are 
infinite (since the orbits of each action $A_i \actson X_i$ are so), the 
group $\bigoplus_{i \in \N} A_i$ is countable and the set $\sqcup_{i 
\in \N} X_i$ is as well. According to Proposition \ref{prop:KechrisTsankov:ergodic}, this new generalised 
Bernoulli shift is highly ergodic. 
Since $\bigoplus_{i \in \N} 
A_i$ is a subgroup of $G$, Remark~\ref{rem:ergodicity:subgroups} shows 
that the action of $G$ on $\prod_{i \in \N} Y^{X_i}$ is also highly ergodic. By Corollary \ref{cor:zimmer:ergodic}, $G$ is $\Xsep$-boundedly acyclic, whence the desired contradiction.
\end{proof}

\section{Commuting cyclic conjugates and bounded acyclicity}\label{sec:ccc:bac}

In this section we introduce our main algebraic criterion of admitting \emph{commuting cyclic conjugates} and we show that groups with commuting cyclic conjugates are $\Xsep$-boundedly acyclic. 

\subsection{Commuting cyclic conjugates and $\Xsep$-bounded acyclicity}
\label{subsec:main:thm}

We recall the definition of commuting cyclic conjugates from the introduction:

\begin{defi}[commuting cyclic conjugates]\label{def:comm:cycl:conj}
A group $\Gamma$ has \emph{commuting cyclic conjugates} if for every finitely generated subgroup $H \leq \Gamma$ there exist $t \in \Gamma$ and $n \in \Z_{\geq 2} \cup \{\infty\}$ such that $[H, {}^{t^p}H] = 1$ for $1 \leq p < n$ and $[H, t^n] = 1$.\ Here we read that $t^\infty = 1$ and ${}^{t^p}H$ denotes the conjugation $t^p H t^{-p}$.
\end{defi}

It will also be useful to treat the special situation in which $n = \infty$.

\begin{defi}[commuting $\Z$-conjugates]\label{def:comm:Z:conj}
A group has \emph{commuting $\Z$-conjugates} if for every finitely generated subgroup $H \leq \Gamma$ there exists $t \in \Gamma$ such that $[H, {}^{t^p}H] = 1$ for all integers $p \geq 1$.
\end{defi}

\begin{rem}[Order of $t$]
    In the definitions above, if the order of  $t$ is less than or equal to $n-1$ (or finite when $n = \infty$), then the group $H$ has to be abelian.
\end{rem}

Our main result is the following more precise version of Theorem \ref{intro:thm:ccc}:

\begin{thm}[commuting cyclic conjugates $\Rightarrow$ $\Xsep$-boundedly acyclic]
\label{thm:cac}
The following holds:
\begin{enumerate}
\item If $\Gamma$ is a group with commuting cyclic conjugates, then it is $\Xsep$-boundedly acyclic. 

\item For every integer $n \geq 1$, there exists a constant $K_n \in [0, \infty)$ such that for every group $\Gamma$ with commuting cyclic conjugates, and every separable dual Banach $\Gamma$-module $E$, we have that $\| H^n_b(\Gamma; E) \| \leq K_n$. Hence, the vanishing modulus is uniform.
\end{enumerate}
\end{thm}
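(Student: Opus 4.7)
The plan is to derive both parts simultaneously by combining Proposition~\ref{prop:vanishingmodulus:wreathproducts} with Proposition~\ref{prop:dirun}; to avoid clashing notation I will write $N$ for the CCC parameter (reserving $n$ for the cohomological degree). Since $\Gamma$ is the directed union of its finitely generated subgroups, and since part (2) implies part (1) via Lemma~\ref{lem:vm:finite}, it suffices to produce, for each finitely generated $H \leq \Gamma$, a subgroup $L \leq \Gamma$ with $H \leq L$ which is a quotient of some permutational wreath product $\Gamma' \wr_X A$ satisfying Setup~\ref{setup:perm:wreath:products} by an amenable normal subgroup; the constant $K_n$ from Proposition~\ref{prop:vanishingmodulus:wreathproducts} then bounds $\|H^n_b(L;E)\|$ uniformly, and Proposition~\ref{prop:dirun} transfers the bound to $\Gamma$.

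The easy case is $N=\infty$ in the CCC pair $(t,N)$ for $H$. I would set $L \coloneqq \langle H,t\rangle$ and consider the homomorphism $\phi\colon H \wr \Z \to L$ defined on generators by $h_p \mapsto t^p h t^{-p}$ and by sending the generator of $\Z$ to $t$; it is well defined by pairwise commutation of the cyclic conjugates $\{t^p H t^{-p}\}_{p\in\Z}$ and surjective by construction. To see that $\ker\phi$ is amenable, observe that any $(h_p)_p \in \ker\phi \cap \bigoplus_{\Z} H$ satisfies $\prod_p t^p h_p t^{-p}=1$, so for each $q$ the element $t^q h_q t^{-q}$ equals an explicit product of elements centralising $t^q H t^{-q}$, and therefore itself lies in $Z(t^q H t^{-q}) = t^q Z(H) t^{-q}$; this forces $h_q \in Z(H)$. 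Hence $\ker\phi \cap \bigoplus_\Z H \subseteq \bigoplus_\Z Z(H)$ is abelian, and the full kernel is a metabelian extension of this abelian group by a cyclic subgroup of $\Z$, hence amenable. This realises $L$ as a quotient of $H \wr \Z$ (which fits Setup~\ref{setup:perm:wreath:products} with $\Gamma'=H$ and $A=X=\Z$) by an amenable normal subgroup.

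The case $N<\infty$ is the main obstacle, and is exactly what is flagged in the introduction as requiring a careful inductive construction. The analogous map $H \wr_{\Z/N} \Z \twoheadrightarrow \langle H,t\rangle$ still has amenable (metabelian) kernel by the same argument, but now $A=\Z$ acts on $X=\Z/N$ through a finite quotient, violating the infinite-orbit hypothesis. To overcome this, I would iterate CCC: set $H_0 \coloneqq H$ with CCC pair $(t_0,N_0)$, and recursively $H_{k+1} \coloneqq \langle H_k, t_k\rangle$ with CCC pair $(t_{k+1}, N_{k+1})$. If at some finite stage $N_k=\infty$, the easy case applies to $H_k \supseteq H$. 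Otherwise, the central elements $s_k \coloneqq t_k^{N_k}$ (each centralising $H_k$, and hence pairwise commuting by induction on $k$) accumulate into an infinite abelian subgroup of $L^\ast \coloneqq \bigcup_k H_k \leq \Gamma$, which is designed to play the role of the infinite-orbit amenable acting group in the sought presentation of $L^\ast$.

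The hardest part will be assembling this inductive data into a single presentation $\Gamma' \wr_X A \twoheadrightarrow L^\ast$ fitting Setup~\ref{setup:perm:wreath:products} with amenable kernel: the indexing set $X$ and the acting group $A$ must be realised as colimits of the finite-stage indexing sets and acting groups so that the $A$-action acquires infinite orbits in the limit, and one must verify that the metabelian kernels produced at each finite stage assemble to an amenable kernel in the colimit, controlling the interaction between the $t_k$ at different stages. Once this is done, Proposition~\ref{prop:vanishingmodulus:wreathproducts} delivers the uniform bound $\|H^n_b(L^\ast;E)\| \leq K_n$ for every $E \in \Xsep$, and Proposition~\ref{prop:dirun} transfers the same bound to $\Gamma$, yielding simultaneously both the $\Xsep$-bounded acyclicity and the uniform control on the vanishing modulus.
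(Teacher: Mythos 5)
Your overall reduction is the same as the paper's: realise each finitely generated $H \leq \Gamma$ inside a subgroup that is a quotient of a permutational wreath product as in Setup~\ref{setup:perm:wreath:products} by an amenable normal subgroup, then invoke Proposition~\ref{prop:vanishingmodulus:wreathproducts} and Proposition~\ref{prop:dirun}; your treatment of the case $N=\infty$ (commuting $\Z$-conjugates) is correct and essentially identical to the paper's. However, the finite-$N$ case, which you yourself flag as ``the hardest part'', is precisely the content of the paper's key technical step (Proposition~\ref{lem:cac:wreathproduct}, implication (1)$\Rightarrow$(2)), and you do not carry it out; moreover the mechanism you sketch does not work. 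The elements $s_k = t_k^{N_k}$ need not generate an infinite subgroup: in the typical applications each $t_k$ is an involution with $N_k = 2$ (e.g. $\textup{GL}(R)$, $\mathrm{Cr}_\infty(K)$, $\Aut_\infty(\Gamma^{*\infty})$, $\iet([0,\infty))$), so every $s_k$ is trivial. And even when nontrivial, the $s_k$ centralise the $H_k$, so in any wreath-product presentation they are exactly the elements that must act trivially on the index set --- they correspond to the stabiliser-type subgroup, not to the source of infinite orbits; the infinite orbit has to be produced by combining the \emph{displacing} data from all stages of the induction.

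The paper resolves this with an explicit inductive construction that your colimit plan would have to reproduce: set $A_1 = \Z$ and $A_{i+1} = A_i \wr_{\Z/n_{i+1}} \Z$, let $A$ be the direct union (countable and amenable), and let $B$ be the direct union of subgroups $B_{i+1}$ consisting of tuples whose $0$-coordinate lies in $B_i$ and whose $\Z$-coordinate is a multiple of $n_{i+1}$. That $B$ has infinite index in $A$ is proved by an orbit--stabiliser count: there are at least $n_2 \cdots n_i \geq 2^{i-1}$ many $A_i$-conjugates of $A_1$, while $B$ centralises $A_1$. A homomorphism $f \colon A \to \Gamma$ is then built stage by stage from the $t_i$, and the two conditions ($[H, {}^{f(a)}H]=1$ for $a \notin B$, $[H, f(b)]=1$ for $b \in B$) are verified by induction; finally $X = A/B$ is the infinite transitive $A$-set and the induced map $H \wr_X A \to \Gamma$ has amenable kernel by the same centraliser argument you used for $N = \infty$. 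Without a construction of this kind your argument proves the theorem only for groups with commuting $\Z$-conjugates, not for general commuting cyclic conjugates.
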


Our result is built on Proposition \ref{prop:vanishingmodulus:wreathproducts} and so we have to rephrase the definition of commuting cyclic conjugates in terms of morphisms from wreath products. This is the content of the following technical proposition:

\begin{prop}[commuting cyclic conjugates and wreath products]
\label{lem:cac:wreathproduct}
Let $\Gamma$ be a group. Then the following are equivalent:
\begin{enumerate}
    \item $\Gamma$ has commuting cyclic conjugates;
    \item For every finitely generated subgroup $H \leq \Gamma$ there exist a countable amenable group $A$, an infinite-index subgroup $B < A$, and a homomorphism $f \colon A \to \Gamma$ such that
\begin{itemize}
    \item[(i)] $[H, {}^{f(a)}H] = 1$ for all $a \in A \setminus B$;
    \item[(ii)] $[H, f(b)] = 1$ for all $b \in B$;
\end{itemize}
    \item For every finitely generated subgroup $H \leq \Gamma$ there exist a countable amenable group $A$, an infinite transitive $A$-set $X$ and a homomorphism $$f\colon H \wr_X A \to \Gamma$$ such that $f$ restricts to the inclusion on some factor $H$, and $f$ has amenable kernel.
\end{enumerate}

\end{prop}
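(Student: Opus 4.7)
The plan is to prove the cyclic chain of implications $(3) \Rightarrow (2) \Rightarrow (1) \Rightarrow (3)$. The first two implications are essentially formal and rest on the stabilizer--orbit correspondence inside the permutational wreath product; the third is the technical heart of the proposition.

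For $(3) \Rightarrow (2)$, set $B := \mathrm{Stab}_A(x_0)$ for the distinguished point $x_0 \in X$ on which $f$ restricts to the inclusion on $H_{x_0}$. Since $X \cong A/B$ is infinite, $B$ has infinite index in $A$, and the restriction of $f$ to the acting group $A$ inside $H \wr_X A$ is the desired homomorphism. Conditions (i) and (ii) follow directly from the wreath product structure: for $a \notin B$, conjugation by $a$ sends $H_{x_0}$ to the distinct commuting factor $H_{a \cdot x_0}$; for $b \in B$, the element $b$ fixes $x_0$ and hence commutes pointwise with $H_{x_0}$. For $(2) \Rightarrow (1)$, pick any $a \in A \setminus B$ and set $t := f(a)$; define $n$ as the smallest positive integer with $a^n \in B$, or $\infty$ if no such integer exists (so $n \geq 2$ since $a \notin B$). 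Conditions (i) and (ii) applied to $a, a^2, \ldots, a^{n-1}$ and $a^n$ respectively yield the required commutation relations.

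For $(1) \Rightarrow (3)$, consider first the easy case $n = \infty$: take $A = \Z$ acting regularly on $X = \Z$, and define $f \colon H \wr \Z \to \Gamma$ on the base by $(h_k)_k \mapsto \prod_k {}^{t^k} h_k$ and on $\Z$ by $m \mapsto t^m$. Pairwise commutation of the subgroups $\{{}^{t^k}H\}_{k \in \Z}$ makes this a well-defined homomorphism restricting to the inclusion on the $0$-th factor. Amenability of $\ker f$ rests on a general lemma: for commuting subgroups $K, K'$ of any group, $K \cap K' \leq Z(\langle K, K' \rangle)$. Applied iteratively, the $k$-th coordinate of any element in the base kernel $K_0 := \ker f \cap \bigoplus_k H_k$ lies in $Z\bigl(\sum_j {}^{t^j}H\bigr)$, so $K_0$ is abelian; since $\ker f / K_0$ embeds into $\Z$, the full kernel is abelian-by-cyclic and hence amenable.

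The main obstacle is the finite $n$ case: the naive map collapses distinct factors of the wreath product onto identical conjugates ${}^{t^{k \bmod n}}H$, which do not commute with themselves when $H$ is non-abelian, so the homomorphism property fails. The strategy is an inductive enlargement of $H$: since (3) for a larger finitely generated subgroup $H' \supseteq H$ implies (3) for $H$ (by restricting along $H \wr_X A \hookrightarrow H' \wr_X A$, and noting that subgroups of amenable groups are amenable), one iterates (1) on the ascending chain $\widehat{H}_0 := \langle H, t \rangle$, $\widehat{H}_{i+1} := \langle \widehat{H}_i, t_i \rangle$ to produce elements $\{t_i\}_{i \geq 0}$ with associated orders $\{n_i\}_{i \geq 0}$. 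If at some stage $n_i = \infty$, the $n = \infty$ construction applied to $\widehat{H}_i$ concludes. Otherwise, the tower $\{t_i^{n_i}\}_{i \geq 0}$ consists of pairwise commuting elements jointly centralizing $H$, from which a countable amenable group $A$ and an infinite transitive $A$-set $X$ can be built so that the resulting map $H \wr_X A \to \Gamma$ has amenable kernel, verified by the same centrality argument above.
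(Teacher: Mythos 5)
The easy implications in your proposal are correct. The reversal $(3) \Rightarrow (2)$ via $B := \mathrm{Stab}_A(x_0)$ and the derivation $(2) \Rightarrow (1)$ are clean and slightly reorganize the paper's $(2) \Rightarrow (3) \Rightarrow (1)$ chain, and your centralizer lemma ($K \cap K' \leq Z(\langle K, K'\rangle)$ when $[K,K']=1$) is a tidy repackaging of the abelian-kernel argument. Your $n=\infty$ case of $(1) \Rightarrow (3)$ is also fine: the map $(h_k)_k \mapsto \prod_k {}^{t^k} h_k$ is a homomorphism because the conjugates $\{{}^{t^k}H\}$ pairwise commute, and the kernel is abelian-by-$\Z$, hence amenable.

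The gap is exactly where you flagged the difficulty: the finite-$n$ case of $(1) \Rightarrow (3)$. You correctly note that the naive map along a single cyclic conjugation collapses, you correctly set up the ascending tower $\Lambda_0 = H$, $\Lambda_{i} = \langle \Lambda_{i-1}, t_i\rangle$ with exponents $n_i$, and you correctly observe that the elements $t_i^{n_i}$ pairwise commute and centralize $H$. But then the proposal stops: \emph{``from which a countable amenable group $A$ and an infinite transitive $A$-set $X$ can be built \ldots verified by the same centrality argument above.''} This is an assertion, not a construction, and the construction is the actual content of the proposition. Note in particular that the group generated by the $t_i^{n_i}$ is abelian and centralizes $H$, so on its own it produces \emph{no} non-trivial commuting conjugates of $H$ and no transitive infinite action; the infinitude has to be manufactured by combining the cyclic pieces of size $n_i$ across all levels of the tower. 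The paper does this by building $A$ as a direct union of iterated permutational wreath products $A_1 = \Z$, $A_{i+1} = A_i \wr_{\Z/n_{i+1}} \Z$, defining nested subgroups $B_i < A_i$ encoding ``all coordinates at level $i+1$ fix $H$,'' proving via orbit--stabilizer that $B$ has infinite index precisely because each $n_i \geq 2$, and defining $f$ on $A_{i+1}$ inductively by $f((a_p)_p, m) = \bigl(\prod_{p=0}^{n_{i+1}-1}{}^{t_{i+1}^p}f(a_p)\bigr)t_{i+1}^m$, with a delicate induction to verify the homomorphism property and conditions (i)--(ii). None of this appears in the proposal, and without it the implication is not proved. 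Everything else in your argument is sound, so the proposal stands or falls on supplying this missing construction.
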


We postpone the proof of Proposition~\ref{lem:cac:wreathproduct} to the next Section~\ref{sec:proof:technical:prop} and we give here the proof of Theorem~\ref{thm:cac} (and thus of Theorem~\ref{intro:thm:ccc}):

\begin{proof}[Proof of Theorem \ref{thm:cac}]
Let $\Gamma$ be a group with commuting cyclic conjugates, and let $n \geq 1$. Let $H \leq \Gamma$ be a finitely generated subgroup. By Proposition~\ref{lem:cac:wreathproduct} there exist a countable amenable group $A$, an infinite transitive $A$-set $X$ (which is necessarily countable) and a morphism $f \colon H \wr_X A \to \Gamma$ that restricts to the inclusion on some factor $H$. In particular, we have that $H \leq \im(f)$. Again by Proposition~~\ref{lem:cac:wreathproduct} we can assume that the kernel of $f$ is amenable. Let $\Xsep$ be the class of separable dual Banach $\Gamma$-modules and let $i^*(\Xsep)$ be the class of separable dual Banach $\im(f)$-modules induced by the inclusion $i \colon \im(f) \to \Gamma$.

We are now ready to apply Proposition~\ref{prop:vanishingmodulus:wreathproducts}. Indeed, $H$ is finitely generated and thus countable, $X$ is countably infinite and the amenable group $A$ acts on $X$ with a (unique) infinite orbit. Hence, for every integer $n \geq 1$ there exists a constant $K_n \in [0, \infty)$ which does not depend on $H, A, X$ or even $\Gamma$, such that $\| H^n_b(\im(f); E) \| \leq K_n$ for every separable dual Banach $\im(f)$-module $E$.

Every finitely generated $H \leq \Gamma$ is contained in a group of the form $\im(f)$ as above. Hence, applying Proposition \ref{prop:dirun} with $\Gamma_i = \im(f)$, we conclude that $H^n_b(\Gamma; E) = 0$.
\end{proof}

\subsection{Proof of Proposition~\ref{lem:cac:wreathproduct}}\label{sec:proof:technical:prop}

We prove the chain of implications \emph{(1) $\Rightarrow$ (2) $\Rightarrow$ (3) $\Rightarrow$ (1)}. The first implication is the crucial one.

\begin{proof}[Proof of Proposition~\ref{lem:cac:wreathproduct} (1) $\Rightarrow$ (2)]
Suppose that $\Gamma$ has commuting cyclic conjugates, and let $H \leq \Gamma$ be finitely generated. Set $\Lambda_0 \coloneqq H$. Let $t_1 \in \Gamma$ and $n_1 \in \Z_{\geq 2} \cup \{ \infty \}$ be such that 
\begin{itemize}
    \item $[H, {}^{t_1^p}H]=1$ for all $1 \leq p < n_1$; and
    \item $[H, t_1^{n_1}] = 1$.
\end{itemize}
Define $\Lambda_1 := \langle H, t_1 \rangle$. We proceed by induction. At the $i$-th step, $i \in \N$, we have a finitely generated group $\Lambda_{i-1}$, and we find $t_{i} \in \Gamma$ and $n_{i} \in \Z_{\geq 2} \cup \{ \infty \}$ such that 
\begin{itemize}
    \item $[\Lambda_{i-1}, {}^{t_i^p}\Lambda_{i-1}]=1$ for all $1 \leq p <n_i$; and
    \item $[\Lambda_{i-1}, t_{i}^{n_{i}}] = 1$.
\end{itemize}
We set $\Lambda_{i} := \langle \Lambda_{i-1}, t_{i} \rangle$.

\medskip

Suppose first that $n_i = \infty$ for some $i \in \N$. In that case, for all integers $p \geq 1$ we have 
\[
[H, {}^{t_i^p}H] \leq [\Lambda_{i-1}, {}^{t_i^p}\Lambda_{i-1}] = 1,
\] 
and moreover for all integers $p \leq -1$ we have 
\[
{}^{t_i^{-p}}[H, {}^{t_i^p}H] = [{}^{t_i^{-p}} H, H] = [H, {}^{t_i^{-p}}H]^{-1} = 1 \Longrightarrow [H, {}^{t_i^p}H] = 1.
\] 
Thus we can choose $A = \Z, B = \{ 0 \}$ and a homomorphism
\begin{align*}
    f \colon A = \Z &\to \Gamma \\
    m &\mapsto t_i^m,
\end{align*}
which satisfies the conditions required in Item \emph{(2)}.\ Note how this already suffices to show that groups with commuting $\Z$-conjugates satisfy condition \emph{(2)}. From now on, we may assume that $n_i \in \Z_{\geq 2}$ for all integers $i \geq 1$.

\medskip

Set $A_1 := \Z$, and define by induction 
\[A_{i+1} \coloneqq A_i \wr_{\Z/n_{i+1}} \Z,\] 
where the action of $\Z$ on $\Z/n_{i+1}$ is by left translation. By construction, each $A_i$ is amenable being an extension of amenable groups. Consider the inclusion of $A_i$ into $A_{i+1}$, concentrated at the coordinate $0 \in \Z/n_{i+1}$. Let $A$ be the direct union of the $A_i$. Hence, the group $A$ is countable and amenable, since it is a direct union of countable amenable groups.

Let $B_1 \coloneqq n_1 \Z \leq A_1$, and define by induction $B_{i+1}$ to be the subgroup of $A_{i+1}$ consisting of elements of the form $((a_p)_{p \in \Z/n_{i+1}}, m) \in A_i \wr_{\Z/n_{i+1}} \Z$, where $a_0 \in B_i$ and $m \in n_{i+1}\Z$. This is indeed a group, since the $\Z$ coordinate only consists of multiples of $n_{i+1}$, which act trivially on $\Z/n_{i+1}$. Denote by $B$ the direct union of the $B_i$. By construction $B < A$.

To see that the subgroup $B < A$ has infinite index, consider the $A$-conjugates of $A_1$. First note that there are exactly $n_i$ conjugates of $A_{i-1}$ inside $A_i$, which have pairwise trivial intersection: these are the copies of $A_{i-1}$ concentrated at the different coordinates of $\Z/n_i$. It follows that the number of $A_i$-conjugates of $A_1$ is at least $n_2 \cdots n_i$. Since by definition of commuting cyclic conjugates each $n_i \geq 2$ (Definition~\ref{def:comm:cycl:conj}), there are infinitely many $A$-conjugates of $A_1$. On the other hand, let us show that $B$ centralizes $A_1$. This is clear for $B_1$. Recall that an element of $B_{i+1}$ is of the form $((a_p)_{p \in \Z/n_{i+1}}, m)$ where $a_0 \in B_i$ and $m \in n_{i+1}\Z$. Then $a_0$ centralizes $A_1$ by induction hypothesis, and $a_p$ with $p \neq 0$ as well as $m$ centralize $A_1$ by definition of the permutational wreath product. Thus all of $B_{i+1}$ centralizes $A_1$ and so also their direct union $B$ centralizes $A_1$. 
In conclusion we have proved that the conjugation action of $A$ on the set of its subgroups has the following properties:
\begin{enumerate}
    \item The $A$-conjugates of $A_1$ are infinite (i.e. the orbit ${}^{A}A_1$ is infinite);
    
    \item $B$ centralizes $A_1$ (i.e. $B$ is a subgroup of the stabilizer $\textup{Stab}_A(A_1)$).
\end{enumerate}
The orbit-stabilizer theorem then shows that
\[
\left[ A : B\right] \geq  \left[ A : \textup{Stab}_A(A_1)\right] = |{}^{A} A_1|  = +\infty,
\]
and thus $B$ has infinite index in $A$.

We are left to define the desired homomorphism $f \colon A \to \Gamma$ as required in condition~\emph{(2)}. We also construct the homomorphism $f$ by induction. Let us begin by setting 
\begin{align*}
    f \colon A_1 = \Z &\to \Gamma \\
    m &\mapsto t_1^m.
\end{align*}
By definition, we have $f(A_1) \leq \Lambda_1$. Suppose now that $f$ has already been defined on $A_i$ (and is such that $f(A_i) \leq \Lambda_{i}$), and set
\begin{align*}
   f \colon A_{i+1} = A_i \wr_{\Z/n_{i+1}} \Z &\to \Gamma \\
   ((a_p)_{p \in \Z/n_{i+1}}, m) &\mapsto \left( \prod\limits_{p = 0}^{n_{i+1}-1} {}^{t_{i+1}^p}f(a_p) \right) t_{i+1}^m. 
\end{align*}
Let us check that this is a homomorphism. By induction it is a homomorphism on each summand $A_i$. Recall that $\langle t_{i+1}^{n_{i+1}} \rangle$ centralizes $\Lambda_i$ and therefore $f(A_i)$ (since $f(A_i) \leq \Lambda_i$ by induction). Recall also that $[f(A_i), {}^{t_{i+1}^p}f(A_i)] \leq [\Lambda_i, {}^{t_{i+1}^p}\Lambda_i] = 1$ for all $1 \leq p < n_{i+1}$. Applying a suitable conjugation, we see that the subgroups ${}^{t_{i+1}^p}f(A_i)$ pairwise commute, hence $f$ is a homomorphism on the direct sum of the $A_i$. Finally, the action of $\Z$ by conjugacy on the summands is respected, by construction and because $t_{i+1}^{n_{i+1}}$ centralizes each summand.

We now need to check the two required properties in condition~\emph{(2)}:
\begin{enumerate}
    \item[$(i)$] $[H, {}^{f(a)} H] = 1$ for all $a \in A \setminus B$; and
    \item[$(ii)$] $[H, f(b)] = 1$ for all $b \in B$.
\end{enumerate}
The property \emph{(ii)} is verified by induction on the $B_i$, with the same argument as the proof above that $B$ centralizes $A_1$ together with the definition of $f$.

Let us prove that property \emph{(i)} is also satisfied. We proceed again by induction, by showing that if $a \in A_i$ is such that $[H, {}^{f(a)} H] \neq 1$, then $a \in B_i$. For $i = 1$, this is by definition of $t_1$. Now suppose that $a \in A_{i+1}$ is an element such that $[H, {}^{f(a)} H] \neq 1$. Using the definition of $A_{i+1} = A_i \wr_{\Z/n_{i+1}} \Z$, we write $a = ((a_p)_{p \in \Z/n_{i+1}}, m)$. Now for $1 \leq p < n_{i+1}$ the image of $a_p$ lies in ${}^{t_{i+1}^p}f(A_i) \leq {}^{t_{i+1}^p}\Lambda_i$, which by choice of $t_{i+1}$ commutes with $\Lambda_i$, and therefore with $H$. In other words, conjugating by these coordinates has no effect on $H$, and therefore ${}^{f(a)^{-1}} H = {}^{t_{i+1}^{-m}}({}^{f(a_0)^{-1}}H)$. Suppose first that $m \notin n_{i+1} \Z$. Then ${}^{t_{i+1}^{-m}}({}^{f(a_0)^{-1}}H) \leq {}^{t_{i+1}^{-m}}\Lambda_i$, which commutes with $\Lambda_i$ and thus with $H$. A standard manipulation shows then that ${}^{f(a)} H$ also commutes with $H$. Since we are assuming that $[H, {}^{f(a)} H] \neq 1$, we get a contradiction. Therefore we must have $m \in n_{i+1} \Z$, which implies ${}^{t_{i+1}^{-m}}({}^{f(a_0)^{-1}}H) = {}^{f(a_0)^{-1}}H$, since $t_{i+1}^{n_{i+1}}$ centralizes $\Lambda_i$. Since we are assuming $[H, {}^{f(a)}H] \neq 1$, or equivalently $[H, {}^{f(a)^{-1}}H] \neq 1$, we obtain $[H, {}^{f(a_0)^{-1}}H] \neq 1$ and so by induction hypothesis $a_0$ actually lies in $B_i$. We have shown that $a = ((a_p)_{p \in \Z/n_{i+1}}, m)$ is such that $a_0 \in B_i$ and $m \in n_{i+1}\Z$. This shows that $a \in B_{i+1}$ by definition, and concludes the proof.
\end{proof}

\begin{proof}[Proof of Proposition~\ref{lem:cac:wreathproduct} (2) $\Rightarrow$ (3)]

Let $H \leq \Gamma$ be a finitely generated subgroup, and suppose that there exists a countable amenable group $A$, a morphism $f \colon A \to \Gamma$ and an infinite index subgroup $B < A$ as in condition \emph{(2)}. Note that the property \emph{(2)}.(i) implies that $[^{f(a_1)} H, ^{f(a_2)} H] = 1$ whenever $a_1B \neq a_2B$. Define $X := A/B$, with the left translation action of $A$. By the assumptions of \emph{(2)}, $X$ is a transitive infinite $A$-set. We extend $f$ to a homomorphism $f \colon H \wr_X A \to \Gamma$ as follows:
\[f((h_{aB})_{aB \in X}, g) = \left( \prod\limits_{aB \in X} {}^{f(a)}h_{aB} \right) f(g). \]
Since $f(B)$ centralizes $H$, the value of $^{f(a)}h_{aB}$ is independent of the choice of $a$. Moreover, since all of the $^{f(a)}H$ pairwise commute, when $a$ runs over a system of representatives, the product is well-defined independently of the order of multiplication. Finally, the action by conjugacy of $A$ on the $H$ summands corresponds equivariantly to the action by conjugacy of $f(A)$ on the commuting groups ${}^{f(a)}H : aB \in X$. This implies that $f$ is indeed a homomorphism, and by construction it restricts to the inclusion of $H$ on the factor $H_{1B}$. This proves the first part of the claim.

We are left to show that $f$ has amenable kernel.\ A proof in the case $A = X = \Z$ is already available in the literature~\cite[Lemma 5.4]{thompson:ulam}. For completeness, we show how to extend it. Let $f \colon H \wr_X A \to \Gamma$ be as above, and let $x_0 \in X$ be such that $f|_{H_{x_0}}$ is the inclusion of $H$. Note that the structure of the wreath product implies that $\ker(f)$ can be described as the following extension:
 \[
 1 \to K := \ker(f) \cap \bigoplus_{x \in X} H \to \ker(f) \to Q \to 1,
 \]
where $Q$ is the subgroup of $A$ corresponding to the projection of $\ker(f)$ to $A$. It is sufficient to show that $K$ is amenable; in fact we will show that it is abelian.
 
To see this, let $g, h \in K$, which we write as $g = (g_x)_{x \in X}, h = (h_x)_{x \in X}$ (we omit the $A$-component since it is always trivial). Then
\[1 = f(g) = \prod\limits_{x \in X} f(g_x) \quad \Longrightarrow \quad f(g_{x_0})^{-1} = \prod\limits_{x \in X \setminus \{ x_0 \}} f(g_x).\]
Since $f(H_x)$ commutes with $f(H_{x_0})$ for all $x \in X \setminus \{ x_0 \}$, from the above expression it follows in particular that $f(g_{x_0})$ commutes with $f(h_{x_0})$. But $f|_{H_{x_0}}$ is injective by hypothesis, therefore $g_{x_0}$ and $h_{x_0}$ commute. Up to conjugating by a suitable element of $A$, we obtain that $g_x$ commutes with $h_x$ for all $x \in X$, and therefore $g$ and $h$ commute.
\end{proof}

\begin{proof}[Proof of Proposition~\ref{lem:cac:wreathproduct} (3) $\Rightarrow$ (1)]

Suppose that $\Gamma$ is a group satisfying condition \emph{(3)}. Let $H \leq \Gamma$ be a finitely generated group and let $f\colon H \wr_X A \to \Gamma$ be as required by condition \emph{(3)}. Let $x_0 \in X$ be such that $f$ restricts to the identity on the $x_0$-coordinate. We set $t \coloneqq f(a)$ for some $a \in A$ such that $a \cdot x_0 \neq x_0$. Let $n \geq 2$ be the minimal integer such that $a^n \cdot x_0 = x_0$ (possibly $n = \infty$). Then for all $1 \leq p < n$ we have $[H, {}^{t^p}H] = [f(H_{x_0}), {}^{f(a^p)}f(H_{x_0})] = f([H_{x_0}, H_{a^p \cdot x_0}]) = 1$, because $a^p \cdot x_0 \neq x_0$; and $[H, t^n] = f([H_{x_0}, a^n]) = 1$ since $a^n \cdot x_0 = x_0$. Hence, $\Gamma$ has commuting cyclic conjugates.
\end{proof}

\subsection{Constructions}

A classical question in bounded cohomology with real coefficients is whether surjective group homomorphisms induce injective maps in bounded cohomology~\cite{Bouarich:exact, amcat, fflm1, fflm2}.\ Since in this paper we are working with $\Xsep$-boundedly acyclic groups, a weaker formulation of the previous problem is whether quotients of $\Xsep$-boundedly acyclic groups are $\Xsep$-boundedly acyclic.\ For this reason, now that we provided a new criterion for bounded acyclicity, it is interesting to note that it does pass to quotients.\ This shows that quotients of groups with commuting cyclic conjugates are $\Xsep$-boundedly acyclic:

\begin{lemma}[Quotients]
\label{lemma:quotient:has:ccc}
A quotient of a group with commuting cyclic conjugates has commuting cyclic conjugates.\ A quotient of a group with commuting $\Z$-conjugates has commuting $\Z$-conjugates.
\end{lemma}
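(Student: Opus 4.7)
The plan is to simply lift, apply, and project. Let $\pi \colon \Gamma \twoheadrightarrow \Lambda$ be a surjection, where $\Gamma$ has commuting cyclic conjugates, and let $H \leq \Lambda$ be a finitely generated subgroup. First I would choose a finite generating set $h_1, \dots, h_k$ of $H$ and select preimages $\tilde h_1, \dots, \tilde h_k \in \Gamma$ under $\pi$. Setting $\tilde H \coloneqq \langle \tilde h_1, \dots, \tilde h_k \rangle$, we obtain a finitely generated subgroup of $\Gamma$ with $\pi(\tilde H) = H$.

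Next, I would apply the commuting cyclic conjugates property to $\tilde H \leq \Gamma$: this yields $\tilde t \in \Gamma$ and $n \in \Z_{\geq 2} \cup \{\infty\}$ such that $[\tilde H, \tilde t^p \tilde H \tilde t^{-p}] = 1$ for all $1 \leq p < n$, and $[\tilde H, \tilde t^n] = 1$. Setting $t \coloneqq \pi(\tilde t) \in \Lambda$, both conditions descend through $\pi$: since group homomorphisms carry commutators to commutators and conjugations to conjugations, we have $\pi(\tilde t^p \tilde H \tilde t^{-p}) = t^p H t^{-p}$, so $[H, t^p H t^{-p}] = \pi([\tilde H, \tilde t^p \tilde H \tilde t^{-p}]) = 1$ for $1 \leq p < n$, and similarly $[H, t^n] = 1$. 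This shows $\Lambda$ has commuting cyclic conjugates with the \emph{same} integer $n$ (in particular, the $n = \infty$ case gives the statement for commuting $\Z$-conjugates for free, since then the condition $[\tilde H, \tilde t^p \tilde H \tilde t^{-p}] = 1$ holds for every $p \geq 1$ in $\Gamma$, and this persists after applying $\pi$).

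There is no real obstacle here: the only subtlety worth flagging is that Definition~\ref{def:comm:cycl:conj} only requires the existence of some suitable $n$, not a minimal one, so we are free to reuse the $n$ provided by $\Gamma$ without worrying about whether a smaller exponent already trivializes the commutator in $\Lambda$. The argument is thus a two-line verification once the lift $\tilde H$ has been constructed.
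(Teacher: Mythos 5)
Your proof is correct and follows essentially the same route as the paper: lift a finitely generated subgroup of the quotient to a finitely generated subgroup of $\Gamma$ by choosing preimages of generators, invoke the commuting cyclic conjugates condition there, and push the witnessing element $t$ and the (unchanged) exponent $n$ forward through $\pi$. The observation that the $n=\infty$ case immediately gives the commuting $\Z$-conjugates statement is also exactly how the paper handles it.
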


\begin{proof}
Let $\Gamma$ be a group, and let $\pi \colon \Gamma \to \overline{\Gamma}$ be a quotient. For a finitely generated subgroup $\overline{H} \leq \overline{\Gamma}$, choose a preimage for each of the generators to obtain a finitely generated subgroup $H \leq \Gamma$ such that $\pi(H) = \overline{H}$. Since $\Gamma$ has commuting cyclic conjugates, there exist $t \in \Gamma$ and $n \in \Z_{\geq 2} \cup \{ \infty \}$ such that $[H, {}^{t^p}H] = 1$ for $1 \leq p < n$ and $[H, t^n] = 1$. Then by definition of homomorphism we have 
\[
[\overline{H}, {}^{\pi(t)^p}\overline{H}] = \pi([H, {}^{t^p}H]) = 1
\]
for all $1 \leq p < n$; and 
\[
[\overline{H}, \pi(t)^n] = \pi([H, t^n]) = 1.\] 
Thus $\overline{H}$ satisfies the condition of commuting cyclic conjugates with $\pi(t)$ and $n$. Note that $n$ is unchanged, so this also proves that if $\Gamma$ has commuting $\Z$-conjugates, then $\overline{\Gamma}$ also has commuting $\Z$-conjugates.
\end{proof}

\begin{rem}[Groups with commuting conjugates but not commuting cyclic conjugates]
    Using this fact, one can easily construct groups with commuting conjugates but without commuting \emph{cyclic} conjugates~\cite[Proposition 3.6]{companion}.
\end{rem}

Extensions of $\Xsep$-boundedly acyclic groups can be shown to be $\Xsep$-boundedly acyclic, either using spectral sequences \cite[Section 12]{monod}, or generalised mapping theorems~\cite{moraschini_raptis_2023}. For groups with commuting cyclic conjugates, we can show more:

\begin{lemma}[Products]
\label{lem:products:ccc}

Let $\{ \Gamma_i \}_{i \in I}$ be a collection of groups with commuting $\Z$-conjugates. Then their direct product and direct sum have commuting $\Z$-conjugates.
\end{lemma}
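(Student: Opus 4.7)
The plan is to construct the witness element $t$ coordinate-wise, exploiting the fact that both conjugation and commutators in a direct product (or sum) act independently on each coordinate.

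For the direct product $\Gamma = \prod_{i \in I} \Gamma_i$, let $H \leq \Gamma$ be finitely generated. First I would project: set $H_i := \pi_i(H) \leq \Gamma_i$, which is finitely generated for every $i \in I$ as the image of a finitely generated group. By hypothesis, for each $i$ there exists $t_i \in \Gamma_i$ with $[H_i, {}^{t_i^p}H_i] = 1$ for all $p \geq 1$. Define $t := (t_i)_{i \in I} \in \Gamma$. Then for any $h, h' \in H$ and any $p \geq 1$, the element $[h, {}^{t^p}h']$ has $i$-th coordinate $[(h)_i, {}^{t_i^p}(h')_i] \in [H_i, {}^{t_i^p} H_i] = 1$, so $[H, {}^{t^p}H] = 1$ as required.

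For the direct sum $\Gamma = \bigoplus_{i \in I} \Gamma_i$, the same strategy works but with one extra observation to ensure that the witness $t$ lies in the direct sum rather than just the product. Since $H$ is finitely generated and each generator has finite support, there exists a finite set $J \subset I$ with $H \leq \bigoplus_{j \in J} \Gamma_j$. I would then apply the commuting $\Z$-conjugates hypothesis only at the finitely many coordinates $j \in J$, obtaining elements $t_j \in \Gamma_j$ with $[H_j, {}^{t_j^p}H_j]=1$ for all $p \geq 1$, and set $t := (t_j)_{j \in J}$ extended by the identity elsewhere; thus $t$ lies in the direct sum. The same coordinate-wise computation then gives $[H, {}^{t^p}H] = 1$ for all $p \geq 1$.

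There is essentially no obstacle: the argument reduces to the commutativity of disjoint coordinates and the invariance of each coordinate under the diagonal conjugation by $t$. The only mild subtlety is the direct sum case, where one must verify that the chosen $t$ has finite support, which is automatic from the finite generation of $H$.
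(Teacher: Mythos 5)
Your argument is correct and is essentially the same as the paper's: project $H$ to each factor, choose $t_i$ coordinate-wise, and form $t=(t_i)$, taking $t_i=1$ outside the finite support of $H$ in the direct sum case. Nothing to add.
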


\begin{proof}
Suppose that the $\Gamma_i$ have commuting $\Z$-conjugates. Let $\Gamma \coloneqq \prod_{i \in I} \Gamma_i$, and let $H \leq \Gamma$ be finitely generated. Let $H_i \leq \Gamma_i$ be the projection of $H$ on $\Gamma_i$, which is finitely generated. Since $\Gamma_i$ has commuting $\Z$-conjugates, there exists $t_i \in \Gamma_i$ such that $[H_i, {}^{t_i^p} H_i] = 1$ for all $p \geq 1$. Then $t = (t_i)_{i \in I} \in \Gamma$ satisfies $[H, {}^{t^p} H] = 1$ for all $p \geq 1$.

For direct sums, there exists a finite index set $J \subset I$ such that $H_i$ is trivial for all $i \in I \setminus J$. So we can choose $t_i = 1$ for all $i \in I \setminus J$, and the resulting element $t$ belongs to the direct sum.
\end{proof}

\begin{rem}[Products]
Note that the proof does not work for groups with commuting cyclic conjugates, as the numbers $n$ appearing in the definition may be different for different factors. In fact, the product of two groups with commuting cyclic conjugates need not have commuting cyclic conjugates \cite[Proposition 3.6]{companion}.
\end{rem}

We use this to prove an extension of the uniformity of the vanishing modulus from Theorem \ref{thm:cac}, which will be useful in the sequel:

\begin{cor}[Uniform vanishing modulus for amenable extensions]
\label{cor:coameanble:modulus}

For every integer $n \geq 1$, there exists a constant $K_n \in [0, \infty)$ such that for every group $\Gamma$ containing a normal subgroup $\Lambda < \Gamma$ such that $\Gamma / \Lambda$ is amenable and $\Lambda$ has commuting $\Z$-conjugates, and for every separable dual Banach $\Gamma$-module $E$, we have that $\| H^n_b(\Gamma; E) \| \leq K_n$.
\end{cor}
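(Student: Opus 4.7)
The plan is to combine Theorem \ref{thm:cac}(2) with a quantitative refinement of the co-amenability statement (Theorem \ref{thm:co-amenable}). The key observation is that commuting $\Z$-conjugates (Definition \ref{def:comm:Z:conj}) is a special case of commuting cyclic conjugates (corresponding to $n = \infty$), so Theorem \ref{thm:cac}(2) applies directly to $\Lambda$: it yields a constant $K_n$, depending only on $n$, such that $\|H^n_b(\Lambda; E')\| \leq K_n$ for every separable dual Banach $\Lambda$-module $E'$. Restricting the given separable dual Banach $\Gamma$-module $E$ to $\Lambda$ in particular gives $\|H^n_b(\Lambda; E)\| \leq K_n$.

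The second ingredient is a quantitative version of co-amenability: when $\Lambda \leq \Gamma$ is co-amenable, I claim $\|H^n_b(\Gamma; E)\| \leq \|H^n_b(\Lambda; E)\|$ for every separable dual Banach $\Gamma$-module $E$. This is a strengthening of Theorem \ref{thm:co-amenable} following from the classical proof: the Eckmann--Shapiro isomorphism provides an isometric identification of cochain complexes $C_b^\bullet(\Gamma; \ell^\infty(\Gamma/\Lambda, E))^\Gamma \cong C_b^\bullet(\Lambda; E)^\Lambda$, while the $\Gamma$-invariant mean on $\Gamma/\Lambda$ (combined with the duality of $E$, which enables $E$-valued weak-$\ast$ integration) yields a $\Gamma$-equivariant contraction $\pi \colon \ell^\infty(\Gamma/\Lambda, E) \to E$ splitting the canonical inclusion $\iota \colon E \hookrightarrow \ell^\infty(\Gamma/\Lambda, E)$ by constants, so that $\pi \circ \iota = \id_E$.

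Putting these together, given a cocycle $z \in Z^n_b(\Gamma; E)$, I will view $\iota_*(z) \in Z^n_b(\Gamma; \ell^\infty(\Gamma/\Lambda, E))$ (of the same norm), apply the bound via Eckmann--Shapiro to obtain a $\Gamma$-invariant primitive $\tilde b$ with $\|\tilde b\|_\infty \leq K_n \|z\|_\infty$, and push it forward via $\pi$ to define $b \coloneqq \pi_*(\tilde b) \in C^{n-1}_b(\Gamma; E)^\Gamma$. By naturality of $\delta$ and $\pi \circ \iota = \id_E$, the cochain $b$ is a primitive of $z$ with $\|b\|_\infty \leq \|\pi\| \cdot \|\tilde b\|_\infty \leq K_n \|z\|_\infty$, yielding the desired uniform bound $\|H^n_b(\Gamma; E)\| \leq K_n$.

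The main technical point will be justifying the isometric Eckmann--Shapiro identification at the cochain level together with the contractivity of the mean-based projection for separable dual coefficients; both are standard in the bounded cohomology literature with dual coefficients, but require a careful setup (in particular ensuring that $\ell^\infty(\Gamma/\Lambda, E)$ is treated as a dual Banach $\Gamma$-module even when $\Gamma/\Lambda$ is uncountable). An alternative route, avoiding Eckmann--Shapiro altogether, would be to adapt the wreath-product argument from the proof of Theorem \ref{thm:cac} directly: one would verify that for every finitely generated $H \leq \Gamma$ one can build a subgroup $G \leq \Gamma$ containing $H$ which is itself a quotient of a permutational wreath product as in Setup \ref{setup:perm:wreath:products} by an amenable normal subgroup (incorporating the amenable quotient $\Gamma/\Lambda$ into the acting group), and then apply Propositions \ref{prop:vanishingmodulus:wreathproducts} and \ref{prop:dirun} exactly as in the proof of Theorem \ref{thm:cac}.
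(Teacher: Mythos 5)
There is a gap in your main route. The Eckmann--Shapiro identification you invoke is misstated: the natural isometric chain-level isomorphism is
\[
C_b^\bullet(\Gamma; \ell^\infty(\Gamma/\Lambda, E))^\Gamma \;\cong\; C_b^\bullet(\Gamma; E)^\Lambda,
\]
not $C_b^\bullet(\Lambda; E)^\Lambda$ as you claim. The complexes $C_b^\bullet(\Gamma; E)^\Lambda$ and $C_b^\bullet(\Lambda; E)^\Lambda$ both compute $H^\bullet_b(\Lambda; E)$, but they are different cochain complexes, and the comparison between them at the chain level (e.g.\ via the restriction map and a section obtained by choosing coset representatives) gives no a priori control on the constant implicit in passing a primitive from one complex to the other. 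Concretely, Theorem~\ref{thm:cac}(2) gives the uniform bound $K_n$ for the vanishing modulus \emph{defined through} $C_b^\bullet(\Lambda; E)^\Lambda$, while the transfer by the invariant mean $\tau\colon C_b^\bullet(\Gamma; E)^\Lambda \to C_b^\bullet(\Gamma; E)^\Gamma$ (which is indeed a norm-one chain projection using the dual structure on $E$) only yields that $\|H^n_b(\Gamma; E)\|$ is dominated by the ``vanishing modulus'' of the auxiliary complex $C_b^\bullet(\Gamma; E)^\Lambda$. These two quantities are not the same, and you have not supplied a uniform comparison. So the asserted quantitative co-amenability bound $\|H^n_b(\Gamma; E)\| \leq \|H^n_b(\Lambda; E)\|$ is not justified by the stated argument.

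The paper circumvents this entirely by a soft contradiction argument that avoids any quantitative version of co-amenability. Suppose the conclusion fails; pick triples $(\Lambda_i, \Gamma_i, E_i)$ with $\|H^n_b(\Gamma_i; E_i)\| \to \infty$, and form $\Gamma = \bigoplus_i \Gamma_i \rhd \Lambda = \bigoplus_i \Lambda_i$. By Lemma~\ref{lem:products:ccc}, $\Lambda$ still has commuting $\Z$-conjugates, so it is $\Xsep$-boundedly acyclic by Theorem~\ref{intro:thm:ccc}; since the quotient $\Gamma/\Lambda$ is again amenable, $\Lambda$ is co-amenable in $\Gamma$, and the \emph{qualitative} Theorem~\ref{thm:co-amenable} yields $\Xsep$-bounded acyclicity of $\Gamma$. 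Then Proposition~\ref{prop:dirsum} (which turns a single bounded-acyclicity statement for a direct sum into uniform bounds on the factors, via the retraction onto each $\Gamma_i$) gives the contradiction. Your alternative suggestion---baking the amenable quotient into the acting group of the wreath product---could in principle be made to work but is far from spelled out; the paper's direct-sum trick is simpler and avoids those technicalities, and is worth internalizing as the standard way to leverage Proposition~\ref{prop:dirsum} to obtain uniform vanishing moduli.
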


\begin{proof}
Suppose by contradiction that this is not the case. Then there exists a sequence $(\Lambda_i, \Gamma_i, E_i)_{i \in \N}$ such that $\Lambda_i < \Gamma_i$ is normal with amenable quotient, $\Lambda_i$ has commuting $\Z$-conjugates, and $E_i$ is a separable dual Banach $\Gamma_i$-module such that $\| H^n_b(\Gamma_i; E_i) \| \geq i$. Let $\Gamma \coloneqq \bigoplus_{i \in \N} \Gamma_i > \Lambda \coloneqq \bigoplus_{i \in \N} \Lambda_i$; note that $\Lambda$ is normal in $\Gamma$ with amenable quotient. Now $\Lambda$ has commuting $\Z$-conjugates by Lemma \ref{lem:products:ccc}, thus $\Lambda$ is $\Xsep$-boundedly acyclic by Theorem \ref{intro:thm:ccc}, and therefore so is $\Gamma$ by Theorem \ref{thm:co-amenable}. Thus we obtain a contradiction with Proposition \ref{prop:dirsum} and conclude.
\end{proof}

We end by noticing that the commuting cyclic conjugates condition passes to derived subgroups.\ Recall that for a group $\Gamma$, we denote $\Gamma^{(1)} \coloneqq [\Gamma, \Gamma]$ and $\Gamma^{(d)} \coloneqq [\Gamma^{(d-1)}, \Gamma^{(d-1)}]$ for all $d > 1$.

\begin{lemma}[Derived subgroups]
\label{lem:derived}
    Let $\Gamma$ be a group with commuting cyclic conjugates, and let $d \geq 1$.\ Then the $d$-th derived subgroup $\Gamma^{(d)}$ has commuting cyclic conjugates.
\end{lemma}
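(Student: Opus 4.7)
The plan is to use the wreath-product characterization of commuting cyclic conjugates from Proposition~\ref{lem:cac:wreathproduct}, part~(3), which turns the problem into constructing an appropriate sub-wreath-product with image inside $\Gamma^{(d)}$. I will treat all $d \geq 1$ simultaneously, although an induction on $d$ would work equally well.

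Given a finitely generated $H \leq \Gamma^{(d)}$, each generator of $H$ is a $d$-fold iterated commutator of elements of $\Gamma$, so there is a finitely generated $K \leq \Gamma$ with $H \leq K^{(d)}$ (take $K$ to be generated by the finitely many elements appearing in these commutator expressions). Applying Proposition~\ref{lem:cac:wreathproduct}, part~(3), to $K$ in $\Gamma$ yields a countable amenable group $A$, an infinite transitive $A$-set $X$, and a homomorphism $f \colon K \wr_X A \to \Gamma$ with $f|_{K_{x_0}} = \id_K$ and amenable kernel. The key move is then to restrict $f$ to the sub-wreath-product $H \wr_Y A^{(d)} \leq K \wr_X A$, where $Y \coloneqq A^{(d)} \cdot x_0 \subseteq X$ is the orbit of $x_0$ under $A^{(d)}$; this is a subgroup because $A^{(d)}$ preserves $Y$ setwise.

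The resulting homomorphism $g \colon H \wr_Y A^{(d)} \to \Gamma$ takes values in $\Gamma^{(d)}$: indeed $g(H_{x_0}) = H \leq \Gamma^{(d)}$ and $g(A^{(d)}) \leq f(A)^{(d)} \leq \Gamma^{(d)}$, while for any other $y = a \cdot x_0 \in Y$ with $a \in A^{(d)}$, the subgroup $g(H_y) = f(a) H f(a)^{-1}$ still lies in $\Gamma^{(d)}$ because $\Gamma^{(d)}$ is characteristic, hence normal, in $\Gamma$. By construction $g|_{H_{x_0}} = \id_H$, and $\ker g \leq \ker f$ is amenable. So, provided that $Y$ is infinite, Proposition~\ref{lem:cac:wreathproduct}, part~(3), applied to $\Gamma^{(d)}$ gives exactly the commuting cyclic conjugates property.

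The main obstacle will be verifying that $Y = A^{(d)} \cdot x_0$ is infinite. This is false in general (e.g.\ when $A$ is abelian and $d \geq 1$), but can be arranged by unpacking the explicit construction of $A$ in the proof of (1)~$\Rightarrow$~(2) of Proposition~\ref{lem:cac:wreathproduct}: there $A = \bigcup_i A_i$ with $A_{i+1} = A_i \wr_{\Z/n_{i+1}} \Z$ and $n_{i+1} \geq 2$, and the stabiliser $B$ of $x_0$ is defined layer by layer. A direct inductive analysis in these coordinates produces, at each step, new elements of $A_i^{(d)} \setminus B$ with support spread across several slots, so the orbits $A_i^{(d)} \cdot x_0$ strictly grow with $i$ and their union $A^{(d)} \cdot x_0$ is infinite. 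Iterating the construction sufficiently many times secures this simultaneously for any prescribed~$d$.
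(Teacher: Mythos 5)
There is a genuine gap, and it sits exactly at the step you flag yourself: the infinitude of $Y=A^{(d)}\cdot x_0$. First, even in the ``generic'' branch of the construction of Proposition~\ref{lem:cac:wreathproduct} (1)~$\Rightarrow$~(2), where all $n_i$ are finite, your justification is only an assertion: producing elements of $A_i^{(d)}\setminus B$ ``with support spread across several slots'' does not by itself bound from below the number of cosets of $B$ met by $A_i^{(d)}$, which is what the orbit size measures. A real argument is needed (and is available: since $A_{i+1}^{(1)}$ is contained in the base $\bigoplus_{\Z/n_{i+1}}A_i$ and projects \emph{onto} $A_i$ in the $0$-th coordinate, one gets $\pi_0\big(A_{i+1}^{(d)}\big)=A_i^{(d-1)}$ and hence $|A_{i+d}^{(d)}\cdot x_0|\geq [A_i:B_i]\to\infty$; also note the orbits do not ``strictly grow'' at every step, since $A_i^{(d)}=1$ for $i\leq d$ by solvability of $A_i$). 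Second, and more seriously, the construction you invoke has a branch that your fix cannot repair: when some $n_i=\infty$ (the commuting $\Z$-conjugates situation, which occurs for many of the paper's main examples, e.g.\ piecewise linear groups), the proof of (1)~$\Rightarrow$~(2) outputs $A=\Z$ and $B=\{0\}$. Then $A^{(d)}=1$ and $Y=\{x_0\}$, so ``unpacking the explicit construction'' gives you nothing — that \emph{is} the construction. To salvage your route you would have to redesign the construction of $A$ (say, iterating with $\Z$ in place of $\Z/n_{i+1}$ when $n_{i+1}=\infty$, with the analogous $B_{i+1}$) and then carry out the orbit-growth argument above; as written, the proposal does not prove the lemma.

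For contrast, the paper's proof is a two-line commutator trick that avoids wreath products entirely: given a finitely generated $H\leq[\Gamma,\Gamma]$, pick $t$ and $n$ witnessing the condition for $H$ in $\Gamma$, then pick $s$ with $[\langle H,t\rangle,{}^{s}\langle H,t\rangle]=1$ (commuting cyclic conjugates applied to the finitely generated group $\langle H,t\rangle$, using only $p=1$), and observe that $c=[t,s]=t\cdot{}^{s}t^{-1}$ conjugates every element of $\langle H,t\rangle$ exactly as $t$ does; hence $c\in[\Gamma,\Gamma]$ witnesses the condition for $H$ with the same $n$, and induction on $d$ concludes. Your image-in-$\Gamma^{(d)}$ bookkeeping (normality of $\Gamma^{(d)}$, $\ker g\leq\ker f$, restriction to the factor at $x_0$) is fine, but the heart of the matter — finding a displacing element \emph{inside} the derived subgroup — is precisely what remains unproved in your approach and what the paper's trick delivers directly.
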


\begin{proof}
    It suffices to show that $[\Gamma, \Gamma]$ has commuting cyclic conjugates, the statement then follows by induction.\ Let $H \leq [\Gamma, \Gamma]$ be a finitely generated subgroup.\ Then there exists $t \in \Gamma$ and $n \in \Z_{\geq 2} \cup \{ \infty \}$ such that $[H, {}^{t^p} H] = 1$ for $1 \leq p < n$ and $[H, t^n] = 1$.\ Now the group $K \coloneqq \langle H, t \rangle$ is also finitely generated, so there exists $s \in \Gamma$ such that $[K, {}^s K] = 1$.\ Let $c \coloneqq [t, s] = t \cdot {}^s t^{-1}$.\ Then, for all $g \in K$, it holds ${}^c g = {}^{t}({}^{{}^s t^{-1}} g) = {}^t g$, and so by induction ${}^{c^p} g = {}^{t^p} g$ for all $p \geq 1$.\ It follows that $[H, {}^{c^p} H] = [H, {}^{t^p} H] = 1$ for all $1 \leq p < n$, and $[H, c^n] = [H, t^n] = 1$. Since $c \in [\Gamma, \Gamma]$, we conclude.
\end{proof}

\section{Applications}\label{sec:ex}

In this section, we show that many groups of interest in geometry, dynamics, and algebra have commuting cyclic conjugates. By virtue of Theorem~\ref{intro:thm:ccc}, they will be all $\Xsep$-boundedly acyclic. Combining these with some structural considerations we obtain further examples of $\Xsep$-boundedly acyclic groups. Lemma \ref{lem:derived} allows to extend some of our examples to derived subgroups; however in many occasions the groups in question are known to be perfect.

\subsection{Stable mapping class group}\label{subsec:smcg}

First we show that the \emph{stable mapping class group} $\Gamma_\infty=\cup_{g\geq 1}\Gamma_g^1$ has commuting cyclic conjugates and so it is $\Xsep$-boundedly acyclic, proving a conjecture by Bowden~\cite{bowden}.

\begin{cor}[Corollary \ref{corintro:stablemcg}]
\label{cor:stablemcg}

   The stable mapping class group~$\Gamma_\infty$ is $\Xsep$-boundedly acyclic. 
\end{cor}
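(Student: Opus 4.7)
The plan is to verify that $\Gamma_\infty$ satisfies the commuting cyclic conjugates condition of Definition~\ref{intro:def:comm:cycl:conj} with $n=2$, and then invoke Theorem~\ref{intro:thm:ccc}. Let $H \leq \Gamma_\infty$ be finitely generated. Since $\Gamma_\infty = \bigcup_{g \geq 1} \Gamma_g^1$ is a direct union, there exists $g$ with $H \leq \Gamma_g^1$. I would decompose $\Sigma_{2g, 1}$ as $\Sigma_1 \cup P \cup \Sigma_2$, where $P \cong \Sigma_{0, 3}$ is a pair of pants with boundary circles $b_0, b_1, b_2$, and $\Sigma_1, \Sigma_2$ are two copies of $\Sigma_{g, 1}$ glued to $P$ along $b_1, b_2$ respectively; the remaining circle $b_0$ becomes $\partial \Sigma_{2g, 1}$. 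Under the stabilization $\Gamma_g^1 \hookrightarrow \Gamma_{2g}^1$, we identify $H$ with the subgroup of $\Gamma_{2g}^1$ supported in the interior of $\Sigma_1$.

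Pick any homeomorphism $\varphi \colon \Sigma_1 \to \Sigma_2$, and define $t \in \Gamma_{2g}^1 \leq \Gamma_\infty$ by setting $t|_{\Sigma_1} = \varphi$, $t|_{\Sigma_2} = \varphi^{-1}$, and taking $t|_P$ to be any homeomorphism of $P$ that fixes $b_0$ pointwise and restricts to $\varphi|_{b_1}$ and $\varphi^{-1}|_{b_2}$ on the two inner boundary circles; such an extension always exists, as $P$ admits homeomorphisms swapping its two inner boundaries with any prescribed behavior on $\partial P$. Then $t$ fixes $\partial\Sigma_{2g,1}$ pointwise, and $tHt^{-1}$ is supported in $\Sigma_2$, disjoint from $\Sigma_1 \supseteq \supp(H)$, so $[H, tHt^{-1}] = 1$.

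For the second condition, $t^2$ restricts to the identity on each $\Sigma_i$ by the choice of $\varphi, \varphi^{-1}$, and the same computation on each inner circle yields $t^2|_{b_i} = \mathrm{id}$. Hence $t^2$ is supported in $P$, which is disjoint from the interior of $\Sigma_1 \supseteq \supp(H)$, giving $[H, t^2] = 1$. The pair $(t, 2)$ witnesses commuting cyclic conjugates for $H$; as $H$ was arbitrary, $\Gamma_\infty$ has commuting cyclic conjugates, and Theorem~\ref{intro:thm:ccc} delivers the $\Xsep$-bounded acyclicity.

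The only mildly technical point is the compatibility along $b_1, b_2$ when assembling $t$ from its three pieces, together with the verification that $t^2$ restricts to the identity on the inner boundary circles; both are immediate from the definitions of $\varphi$ and $\varphi^{-1}$. Everything else is an exercise in disjoint-support geometry, made available by the genus-doubling embedding $\Gamma_g^1 \hookrightarrow \Gamma_{2g}^1$ that is characteristic of the stable, direct-limit structure of $\Gamma_\infty$.
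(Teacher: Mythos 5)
Your proof is correct and is essentially the paper's own argument: embed $H\leq\Gamma_g^1$ into $\Gamma_{2g}^1$, produce a displacing element $t$ that swaps two genus-$g$ pieces so that $[H,{}^tH]=1$ while $t^2$ is supported away from $\supp(H)$, and conclude via Theorem~\ref{intro:thm:ccc} (commuting cyclic conjugates with $n=2$). The only difference is cosmetic: the paper realizes $t$ explicitly as a half-turn along a boundary-collar curve, so that $t^2$ is visibly a Dehn twist, whereas you assemble $t$ from $\varphi$, $\varphi^{-1}$, and an unspecified extension over the pair of pants $P$ --- a legitimate variant, though the assertion that this extension exists with the prescribed behavior on $\partial P$ deserves one sentence of justification (e.g.\ a half-twist of $P$ swapping $b_1,b_2$, corrected by collar twists near $b_1,b_2$ to match $\varphi|_{b_1}$ and $\varphi^{-1}|_{b_2}$).
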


Note that $\Gamma_\infty$ is perfect \cite[Theorem 5.2]{primer}.

\begin{proof}
    We adapt Kotschick's construction~\cite[Theorem 3.1]{kotschick} for the vanishing of the stable commutator length in order to show that $\Gamma_\infty$ has commuting cyclic conjugates. Let $H \leq \Gamma_\infty$ be a finitely generated subgroup. This is supported on the image of some $\Gamma_g^1$, that is the group of isotopy classes of diffeomorphisms of $\Sigma_g^1$ with compact support in the interior of $\Sigma_g^1$. As explained in Subsection~\ref{subsect:stable:mapping:class:group}, the stable mapping class group $\Gamma_\infty=\cup_{g\geq 1}\Gamma_g^1$ is the colimit along the inclusion $\Gamma_g^1 \hookrightarrow \Gamma_{g+1}^1$ given by adding a two-holed torus to $\Sigma_g^1$ and extending the elements of $\Gamma_g^1$ via the identity. Since each element $\gamma \in \Gamma_g^1$ has support in the interior of $\Sigma_g^1$, we can realize the previous inclusion $\Gamma_g^1 \hookrightarrow \Gamma_{2g}^1$ as follows. Let us consider another surface $S_g^1$ of genus $g$ and one boundary component and let $\Sigma_{2g}^1$ be the boundary connected sum of $\Sigma_g^1$ and $S_g^1$ (Figures \ref{fig:boundary connected sum} and \ref{fig:element t}). Then each element $\gamma \in \Gamma_g^1$, that is the identity on a small neighbourhood of the boundary of $\Sigma_g^1$, can be extended to the whole $\Sigma_{2g}^1$ as the identity providing the same inclusion of $\Gamma_g^1 \hookrightarrow \Gamma_{2g}^1$ as above. In particular, since $H \leq \Gamma_g^1$ we can view each element $h \in H$ as an element in $\Gamma_{2g}^1 \leq \Gamma_\infty$ extended as the identity over the surface $S_g^1$.

\begin{figure}[htbp]
\centering
\def\svgscale{0.4}
\begingroup%
  \makeatletter%
  \providecommand\color[2][]{%
    \errmessage{(Inkscape) Color is used for the text in Inkscape, but the package 'color.sty' is not loaded}%
    \renewcommand\color[2][]{}%
  }%
  \providecommand\transparent[1]{%
    \errmessage{(Inkscape) Transparency is used (non-zero) for the text in Inkscape, but the package 'transparent.sty' is not loaded}%
    \renewcommand\transparent[1]{}%
  }%
  \providecommand\rotatebox[2]{#2}%
  \newcommand*\fsize{\dimexpr\f@size pt\relax}%
  \newcommand*\lineheight[1]{\fontsize{\fsize}{#1\fsize}\selectfont}%
  \ifx\svgwidth\undefined%
    \setlength{\unitlength}{684bp}%
    \ifx\svgscale\undefined%
      \relax%
    \else%
      \setlength{\unitlength}{\unitlength * \real{\svgscale}}%
    \fi%
  \else%
    \setlength{\unitlength}{\svgwidth}%
  \fi%
  \global\let\svgwidth\undefined%
  \global\let\svgscale\undefined%
  \makeatother%
  \begin{picture}(1,0.42105263)%
    \lineheight{1}%
    \setlength\tabcolsep{0pt}%
    \put(0,0){\includegraphics[width=\unitlength,page=1]{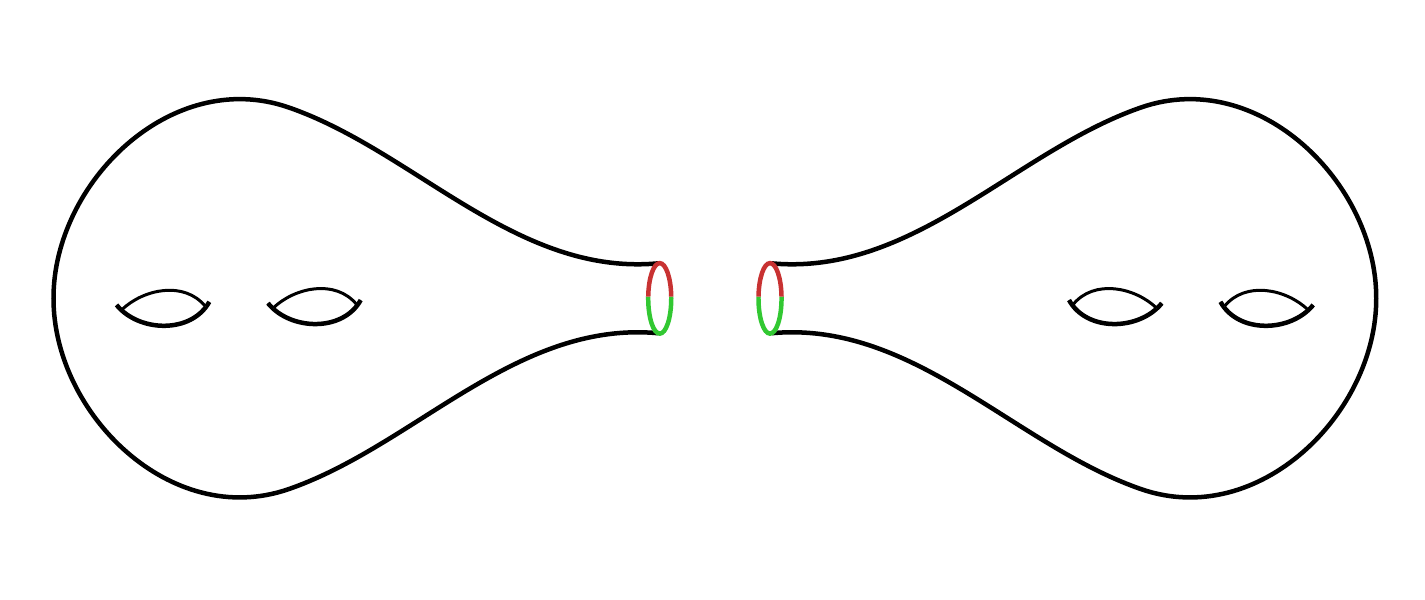}}%
    \put(0.40533293,0.1000388){\color[rgb]{0,0,0}\makebox(0,0)[lt]{\lineheight{1.25}\smash{\begin{tabular}[t]{l}$\Sigma^1_2$\end{tabular}}}}%
    \put(0.55196936,0.1000388){\color[rgb]{0,0,0}\makebox(0,0)[lt]{\lineheight{1.25}\smash{\begin{tabular}[t]{l}$S^1_2$\end{tabular}}}}%
  \end{picture}%
\endgroup%

\caption{Boundary connected sum of $\Sigma_2^1$ and $S_2^1$ along the lower half of their boundaries.}
\label{fig:boundary connected sum}
\end{figure}

We claim that there exists an element $t \in \Gamma_{2g}^1 \leq \Gamma_\infty$ such that 
\begin{enumerate}
    \item $[H, {}^t H] = 1$;
    \item $[H, t^2] = 1$.
\end{enumerate}

Since every element $h$ in $H \leq \Gamma_g^1$ fixes the boundary, the collar neighbourhood theorem ensures that one can find a small enough neighbourhood $S^1 \times [0, 1)$ of the boundary of $\Sigma_{2g}^1$ such that the support of every $h \in H$ is contained in $\Sigma_g^1 \cap (\Sigma_{2g}^1 \setminus (S^1 \times [0, 1)))$. Let now $t \in \Gamma_{2g}^1$ be the half turn around the curve $S^1\times \{3/4\}$ over $\Sigma_{2g}^1 \setminus (S^1 \times [0, 1/2])$ and extend it to the rest of $\Sigma_{2g}^1$ by fixing a neighbourhood $S^1\times [0,1/4]$ of its boundary (Figure \ref{fig:element t}): This $t$ thus acts as a central symmetry below the curve $S^1\times \{3/4\}$ and rigidly exchanges $\Sigma^1_g\cap (\Sigma_{2g}^1 \setminus (S^1 \times [0, 1)))$ and $S^1_g\cap (\Sigma_{2g}^1 \setminus (S^1 \times [0, 1)))$. By construction for every $h \in H$ the conjugate element ${}^t h$ has its compact support entirely contained in the interior of $S_g^1$ and so it commutes with $h$. This shows that $[H, {}^tH] = 1$.

\begin{figure}
\centering
\def\svgscale{0.4}
\begingroup%
  \makeatletter%
  \providecommand\color[2][]{%
    \errmessage{(Inkscape) Color is used for the text in Inkscape, but the package 'color.sty' is not loaded}%
    \renewcommand\color[2][]{}%
  }%
  \providecommand\transparent[1]{%
    \errmessage{(Inkscape) Transparency is used (non-zero) for the text in Inkscape, but the package 'transparent.sty' is not loaded}%
    \renewcommand\transparent[1]{}%
  }%
  \providecommand\rotatebox[2]{#2}%
  \newcommand*\fsize{\dimexpr\f@size pt\relax}%
  \newcommand*\lineheight[1]{\fontsize{\fsize}{#1\fsize}\selectfont}%
  \ifx\svgwidth\undefined%
    \setlength{\unitlength}{684bp}%
    \ifx\svgscale\undefined%
      \relax%
    \else%
      \setlength{\unitlength}{\unitlength * \real{\svgscale}}%
    \fi%
  \else%
    \setlength{\unitlength}{\svgwidth}%
  \fi%
  \global\let\svgwidth\undefined%
  \global\let\svgscale\undefined%
  \makeatother%
  \begin{picture}(1,0.52631579)%
    \lineheight{1}%
    \setlength\tabcolsep{0pt}%
    \put(0,0){\includegraphics[width=\unitlength,page=1]{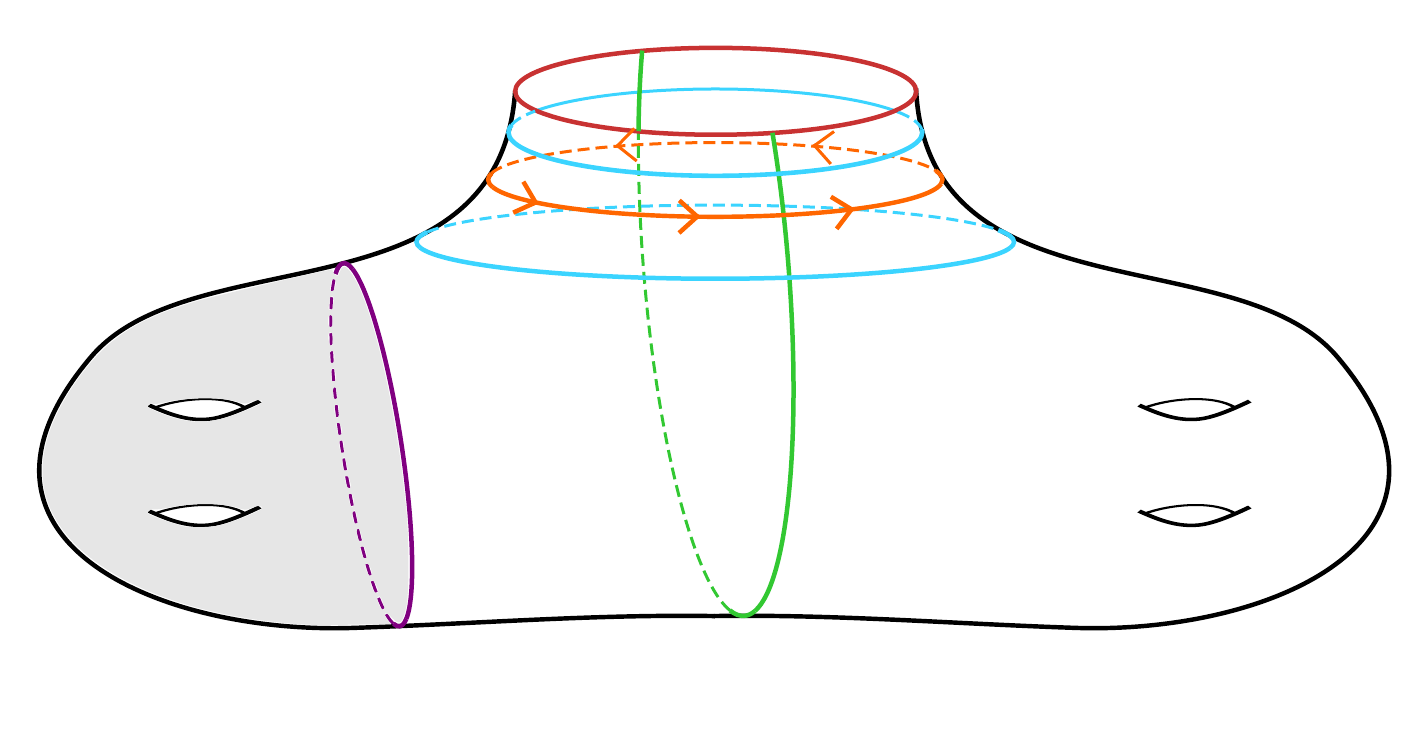}}%
    \put(0.37390351,0.03563596){\color[rgb]{0,0,0}\makebox(0,0)[lt]{\lineheight{1.25}\smash{\begin{tabular}[t]{l}$\Sigma^1_2$\end{tabular}}}}%
    \put(0.21992166,0.48436416){\color[rgb]{0,0,0}\makebox(0,0)[lt]{\lineheight{1.25}\smash{\begin{tabular}[t]{l}$\Sigma^1_4$\end{tabular}}}}%
    \put(0.66337719,0.03561776){\color[rgb]{0,0,0}\makebox(0,0)[lt]{\lineheight{1.25}\smash{\begin{tabular}[t]{l}$S^1_2$\end{tabular}}}}%
    \put(0.49991066,0.34409871){\color[rgb]{0,0,0}\makebox(0,0)[lt]{\lineheight{1.25}\smash{\begin{tabular}[t]{l}$t$\end{tabular}}}}%
    \put(0.11555464,0.03561776){\color[rgb]{0,0,0}\makebox(0,0)[lt]{\lineheight{1.25}\smash{\begin{tabular}[t]{l}$\supp(h)$\end{tabular}}}}%
    \put(0.65894244,0.42971365){\color[rgb]{0,0,0}\makebox(0,0)[lt]{\lineheight{1.25}\smash{\begin{tabular}[t]{l}$S^1 \times \left\{\frac{1}{4}\right\}$\end{tabular}}}}%
    \put(0.15874498,0.40146425){\color[rgb]{0,0,0}\makebox(0,0)[lt]{\lineheight{1.25}\smash{\begin{tabular}[t]{l}$S^1 \times \left\{\frac{3}{4}\right\}$\end{tabular}}}}%
  \end{picture}%
\endgroup%

\caption{The element $t\in \Gamma_{2g}^1$ is a half turn along the middle curve in the direction of the arrows.}
\label{fig:element t}
\end{figure}

On the other hand, if we consider the Dehn twist $t^2$, we obtain an element of $\Gamma_{2g}^1$ whose compact support is entirely contained in the open neighbourhood $S^1 \times (1/4, 3/4)$. Since we chose a small enough collar neighbourhood, we have that $t^2$ has support disjoint from that of every $h \in H$. Thus also condition $(2)$ is satisfied and we have shown that $\Gamma_\infty$ has commuting cyclic conjugates. By applying Theorem~\ref{intro:thm:ccc} we obtain the result.
\end{proof}

\subsection{Other compactly supported big mapping class groups}\label{subsec:big:mcg}

We studied the group $\Gamma_\infty$ as a natural direct limit of finite type mapping class groups. One could take another point of view, coming from the theory of \emph{big mapping class groups} \cite{aramayona2020big}. For a connected orientable surface $\Sigma$ of infinite type, i.e. with infinitely generated fundamental group, we denote by $\mcg(\Sigma)$ the group of orientation-preserving homeomorphisms of $\Sigma$, modulo those that are isotopic to the identity. The \emph{compactly supported} subgroup is the group $\mcgc(\Sigma)$ of mapping classes that admit a representative with compact support. With this language, the group $\Gamma_\infty$ is nothing by $\mcgc(\Sigma_{LN})$, where $\Sigma_{LN}$ is the \emph{Loch Ness monster surface} (see Figure \ref{fig:surfaces}).

\begin{figure}[htbp]
\centering
\def\svgscale{0.4}
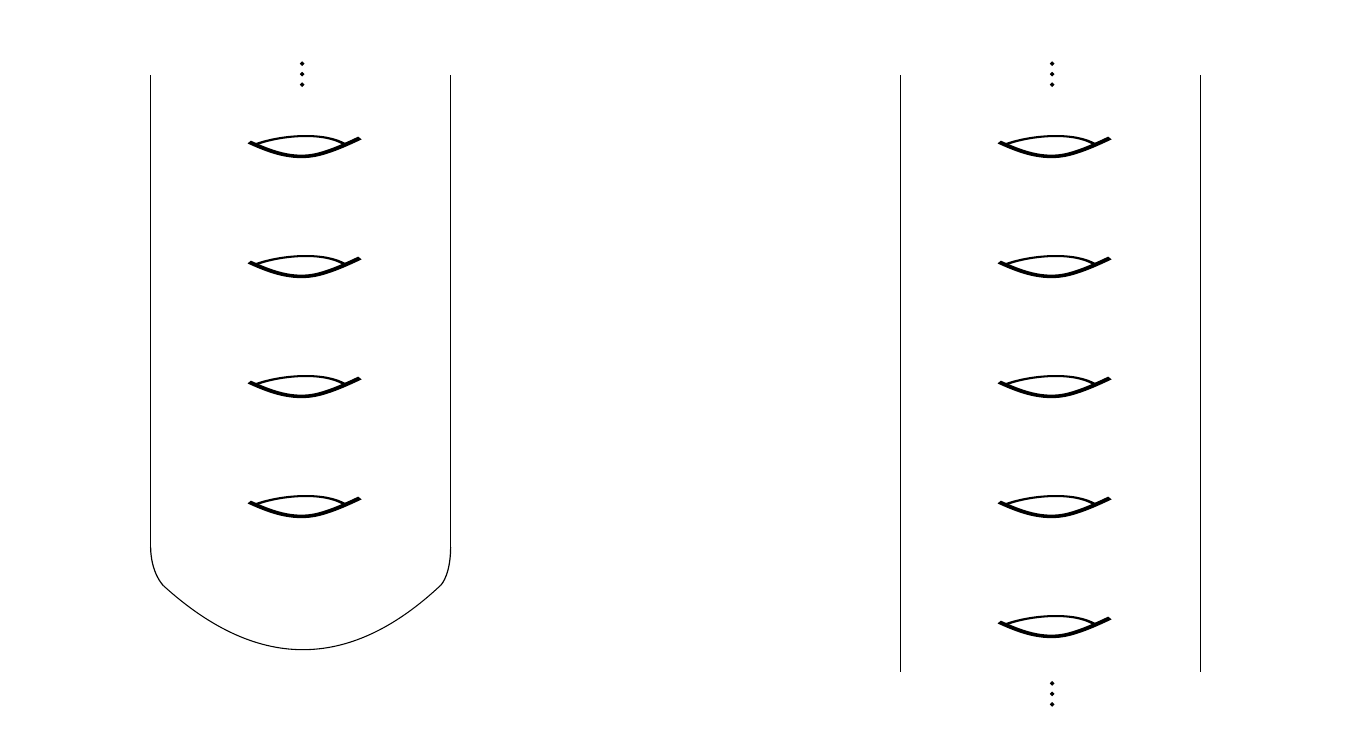
\caption{Two infinite-type surfaces: the Loch Ness monster (left) and Jacob's ladder (right).}
\label{fig:surfaces}
\end{figure}

The key ingredient in the proof of Corollary \ref{cor:stablemcg} is that every subsurface $K \subset \Sigma_{LN}$ of finite type can be displaced by a compactly supported mapping class (with additional control on powers of the displacing element). This is necessary for such a result to hold, as it is shown by a criterion due to Horbez--Qing--Rafi \cite{bigmcg:qm}. For its statement, we need some terminology. A subsurface of finite type $K \subset \Sigma$ is \emph{non-displaceable} if $\phi(K) \cap K \neq \emptyset$ for all $\phi \in \mathrm{Homeo}(\Sigma)$. Let $\widehat{K}$ denote the surface obtained from $K$ by capping off each boundary component with a once-punctured disc. A subgroup $G < \mcg(\Sigma)$ is said to be \emph{$K$-non-elementary} if the stabilizer of $K$ in $G$ contains two elements that induce independent pseudo-Anosov mapping classes of $\widehat{K}$.

\begin{thm}[{\cite[Corollary 2]{bigmcg:qm}}]
\label{thm:bigmcg:qm}
    Let $\Sigma$ be a connected orientable surface, and let $K \subset \Sigma$ be a non-displaceable subsurface. Then every $K$-non-elementary subgroup $G < \mcg(\Sigma)$ has infinite-dimensional second bounded cohomology.
\end{thm}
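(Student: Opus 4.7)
\medskip

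The plan is to exhibit an action of $G$ on a $\delta$-hyperbolic graph $Y$ with two independent loxodromic elements satisfying WPD, and then invoke the Bestvina--Fujiwara construction to produce infinitely many linearly independent homogeneous quasimorphisms on $G$. Since such quasimorphisms inject into $H^2_b(G;\R)$ via the long exact sequence
\[
0 \to H^1(G;\R) \to Q(G)/\R \to H^2_b(G;\R) \to H^2(G;\R),
\]
modulo the one-dimensional contribution of homomorphisms $G\to\R$ coming from the $K$-non-elementary pseudo-Anosov elements, this will give infinite-dimensionality of $H^2_b(G;\R)$.

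The first step is to construct $Y$. A natural candidate is a graph encoding the $G$-orbit of $K$ together with curves in the capped surfaces $\widehat{hK}$: for instance, take vertices to be pairs $(hK,\alpha)$ with $h\in G$ and $\alpha\in\mathcal{C}(\widehat{hK})$, and declare edges using either subsurface projections (``witness'' edges) or disjointness relations. Here the non-displaceability hypothesis plays the decisive role: it guarantees that any two $G$-translates $h_1 K$ and $h_2 K$ intersect, so subsurface projections between them are defined and well-controlled, which is typically what produces hyperbolicity via a Masur--Minsky style argument. The action of $G$ on $Y$ is then the obvious one, and it restricts on the $\mathrm{Stab}_G(K)$-invariant subgraph over $K$ to (a graph quasi-isometric to) the curve graph of $\widehat{K}$.

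The second step is to verify that the two given elements $f,g\in G$ that preserve $K$ and induce independent pseudo-Anosovs on $\widehat{K}$ act loxodromically on $Y$ with independent axes and satisfy WPD. Loxodromicity should follow from Masur--Minsky, since their translation lengths on $\mathcal{C}(\widehat{K})$ are positive, and this projects faithfully to translation length on $Y$. Independence is inherited from independence in $\mcg(\widehat{K})$. For WPD one must show that, given a long segment of the axis, only finitely many $G$-elements coarsely fix it. Non-displaceability forces any such element to preserve $K$ up to bounded error (otherwise its translate of $K$ would cover much of the axis from far away), reducing WPD in $G$ to the standard WPD for pseudo-Anosovs on $\mathcal{C}(\widehat{K})$.

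The main obstacle I expect is precisely the construction of $Y$ and the WPD verification: the graph must be rich enough for the full group $G$ (which may be very large and not even finitely generated) to act, yet rigid enough that stabilizers of long axis segments are controlled. The delicate point is translating the purely topological hypothesis of non-displaceability into metric control on $Y$; this is the technical heart of the argument, and mirrors similar constructions in the big mapping class group literature where one leverages a finite-type ``anchor'' subsurface to produce hyperbolic geometry for a much larger ambient group. Once $Y$ is built and WPD is established, Bestvina--Fujiwara applies essentially verbatim to conclude.
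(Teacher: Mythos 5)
First, a point of order: the paper does not prove this statement at all --- it is quoted verbatim as an external result of Horbez--Qing--Rafi (their Corollary~2), and is only \emph{used} in Section~4.2 to show that certain compactly supported big mapping class groups cannot have commuting (cyclic) conjugates. So there is no internal proof to compare your argument against; the relevant comparison is with the cited work, whose strategy your outline does broadly follow: build a hyperbolic graph out of the curve graphs of the $G$-translates of $K$ (the non-displaceability hypothesis ensuring all translates intersect), find two independent loxodromic elements coming from the independent pseudo-Anosovs on $\widehat{K}$, and run a Bestvina--Fujiwara-type quasimorphism construction to get an infinite-dimensional kernel of the comparison map $H^2_b(G;\R)\to H^2(G;\R)$.

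However, as a proof your proposal has genuine gaps: everything that carries the mathematical weight is deferred. The graph $Y$ is only described as ``a natural candidate''; its hyperbolicity is asserted to follow ``via a Masur--Minsky style argument'' without any verification; loxodromicity and independence are claimed to ``project'' from $\widehat{K}$ without an argument; and the WPD verification --- which you yourself identify as the technical heart --- is not carried out. Moreover, strict WPD is the wrong target in this setting: stabilizers in $\mcg(\Sigma)$ of a finite-type subsurface (and of long axis segments) contain the full, typically uncountable, mapping class group of the complement, so ``only finitely many $G$-elements coarsely fix a long segment'' generally fails. This is why the actual argument in the literature works with the weaker WWPD condition and the Bestvina--Bromberg--Fujiwara extension of Bestvina--Fujiwara rather than classical WPD. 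Finally, the remark about ``the one-dimensional contribution of homomorphisms'' is off ($H^1(G;\R)$ need not be one-dimensional), though this is harmless since the construction produces an infinite-dimensional space of homogeneous quasimorphisms modulo homomorphisms. In sum: right general strategy, but the proposal is a plan rather than a proof, and its key displacement-to-metric-control and WPD steps would need to be replaced by the WWPD machinery to go through.
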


In particular, such a $G$ does not have commuting cyclic conjugates, or even commuting conjugates. This applies to $G = \mcgc(\Sigma)$, whenever $\Sigma$ contains a sufficiently complicated non-displaceable subsurface $K$.

\medskip

As a special instance, we consider alternative stabilizations of mapping class groups, in the same spirit as the previous construction of $\Gamma_\infty$. We attach $k$ copies of the two-holed torus to a sphere with $k$ boundary components, one on each of its boundary components. At each step this produces an embedding $\Sigma_{kg, k} \hookrightarrow \Sigma_{k(g+1), k}$, whence an inclusion of the mapping class groups $\Gamma_{kg}^k \hookrightarrow \Gamma_{k(g+1)}^k$.

We denote by $\Gamma_{\infty, k}$ the direct union of these groups. We can see $\Gamma_{\infty, k}$ as $\mcgc(\Sigma_{\infty, k})$, where $\Sigma_{\infty, k}$ is the limit of the surfaces $\Sigma_{kg, k}, g \to \infty$.

When $k = 1$, this is just $\Gamma_\infty$, and $\Sigma_{\infty, 1} = \Sigma_{LN}$ is the Loch Ness monster surface. When $k \geq 3$, Theorem \ref{thm:bigmcg:qm} applies, and shows that $\Gamma_{\infty, k}$ has infinite-dimensional second bounded cohomology. Indeed, the subsurfaces $\Sigma_{kg, k}$ are all non-displaceable, since $\Sigma_{\infty, k} \setminus \Sigma_{kg, k}$ has $k \geq 3$ connected components, while the complement of any subsurface disjoint from $\Sigma_{kg, k}$ has at most two connected components.

\medskip

An interesting intermediate case is given by $k = 2$, when $\Sigma_{\infty, 2} = \Sigma_{JL}$ is also known as \emph{Jacob's ladder surface} (see Figure \ref{fig:surfaces}). Then every subsurface of finite type is displaceable by a mapping class, which can be chosen to be a vertical translation; so Theorem \ref{thm:bigmcg:qm} does not apply. However, it is not possible to achieve this displacement using a compactly supported mapping class, i.e. an element of $\Gamma_{\infty, 2} = \mcgc(\Sigma_{JL})$. Indeed, let $K \cong \Sigma_{2g,2}$ be a subsurface of $\Sigma_{JL}$. Then an element $t \in \Gamma_{\infty, 2}$ must be represented by a homeomorphism supported on a subsurface $K' \cong \Sigma_{2h, 2}$ containing $K$, for some $h \geq g$. The complement $K' \setminus K$ is the disjoint union of two subsurfaces $\Sigma_{2g_1,2} \sqcup \Sigma_{2g_2, 2}$, and $t(K' \setminus K) = K' \setminus t(K)$ must be of the same type. It is easy to see that it is not possible to preserve the genera $g_1, g_2$, while ensuring that $t(K) \subset K'$ is disjoint from $K$. Contrast this with the case of the Loch Ness monster, which is illustrated in Figure \ref{fig:element t}.

Camille Horbez has informed us that it should be possible to generalize Theorem \ref{thm:bigmcg:qm} to include surfaces $K$ that are only \emph{$G$-non-displaceable}, i.e. such that $\varphi(K) \cap K \neq \emptyset$ for every homeomorphism $\varphi \in \homeo(\Sigma)$ representing an element of $G$. The above argument would then imply that $\Gamma_{\infty, 2}$ has infinite-dimensional second bounded cohomology.

\subsection{Stable braid group and the bounded cohomology of the braided Ptolemy--Thompson group}\label{subsec:sbraidgroup}

In this setion, we compute the bounded cohomology of the \emph{infinite braid group} $B_\infty$ (with compact support). Kotschick proved that $B_\infty$ has vanishing stable commutator length~\cite{kotschick}. We adapt his argument to show that it has in fact commuting cyclic conjugates.\ As a by-product we fully compute the bounded cohomology of the braided Ptolemy--Thompson group $T^\ast$~\cite{funar2008braided}.

\begin{cor}[Corollary \ref{corintro:braid}]
\label{cor:braid}
    The infinite braid group $B_\infty$ is $\Xsep$-boundedly acyclic, and so is its commutator subgroup.
\end{cor}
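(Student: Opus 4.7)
The plan is to apply Theorem~\ref{intro:thm:ccc} after verifying that $B_\infty$ has commuting cyclic conjugates, and then to deduce the statement about the commutator subgroup via Lemma~\ref{lem:derived}. So the whole corollary reduces to producing, for each finitely generated $H\leq B_\infty$, an element $t\in B_\infty$ together with an integer $n\in\Z_{\geq 2}\cup\{\infty\}$ witnessing the commuting cyclic conjugates condition.

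The strategy is an algebraic reformulation of Kotschick's displacement argument~\cite[Theorem~3.5]{kotschick}, made cyclic by exploiting the Garside element. Since $H$ is finitely generated, it embeds in some $B_n\leq B_\infty$, which we think of as acting on the first $n$ strands. I would then enlarge to $B_{2n}\leq B_\infty$ and take $t$ to be the Garside half-twist $\Delta_{2n}\in B_{2n}$ on all $2n$ strands. Conjugation by $\Delta_{2n}$ reverses the order of the $2n$ strands, hence sends elements supported on strands $1,\ldots,n$ to elements supported on strands $n+1,\ldots,2n$; in particular $tHt^{-1}$ has support disjoint from $H$, so $[H,\,{}^{t}H]=1$. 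On the other hand $t^2=\Delta_{2n}^2$ is the full twist, which is central in $B_{2n}$; since $H\leq B_n\leq B_{2n}$, it follows that $[H,t^2]=1$. This verifies Definition~\ref{def:comm:cycl:conj} with $n=2$, and shows that $B_\infty$ has commuting cyclic conjugates.

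Applying Theorem~\ref{intro:thm:ccc} then yields that $B_\infty$ is $\Xsep$-boundedly acyclic. For the commutator subgroup, Lemma~\ref{lem:derived} gives that $[B_\infty,B_\infty]$ also has commuting cyclic conjugates, and a second application of Theorem~\ref{intro:thm:ccc} concludes.

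The only delicate point to double-check is the claim that $\Delta_{2n}$ reverses the order of all strands, so that $\Delta_{2n}H\Delta_{2n}^{-1}$ lands on strands $n+1,\ldots,2n$ and hence commutes pointwise with $H$; this is standard Garside theory for $B_{2n}$, and the centrality of the full twist $\Delta_{2n}^2$ is likewise classical. Beyond that, no additional obstacle arises: the two conditions of commuting cyclic conjugates are met simultaneously by the single choice $t=\Delta_{2n}$, and everything else is immediate from the cited results.
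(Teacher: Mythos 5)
Your proof is correct and follows essentially the same route as the paper: reduce to a finitely generated $H\leq B_n$, embed $B_n$ into $B_{2n}$, exhibit $t\in B_{2n}$ with $[H,{}^{t}H]=1$ and $[H,t^2]=1$, and conclude via Theorem~\ref{intro:thm:ccc} together with Lemma~\ref{lem:derived} for the commutator subgroup. The only difference is the witness element: the paper uses the block-swap braid (whose square commutes with $H$ by a support argument), whereas you use the Garside half-twist $\Delta_{2n}$, whose conjugation sends $\sigma_i\mapsto\sigma_{2n-i}$ and whose square is the central full twist --- both choices verify the commuting cyclic conjugates condition with $n=2$.
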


The commutator subgroup $[B_\infty, B_\infty]$ has been studied in some occasions \cite{burago2008conjugation, concordance, kimura}; note that it is perfect \cite{braid:perfect}.

\begin{proof}
    Let $H \leq B_\infty$ be a finitely generated group. Thus, $H$ is contained in some $B_n$. We can then embed $B_n$ into $B_{2n}$ simply by adding $n$ strands on the right. Let $t \in B_{2n} \leq B_\infty$ be the element that takes the first $n$ strands and puts them in positions $n+1, \dots, 2n$ by preserving the order and passing over the strands $n+1, \dots, 2n$ which are put in positions $1,\dots , n$, while all other strands are fixed. Then, for every $h \in H \leq B_n$ the support of the element ${}^t h$ lies on the strands $n+1, \dots, 2n$ and so it is disjoint from the one of $h$ (Figure~\ref{fig:conj:by:t}).\ This shows that $[H, {}^{t}H] = 1$.\ Now since $t^2$ fixes the strands supporting $H$ (Figure~\ref{fig:conj:by:t2}), we also have $[H, t^2]=1$, thus proving that $B_\infty$ has commuting cyclic conjugates.\ By applying Theorem~\ref{intro:thm:ccc}, the thesis follows.

\begin{figure}[!htb]
    \centering
    \begin{minipage}{.5\textwidth}
        \begin{center}
    \begin{tikzpicture}[scale = .35, very thick]

\draw[blue] (3,0) .. controls +(0,1) and +(0,-1.5) .. (0,3);

\draw[blue] (4,0) .. controls +(0,1.5) and +(0,-1) .. (1,3);

\draw[white, line width =5] (0,0) .. controls +(0,1.5) and +(0,-1) .. (3,3);

\draw[white, line width =5] (1,0) .. controls +(0,1) and +(0,-1.5) .. (4,3);

\draw[red] (0,0) .. controls +(0,1.5) and +(0,-1) .. (3,3);

\draw[red] (1,0) .. controls +(0,1) and +(0,-1.5) .. (4,3);


\draw[blue] (0,5) .. controls +(0,1.5) and +(0,-1) .. (3,8);

\draw[blue] (1,5) .. controls +(0,1) and +(0,-1.5) .. (4,8);

\draw[white, line width =5] (3,5) .. controls +(0,1) and +(0,-1.5) .. (0,8);

\draw[white, line width =5] (4,5) .. controls +(0,1.5) and +(0,-1) .. (1,8);

\draw[red] (3,5) .. controls +(0,1) and +(0,-1.5) .. (0,8);

\draw[red] (4,5) .. controls +(0,1.5) and +(0,-1) .. (1,8);


\draw (1.25,3) rectangle (-.25,5);
\node at (.5, 4) {$H$};

\draw[red] (3,5) -- (3,3);

\draw[red] (4,5) -- (4,3);


\draw[fill, very thin] (4.5,3.1) .. controls +(.5,0) and +(-.25,0) .. (4.75,1.5).. controls +(-.15,0) and +(.5,0) .. (4.5,3.1); 
\draw[fill, very thin] (4.5,-.1) .. controls +(.5,0) and +(-.25,0) .. (4.75,1.5).. controls +(-.15,0) and +(.5,0) .. (4.5,-.1);

\draw[fill, very thin] (4.5,8.1) .. controls +(.5,0) and +(-.25,0) .. (4.75,6.5).. controls +(-.15,0) and +(.5,0) .. (4.5,8.1); 
\draw[fill, very thin] (4.5,4.9) .. controls +(.5,0) and +(-.25,0) .. (4.75,6.5).. controls +(-.15,0) and +(.5,0) .. (4.5,4.9);

\node[right] at (4.5,6.5) {$t$};

\node[right] at (4.5,1.5) {$t^{-1}$};
     \end{tikzpicture}
  \end{center}

  \caption{The conjugation $t H t^{-1}$.}
  \label{fig:conj:by:t}
    \end{minipage}%
    \begin{minipage}{0.5\textwidth}
    \begin{center}
    \begin{tikzpicture}[scale = .35, very thick]


\draw[red] (0,0) .. controls +(0,-1.5) and +(0,1) .. (3,-3);

\draw[red] (1,0) .. controls +(0,-1) and +(0,1.5) .. (4,-3);

\draw[white, line width =5] (3,0) .. controls +(0,-1) and +(0,1.5) .. (0,-3);

\draw[white, line width =5] (4,0) .. controls +(0,-1.5) and +(0,1) .. (1,-3);

\draw[blue] (3,0) .. controls +(0,-1) and +(0,1.5) .. (0,-3);

\draw[blue] (4,0) .. controls +(0,-1.5) and +(0,1) .. (1,-3);

\draw[blue] (3,0) .. controls +(0,1) and +(0,-1.5) .. (0,3);

\draw[blue] (4,0) .. controls +(0,1.5) and +(0,-1) .. (1,3);

\draw[white, line width =5] (0,0) .. controls +(0,1.5) and +(0,-1) .. (3,3);

\draw[white, line width =5] (1,0) .. controls +(0,1) and +(0,-1.5) .. (4,3);

\draw[red] (0,0) .. controls +(0,1.5) and +(0,-1) .. (3,3);

\draw[red] (1,0) .. controls +(0,1) and +(0,-1.5) .. (4,3);


\draw[blue] (0,5) .. controls +(0,1.5) and +(0,-1) .. (3,8);

\draw[blue] (1,5) .. controls +(0,1) and +(0,-1.5) .. (4,8);

\draw[white, line width =5] (3,5) .. controls +(0,1) and +(0,-1.5) .. (0,8);

\draw[white, line width =5] (4,5) .. controls +(0,1.5) and +(0,-1) .. (1,8);

\draw[red] (3,5) .. controls +(0,1) and +(0,-1.5) .. (0,8);

\draw[red] (4,5) .. controls +(0,1.5) and +(0,-1) .. (1,8);


\draw[red] (0,8) .. controls +(0,1.5) and +(0,-1) .. (3,11);

\draw[red] (1,8) .. controls +(0,1) and +(0,-1.5) .. (4,11);

\draw[white, line width =5]  (3,8) .. controls +(0,1) and +(0,-1.5) .. (0,11);

\draw[white, line width =5] (4,8) .. controls +(0,1.5) and +(0,-1) .. (1,11);

\draw[blue] (3,8) .. controls +(0,1) and +(0,-1.5) .. (0,11);

\draw[blue] (4,8) .. controls +(0,1.5) and +(0,-1) .. (1,11);

\draw (1.25,3) rectangle (-.25,5);
\node at (.5, 4) {$H$};

\draw[red] (3,5) -- (3,3);

\draw[red] (4,5) -- (4,3);


\draw[fill, very thin] (4.5,3.1) .. controls +(.5,0) and +(-.25,0) .. (4.75,0).. controls +(-.15,0) and +(.5,0) .. (4.5,3.1); 
\draw[fill, very thin] (4.5,-2.9) .. controls +(.5,0) and +(-.25,0) .. (4.75,0).. controls +(-.15,0) and +(.5,0) .. (4.5,-2.9);

\draw[fill, very thin] (4.5,11.1) .. controls +(.5,0) and +(-.25,0) .. (4.75,8).. controls +(-.15,0) and +(.5,0) .. (4.5,11.1); 
\draw[fill, very thin] (4.5,4.9) .. controls +(.5,0) and +(-.25,0) .. (4.75,8).. controls +(-.15,0) and +(.5,0) .. (4.5,4.9);

\node[right] at (4.5,8) {$t^2$};

\node[right] at (4.5,0) {$t^{-2}$};
\end{tikzpicture}
  \end{center}

  \caption{The conjugation $t^2 H t^{-2}$.}
  \label{fig:conj:by:t2}
    \end{minipage}
\end{figure}

The statement for derived subgroups follows from Lemma \ref{lem:derived}.
\end{proof}

Note that the proof really relies on being able to apply non-trivial permutations to the strands. This motivates the following question:

\begin{question}
\label{q:braid}
    Is the infinite pure braid group $PB_\infty$ ($\Xsep$-)boundedly acyclic?
\end{question}

As a consequence of Corollary~\ref{cor:braid}, we get the following:

\begin{cor}[Corollary \ref{corintro:ptolemy}]
\label{cor:ptolemy}
    The bounded cohomology ring (with the cup product structure) of the braided Ptolemy--Thompson group $T^\ast$ with trivial real coefficients is the following:
    \[
    H_b^\bullet(T^\ast; \R) \cong \R[x],
    \]
    where $|x| = 2$. Under this isomorphism, $x$ corresponds to the Euler class of the defining action on the circle of the quotient $T$.
\end{cor}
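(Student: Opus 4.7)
The plan is to exploit the Funar--Kapoudjian short exact sequence
\[
1 \longrightarrow B_\infty \longrightarrow T^\ast \overset{\pi}{\longrightarrow} T \longrightarrow 1,
\]
whose kernel is identified with the stable braid group and whose quotient is Thompson's group $T$, which in turn acts on $S^1$ in the standard way. Since $B_\infty$ is $\Xsep$-boundedly acyclic by Corollary~\ref{cor:braid}, it is in particular boundedly acyclic with trivial real coefficients.

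The first step is to invoke a generalized mapping theorem for boundedly acyclic normal subgroups (see \cite{moraschini_raptis_2023}, which extends Theorem~\ref{thm:mapping} beyond the amenable case) to conclude that the inflation
\[
\pi^\ast \colon H^\bullet_b(T; \R) \xrightarrow{\cong} H^\bullet_b(T^\ast; \R)
\]
is an isomorphism. Since inflation along a group homomorphism is multiplicative with respect to the cup product, this is an isomorphism of graded $\R$-algebras.

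The second step is to compute $H^\bullet_b(T; \R)$ by applying Corollary~\ref{cor:circleaction} to $T$: the action of $T$ on $S^1$ by orientation-preserving piecewise linear homeomorphisms restricts to a transitive action on positively oriented $n$-tuples of dyadic rationals for every $n \geq 1$. Hence $H^\bullet_b(T; \R) \cong \R[e]$, where $|e| = 2$ and $e$ is the Euler class of the defining action $T \actson S^1$.

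To conclude, I would observe that the defining action of $T^\ast$ on the circle factors through $\pi$ (the asymptotically rigid mapping classes of $D^\ast$ act on the boundary circle via their underlying action in $T$), so that the Euler class of $T^\ast \actson S^1$ equals $\pi^\ast(e)$. Setting $x \coloneqq \pi^\ast(e)$, the chain of ring isomorphisms $H^\bullet_b(T^\ast; \R) \cong H^\bullet_b(T; \R) \cong \R[e] \cong \R[x]$ yields the statement together with the identification of the generator. The main obstacle is less conceptual than bibliographic: one must pinpoint the precise form of the mapping theorem allowing a boundedly acyclic (rather than amenable) normal kernel, and confirm the identification of $\ker(\pi)$ with the stable braid group $B_\infty$ in the sense used in Corollary~\ref{cor:braid}, both of which are standard but require careful referencing to \cite{funar2008braided, moraschini_raptis_2023}.
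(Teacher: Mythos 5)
Your proposal is correct and follows essentially the same route as the paper: the Funar--Kapoudjian extension $1 \to B_\infty \to T^\ast \to T \to 1$, the bounded acyclicity of $B_\infty$ from Corollary~\ref{cor:braid}, the generalized mapping theorem of \cite{moraschini_raptis_2023} giving a ring isomorphism via pullback, and the known computation $H^\bullet_b(T;\R) \cong \R[x]$ (which the paper cites directly and you obtain through Corollary~\ref{cor:circleaction}, itself a generalization of that computation). The identification of the generator with the pulled-back Euler class matches the paper's statement as well.
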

\begin{proof}
    It is sufficient to notice that the braided Ptolemy--Thompson group $T^\ast$ is an extension of Thompson's group $T$ by $B_\infty$~\cite{funar2008braided}:
    \[
    1 \to B_\infty \to T^\ast \to T \to 1.
    \]
    Since $B_\infty$ is boundedly acyclic by Corollary~\ref{cor:braid}, we can apply \cite[Theorem 4.1.1]{moraschini_raptis_2023} and obtain
    \[
    H_b^\bullet(T^\ast; \R) \cong H_b^\bullet(T; \R)
    \]
    as cohomology ring, where the isomorphism is induced by the pullback.
    The thesis now follows from the computation of the bounded cohomology of Thompson's group $T$~\cite{fflm2, monod:thompson, monodnariman}.
\end{proof}

\subsection{Braided Thompson groups}

We briefly recall the definition of $bV$ \cite{bV:brin, bV:dehornoy, bV:qm}.\ An element of $bV$ is represented by a triple of the form $(T_-, \beta, T_+)$, where $T_{\pm}$ are binary trees with $n$ leaves, and $\beta \in B_n$, the braid group on $n$ strands. We picture $T_+$ upside down and below $T_-$, with $\beta$ connecting the leaves of $T_+$ up to those of $T_-$, as in Figure \ref{fig:bV}.\ We call such a picture a \emph{strand diagram}. Elements of $bV$ are equivalence classes of such representative triples, denoted by $[T_-, \beta, T_+]$, where the equivalence is generated by a suitable notion of expansion analogous to that of tree pair diagrams for Thompson's groups. Given two elements $[T_-, \beta, T_+]$ and $[U_-, \gamma, U_+]$, up to expansion we can assume that $T_+ = U_-$, and then their product is $[T_-, \beta \gamma, U_+]$.

Denote by $\widehat{B_n}$ the copy of $B_{n-1}$ inside $B_n$ that is supported on the first $n-1$ strands.\ Then the subset of elements of $bV$ of the form $[T_-, \beta, T_+]$ with $\beta \in \widehat{B_n}$ forms a subgroup, that is denoted by $\widehat{bV}$.

The \emph{right depth} of a tree is the distance (length of the geodesic path) of its rightmost leaf to the root in a strand diagram (recall that $T_+$ is pictured upside down). The map $\chi_1\colon \widehat{bV} \to \Z$ sending an element $[T_-, \beta, T_+]$ to the right depth of $T_-$ minus the right depth of $T_+$, is a well-defined surjective homomorphism, which in fact realizes the abelianization of $\widehat{bV}$ \cite[Corollary 2.14]{bV:qm}.\ We denote by $\widehat{D}$ the kernel of $\chi_1$, and by $\widehat{bV}(1)$ the subgroup of $\widehat{D}$ consisting of elements $[T_-, \beta, T_+]$, where both $T_-$ and $T_+$ have right depth $1$.

\begin{cor}[Corollary \ref{corintro:bV}]
\label{cor:bV}
The group $\widehat{bV}$ is $\Xsep$-boundedly acyclic.
\end{cor}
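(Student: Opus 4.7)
The plan is to first pass to the commutator subgroup and then establish commuting cyclic conjugates there. Since $\chi_1 \colon \widehat{bV} \to \Z$ realizes the abelianization of $\widehat{bV}$, we have $\widehat{D} \coloneqq [\widehat{bV}, \widehat{bV}] = \ker(\chi_1)$, and the quotient $\widehat{bV}/\widehat{D} \cong \Z$ is amenable; hence $\widehat{D}$ is coamenable in $\widehat{bV}$, and by Theorem~\ref{thm:co-amenable} it suffices to prove that $\widehat{D}$ is $\Xsep$-boundedly acyclic. I would then deduce this via Theorem~\ref{intro:thm:ccc} by showing that $\widehat{D}$ has commuting cyclic conjugates, which is a cyclic strengthening of the commuting conjugates already established for $\widehat{D}$ in~\cite{bV:qm}.

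Let $H \leq \widehat{D}$ be finitely generated. Since each generator lies in $\ker(\chi_1)$, it admits a representation $[T_-, \beta, T_+]$ in which $T_-$ and $T_+$ have the same right depth. Taking a common refinement along the rightmost path, I may assume there is some $d \geq 1$ such that every generator of $H$ is represented with trees of right depth $d$; equivalently, every element of $H$ fixes the dyadic interval $[1-2^{-d}, 1]$ pointwise and acts only via tree-and-braid manipulations on $[0, 1-2^{-d}]$.

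Next I would construct the displacing element $t$ by reserving, inside the fixed region, a scaled mirror copy of the right-perfect subdivision of $[0, 1-2^{-d}]$. Explicitly, let $T$ be a tree with $2d+1$ leaves: the first $d$ leaves subdivide $[0, 1-2^{-d}]$ in the standard right-perfect way; the next $d$ leaves subdivide a dyadic subinterval of $[1-2^{-d}, 1]$ analogously (with the same combinatorial structure, scaled by a power of $2$); and the last leaf is the rightmost residual dyadic interval. Set
\[
t \coloneqq [T, \Delta_{2d}, T],
\]
where $\Delta_{2d} \in B_{2d}$ is the positive half-twist on the first $2d$ strands, extended by the identity on the $(2d+1)$-th strand. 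Since $\Delta_{2d} \in \widehat{B_{2d+1}}$, we have $t \in \widehat{bV}$; and since $t$ fixes the rightmost leaf pointwise, in fact $t \in \widehat{D}$.

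To verify commuting cyclic conjugates with $n = 2$, I would check two things. First, conjugation by $t$ interchanges the two cables of $d$ strands each, so $tHt^{-1}$ is supported on the mirror copy inside $[1-2^{-d}, 1]$, a region fixed pointwise by $H$; the disjointness of the corresponding subdiscs gives $[H, tHt^{-1}] = 1$ by the standard fact that braids in disjoint subdiscs commute. Second, the square $t^2$ corresponds to $\Delta_{2d}^2$, which under the cabled refinement needed to form products with elements of $H$ becomes the boundary Dehn twist of the disc enclosing the two cables, a central element of the ambient braid group, and hence commutes with every element of $H$; this gives $[H, t^2] = 1$. The main technical obstacle, already addressed in~\cite{bV:qm}, is carrying out the cabled refinement compatibly on both cables when multiplying $t$ against elements of $H$; the built-in mirror symmetry of $T$ makes this refinement automatic, since any refinement of the first cable imposed by $H$ can be duplicated on the second cable without changing the action of $t$ on $H$.
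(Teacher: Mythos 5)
Your overall architecture (reduce via coamenability of $\widehat{D}=\ker\chi_1$ and Theorem~\ref{thm:co-amenable}, then verify commuting cyclic conjugates with $n=2$ for a well-chosen $t$) matches the paper, and for $d=1$ your $t$ is literally the paper's element. But the step where you handle general right depth $d$ directly has a genuine gap: the claim that the cabled $\Delta_{2d}^2$ ``becomes the boundary Dehn twist of the disc enclosing the two cables, a central element of the ambient braid group'' is false. When a strand of a braid diagram is split in $bV$, the new strands run parallel with no internal crossings, so the cabled full twist is the \emph{block} full twist, which equals the genuine full twist multiplied by the inverses of the internal full twists of the blocks over $I_1,\ldots,I_d$. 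Hence $t^2$ commutes with $h$ only if the braid of $h$ preserves (up to isotopy) each curve enclosing the punctures over $I_k$ --- but elements of $\widehat{D}$ with right-depth-$d$ representatives need not preserve the partition $I_1,\ldots,I_d$ of $[0,1-2^{-d}]$; they only leave the rightmost leaf alone. Concretely, take $d=2$ and $h=[T_0,\sigma_2,T_0]$, where $T_0$ has leaves $[0,\tfrac14],[\tfrac14,\tfrac12],[\tfrac12,\tfrac34],[\tfrac34,1]$ and $\sigma_2$ braids the second and third strands: then $h\in\widehat{D}$ has right depth $2$, but after the common refinement your $t^2$ becomes $\Delta^2\cdot\sigma_1^{-2}$ (full twist on the first five strands times the inverse internal twist of the block over $[0,\tfrac12]$), and since $\sigma_2$ does not commute with $\sigma_1^2$ in the braid group, $[h,t^2]\neq 1$. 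So the second condition in the definition of commuting cyclic conjugates fails for your $t$ whenever $d\geq 2$.

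The missing ingredient is exactly what the paper imports from \cite{bV:qm}: every finitely generated subgroup of $\widehat{D}$ is \emph{conjugate} into $\widehat{bV}(1)$ (\cite[Lemma 2.12]{bV:qm}), i.e.\ into the subgroup supported over the single dyadic interval $[0,\tfrac12]$. With a single block, every element of $\widehat{bV}(1)$ preserves the enclosing curve of that block, so the cabled square of the paper's $t=[T,\sigma_1,T]$ (the case $d=1$ of your construction) is a product of a full twist and the internal twist of that one block, both of which commute with everything supported over $[0,\tfrac12]$; the disjoint-support argument then gives $[\widehat{bV}(1),{}^t\widehat{bV}(1)]=1$ as in \cite[Lemma 2.16]{bV:qm}, and $[\widehat{bV}(1),t^2]=1$ as in Corollary~\ref{cor:braid}. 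Your proposal cannot bypass this conjugation lemma by passing to a common right depth $d$, because increasing $d$ multiplies the blocks and destroys the centrality you rely on; to repair the argument you would either need to invoke that lemma or prove an equivalent statement.
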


\begin{proof}
By Theorems \ref{thm:co-amenable} and \ref{intro:thm:ccc}, it suffices to show that the co-amenable subgroup $\widehat{D}$ has commuting cyclic conjugates. Since every finitely generated subgroup of $\widehat{D}$ is conjugate into $\widehat{bV}(1)$~\cite[Lemma 2.12]{bV:qm}, it is enough to show that there exists an element $t \in \widehat{D}$ such that $[\widehat{bV}(1), {}^t \widehat{bV}(1)] = 1$, and $[\widehat{bV}(1), t^2] = 1$. We choose $t = [T, \beta, T]$, where $T$ is a caret with a second caret hanging on the right and $\beta$ is the first standard generator of $B_3$ (Figure \ref{fig:bV}).\ It is known that $[\widehat{bV}(1), {}^t \widehat{bV}(1)] = 1$~\cite[Lemma 2.16]{bV:qm}, wheras the fact that $[\widehat{bV}(1), t^2] = 1$ can be proved just as in our proof of Corollary \ref{cor:braid} (compare with Figure~\ref{fig:conj:by:t2}).
\begin{figure}[htb]
\centering
\begin{tikzpicture}[line width=0.4pt]

\draw (0,0) -- (0.5,0.5) -- (1,0) -- (1,-0.5) -- (0.5,-1) -- (0,-0.5);
\filldraw[lightgray!50] (-0.375,0.125) -- (0.375,0.125) -- (0.375,-0.625) -- (-0.375,-.625) -- (-0.375,0.125);
\draw (-0.375,0.125) -- (0.375,0.125) -- (0.375,-.625) -- (-0.375,-.625) -- (-0.375,0.125)   (0.5,-1) -- (0.5,-1.25)   (0.5,0.5) -- (0.5,0.75);
\filldraw (0.5,0.5) circle (0.75pt)   (1,0) circle (0.75pt)   (1,-0.5) circle (0.75pt)   (0.5,-1) circle (0.75pt);
\node at (1.5,-0.25) {$=$};

\begin{scope}[yshift=-1.75cm]
\draw (0,0) -- (0.5,0.5) -- (1,0) -- (1,-0.5) -- (0.5,-1) -- (0,-0.5)   (0.75,0.25) -- (0.5,0)   (0.5,-0.5) -- (0.75,-0.75);
\draw (0.5,0) to[out=-90,in=90] (0,-0.5);
\draw[white,line width=4pt] (0,0) to[out=-90,in=90] (0.5,-0.5);
\draw (0,0) to[out=-90,in=90] (0.5,-0.5);
\filldraw (0,0) circle (0.75pt)   (0.5,0.5) circle (0.75pt)   (1,0) circle (0.75pt)   (1,-0.5) circle (0.75pt)   (0.5,-1) circle (0.75pt)   (0,-0.5) circle (0.75pt)   (0.75,0.25) circle (0.75pt)   (0.5,0) circle (0.75pt)   (0.5,-0.5) circle (0.75pt)   (0.75,-0.75) circle (0.75pt);
\end{scope}

\begin{scope}[yshift=1.75cm]
\draw (0,0) -- (0.5,0.5) -- (1,0) -- (1,-0.5) -- (0.5,-1) -- (0,-0.5)   (0.75,0.25) -- (0.5,0)   (0.5,-0.5) -- (0.75,-0.75);
\draw (0,0) to[out=-90,in=90] (0.5,-0.5);
\draw[white,line width=2pt] (0.5,0) to[out=-90,in=90] (0,-0.5);
\draw (0.5,0) to[out=-90,in=90] (0,-0.5);
\filldraw (0,0) circle (0.75pt)   (0.5,0.5) circle (0.75pt)   (1,0) circle (0.75pt)   (1,-0.5) circle (0.75pt)   (0.5,-1) circle (0.75pt)   (0,-0.5) circle (0.75pt)   (0.75,0.25) circle (0.75pt)   (0.5,0) circle (0.75pt)   (0.5,-0.5) circle (0.75pt)   (0.75,-0.75) circle (0.75pt);
\end{scope}

\begin{scope}[xshift=3.8cm,yshift=-0.5cm,xscale=-1.2,yscale=-1.2]
\draw (0,-0.5) -- (0,0) -- (0.5,0.5) -- (1,0) -- (1,-0.5) -- (0.5,-1) -- (0,-0.5)   (0.5,0.5) -- (0.75,0.75) -- (1.5,0) -- (1.5,-0.5) -- (0.75,-1.25) -- (0.5,-1);
\begin{scope}[xshift=0.8cm, yshift=0.5]\filldraw[lightgray!50] (-0.375,0.125) -- (0.375,0.125) -- (0.375,-0.725) -- (-0.375,-0.725) -- (-0.375,0.125);
\draw (-0.375,0.125) -- (0.375,0.125) -- (0.375,-0.725) -- (-0.375,-0.725) -- (-0.375,0.125);\end{scope}
\filldraw (0.5,0.5) circle (0.75pt)   (0,0) circle (0.75pt)   (0,-0.5) circle (0.75pt)   (0.5,-1) circle (0.75pt)   (0.75,0.75) circle (0.75pt)   (1.5,0) circle (0.75pt)   (1.5,-0.5) circle (0.75pt)   (0.75,-1.25) circle (0.75pt);
\end{scope}
\end{tikzpicture}
\caption{The conjugation $t \widehat{bV}(1) t^{-1}$.}
\label{fig:bV}
\end{figure}
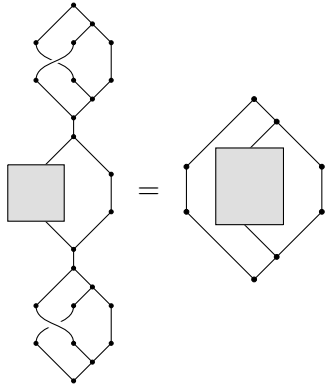
\end{proof}

\subsection{Automorphisms of products}\label{subsec:aut:free:group} 

Let $\Gamma$ be an arbitrary group.\ We denote by $\Gamma^{* \infty} \coloneqq *_{i \in \N} \Gamma$, by $\Gamma^{\times \infty} \coloneqq \prod_{i \in \N} \Gamma$, and by $\Gamma^{\oplus \infty} \coloneqq \oplus_{i \in \N} \Gamma$.\ In each case, we denote by $\Gamma_i$ the $i$-th copy of $\Gamma$. We define $\Aut_\infty(\Gamma^{* \infty})$ the subgroup of $\Aut(\Gamma^{* \infty})$ consisting of automorphisms that fix pointwise the groups $\Gamma_i$ for $i$ large enough. Similarly, we define $\Aut_\infty(\Gamma^{\times \infty})$ and $\Aut_\infty(\Gamma^{\oplus \infty})$.

\begin{cor}[Corollary \ref{corintro:auto}]
\label{cor:auto}
For every group $\Gamma$, the groups $\Aut_\infty(\Gamma^{* \infty})$, $\Aut_\infty(\Gamma^{\times \infty})$ and $\Aut_\infty(\Gamma^{\oplus \infty})$ are $\Xsep$-boundedly acyclic.
\end{cor}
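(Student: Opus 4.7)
The plan is to apply Theorem~\ref{intro:thm:ccc}, so it suffices to verify that each of the three groups has commuting cyclic conjugates. All three cases are handled uniformly: let $\bullet \in \{*, \times, \oplus\}$, write $G^{(N)} := \Gamma^{\bullet N}$, and observe that by definition $\Aut_\infty(G^{(\infty)})$ is the directed union of the subgroups $\Aut(G^{(N)})$, included into $\Aut(G^{(N+1)})$ by extending automorphisms via the identity on the $(N{+}1)$st factor.

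Let $H \leq \Aut_\infty(G^{(\infty)})$ be a finitely generated subgroup. Since $H$ is finitely generated, there exists $N \in \N$ such that $H \leq \Aut(G^{(N)})$, so every $\phi \in H$ preserves $G^{(N)}$ (the subgroup generated by the first $N$ copies of $\Gamma$) and acts as the identity on every $\Gamma_i$ with $i > N$. The candidate for the element $t$ witnessing commuting cyclic conjugates will be the automorphism of $G^{(2N)}$ induced by the index permutation that swaps $i \leftrightarrow i+N$ for $1 \leq i \leq N$ and is the identity on every $\Gamma_i$ with $i > 2N$. In each of the three constructions (free product, direct product, direct sum), any permutation of the factor set yields an automorphism, so $t$ is a well-defined involution in $\Aut(G^{(2N)}) \subseteq \Aut_\infty(G^{(\infty)})$.

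Since $t^2 = 1$, the condition $[H, t^2] = 1$ is automatic, and the commuting cyclic conjugates property with exponent $n = 2$ will follow once $[H, {}^t H] = 1$ is verified. For every $\phi \in H$, the conjugate ${}^t \phi$ preserves the ``second block'' generated by $\Gamma_{N+1}, \ldots, \Gamma_{2N}$ and acts as the identity outside of it; symmetrically, $\phi$ preserves the ``first block'' generated by $\Gamma_1, \ldots, \Gamma_N$ and acts as the identity elsewhere. Using the natural decomposition of $G^{(2N)}$ into these two blocks (as $G^{(N)} * G^{(N)}$, $G^{(N)} \times G^{(N)}$, or $G^{(N)} \oplus G^{(N)}$, respectively), any two such ``block-localized'' automorphisms commute. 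The only routine point is the verification of this last commutation in the free product case, where it follows from evaluating both compositions on an arbitrary reduced alternating word in the two blocks; no essential obstacle is anticipated.
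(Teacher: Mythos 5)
Your proof is correct and is essentially the paper's argument: the paper likewise takes for $t$ the involution of $\Aut_\infty$ induced by swapping the block of the first $N$ factors with the next $N$ factors, notes $[H,{}^{t}H]=1$ and $[H,t^2]=1$ (so $n=2$ in Definition~\ref{intro:def:comm:cycl:conj}), and concludes via Theorem~\ref{intro:thm:ccc}. The only divergence is that the paper phrases $\Aut_\infty$ as the automorphisms fixing each factor $\Gamma_i$ pointwise for $i$ large and states the final commutation more tersely, whereas your directed-union reading (the one used in the introduction) is exactly what makes your block-preservation and block-commutation checks immediate, including the free product case you verify on reduced words.
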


\begin{proof}
We prove this for $\Aut_\infty(\Gamma^{* \infty})$; the other cases are analogous.\ First note that, by the universal property of free products, every automorphism is uniquely determined by its restriction to each free factor $\Gamma_i$. Let $H \leq \Aut_\infty(\Gamma^{* \infty})$ be a finitely generated subgroup.\ Then there exists $n\in\N$ such that $H$ fixes  $\Gamma_i$ pointwise for all $i > n$.\ We define $t$ as follows: For all $1 \leq i \leq 2n$, it swaps $\Gamma_i$ and $\Gamma_{2n-i}$, and for all $i > 2n$, it fixes $\Gamma_i$ pointwise. Then $t$ has order $2$, thus $[H, t^2] = 1$.\ Moreover, ${}^t H$ fixes $\Gamma_i$ pointwise, for $1 \leq i \leq n$, and so $[H, {}^t H] = 1$.\ This shows that $\Aut_\infty(\Gamma^{* \infty})$ has commuting cyclic conjugates, and we conclude by Theorem \ref{intro:thm:ccc}.
\end{proof}
\subsection{Cremona groups}\label{subsec:cremona:group} Recall that, for a field $K$, the Cremona groups $\mathrm{Cr}_n(K)=\mathrm{Aut}(K(x_1, \ldots, x_n)|K)$ are the groups of automorphisms of the field of rational functions~$K(x_1, \ldots, x_n)$ over $K$. We stabilize these groups along the natural inclusions $\mathrm{Cr}_n(K)\hookrightarrow \mathrm{Cr}_{n+1}(K)$, fixing the coordinate $x_{n+1}$. We consider their direct limit $$\mathrm{Cr}_{\infty}(K)=\bigcup_{n\in\mathbb{N}}\mathrm{Cr}_n(K).$$
In this section, we apply Theorem \ref{intro:thm:ccc} to $\mathrm{Cr}_\infty(K)$ and show: 
\begin{cor}[Corollary \ref{corintro:cremona}]
\label{cor:cremona}
   The group $\mathrm{Cr}_{\infty}(K)$ is $\Xsep$-boundedly acyclic.
\end{cor}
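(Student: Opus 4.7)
The plan is to invoke Theorem~\ref{intro:thm:ccc}: it suffices to show that $\mathrm{Cr}_\infty(K)$ has commuting cyclic conjugates. The construction follows exactly the same stabilisation pattern as in the proof of Corollary~\ref{cor:auto}, replacing the free/direct factors by the generating indeterminates of $K(x_1, x_2, \ldots)$.

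Let $H \leq \mathrm{Cr}_\infty(K)$ be a finitely generated subgroup. Since $\mathrm{Cr}_\infty(K) = \bigcup_{n} \mathrm{Cr}_n(K)$ is a directed union, there exists $n$ with $H \leq \mathrm{Cr}_n(K)$; concretely, every $h \in H$ is a $K$-automorphism of $K(x_1, x_2, \ldots)$ which fixes $x_i$ for all $i > n$ and restricts to a $K$-automorphism of $K(x_1, \ldots, x_n)$. Define $t \in \mathrm{Cr}_{2n}(K) \leq \mathrm{Cr}_\infty(K)$ to be the involution uniquely determined by $t(x_i) = x_{n+i}$ and $t(x_{n+i}) = x_i$ for $1 \leq i \leq n$, and $t(x_j) = x_j$ for $j > 2n$.

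Take $n_0 = 2$ in Definition~\ref{def:comm:cycl:conj}. Since $t^2 = \mathrm{id}$, we have $[H, t^2] = 1$ automatically. For the single remaining condition $[H, {}^t H] = 1$, fix $h \in H$ and set $h' \coloneqq {}^t h \in {}^t H$. By construction, $h'$ fixes $x_i$ for $i \leq n$ and for $i > 2n$, and sends each $x_{n+i}$ with $1 \leq i \leq n$ into the subfield $K(x_{n+1}, \ldots, x_{2n})$; conversely, $h$ fixes $x_j$ for $j > n$ and sends each $x_i$ with $1 \leq i \leq n$ into $K(x_1, \ldots, x_n)$. Thus $h$ fixes the subfield $K(x_{n+1}, \ldots, x_{2n})$ pointwise (and symmetrically for $h'$), so $h h'$ and $h' h$ agree on every generator $x_i$ of the field $K(x_1, x_2, \ldots)$, hence everywhere. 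This yields $h h' = h' h$, so $\mathrm{Cr}_\infty(K)$ has commuting cyclic conjugates and Theorem~\ref{intro:thm:ccc} finishes the proof.

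There is no real obstacle: once the correct swap involution $t$ is chosen, the commutation reduces entirely to the observation that $H$ and ${}^t H$ act on disjoint sets of variables and each pointwise stabilises a subfield containing the image of the other. The only point deserving a brief remark is that this disjointness argument works because a $K$-automorphism of $K(x_1, x_2, \ldots)$ is determined by its values on the indeterminates $x_i$, so equality of $hh'$ and $h'h$ on generators suffices.
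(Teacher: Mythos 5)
Your proof is correct and takes essentially the same route as the paper: embed the finitely generated $H$ in some $\mathrm{Cr}_n(K)$, conjugate by the block-swap involution $t \in \mathrm{Cr}_{2n}(K)$, note $t^2=\mathrm{id}$, and conclude that $H$ and ${}^tH$ commute because they are supported on disjoint sets of indeterminates. One minor expository remark: you write ``fix $h \in H$ and set $h' := {}^t h$'', which literally only shows $[h, {}^t h]=1$ for the same $h$; fortunately your argument never uses that $h'$ is the $t$-conjugate of \emph{that particular} $h$, so it applies verbatim to arbitrary $h \in H$ and $h' \in {}^tH$, which is what $[H,{}^tH]=1$ actually requires.
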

\begin{proof}
Note that every element of $\mathrm{Aut}(K(x_1, \ldots, x_n)|K)$ is determined by its action on $x_1, \ldots,x_n$. Let $H\leq \mathrm{Cr}_\infty(K)$ be a finitely generated subgroup. Then there exists $n\in\mathbb{N}$ such that $H\leq \mathrm{Cr}_n(K)$. Define an element $t\in \mathrm{Cr}_{2n}(K)$ as follows: for all $1\leq i\leq 2n$, $t$ swaps $x_i$ and $x_{2n-i}$. Thus $t$ has order $2$, so that $[H, t^2]=1$. Moreover ${}^t H$ fixes $x_i$ for all $1\leq i\leq n$, so that ${}^t H$ commutes with $H$. This shows that $\mathrm{Cr}_{\infty}(K)$ has commuting cyclic conjugates, and we conclude by Theorem \ref{intro:thm:ccc}.
\end{proof}
\subsection{Transformation groups}\label{subsec:transf:group}

In this section, we apply Theorem \ref{intro:thm:ccc} to some compactly supported homeomorphism and diffeomorphism groups of manifolds, proving all the parts of Corollary \ref{corintro:transformation}.\ The full groups of compactly supported homeomorphisms and diffeomorphisms were already tackled via other approaches~\cite{Matsu-Mor, monod:thompson, fflm2, monodnariman}.\ But using the commuting cyclic conjugates condition, we are also able to treat structure-preserving groups, such as symplectomorphisms and volume-preserving diffeomorphisms.\ Our approach is closer to Kotschick's argument for the vanishing of the stable commutator length of such groups~\cite{kotschick}.

Recall that given a topological space $X$ and a map $f\colon X \to X$, we set $\supp(f) \coloneqq \{ x \in X : f(x) \neq x \}$ and say that $f$ is \emph{compactly supported} if $\supp(f)$ has compact closure. We denote by $\homeo(X)$ the group of compactly supported homeomorphisms of $X$. For a group of homeomorphisms $H$, we denote by $\supp(H)$ the union of the supports of its elements. We will frequently use the formula 
\[\supp({}^{t}H) = t (\supp(H)).\]

We will repeatedly apply the following dynamical version of the commuting cyclic conjugates condition, both in this and the next section. 

\begin{lemma}[Dynamical commuting cyclic conjugates]
    \label{lem:ccc:dynamical}

    Let $X$ be a topological space, and let $T \leq \homeo(X)$.
    \begin{enumerate}
        \item Suppose that for every compact subset $K \subset X$, there exists $t \in T$ such that the sets $\{t^p(K)\}_{p \geq 1}$ are pairwise disjoint. Then every group $G$ such that $T^{(d)} \leq G \leq \homeo(X)$ for some $d \geq 0$ has commuting $\Z$-conjugates.
        \item Suppose that for every compact subset $K \subset X$, there exists $t \in T$ such that $t(K) \cap K = \emptyset$ and $t^2|_K = \id |_K$. Then every group $G$ such that $T^{(d)} \leq G \leq \homeo(X)$ for some $d \geq 0$ has commuting cyclic conjugates.
    \end{enumerate}
    In particular, in both cases, every such group $G$ is $\Xsep$-boundedly acyclic.
\end{lemma}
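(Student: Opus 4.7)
The plan is to reduce the conclusion to an iterated commutator construction modeled on Lemma \ref{lem:derived}, applied uniformly to finitely generated subgroups of $\homeo(X)$, thereby bypassing any need for a dynamical displacement property of the derived subgroups of $T$ themselves.

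First I would observe that for any finitely generated subgroup $H \leq \homeo(X)$ the set $K \coloneqq \overline{\supp(H)}$ is compact, and the dynamical hypothesis yields some $t \in T$ that already witnesses the required algebraic condition for $H$ inside $\homeo(X)$. Indeed, in part~(1) the pairwise disjointness of $\{t^p(K)\}_{p \geq 1}$ implies $t^p(K) \cap K = \emptyset$ for every $p \geq 1$ (apply $t^{-q}$ to the disjointness of $t^p(K)$ and $t^q(K)$ for $p > q \geq 1$), so $\supp({}^{t^p} H) \subseteq t^p(K)$ is disjoint from $\supp(H) \subseteq K$, giving $[H, {}^{t^p} H] = 1$ for all $p \geq 1$. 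In part~(2), the same disjointness argument gives $[H, {}^t H] = 1$, and $t^2|_K = \id$ forces $\supp(t^2) \cap K = \emptyset$, so $t^2$ commutes with every element of $H$ and hence $[H, t^2] = 1$.

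The main step is an inductive upgrade of this witness from $T = T^{(0)}$ into $T^{(d)}$. I would prove by induction on $k \geq 0$ the statement: \emph{for every finitely generated $H \leq \homeo(X)$ there is a witness of commuting $\Z$-conjugates (in case~(1)) or of commuting cyclic conjugates with $n = 2$ (in case~(2)) that lies in $T^{(k)}$}. The base case $k = 0$ is exactly what was observed above. For the inductive step, given finitely generated $H \leq \homeo(X)$, apply the induction hypothesis to produce $t \in T^{(k)}$ witnessing for $H$; the group $\tilde K \coloneqq \langle H, t \rangle \leq \homeo(X)$ is finitely generated, so the induction hypothesis applied to $\tilde K$ yields $s \in T^{(k)}$ witnessing the corresponding condition for $\tilde K$, and in particular $[\tilde K, {}^s \tilde K] = 1$. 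Setting $c \coloneqq [t, s] \in [T^{(k)}, T^{(k)}] = T^{(k+1)}$, the computation in the proof of Lemma \ref{lem:derived} shows that ${}^c g = {}^t g$ for every $g \in \tilde K$, because ${}^s t^{\pm 1}$ commutes with every element of $\tilde K$. Since ${}^{t^p} h$ stays in $\tilde K$ for every $p$ and every $h \in H$, induction on $p$ yields ${}^{c^p} h = {}^{t^p} h$ for all $p \geq 1$, so $[H, {}^{c^p} H] = [H, {}^{t^p} H]$ and $[H, c^n] = [H, t^n]$, meaning $c \in T^{(k+1)}$ witnesses the same condition for $H$ with the same $n$. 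This completes the induction.

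Specializing the inductive statement to $k = d$ and applying it to a finitely generated $H \leq G$ (which is in particular finitely generated inside $\homeo(X)$) produces the required witness in $T^{(d)} \leq G$, so $G$ has commuting $\Z$-conjugates in case~(1) and commuting cyclic conjugates in case~(2). The $\Xsep$-bounded acyclicity of $G$ then follows at once from Theorem \ref{intro:thm:ccc}. The main conceptual obstacle I would anticipate is the temptation to prove directly that $T^{(d)}$ inherits the dynamical displacement property of $T$, which appears genuinely awkward because commutators of compactly supported homeomorphisms enlarge supports and iterating inside the derived series destroys the compactness bookkeeping needed to reapply the hypothesis; the algebraic identity ${}^{[t,s]} g = {}^t g$ on all of $\tilde K$ sidesteps this entirely, since it depends only on $[\tilde K, {}^s \tilde K] = 1$ and not on how $[t, s]$ actually moves points of $X$.
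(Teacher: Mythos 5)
Your proposal is correct and follows essentially the same route as the paper: the $d=0$ case is handled exactly as in the paper via disjointness of supports (with your extra remark that pairwise disjointness of $\{t^p(K)\}_{p\geq 1}$ forces $t^p(K)\cap K=\emptyset$ making explicit a point the paper glosses over), and the upgrade to a witness in $T^{(d)}$ is precisely what the paper means by ``the same argument as in Lemma~\ref{lem:derived}'', which you have simply spelled out as an induction on the derived length using $c=[t,s]$ and the identity ${}^{c}g={}^{t}g$ on $\langle H,t\rangle$.
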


Recall that $T^{(d)}$ denotes the $d$-th derived subgroup of $T$. It will be apparent from the proof below that Item \emph{(2)} can be generalized by replacing $2$ with larger positive integers $n$, even with $n$ varying in terms of $K$. However $n = 2$ is the only case we will use, so we state the lemma like this for clarity.

\begin{proof}
    We will start by showing that if $T \leq G \leq \homeo(X)$, then $G$ has commuting $\Z$-conjugates in the first case, or commuting cyclic conjugates in the second case. Let $H \leq G$ be a finitely generated subgroup. Since each generator is compactly supported, there exists a compact set $K$ such that $\supp(H) \subset K$.
    
    For the first item, let $t$ be as in the hypothesis for this $K$, and notice that $t \in T \leq G$. Then the groups $\{{}^{t^p} H\}_{p \geq 1}$ are supported on the pairwise disjoint sets $\supp({}^{t^p} H) = t^p(\supp(H)) \subset t^p(K)$.\ So $[H, {}^{t^p} H] = 1$ for all $p \geq 1$.\ This shows that $G$ has commuting $\Z$-conjugates.

    For the second item, let $t$ be such that $t(K) \cap K = \emptyset$, and $t^2|_K = \id|_K$. First, we notice that $H$ and ${}^t H$ have disjoint supports, and so $[H, {}^t H] = 1$.\ Secondly, for all $h \in H$, we have $[h, t^2]|_K = [h, \id]|_K = \id|_K$ while $[h, t^2]|_{X \setminus K} = [\id, t^2]|_{X \setminus K} = \id|_{X \setminus K}$.\ So $[H, t^2] = 1$, and that proves that $G$ has commuting cyclic conjugates (with $n = 2$ in the definition).

    \medskip

    The generalization to derived subgroups now follows by the same argument as in Lemma \ref{lem:derived}.
\end{proof}

\begin{rem}[Universal covers]
    We will consider transformation groups as discrete groups. However, they all carry a natural group topology, and so to each of these groups $G$ is associated a universal cover $\tilde{G}$, which is also of interest in many settings \cite{banyaga}.
    However, $\tilde{G}$ is an extension of $G$ by the group $\pi_1(G)$, which is always abelian \cite[Exercise 52.7]{munkres}. Therefore, by Gromov's Mapping Theorem (Theorem \ref{thm:mapping}) and its inflation version (Remark \ref{rem:mapping:converse}), $\tilde{G}$ is $\Xsep$-boundedly acyclic if and only if $G$ is.
\end{rem}

\subsubsection{Hamiltonian diffeomorphisms and symplectomorphisms}

Let $(M, \omega)$ be a symplectic manifold (i.e.\ an even dimensional smooth manifold $M$ with a symplectic form $\omega \in \Omega^2(M)$). The key example is that of $\R^{2n}$ endowed with the standard symplectic form $\omega = \sum_{i = 1}^n dx_i \wedge dy_i$.\ We recall that $\symp(M, \omega)$ denotes the group of compactly supported diffeomorphisms that preserve $\omega$ and $\sympo(M, \omega)$ its identity component. We will remove $\omega$ from the notation if it is clear from the context.\ Given a compactly supported smooth function $H \colon M \to \R$, called \emph{Hamiltonian}, we say that the \emph{Hamiltonian vector field} $X_H$ is the unique vector field such that 
$\iota_{X_H} \omega = dH$,
where $\iota_{X_H}$ denotes the interior product~\cite{da2009introduction}, note that $X_H$ is compactly supported, so it is complete.\ Then, the \emph{Hamiltonian flow} $\phi \colon M \times \R \rightarrow M$ is a one-parameter family of diffeomorphisms $\phi_t \colon M \to M$ associated to $X_H$.\ By construction, each $\varphi_t$ is compactly supported. We call such diffeomorphisms \emph{Hamiltonian}; note that up to rescaling the flow we may realize each Hamiltonian diffeomorphism as the time-one map of a Hamiltonian flow \cite[Exercise 1.4.A]{polterovich2012geometry}.\ We denote by $\ham(M, \omega)$ the group of (compactly supported) Hamiltonian diffeomorphisms.\ This is indeed a subgroup of $\sympo(M, \omega)$~\cite{polterovich2012geometry}.

\begin{cor}[Symplectic transformation groups of $\R^{2n}$]
\label{cor:tg:symp}
    Let $\R^{2n}$ be endowed with its standard symplectic form.\ Let $G$ be a group such that $[\ham(\R^{2n}), \ham(\R^{2n})] \leq G \leq \homeo(\R^{2n})$.\ Then $G$ is $\Xsep$-boundedly acyclic.
\end{cor}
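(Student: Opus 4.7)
The plan is to invoke Lemma \ref{lem:ccc:dynamical}(2) with $T = \ham(\R^{2n})$ and $d = 1$, so that $T^{(1)} = [\ham(\R^{2n}), \ham(\R^{2n})] \leq G \leq \homeo(\R^{2n})$ lands exactly in the hypothesis, immediately yielding commuting cyclic conjugates and hence $\Xsep$-bounded acyclicity via Theorem \ref{intro:thm:ccc}. The whole argument therefore reduces to producing, for any compact $K \subset \R^{2n}$, a compactly supported Hamiltonian diffeomorphism $t \in \ham(\R^{2n})$ with $t(K) \cap K = \emptyset$ and $t^2|_K = \id|_K$.

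To build such a $t$, I would work in Darboux coordinates $(x_1, y_1, \ldots, x_n, y_n)$ with $\omega = \sum_i dx_i \wedge dy_i$, fix $R > 0$ so large that $K \subset B_R(0)$, and consider the \emph{symplectic $\pi$-rotation} $\rho$ of the $(x_1, y_1)$-plane around the point $(2R, 0)$, extended by the identity on the remaining coordinates. Explicitly, $\rho(x_1, y_1, z) = (4R - x_1, -y_1, z)$. This $\rho$ is symplectic of order $2$ and sends $B_R(0)$ to the ball centered at $(4R, 0, \ldots, 0)$, disjoint from $B_R(0)$; in particular $\rho(K) \cap K = \emptyset$. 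Of course $\rho$ is not compactly supported, so I would realize it on a bounded region as a Hamiltonian flow: $\rho$ is the time-$\pi$ map of the Hamiltonian $H_0 = \tfrac{1}{2}\bigl((x_1 - 2R)^2 + y_1^2\bigr)$, and I would multiply $H_0$ by a smooth cutoff $\chi \colon \R^{2n} \to [0,1]$ that is identically $1$ on a compact rotation-invariant neighborhood $\Omega$ of $K \cup \rho(K)$ and vanishes outside a larger compact set, taking $\chi$ to depend only on $r = \sqrt{(x_1-2R)^2 + y_1^2}$ and on $\|(x_2, y_2, \ldots, x_n, y_n)\|$ so that $\chi$ is $\rho$-invariant.

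Let $t$ be the time-$\pi$ flow of $\chi H_0$, an element of $\ham(\R^{2n})$. Since $\chi \equiv 1$ on $\Omega$, the Hamiltonian vector fields of $\chi H_0$ and of $H_0$ agree there, and because $\Omega$ is $\rho$-invariant, the whole integral curve of every point of $K \cup \rho(K)$ over the interval $[0, \pi]$ stays inside $\Omega$. Hence $t$ agrees with $\rho$ on $K$ and on $\rho(K)$; in particular $t(K) = \rho(K)$ is disjoint from $K$, and $t^2|_K = \rho^2|_K = \id|_K$, as required by Lemma \ref{lem:ccc:dynamical}(2).

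The only delicate step is choosing $\Omega$ so that it contains the entire $[0,\pi]$-orbit of $K \cup \rho(K)$ under $\rho$, but this is automatic: that orbit is contained in the compact set swept out by rotating $K \cup \rho(K)$ around the axis $\{x_1 = 2R,\; y_1 = 0\}$ in the $(x_1, y_1)$-plane, and we just pick a slightly larger compact rotation-invariant neighborhood. The main conceptual point is thus the (well-known, Kotschick-style) observation that in $\R^{2n}$ one has enough room to \emph{symplectically} displace any compact set via an involution; our contribution is to feed this into the cyclic variant of Lemma \ref{lem:ccc:dynamical} and obtain the statement for all intermediate groups $G$ at once, including commutator subgroups.
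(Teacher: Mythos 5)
Your proof is correct and follows essentially the same route as the paper: both reduce via Lemma \ref{lem:ccc:dynamical}.\emph{(2)} to producing a compactly supported Hamiltonian $t$ with $t(K)\cap K=\emptyset$ and $t^2|_K=\id|_K$, and both realize $t$ as the time-$\pi$ flow of (a cutoff of) the quadratic Hamiltonian generating rotation by $\pi$ about a displaced center in one pair of Darboux coordinates. Your phrasing of the cutoff as $\rho$-invariant in the rotation-invariant variables makes explicit why the trajectories of $K$ never leave the region where the Hamiltonian vector field is the true rotation field, a point the paper handles by cutting off on the rotation-invariant set $B_{4R}(2R,0)\times A$; the two choices are interchangeable.
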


\begin{example}[Symplectic transformation groups of $\R^{2n}$]
    Corollary \ref{cor:tg:symp} applies to the groups $\ham(\R^{2n})$, $\sympo(\R^{2n})$ and $\symp(\R^{2n})$. It also applies to the group of compactly supported Hamiltonian homeomorphisms of $\R^{2n}$ \cite{hamhomeo1, hamhomeo2}. Note that we stop at the first derived subgroup since it is perfect \cite[Chapter 4]{banyaga}.
\end{example}

\begin{proof}
We are going to show that $\ham(\R^{2n}) \leq \homeo(\R^{2n})$ satisfies Lemma \ref{lem:ccc:dynamical}.\emph{(2)}; the statement will follow.

We see $\R^{2n}$ as $\R \times \R \times \R^{2n - 2}$, with coordinates $(x, y, \underline{z})$. Let $K \subset \R^{2n}$ be a compact set. Then there exists a compact set $A \subset \R^{2n-2}$ such that $K \subset B_R(0, 0) \times A$, where $B_R(0, 0) \subset \R^2$ is the closed disk of radius $R$ around the origin.

We define $t$ starting from a smooth Hamiltonian $F \colon \R^{2n} \to \R$ satisfying:
\[
    (x, y, \underline{z}) \mapsto
        \begin{cases}
            \frac{1}{2} \big((x-2R)^2 + y^2\big) &\mbox{ if } (x, y) \in B_{4R}(2R, 0), \\
            0 &\mbox{ if } (x, y) \in \R^2 \setminus B_{6R}(2R, 0).
            \end{cases}
\]
When $n > 1$, $F$ is not compactly supported, so we slightly modify it by taking the product with a compactly supported function $\eta \colon \R^{2n} \to \R$ such that $\eta|_{B_{4R}(2R, 0) \times A} \equiv 1$. Hence, we get a compactly supported Hamiltonian $H = \eta \cdot F \colon \R^{2n} \to \R$.
The Hamiltonian vector field associated with $H$ (represented in Figure~\ref{fig:ham} in its projection to $\R^2$) satisfies the following~\cite[Section~15.2]{woit2017quantum}:    
\[
    X_H(x, y, \underline{z}) = y \frac{\partial}{\partial x} - (x - 2R) \frac{\partial}{\partial y} \qquad \text{ if } (x, y, \underline{z}) \in B_{4R}(2R, 0) \times A.
\]
This vector field over the compact set $B_{4R}(2R, 0) \times A$ produces a Hamiltonian flow that corresponds to the clockwise rotation around the point $(2R, 0)$ in the first copy of $\R^2$ and that is the identity over the other coordinates. More precisely, if  $(x, y, \underline{z}) \in B_{4R}(2R, 0) \times A$, then $\phi_s(x, y, \underline{z})$ equals
\[
        \Big(2R+ (x - 2R) \cos (s) + y \sin (s), y \cos (s) - (x - 2R) \sin (s), \underline{z} \Big),
\]
and the support of $\varphi_s$ is contained in the (compact) support of $\eta$.
\begin{figure}[htbp]
\includegraphics[width=\linewidth]{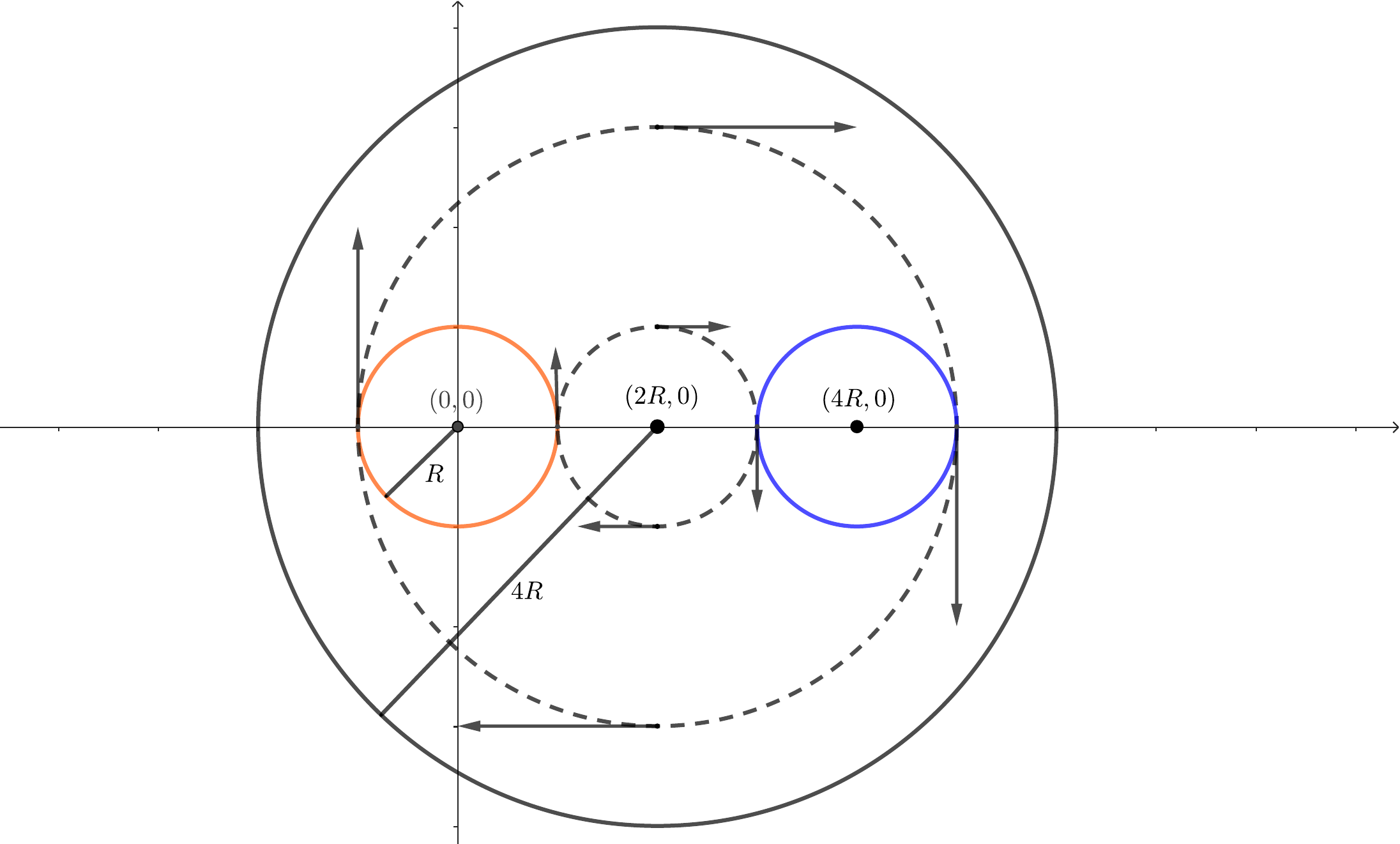}
\caption{The projection of the Hamiltonian vector field on the $2$-dimensional disc $B_{4R}(2R, 0)$ of the first copy of $\R^2$.}\label{fig:ham}
\end{figure}
    We define $t \coloneqq \phi_\pi \in \ham(\R^{2n})$. Then, it is immediate to check that 
    \[
    t(K) \subset t\Big( B_R(0, 0) \times A \Big) \subset B_R(4R, 0) \times A,
    \]
    that is disjoint from $B_R(0, 0) \times A$, whence from $K$. Moreover, by construction we also have that $t^2$ coincides with the identity on $K$, since it is the identity over the larger set $B_{R}(0, 0) \times A$.
    Thus, $\ham(\R^{2n}) \leq \homeo(\R^{2n})$ satisfies Lemma \ref{lem:ccc:dynamical}.\emph{(2)}, and we conclude.
\end{proof}

\subsubsection{Volume-preserving diffeomorphisms}

For an $n$-manifold $M$ with a volume form $\vol \in \Omega^n(M)$, we denote by $\diffvol(M)$ the group of compactly supported diffeomorphisms that preserve the volume, and by $\diffovol(M)$ its identity component. The key example is that of $\R^n$ endowed with the standard volume form $dx_1 \wedge \cdots \wedge dx_n$. Notice that symplectomorphisms of $\R^{2n}$ are volume-preserving, since $\vol = \omega^n/n!$, where $\omega$ is the standard symplectic form. Therefore, Corollary \ref{cor:tg:symp} already proves that $\diffovol(\R^{2n})$ and $\diffvol(\R^{2n})$ are $\Xsep$-boundedly acyclic. In order to include odd dimensions, we use the following trick:

\begin{lemma}[Volume-preserving cut-off]
\label{lem:cutoff}

    Let $\{f_t\}_{t \in [0, 1]}$ be a smooth isotopy of diffeomorphisms of $\R^n$ such that $f_t$ is volume-preserving for all $t \in [0, 1]$.\ Let $\eta\colon \R \to [0, 1]$ be smooth.\ Then the map $\varphi\colon \R^{n+1} \to \R^{n+1}$ defined as $\varphi(\underline{x}, z) = (f_{\eta(z)}(\underline{x}), z)$ is a volume-preserving diffeomorphism, where $\underline{x} \in \R^n$.
    
    Moreover, if the isotopy $\{f_t\}_{t \in [0, 1]}$ is compactly supported, $f_0 = \id$ and $\eta$ is compactly supported, then $\varphi$ is compactly supported.
\end{lemma}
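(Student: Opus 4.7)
The plan is to show that $\varphi$ is a smooth diffeomorphism with unit Jacobian determinant everywhere, and then check the support statement by a case analysis.

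First I would note that $\varphi$ is smooth since it is the composition of smooth maps (the isotopy is smooth jointly in $(t, \underline{x})$, and $\eta$ is smooth). It is a bijection with smooth inverse given by $\varphi^{-1}(\underline{y}, z) = (f_{\eta(z)}^{-1}(\underline{y}), z)$, since the inverse isotopy $\{f_t^{-1}\}_{t \in [0,1]}$ is also smooth. Hence $\varphi$ is a diffeomorphism of $\R^{n+1}$.

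For the volume-preserving property, I would compute the Jacobian in the block decomposition corresponding to splitting coordinates as $\R^n \oplus \R$:
\[
D\varphi(\underline{x}, z) = \begin{pmatrix} D_{\underline{x}} f_{\eta(z)}(\underline{x}) & \eta'(z)\, \partial_t f_t(\underline{x})\big|_{t = \eta(z)} \\ 0 & 1 \end{pmatrix}.
\]
Since the bottom-left block vanishes, the determinant equals $\det\bigl(D_{\underline{x}} f_{\eta(z)}(\underline{x})\bigr) \cdot 1$. But $f_{\eta(z)}$ is volume-preserving by hypothesis, so this determinant is $1$. Therefore $\varphi^{*}(dx_1 \wedge \cdots \wedge dx_n \wedge dz) = dx_1 \wedge \cdots \wedge dx_n \wedge dz$, i.e.\ $\varphi$ preserves the standard volume form on $\R^{n+1}$.

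For the compact support statement, assume there exists a compact set $K \subset \R^n$ with $\supp(f_t) \subset K$ for all $t \in [0,1]$, that $f_0 = \id$, and that $\supp(\eta)$ is compact. I claim that $\supp(\varphi) \subset K \times \supp(\eta)$, which is compact. Indeed, if $(\underline{x}, z) \notin K \times \supp(\eta)$, then either $\underline{x} \notin K$, in which case $f_{\eta(z)}(\underline{x}) = \underline{x}$ since $f_{\eta(z)}$ is the identity outside $K$, or $z \notin \supp(\eta)$, in which case $\eta(z) = 0$ and $f_{\eta(z)}(\underline{x}) = f_0(\underline{x}) = \underline{x}$. In both cases $\varphi(\underline{x}, z) = (\underline{x}, z)$, as required. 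The statement is entirely routine; the only place where the assumptions really intervene is the unit-determinant computation (which uses volume-preservation) and the appeal to $f_0 = \id$ outside $\supp(\eta)$, so there is no real obstacle.
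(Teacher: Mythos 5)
Your proof is correct and follows essentially the same route as the paper's: smoothness and bijectivity with smooth inverse, the block-triangular Jacobian computation giving determinant $1$ from the volume-preserving hypothesis, and the support estimate $\supp(\varphi) \subset \bigl(\bigcup_t \supp(f_t)\bigr) \times \supp(\eta)$. The only cosmetic difference is that you spell out the off-diagonal block $\eta'(z)\,\partial_t f_t|_{t=\eta(z)}$ and do an explicit two-case analysis for the support, where the paper condenses the latter to the equivalence $(\underline{x},z)\in\supp(\varphi) \Leftrightarrow \underline{x}\in\supp(f_{\eta(z)})$; the substance is identical.
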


Notice that in the last statement, we are using two different notions of support: For the diffeomorphisms $f_t$ we have $\supp(f_t) = \{ x \in X : f_t(x) \neq x \}$, while for the function $\eta$ we have $\supp(\eta) = \{x \in X : \eta(x) \neq 0 \}$.

\begin{proof}
    As a composition of smooth functions, $\varphi$ is smooth, and it is a diffeomorphism since its inverse $\varphi^{-1}(\underline{x}, z) = (f_{\eta(z)}^{-1}(\underline{x}), z)$ is also smooth. It remains to show that $\varphi$ is volume-preserving, equivalently that its Jacobian $D_{(\underline{x}, z)} \varphi$ has determinant $1$. We notice that $\left. \frac{\partial \varphi^i}{\partial x_j} \right|_{(\underline{x}, z)} = \frac{\partial f_{\eta(z)}^i}{\partial x_j}$ for $1 \leq i, j \leq n$; that $\left. \frac{\partial \varphi^{n+1}}{\partial x_j} \right|_{(\underline{x}, z)} = 0$ for $1 \leq j \leq n$ and $\left. \frac{\partial \varphi^{n+1}}{\partial z} \right|_{(\underline{x}, z)}= 1$.
    Therefore:
    \[D_{(\underline{x}, z)}\varphi = 
    \begin{pmatrix}
        D_{\underline{x}} f_{\eta(z)} & * \\
        0 & 1
    \end{pmatrix}\]
    which has determinant $1$ since $f_{\eta(z)}$ is volume-preserving. 
    
    The last sentence follows from the following fact: $(\underline{x}, z) \in \textup{supp}(\varphi)$ if and only if $\underline{x} \in \textup{supp} (f_{\eta(z)})$.\ Therefore $\textup{supp} (\varphi) \subset \left(\cup_{t \in [0, 1]} \textup{supp} (f_t)\right) \times \textup{supp}(\eta)$, which is compact by assumption.
\end{proof}

\begin{cor}[Volume-preserving transformation groups of $\R^n$]
    \label{cor:tg:volume}
    Let $n \geq 2$ and let $\R^n$ be endowed with its standard volume form.\ Let $G$ be a group such that $[\diffovol(\R^n), \diffovol(\R^n)] \leq G \leq \homeo(\R^n)$.\ Then $G$ is $\Xsep$-boundedly acyclic.
\end{cor}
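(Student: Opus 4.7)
The plan is to apply Lemma~\ref{lem:ccc:dynamical}.\emph{(2)} to $T := \diffovol(\R^n)$ with $d = 1$, producing for each compact $K \subset \R^n$ an element $t \in \diffovol(\R^n)$ with $t(K) \cap K = \emptyset$ and $t^2|_K = \id|_K$; the conclusion will then follow at once from Theorem~\ref{intro:thm:ccc}.

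When $n = 2m$ is even, the standard volume form on $\R^{2m}$ is (up to a constant) $\omega^m/m!$, so $\ham(\R^{2m}) \leq \diffovol(\R^{2m})$. Hence
\[
[\ham(\R^{2m}), \ham(\R^{2m})] \leq [\diffovol(\R^{2m}), \diffovol(\R^{2m})] \leq G \leq \homeo(\R^{2m}),
\]
and the claim reduces to Corollary~\ref{cor:tg:symp}.

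When $n = 2m+1$ is odd (so $m \geq 1$), the strategy is to transport the symplectic Hamiltonian rotation of Corollary~\ref{cor:tg:symp} from the even-dimensional factor $\R^{2m}$ into $\R^{2m+1}$ by means of Lemma~\ref{lem:cutoff}. Given a compact $K \subset \R^{2m+1} = \R^{2m} \times \R$, pick $M > 0$ and a compact $K' \subset \R^{2m}$ with $K \subset K' \times [-M, M]$, and choose $R > 0$ large enough that $K'$ lies inside $B_R(0,0) \times A$ for some compact $A \subset \R^{2m-2}$ under the decomposition $\R^{2m} = \R^2 \times \R^{2m-2}$ used in the proof of Corollary~\ref{cor:tg:symp}. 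That proof provides a compactly supported volume-preserving Hamiltonian flow $\{\phi_s\}$ on $\R^{2m}$ which acts as rotation by angle $s$ around $(2R, 0)$ on a region containing $K'$; thus $\phi_\pi(K') \cap K' = \emptyset$ and $\phi_{2\pi}|_{K'} = \id|_{K'}$. Reparametrizing $f_s := \phi_{\pi s}$ yields a smooth compactly supported volume-preserving isotopy $\{f_s\}_{s \in [0, 1]}$ of $\R^{2m}$ with $f_0 = \id$ and $f_1 = \phi_\pi$. Pick a smooth compactly supported $\eta \colon \R \to [0, 1]$ with $\eta \equiv 1$ on $[-M, M]$, and apply Lemma~\ref{lem:cutoff} to obtain $t(\underline{x}, z) := (f_{\eta(z)}(\underline{x}), z) \in \diffovol(\R^{2m+1})$. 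For $(\underline{x}, z) \in K$ one has $\eta(z) = 1$, whence $t(K) \subset \phi_\pi(K') \times [-M, M]$ is disjoint from $K$, and $t^2|_K = (\phi_\pi^2 \times \id)|_K = \id|_K$, as required.

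The main obstacle is the odd-dimensional case, since $\R^{2m+1}$ carries no symplectic form and one cannot invoke the Hamiltonian construction directly. Lemma~\ref{lem:cutoff} is precisely the tool that allows to lift the displacing isotopy from an even-dimensional factor to the full odd-dimensional manifold while preserving both the volume form and compact support. With $t$ in hand, Lemma~\ref{lem:ccc:dynamical}.\emph{(2)} together with Theorem~\ref{intro:thm:ccc} yields the $\Xsep$-bounded acyclicity of every $G$ with $[\diffovol(\R^n), \diffovol(\R^n)] \leq G \leq \homeo(\R^n)$.
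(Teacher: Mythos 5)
Your proof is correct and follows essentially the same route as the paper: the even-dimensional case is reduced to Corollary~\ref{cor:tg:symp} via $[\ham,\ham]\leq[\diffovol,\diffovol]$, and in odd dimensions the Hamiltonian displacement of $\R^{2m}$ is suspended to $\R^{2m+1}$ using the cut-off construction of Lemma~\ref{lem:cutoff}, after which Lemma~\ref{lem:ccc:dynamical}.\emph{(2)} and Theorem~\ref{intro:thm:ccc} apply. The only cosmetic difference is that you make the reparametrized isotopy $f_s=\phi_{\pi s}$ explicit where the paper simply invokes the existence of a compactly supported volume-preserving isotopy from $\id$ to the displacing Hamiltonian diffeomorphism.
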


Note that we stop at the first derived subgroup since it is perfect \cite[Chapter 5]{banyaga}.

\begin{proof}
    As we mentioned above, by Corollary \ref{cor:tg:symp} it suffices to show the statement in odd dimensions. Therefore we consider $\R^{2n+1}$, for $n \geq 1$, and we show that $\diffovol(\R^{2n+1})$ satisfies Lemma \ref{lem:ccc:dynamical}.\emph{(2)}.

    Let $K \subset \R^{2n+1}$ be compact. Then there exist a compact set $A \subset \R^{2n}$ and a compact interval $I \subset \R$ such that $K \subset A \times I$. Let $f \in \ham(\R^{2n})$ be such that $f(A) \cap A = \emptyset$ and $f^2|_A = \id|_A$, given by the proof of Corollary \ref{cor:tg:symp}.\ Since $f \in \ham(\R^{2n})$ is the time-one map of the flow of a compactly supported vector field, there exists a compactly supported smooth isotopy of volume-preserving diffeomorphisms $\{f_t\}_{t \in [0, 1]}$ such that $f_0 = \id, f_1 = f$. Let $\eta\colon \R \to [0, 1]$ be a compactly supported smooth function such that $\eta|_I \equiv 1$.\ Then, if we define $\varphi(x, t) = (f_{\eta(t)}(x), t)$, Lemma \ref{lem:cutoff} implies that $\varphi \in \diffovol(\R^{2n+1})$. Moreover, $\varphi|_{A \times I} = f|_{A} \times \textup{id}$; thus we have
    \[A \times I \cap \varphi(A \times I) = (f(A) \cap A) \times I = \emptyset, \] and \[\varphi^2|_{A \times I} = f^2|_{A} \times \textup{id} = \id_{A \times I}.\] This shows that $\diffvol$ satisfies Lemma \ref{lem:ccc:dynamical}.\emph{(2)} and so the thesis follows.
\end{proof}

\begin{rem}[Commuting cyclic vs commuting $\Z$-conjugates]
    Note that all the transformation groups encountered so far, though they have commuting cyclic conjugates, do not have commuting $\Z$-conjugates in general~\cite[Section 3.2]{companion}.
\end{rem}

\subsubsection{Portable manifolds}

If we consider the entire group of compactly supported diffeomorphisms, then it is $\Xsep$-boundedly acyclic for all portable manifolds. Let us start by recalling the definition.

\begin{defi}[Portable manifold \cite{burago2008conjugation}]
Let $M$ be a smooth connected open manifold. We say that $M$ is \emph{portable} if it admits a complete vector field $X$ and a compact subset $M_0$, called the \emph{core} of $M$, such that:
\begin{enumerate}
    \item $M_0$ is an attractor of the flow $X^t$ generated by $X$: For every compact subset $K \subset M$ there exists $\tau > 0$ such that $X^\tau(K) \subset M_0$;
    \item There exists $\theta \in \diffo(M)$ such that $\theta(M_0) \cap M_0 = \emptyset$.
\end{enumerate}
\end{defi}

\begin{rem}[Notation]
    Note that in the original paper by Burago, Ivanov and Polterovich~\cite{burago2008conjugation}, the authors use the notation $\textup{Diff}_0(M)$ for $\diffo(M)$.
\end{rem}

Any manifold that splits as the direct product of a closed manifold and $\R^n$ is portable, on the other hand compact manifolds cannot be portable. The key property of portable manifolds is the following:

\begin{lemma}[Displacement in portable manifolds {\cite[Lemma 3.1]{burago2008conjugation}}]
\label{lem:portable:displacement}
    Let $M$ be a portable manifold, and let $M_0$ be its core. Then there exists $t \in \diffo(M)$ such that $t^p(M_0) \cap M_0 = \emptyset$ for all $p \geq 1$.
\end{lemma}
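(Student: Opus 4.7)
The plan is to exploit both ingredients of portability simultaneously: the attractor flow $X^{t}$ collapses bounded sets back onto $M_{0}$, while $\theta$ provides a single displacement of $M_{0}$. The construction of $t$ will be a ``conveyor belt'' that uses $\theta$ to shift $M_{0}$ away, and the flow to keep the iterates from returning; a cut-off will then be applied to keep $t$ compactly supported.

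First I would fix the displacement data. Set $W \coloneqq \theta(M_0)$, which is disjoint from $M_0$, and apply the attractor condition to the compact set $M_0 \cup W \cup \supp(\theta)$ to obtain $\tau>0$ such that $X^{\tau}(M_0\cup W\cup \supp(\theta))\subset \interior(M_0)$. Iterating, one checks that the backward translates $W_n \coloneqq X^{-n\tau}(W)$ are pairwise disjoint and disjoint from $M_0$: indeed $X^{n\tau}(W) \subset X^{(n-1)\tau}(M_0) \subset \cdots \subset M_0$ and $W\cap M_0=\varnothing$, so applying $X^{m\tau}$ to any assumed nontrivial intersection $W_n\cap W_m$ with $m>n$ produces a contradiction. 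This gives an infinite family of mutually disjoint ``displaced copies'' of $M_0$ along the reverse flow orbit, which is the backbone of the construction.

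Next I would construct $t$. The naive choice ``$t$ sends $W_n$ to $W_{n+1}$'' would be an unbounded shift along the flow, hence not compactly supported. To cure this, fix a very large compact set $C\subset M$ and cut off the vector field $X$ by a bump function equal to $1$ on $C$; the flow of the resulting compactly supported vector field gives $\phi\in\diffo(M)$ agreeing with $X^{\tau}$ on (a neighbourhood of) $C$ and equal to the identity outside a larger compact set. Then I would set
\[
t \coloneqq \phi^{-1}\circ \theta \in \diffo(M),
\]
and prove by induction that $t^{p}(M_0)\cap M_0=\varnothing$ for every $p\geq 1$. The base case $p=1$ is clear since $t(M_0)=\phi^{-1}(W)\subset X^{-\tau}(W)=W_1$, which is disjoint from $M_0$. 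For the inductive step, one shows that while $t^{p-1}(M_0)$ still lies inside $C$ (where $\phi=X^{\tau}$), the image $t^p(M_0)=\phi^{-1}\theta(t^{p-1}(M_0))$ lands inside the next ``displaced copy'' $W_p$, still disjoint from $M_0$; once the iterates leave the flow-faithful region they merely accumulate in the region where $\phi$ tapers to the identity, so they cannot re-enter $M_0$.

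The main obstacle is the interplay between compact support and non-recurrence: a truly compactly supported diffeomorphism cannot behave as a pure shift on $M$, so the orbit $\{t^p(M_0)\}_{p\geq 1}$ must accumulate to some $t$-invariant set, and care is needed to ensure this accumulation locus remains disjoint from $M_0$. This is handled by choosing $C$ large enough that the backward-flow orbit $\{W_n\}_{n\geq 0}$ remains well inside $C$ for all iterations of interest, so the attractor of $t$ sits inside $\overline{\bigcup_{n\geq 0} W_n}$, which by construction is disjoint from $M_0$. The remaining details are routine smooth-topology verifications: existence of the flow cut-off $\phi$ and the inductive tracking of $t^p(M_0)$ along the sequence $W_0, W_1, W_2, \ldots$.
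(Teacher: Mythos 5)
There is a genuine gap, and it is exactly at the point you flag as the ``main obstacle''. Your construction needs $\varphi^{-1}$ to coincide with the true backward flow $X^{-\tau}$ on the whole infinite family $W_0, W_1, W_2, \ldots$ (together with the connecting flow segments), since the induction $t^p(M_0)\subset W_p$ uses this at every step. But the backward saturation $\bigcup_{n\geq 0}X^{-n\tau}(W)$ is in general \emph{not} contained in any compact set (already for $\R^n$, or $\R^n\times M$, with the inward-pointing field whose flow attracts everything to the core, backward orbits escape to infinity), so no compactly supported cut-off $\varphi$ can agree with the flow on ``all iterations of interest''. The fallback sentence --- that once iterates reach the taper region they ``merely accumulate'' there and cannot re-enter $M_0$ --- is not justified and is where the argument actually breaks: a point that stalls in the taper region sits at $X^{-s}(w)$ for some intermediate time $s$ that is not a multiple of $\tau$, and nothing you proved controls such points. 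You only showed that the exact sets $W_n=X^{-n\tau}(W)$ avoid $\supp(\theta)$; the stalled points need not, and indeed $W=\theta(M_0)$ itself typically lies inside $\supp(\theta)$, so on the next application of $t=\varphi^{-1}\circ\theta$ the map $\theta$ can send them anywhere --- e.g.\ into $\theta^2(M_0)$, which may well meet $M_0$ (displacement of $M_0$ by $\theta$ says nothing about $\theta^2$). Likewise, intermediate-time backward images $X^{-s}(W)$, $0<s<\tau$, were never shown to be disjoint from $M_0$: the attractor hypothesis gives $X^{\tau}(K)\subset M_0$ for one $\tau$ per compact $K$, not forward invariance at all times. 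So the construction as described cannot be completed in general.

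The proof the paper relies on (Burago--Ivanov--Polterovich, Lemma 3.1, which is only cited here) avoids this by composing in the opposite order: choose $\tau$ with $X^{\tau}\bigl(M_0\cup\theta(M_0)\bigr)\subset M_0$, let $\varphi\in\diffo(M)$ be the time-$\tau$ map of a cut-off of $X$ that agrees with $X^{\tau}$ on the single compact set $\bigcup_{s\in[0,\tau]}X^{s}\bigl(M_0\cup\theta(M_0)\bigr)$, and set $t:=\theta\circ\varphi$. Then a one-line induction gives $t^{p}(M_0)\subset\theta(M_0)$ for all $p\geq 1$: indeed $t(M_0)=\theta(X^{\tau}(M_0))\subset\theta(M_0)$, and $t(\theta(M_0))=\theta(X^{\tau}(\theta(M_0)))\subset\theta(M_0)$. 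All iterates are trapped in the fixed compact set $\theta(M_0)$, disjoint from $M_0$, so the cut-off is only ever needed on one compact region and the tension with compact support never arises. If you want to salvage your write-up, replace the outward ``conveyor belt'' along the backward flow by this inward trapping argument.
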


We are now ready to prove:

\begin{cor}[Corollary \ref{corintro:portable}]
\label{cor:portable}
    Let $M$ be a portable manifold. Let $G$ be a group such that $\diffo(M) \leq G \leq \homeo(M)$. Then $G$ is $\Xsep$-boundedly acyclic.
\end{cor}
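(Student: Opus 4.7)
The plan is to reduce the corollary to verifying the hypothesis of Lemma \ref{lem:ccc:dynamical}.(1) with $T = \diffo(M)$, namely that every compact $K \subset M$ admits an element $s \in \diffo(M)$ whose positive iterates $\{s^p(K)\}_{p \geq 1}$ are pairwise disjoint. Once this is established, Lemma \ref{lem:ccc:dynamical} grants that every $G$ with $\diffo(M) \leq G \leq \homeo(M)$ has commuting $\Z$-conjugates, and Theorem \ref{intro:thm:ccc} yields $\Xsep$-bounded acyclicity.

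The two key ingredients to combine are: (i) Lemma \ref{lem:portable:displacement}, which already provides $t \in \diffo(M)$ displacing the core $M_0$ with all positive powers pairwise disjoint from $M_0$; and (ii) the attractor property built into portability, which gives $\tau > 0$ with $X^\tau(K) \subset M_0$. Writing $K' \coloneqq X^\tau(K) \subset M_0$ and conjugating by the time-$\tau$ flow,
\[
s \coloneqq X^{-\tau} \circ t \circ X^\tau,
\]
a direct computation gives $s^p(K) = X^{-\tau}(t^p(K'))$, and since $X^{-\tau}$ is a bijection,
\[
s^p(K) \cap K \;=\; X^{-\tau}\bigl(t^p(K') \cap K'\bigr) \;\subseteq\; X^{-\tau}\bigl(t^p(M_0) \cap M_0\bigr) \;=\; \emptyset
\]
for every $p \geq 1$; pairwise disjointness of all $\{s^p(K)\}_{p \geq 1}$ then follows by applying $s^{-q}$ to reduce $s^p(K) \cap s^q(K) = \emptyset$ to the case just handled.

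The one subtlety I expect is checking that $s$ actually lies in $\diffo(M)$, even though $X^\tau$ itself is typically not compactly supported. As a composition of diffeomorphisms $s$ is itself a diffeomorphism, and one computes $\supp(s) = X^{-\tau}(\supp(t))$, which is compact as the image of a compact set under a global diffeomorphism. Moreover, conjugating a compactly supported isotopy from $\id$ to $t$ by $X^\tau$ produces an isotopy from $\id$ to $s$, and the total support of that conjugated isotopy is contained in $X^{-\tau}$ applied to the (compact) total support of the original isotopy, hence is compact. This places $s$ in $\diffo(M)$ and is the only point where care is required; with the conjugation trick in hand, the whole argument amounts to a short verification plus an appeal to Lemma \ref{lem:ccc:dynamical}.
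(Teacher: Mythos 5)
Your proposal is correct and follows essentially the same route as the paper: verify Lemma \ref{lem:ccc:dynamical}.(1) for $\diffo(M)$ by conjugating the displacing element $t$ of Lemma \ref{lem:portable:displacement} by the attracting flow, $s = X^{-\tau} t X^{\tau}$, and compute $s^p(K)\cap K \subset X^{-\tau}(t^p(M_0)\cap M_0)=\emptyset$. The only (harmless) difference is that the paper disposes of the membership $s\in\diffo(M)$ by citing normality of $\diffo(M)$ in the full diffeomorphism group, whereas you verify it directly by conjugating a compactly supported isotopy — the same fact, proved by hand.
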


Note that we do not state this for derived subgroups since $\diffo(M)$ is perfect \cite[Chapter 2]{banyaga}.

\begin{proof}
We will prove that $\diffo(M)$ satisfies Lemma \ref{lem:ccc:dynamical}.\emph{(1)}.\ Let $K \subset M$ be compact.\ By definition, there exists $\tau > 0$ such that $X^\tau(K) \subset M_0$. Let $t$ be as in Lemma~\ref{lem:portable:displacement}.\ Then $s = X^{-\tau} t X^{\tau}$ belongs to $\diffo(M)$, since $\diffo(M)$ is normal in the full diffeomorphism group of $M$. Moreover
\[s^p(K) \cap K = X^{-\tau} (t^p X^{\tau}(K) \cap X^{\tau}(K) ) \subset X^{-\tau}(t^p(M_0) \cap M_0) = \emptyset\]
for all $p \geq 1$.
\end{proof}

\begin{example}[Portable manifolds]
\label{ex:portable}
    Corollary \ref{cor:portable} applies to homeomorphisms and diffeomorphism groups of all regularities, as well as their identity components, of manifolds of the form $\R^n \times M$, with $M$ closed.\ This situation was already studied by Monod--Nariman \cite{monodnariman}.\ It also applies to compactly supported diffeomorphisms and homeomorphisms of open handlebodies \cite{burago2008conjugation}.
\end{example}

\begin{rem}[Comparison with \cite{monodnariman}]
    In Corollary \ref{cor:portable}, we showed that diffeomorphism groups of portable manifolds have commuting $\Z$-conjugates.\ In fact, they even satisfy the stronger criterion for $\Xsep$-bounded acyclicity introduced by Monod~\cite[Corollary 5]{monod:thompson}.\ This requires cutting off the flow outside a compact set in order to obtain compactly supported diffeomorphisms mapping a given compact set into $M_0$, which is already a step in the proof of Lemma \ref{lem:portable:displacement} (see the proof of this fact by Burago, Ivanov and Polterovich~\cite[Lemma 3.1]{burago2008conjugation}).
\end{rem}

\subsubsection{Contactomorphisms}

A \emph{contact form} on a $(2n + 1)$-dimensional manifold $M$ is a $1$-form $\alpha \in \Omega^1(M)$ such that $\alpha \wedge (d\alpha)^n$ is a volume form. A \emph{contact structure} $\xi$ on $M$ is an equivalence class of contact forms, where $\alpha$ is equivalent to $\alpha'$ if there exists a positive function $\lambda$ such that $\alpha = \lambda \alpha'$. We denote by $\cont(M, \xi)$ the group of compactly supported contactomorphisms of $M$, i.e.\ the group of compactly supported diffeomorphisms of $M$ that preserve the contact structure $\xi$, and by $\conto(M, \xi)$ its identity component. The main example is that of $\R^{2n+1}$ with coordinates $(x_1, y_1, \ldots, x_n, y_n, z)$ endowed with its standard contact form $\alpha = dz + \sum_i x_i dy_i$.

Contact geometry is in many ways an odd-dimensional counterpart of symplectic geometry \cite[Chapter 6]{banyaga}, so it is natural to include contactomorphisms in our study. Since contact structures are only defined up to scalar, the situation here is much closer to that of the full diffeomorphism group. We will use again the framework of portable manifolds.

\begin{defi}[Contact portable manifold \cite{contact:portable}]
Let $(M, \alpha)$ be an open connected contact manifold. We say that $(M, \xi)$ is \emph{contact portable} if it admits a contact isotopy $\{P_t\}_{t > 0}$, $P_0 = \id_M$ and a connected compact set $M_0 \subset M$, called the \emph{core} of $M$, such that
\begin{enumerate}
    \item $M_0$ is an attractor of $P_t$: For every compact subset $K \subset M$ and every open neighbourhood $U_0 \supset M_0$ there exists $\tau > 0$ such that $P_\tau(K) \subset U_0$;
    \item There exists a contactomorphism $\theta$ such that $\theta(M_0) \cap M_0 = \emptyset$.
\end{enumerate}
\end{defi}

\begin{rem}[Notation]
    Note that in the original paper, the authors use the notation $\mathcal{G}(M)$ for $\conto(M, \xi)$~\cite{contact:portable}.
\end{rem}

The definition is very similar to that of portable manifolds, although there are some important differences: For instance $\theta$ is not required to be compactly supported. Still, $\R^{2n+1}$ is contact portable \cite[Example 3.12]{contact:portable}, and the key displacement property still holds:

\begin{lemma}[Displacement in contact portable manifolds {\cite[Proof of Proposition 3.13]{contact:portable}}]
    Let $(M, \xi)$ be a contact portable manifold, and let $M_0$ be its core. Then there exists an open neighbourhood $U_0 \supset M_0$ and $t \in \conto(M, \xi)$ such that $t^p(U_0) \cap U_0 = \emptyset$ for all $p \geq 1$.
\end{lemma}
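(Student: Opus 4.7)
The plan is to mimic the displacement argument from the non-contact portable case (Lemma \ref{lem:portable:displacement}), replacing the complete vector field $X$ by the contact isotopy $\{P_t\}$. The first step is to upgrade the pointwise disjointness $\theta(M_0) \cap M_0 = \emptyset$ to a neighborhood: since $M_0$ is compact and $\theta$ is continuous, there exists an open neighborhood $U_0 \supset M_0$ with compact closure satisfying $\theta(\overline{U_0}) \cap \overline{U_0} = \emptyset$. However, two new difficulties appear compared to the smooth case: first, the contactomorphism $\theta$ is not required to have compact support, so we cannot use it directly as our $t$; second, even if it did, the condition $\theta(U_0) \cap U_0 = \emptyset$ does not imply $\theta^p(U_0) \cap U_0 = \emptyset$ for all $p\geq 1$.

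To overcome these, one exploits the attractor property of $\{P_t\}$ to shrink all the dynamics into a single compact set, where we can combine $\theta$ with the flow to build a compactly supported contactomorphism whose iterates displace $U_0$ indefinitely. Concretely, one would fix a nested family of neighborhoods $M_0 \subset V_{k+1} \subset \overline{V_{k+1}} \subset V_k \subset U_0$ and, using the attractor property, choose times $\tau_k > 0$ such that $P_{\tau_k}(\overline{U_0} \cup \theta(\overline{U_0})) \subset V_k$. This produces contactomorphisms $\theta_k := P_{\tau_k}^{-1}\theta P_{\tau_k}$ whose action on $U_0$ can be arranged to lie in prescribed disjoint regions. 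Composing these appropriately (and cutting off outside a large compact piece, using the fact that contact Hamiltonians can be freely multiplied by cutoff functions while remaining contact) yields a single contactomorphism $t \in \conto(M,\xi)$ such that the iterates $\{t^p(U_0)\}_{p \geq 1}$ are pairwise disjoint from $U_0$.

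The main obstacle is the compact-support requirement together with the need for all iterates (not only $t$) to displace $U_0$. In the smooth portable setting this is handled by conjugating $\theta$ through the flow $X^t$, which is well-behaved under compactly supported cutoffs since any complete vector field can be cut off without losing its local flow. In the contact setting, cutoffs must preserve the contact structure, so one works at the level of contact Hamiltonians $H_t$ generating $\{P_t\}$: multiplying $H_t$ by a compactly supported function produces a compactly supported contact isotopy, and the attractor property ensures we can choose the cutoff to agree with $P_{\tau_k}$ on the (compact) region where the construction takes place. Carrying out this bookkeeping is the technical heart of the argument, following \cite{contact:portable}.
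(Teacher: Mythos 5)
The paper does not actually prove this lemma: it is imported verbatim from \cite[Proof of Proposition 3.13]{contact:portable}, so there is no internal argument to compare with, and your sketch has to stand on its own. Judged that way, it has a genuine gap at precisely the point where the contact case differs from Lemma \ref{lem:portable:displacement}. The conclusion demands $t \in \conto(M,\xi)$, i.e.\ $t$ must be compactly supported \emph{and} contact-isotopic to the identity, whereas the displacing map $\theta$ in the definition of contact portability is an arbitrary contactomorphism: it is neither compactly supported nor assumed isotopic to the identity (unlike the smooth portable case, where $\theta \in \diffo(M)$ by definition, which is why the issue is invisible in \cite{burago2008conjugation}). Your building blocks $\theta_k = P_{\tau_k}^{-1}\theta P_{\tau_k}$ are conjugates of $\theta$ and inherit both defects, and the cut-off mechanism you invoke (multiplying the contact Hamiltonian by a bump function) applies only to the isotopy $\{P_t\}$, not to $\theta$, which is not given as the time-one map of any contact isotopy. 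So ``composing these appropriately and cutting off'' does not visibly produce an element of $\conto(M,\xi)$; this is the heart of the lemma, not bookkeeping.

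The missing idea is to use $\theta$ only as a \emph{conjugator} of compactly supported data, never as a factor: if $Q' \in \conto(M,\xi)$ is a cut-off of some $P_{\tau'}$, then $\sigma := \theta^{-1} Q' \theta$ is again in $\conto(M,\xi)$, being the time-one map of the conjugated compactly supported contact isotopy. Concretely, since $\theta^{-1}(M_0)$ is also disjoint from $M_0$, pick open sets $U_0 \supset M_0$ and $D \supset \theta^{-1}(M_0)$ with disjoint compact closures, a small neighbourhood $W \supset M_0$ with $\theta^{-1}(\overline W) \subset D$, and a neighbourhood $N \supset M_0$; by the attractor property choose cut-offs $Q, Q' \in \conto(M,\xi)$ of suitable $P_\tau, P_{\tau'}$ with $Q(\overline{U_0} \cup \overline{D}) \subset N$ and $Q'(\theta(\overline N)) \subset W$, so that $\sigma(N) \subset \theta^{-1}(W) \subset D$. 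Then $t := \sigma \circ Q \in \conto(M,\xi)$ maps both $U_0$ and $D$ into $D$, hence $t^p(U_0) \subset D$ is disjoint from $U_0$ for every $p \geq 1$ — the same absorption trick as in the smooth case, which your sketch also leaves implicit when it appeals to ``prescribed disjoint regions''.
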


With this lemma at hand, the exact same proof as Corollary \ref{cor:portable} shows:

\begin{cor}[Contactomorphism groups]
\label{cor:contact}
    Let $M$ be a contact portable manifold. Let $G$ be a group such that $\conto(M, \xi)^{(d)} \leq G \leq \homeo(M)$ for some $d \geq 1$. Then $G$ is $\Xsep$-boundedly acyclic.
\end{cor}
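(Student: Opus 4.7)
The plan is to mirror the argument used for Corollary \ref{cor:portable}, replacing the complete vector field flow $X^\tau$ by the contact isotopy $\{P_t\}$ supplied by the definition of a contact portable manifold, and using the preceding displacement lemma in place of Lemma \ref{lem:portable:displacement}. Concretely, I would verify the hypothesis of Lemma \ref{lem:ccc:dynamical}.(1) for the subgroup $\conto(M, \xi) \leq \homeo(M)$: then $\conto(M,\xi)$, and hence any intermediate group $G$ with $\conto(M,\xi)^{(d)} \leq G \leq \homeo(M)$, has commuting $\Z$-conjugates, whence $\Xsep$-bounded acyclicity by Theorem~\ref{intro:thm:ccc}.

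First, fix a compact subset $K \subset M$. Let $U_0 \supset M_0$ and $t \in \conto(M, \xi)$ be as in the displacement lemma for contact portable manifolds, so that $t^p(U_0) \cap U_0 = \emptyset$ for every $p \geq 1$. By the attractor property of the contact isotopy $\{P_t\}_{t \geq 0}$, there exists $\tau > 0$ such that $P_\tau(K) \subset U_0$. I then set
\[
s \coloneqq P_\tau^{-1}\, t\, P_\tau.
\]
The key observation is that $s$ is a compactly supported contactomorphism lying in the identity component: indeed, $\supp(s) = P_\tau^{-1}(\supp(t))$ is compact because $P_\tau$ is a homeomorphism and $\supp(t)$ is compact, while $s$ is a contactomorphism as a composition of contactomorphisms; moreover $s$ is isotopic to the identity through the isotopy $P_\tau^{-1}\, t_u\, P_\tau$, where $\{t_u\}_{u\in[0,1]}$ is a contact isotopy from the identity to $t$ inside $\conto(M,\xi)$ (each stage is compactly supported by the same reasoning). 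Thus $s \in \conto(M, \xi)$.

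Next, I would compute, for every $p \geq 1$,
\[
s^p(K) \cap K \;=\; P_\tau^{-1}\bigl(t^p P_\tau(K) \cap P_\tau(K)\bigr) \;\subset\; P_\tau^{-1}\bigl(t^p(U_0) \cap U_0\bigr) \;=\; \emptyset,
\]
which is exactly the displacement condition required by Lemma \ref{lem:ccc:dynamical}.(1). Applying that lemma with $T = \conto(M,\xi)$ yields commuting $\Z$-conjugates for $G$, and Theorem~\ref{intro:thm:ccc} closes the argument.

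The only delicate point I expect is the verification that $s$ lies in $\conto(M,\xi)$ rather than in the larger group of contactomorphisms: since $P_\tau$ itself need not be compactly supported, one must check both compactness of the support of $s$ and its isotopy to the identity inside the compactly supported contactomorphism group, as done above. Everything else is a direct transcription of the argument for Corollary \ref{cor:portable} from the smooth to the contact category.
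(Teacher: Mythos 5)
Your proposal is correct and is essentially the paper's own argument: the paper explicitly proves Corollary \ref{cor:contact} by running the proof of Corollary \ref{cor:portable} verbatim, replacing the flow $X^\tau$ by the contact isotopy $P_\tau$, the core $M_0$ by the displaced neighbourhood $U_0$, and Lemma \ref{lem:portable:displacement} by the contact displacement lemma, then invoking Lemma \ref{lem:ccc:dynamical}.(1) and Theorem \ref{intro:thm:ccc}. Your explicit check that $s = P_\tau^{-1} t P_\tau$ lies in $\conto(M,\xi)$ is just the unpacked version of the normality-under-conjugation observation the paper uses in the portable case, so there is no substantive difference.
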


\begin{example}[Contactomorphism groups]
    This applies to the group of contactomorphisms of $\R^{2n+1}$, and to the group of contactomorphisms of contact handlebodies. However, it seems that these are the only known examples of contact portable manifolds~\cite[paragraph before Theorem~D]{contact:portable:examples}. We state this corollary for all derived subgroups since it seems to be open whether any of them is perfect \cite[Remark after Theorem 6.3.7]{banyaga}.
\end{example}

\subsubsection{Products}

We end this section by extending our results from $\R^n$ to products of the form $\R^n \times M$, where $M$ is a compact topological space. This kind of argument has already been considered by Monod and Nariman for the full diffeomorphism groups \cite{monodnariman}.

\begin{cor}[Transformation groups of products]
    \label{cor:tg:products}

    Let $n \geq 2$ and let $M$ be a compact topological space. Let $D \leq \homeo(\R^n)$ be one of the groups from Corollaries \ref{cor:tg:symp}, \ref{cor:tg:volume} or \ref{cor:contact}. Let $G$ be a group such that $D \times \{ \id \} \leq G \leq \homeo(\R^n \times M)$. Then $G$ is $\Xsep$-boundedly acyclic.
\end{cor}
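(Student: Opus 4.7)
The plan is to reduce to Theorem~\ref{intro:thm:ccc} by showing that $G$ has commuting cyclic conjugates, exploiting the compactness of $M$ to transfer the displacement property from $\R^n$ to $\R^n \times M$. The essential observation is that any compactly supported homeomorphism of $\R^n \times M$ has support contained in $K \times M$ for some compact $K \subset \R^n$: Projecting the (compact) support to $\R^n$ gives a compact subset.

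First, I would fix an arbitrary finitely generated subgroup $H \leq G$, with generators $h_1, \dots, h_k$. Using compactness of $M$, I can take $K \subset \R^n$ to be the image in $\R^n$ of $\bigcup_i \supp(h_i)$, which is compact and satisfies $\supp(H) \subset K \times M$. Then, by inspection of the proofs of Corollaries~\ref{cor:tg:symp}, \ref{cor:tg:volume}, and \ref{cor:contact}, a displacing group $T_0 \leq D$ is available: There exists $t \in T_0$ with $t(K) \cap K = \emptyset$ and $t^2|_K = \id_K$. Setting $\tilde{t} \coloneqq (t, \id_M) \in D \times \{\id\} \leq G$, one has $\tilde{t}(K \times M) \cap (K \times M) = (t(K) \cap K) \times M = \emptyset$ and $\tilde{t}^2|_{K \times M} = \id$.

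The verification of the commuting cyclic conjugates condition with $n = 2$ is then routine: The support of ${}^{\tilde{t}} H$ lies in $t(K) \times M$, disjoint from $\supp(H)$, giving $[H, {}^{\tilde{t}} H] = 1$; and $\tilde{t}^2$ is the identity on the support of $H$, while $H$ is the identity off $K \times M$, so $[H, \tilde{t}^2] = 1$. Cleaner still, I can simply apply Lemma~\ref{lem:ccc:dynamical}.\emph{(2)} to the subgroup $T \coloneqq T_0 \times \{\id\} \leq \homeo(\R^n \times M)$, whose displacement condition in $\R^n \times M$ follows from the corresponding displacement in $\R^n$ via the projection-and-lift argument above.

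The main subtlety will be the derived-subgroup bookkeeping: In some cases, such as when $D$ equals the commutator subgroup of $\ham(\R^{2n})$, the displacement element $t$ lies in $T_0 \supsetneq D$ rather than in $D$ itself. However, this matches exactly the setup of Lemma~\ref{lem:ccc:dynamical}, which is already stated for $T^{(d)} \leq G$, so no new ideas are needed. Once commuting cyclic conjugates is established, Theorem~\ref{intro:thm:ccc} immediately yields $\Xsep$-bounded acyclicity of $G$.
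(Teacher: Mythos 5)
Your proposal is correct and takes essentially the same approach as the paper: both transfer the displacement hypothesis of Lemma~\ref{lem:ccc:dynamical} from $\R^n$ to $\R^n \times M$ by using compactness of $M$ to place a compact set inside $A \times M$ for some compact $A \subset \R^n$, displacing $A$ in $\R^n$, and lifting via the product with $\id_M$. The only slip is internal: you first write that a displacing group $T_0 \leq D$ is available and then, two sentences later, that $T_0 \supsetneq D$ in the derived-subgroup case; these cannot both hold, but your final appeal to the $T^{(d)} \leq G$ formulation of Lemma~\ref{lem:ccc:dynamical} is exactly the right fix, and is also what the paper relies on implicitly.
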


Notice that we are requiring that $M$ be compact to ensure that homeomorphisms in $D \times \{\id\}$ are compactly supported.

\begin{proof}
    Such a group $D \leq \homeo(\R^n)$ satisfies Item~\emph{(1)} (in the case of $\conto$) or Item~\emph{(2)} (in the other cases) of Lemma \ref{lem:ccc:dynamical}.\ We claim that $D \times \{ \id \} \leq \homeo(\R^n \times M)$ also satisfies the same item, and the result follows.
    
    We prove this in the case that Item~\emph{(2)} is satisfied, the other case is identical.\ Let $K \subset \R^n \times M$ be compact.\ Then there exists a compact set $A \subset \R^n$ such that $K \subset A \times M$. Let $t \in D$ be such that $t(A) \cap A = \emptyset$ and $t^2|_A = \id|_A$.\ Then $(t, \id)(A \times M) \cap A \times M = (t(A) \cap A) \times M = \emptyset$, and $(t, \id)^2|_{A \times M} = t^2|_A \times \id|_M = \id|_{A \times M}$.
\end{proof}

\begin{example}[Transformation groups of products]
    Corollary \ref{cor:tg:products} applies to the following cases:
    \begin{enumerate}
        \item Suppose that $M$ is a closed symplectic manifold, and endow $\R^{2n} \times M$ with the product symplectic structure.\ Then $\ham(\R^{2n} \times M)$, $\sympo(\R^{2n} \times M)$ and $\symp(\R^{2n} \times M)$ are $\Xsep$-boundedly acyclic.
        \item Suppose that $M$ is a closed manifold endowed with a volume form, and endow $\R^n \times M$ with the product volume form.\ Then $\diffvol(\R^n \times M)$ and $\diffovol(\R^n \times M)$ are $\Xsep$-boundedly acyclic.
        \item Suppose that $M$ is a compact topological space endowed with a finite Borel measure, and endow $\R^n \times M$ with the product measure.\ Then the group of compactly supported measure-preserving homeomorphisms of $\R^n \times M$ is $\Xsep$-boundedly acyclic.
    \end{enumerate}
    It also applies to the analogous groups in all regularities.
\end{example}

It is important in our argument that $n \geq 2$. This motivates the following question:

\begin{question}
\label{q:cylinder}
    Let $M$ be a closed connected manifold endowed with a volume form, and let $\R \times M$ be endowed with the product volume form. Is $\diffvol(\R \times M)$ ($\Xsep$-)boundedly acyclic? In particular, is $\diffvol(\R \times S^1)$ ($\Xsep$-)boundedly acyclic?
\end{question}

\subsection{Piecewise linear and piecewise projective groups}\label{subsec:plpp}

We show that many piecewise linear and piecewise projective groups already known to be $2$-boundedly acyclic~\cite{fflodha} have commuting $\Z$-conjugates, possibly up to an amenable extension, thus proving all the parts of Corollary \ref{corintro:PL}.

The prime example of a piecewise linear group is Thompson's group $F$, which was already shown to be $\Xsep$-boundedly acyclic by Monod \cite{monod:thompson}.\ However Monod's method does not apply to all piecewise linear groups, for instance it does not apply to all subgroups of $F$.

\begin{cor}[Compactly supported groups without fixed points]
\label{cor:bsupp}

Let $\Gamma$ be a group of compactly supported homeomorphisms of the line acting without a global fixed point. Then $\Gamma$ is $\Xsep$-boundedly acyclic.
\end{cor}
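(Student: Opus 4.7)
We apply Lemma \ref{lem:ccc:dynamical}.(1) with $T = G = \Gamma$ and $X = \R$: it suffices to show that for every compact $K \subset \R$ there exists $t \in \Gamma$ such that the iterates $\{t^p(K)\}_{p \geq 1}$ are pairwise disjoint. This will give commuting $\Z$-conjugates, and Theorem \ref{intro:thm:ccc} will then yield $\Xsep$-bounded acyclicity. A preliminary observation is that every compactly supported homeomorphism of $\R$ is automatically orientation-preserving, since it agrees with the identity on a neighborhood of $\pm\infty$ and is therefore a monotone increasing bijection. Thus $\Gamma \leq \textup{Homeo}_c^+(\R)$.

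Fix a compact $K \subset \R$, enclosed in some interval $[a, b]$. The no-global-fixpoint hypothesis is equivalent to $\bigcup_{g \in \Gamma} \supp(g) = \R$, and a displacement argument in the style of Fournier-Facio--Lodha \cite[Proposition~3.8]{fflodha} produces $t \in \Gamma$ with $t([a, b]) \cap [a, b] = \emptyset$: since every point of $\R$ is moved by some element of $\Gamma$, one composes finitely many elements of $\Gamma$ to shift $[a, b]$ strictly past itself.

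Given such $t$, monotonicity forces either $t(a) > b$ or $t(b) < a$; by symmetry assume $t(a) > b$. Then $t^2(a) = t(t(a)) > t(b) > b$, and inductively $t^{p+1}(a) > t^p(b)$ for all $p \geq 0$. Consequently the intervals $t^p([a, b]) = [t^p(a), t^p(b)]$ form a strictly increasing family as $p$ ranges over $\N_{\geq 0}$, hence are pairwise disjoint; applying the same argument to $t^{-1}$ extends this to all $p \in \Z$. In particular $\{t^p(K)\}_{p \in \Z}$ are pairwise disjoint, so Lemma \ref{lem:ccc:dynamical}.(1) applies and Theorem \ref{intro:thm:ccc} concludes the proof.

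The main technical ingredient is the displacement lemma, which is where the no-global-fixpoint hypothesis is crucially used. Once one such displacement is secured, the orientation-preserving nature of compactly supported line homeomorphisms does the rest: monotonicity propagates the initial disjointness to all iterates, yielding commuting $\Z$-conjugates without further work.
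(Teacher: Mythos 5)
Your argument is correct and takes essentially the same route as the paper: both verify Lemma \ref{lem:ccc:dynamical}.(1) by producing $t \in \Gamma$ with $t(a) > b$ (the paper asserts this displacement directly from the absence of a global fixed point; the cleanest justification, implicit in your sketch, is that otherwise $\sup \Gamma\cdot a$ would be a finite global fixed point) and then using the fact that compactly supported homeomorphisms of $\R$ preserve orientation to conclude that the iterates $t^p([a,b])$, $p \geq 1$, are pairwise disjoint. There is no gap; your extra step extending disjointness to all $p \in \Z$ is harmless but unnecessary, since the lemma only requires $p \geq 1$.
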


\begin{proof}
We show that $\Gamma$ satisfies Lemma \ref{lem:ccc:dynamical}.\emph{(1)}, so it has commuting $\Z$-conjugates, whence it is $\Xsep$-boundedly acyclic. Let $K \subset \R$ be compact.\ Then it is contained in some compact interval $[a, b]$.\ Let $t \in \Gamma$ be an element such that $t(a) > b$, which exists because $\Gamma$ acts without a global fixed point. Since $\Gamma$ is compactly supported, it acts preserving the orientation, and thus $t^p(a) > t^{p-1}(b)$ for all $p \geq 1$.\ This shows that the sets $t^p(K)$ are pairwise disjoint for all $p \geq 1$, and we conclude.
\end{proof}

We will now show that groups of piecewise linear homeomorphisms of $[0,1]$ and piecewise projective homeomorphisms of $\R$ can always be expressed as an amenable extension of a group as in Corollary \ref{cor:bsupp}, and therefore are $\Xsep$-boundedly acyclic by Theorem \ref{thm:co-amenable}. This also applies directly to chain groups of homeomorphisms~\cite{chain} or groups that admit a coherent action on the line~\cite{coherent}, that were already known to have vanishing second bounded cohomology with trivial coefficients~\cite{fflodha}.

\begin{cor}[Chain groups and coherent actions]
\label{cor:coherent}
Let $\Gamma$ be either a chain group of homeomorphisms of the line, or a group that admits a coherent action on the line.\ Then $\Gamma$ is $\Xsep$-boundedly acyclic.
\end{cor}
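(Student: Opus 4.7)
The plan is to reduce both cases to Corollary \ref{cor:bsupp} by exhibiting, inside each group $\Gamma$, a normal co-amenable subgroup $N \triangleleft \Gamma$ that fits the hypotheses of that corollary (compactly supported homeomorphisms of $\R$ acting without global fixed point), and then invoking Theorem \ref{thm:co-amenable} to transfer $\Xsep$-bounded acyclicity from $N$ to $\Gamma$. In both settings the existence of such an $N$ is essentially contained in the structural results recalled in \cite{fflodha} that were used to verify the commuting conjugates condition.

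For a chain group $\Gamma$ in the sense of \cite{chain}, I would argue directly, with $N = \Gamma$. By definition $\Gamma$ is generated by finitely many homeomorphisms with bounded support, and the chain condition forces consecutive generators to have overlapping supports while the union of the supports is an open interval $I$. Hence every element of $\Gamma$ is compactly supported in $I$, and every point of $I$ is moved by some generator, so the action on $I$ has no global fixed point. Identifying $I$ with $\R$ via any orientation-preserving homeomorphism realizes $\Gamma$ as a group satisfying the hypotheses of Corollary \ref{cor:bsupp}, which therefore gives $\Xsep$-bounded acyclicity.

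For a group $\Gamma$ admitting a coherent action on the line \cite{coherent}, the coherence condition produces, by definition, a homomorphism $\rho \colon \Gamma \to A$ to an abelian (in particular amenable) group $A$ recording the asymptotic behavior of $\Gamma$ at $\pm\infty$. Its kernel $N \coloneqq \ker(\rho)$ consists of boundedly supported homeomorphisms, and the image $\rho(\Gamma) \leq A$ is amenable, so $N$ is co-amenable in $\Gamma$. Moreover, as already used in \cite{fflodha} to verify commuting conjugates, $N$ acts without global fixed point on an open interval carrying its support, which we again identify with $\R$. Corollary \ref{cor:bsupp} then yields that $N$ is $\Xsep$-boundedly acyclic, and Theorem \ref{thm:co-amenable} promotes this to $\Gamma$.

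The main obstacle is keeping the bookkeeping clean in the coherent-action case: one must check that $N$ is genuinely compactly supported on an open interval of $\R$ and has no global fixed point on that interval, which relies on unpacking the definition of coherent action from \cite{coherent}. For chain groups this is transparent from the chain overlap condition, so no extension step is needed. Once the structural descriptions of $N$ are in place, the combination of Corollary \ref{cor:bsupp} with Theorem \ref{thm:co-amenable} is immediate.
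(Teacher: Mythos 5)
Your overall strategy — exhibit a co-amenable subgroup $N \leq \Gamma$ satisfying the hypotheses of Corollary~\ref{cor:bsupp}, then invoke Theorem~\ref{thm:co-amenable} — is exactly the route the paper takes (see the paragraph preceding Corollary~\ref{cor:coherent} and the remark before Corollary~\ref{cor:PL:modulus}). Your treatment of the coherent case is consistent with that.

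However, there is a genuine error in the chain-group case. You claim $N = \Gamma$ works, asserting that every element of a chain group is compactly supported in the interval $I = \bigcup_i I_i$ and that $\Gamma$ acts on $I \cong \R$ without global fixed points. The second assertion is fine, but the first is false: in a chain $f_1, \ldots, f_n$ with $\supp(f_i) = I_i = (a_i, b_i)$ and $a_1 < a_2 < b_1 < \cdots$, we have $I = (a_1, b_n)$, and the extremal generators $f_1$, $f_n$ have supports $(a_1, b_1)$, $(a_n, b_n)$ whose closures inside $I$ are $(a_1, b_1]$ and $[a_n, b_n)$, which are \emph{not} compact in $I$. After any identification $I \cong \R$, the supports of $f_1$ and $f_n$ become unbounded rays, so Corollary~\ref{cor:bsupp} cannot be applied to $\Gamma$ itself. (Equivalently: if you instead view $\Gamma$ as acting on the ambient $\R$, it is compactly supported, but then it has global fixed points outside $[a_1, b_n]$, so the no-fixed-point hypothesis fails.) The two-chain case is already a counterexample to your claim: Thompson's group $F$ acting on $(0,1) \cong \R$ is not compactly supported — only its commutator subgroup $[F,F]$ is. What you need to do, as in the paper's treatment of Corollary~\ref{cor:PL}, is pass to the proper normal subgroup $\Gamma_0 \triangleleft \Gamma$ of elements whose germs at both endpoints of $I$ are trivial; since only $f_1$ (resp.\ $f_n$) is nontrivial near the left (resp.\ right) endpoint, the germ homomorphism lands in $\Z^2$, so $\Gamma/\Gamma_0$ is abelian and $\Gamma_0$ is co-amenable. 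This $\Gamma_0$ \emph{is} compactly supported on $I \cong \R$, and it acts without global fixed points because the $\Gamma$-action on $I$ is minimal while $\Gamma_0$ is a nontrivial normal subgroup. With that correction, your argument goes through.
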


Again using co-amenability and Theorem \ref{intro:thm:ccc} we can deduce the following:

\begin{cor}[Piecewise linear and piecewise projective groups]
\label{cor:PL}
Let $\Gamma$ be a group of piecewise linear homeomorphisms of the interval, or piecewise projective homeomorphisms of the line. Then $\Gamma$ is $\Xsep$-boundedly acyclic.
\end{cor}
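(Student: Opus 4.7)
The approach is to combine Theorem \ref{thm:co-amenable} with Theorem \ref{intro:thm:ccc} by producing a co-amenable subgroup of $\Gamma$ to which Corollary \ref{cor:bsupp} applies. For a piecewise linear group $\Gamma \leq \textup{PL}_+([0,1])$, the endpoint germ homomorphisms $\rho_0, \rho_1 \colon \Gamma \to \R_+^\ast$, sending $f$ to $f'(0^+)$ and $f'(1^-)$ respectively, combine to a homomorphism into the abelian (hence amenable) group $\R_+^\ast \times \R_+^\ast$. Its kernel $N$ is therefore co-amenable in $\Gamma$, and by piecewise linearity every element of $N$ is the identity on neighborhoods of both $0$ and $1$: an affine map fixing an endpoint with slope $1$ there must be the identity locally. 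Thus $N$ is a group of compactly supported homeomorphisms of $(0,1) \cong \R$. For a piecewise projective group of the line, the analogous construction using germs at $\pm\infty$ yields a co-amenable subgroup of compactly supported homeomorphisms of $\R$.

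When $N$ acts without global fixed points on the interior, Corollary \ref{cor:bsupp} applies directly to give the $\Xsep$-bounded acyclicity of $N$, and Theorem \ref{thm:co-amenable} transfers this to $\Gamma$. The main obstacle is to handle the case where $N$ has a nonempty fixed-point set $F_N \subset (0,1)$. Given a finitely generated $H \leq N$ with $\supp(H) \subset [a,b] \subset (0,1)$, piecewise linearity of the generators of $H$ ensures that $\supp(H)$ meets only finitely many connected components $I_1, \ldots, I_k$ of $(0,1) \setminus F_N$, since each generator has finitely many breakpoints and its support is contained in $\supp(N) = (0,1) \setminus F_N$. On each such component $I_j$, the restriction $N|_{I_j}$ is compactly supported and, by maximality of $F_N$, acts without global fixed points; hence it admits displacing elements as in the proof of Corollary \ref{cor:bsupp}.

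The crux of the argument is then to assemble these component-wise displacements into a single element $t \in N$ establishing commuting cyclic conjugates for $H$: one needs $t$ to cyclically displace $\supp(H)$ in each relevant component $I_j$, while commuting suitably with $H$ on the rest. This is carried out by leveraging the piecewise linear (respectively piecewise projective) flexibility of $\Gamma$ to realize, in $N$, elements with prescribed behavior on the finitely many relevant components, using the germ-kernel construction inductively if necessary to bypass any fixed point that obstructs a single global displacement. Once commuting cyclic conjugates are verified for $N$, Theorem \ref{intro:thm:ccc} yields that $N$ is $\Xsep$-boundedly acyclic, and Theorem \ref{thm:co-amenable} transfers this property to $\Gamma$, completing the proof.
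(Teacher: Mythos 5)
Your overall skeleton matches the paper's: pass to a co-amenable subgroup of germ-trivial (hence compactly supported) elements, show that this subgroup has commuting $\Z$-conjugates, and transfer back via Theorem \ref{intro:thm:ccc} and Theorem \ref{thm:co-amenable}. However, there is a genuine gap at exactly the step you yourself call ``the crux'': you never construct the single element $t$ of the kernel $N$ that simultaneously displaces $\supp(H)$ inside each of the finitely many relevant components $I_1,\ldots,I_k$ of the complement of the global fixed set, let alone one all of whose positive powers do so. Saying this ``is carried out by leveraging the piecewise linear (respectively piecewise projective) flexibility \ldots{} using the germ-kernel construction inductively if necessary'' is not an argument: the displacing elements you can produce for the individual components are global elements of $N$ whose behaviour on the other components is uncontrolled, and a naive product of them need not displace anything (indeed different components may force displacement in opposite directions, so one cannot even ask for a single element pushing everything to the right). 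This simultaneous displacement is precisely the nontrivial content of \cite[Theorem 1.3]{fflodha}, which the paper invokes in its precise form: disjoint intervals $J_i$ supporting $H$, $\Gamma_0$-invariant intervals $I_i \supset J_i$, and one $t \in \Gamma_0$ with $t(J_i) \cap J_i = \emptyset$. The only new step in the paper's proof is upgrading ``disjoint'' to ``disjoint for all powers $t^p$'' using orientation-preservation and the $\Gamma_0$-invariance of the $I_i$. Without either reproving that statement or citing it, your proof is incomplete.

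Two smaller inaccuracies. First, the restriction $N|_{I_j}$ need not be compactly supported in $I_j$: supports of elements of $N$ can accumulate at the endpoints of $I_j$, which are interior fixed points of $N$, so Corollary \ref{cor:bsupp} does not literally apply componentwise. What is true, and what one should argue instead, is that $N$ acts on each $I_j$ without global fixed points, hence orbits accumulate at the endpoints of $I_j$ and any compact subset of $I_j$ can be displaced within $I_j$ by some element of $N$. Second, the germ homomorphism to $\R_+^\ast \times \R_+^\ast$ (or to the germ groups at $\pm\infty$) only makes sense after restricting to the orientation-preserving subgroup; the reduction to that index-at-most-$2$ subgroup, which the paper carries out explicitly via Theorem \ref{thm:co-amenable}, is missing from your write-up.
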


\begin{proof}
Let $\Gamma$ be such a group, and suppose first that $\Gamma$ is orientation-preserving. Under this assumption, it is known that there exists a subgroup $\Gamma_0 \leq \Gamma$ such that the quotient $\Gamma / \Gamma_0$ is amenable and $\Gamma_0$ has commuting conjugates~\cite[Theorem 1.3]{fflodha}.\ More precisely, whenever $H \leq \Gamma_0$ is finitely generated, there exist finitely many pairwise disjoint open intervals $J_1, \ldots, J_k$ such that $H$ is supported on their union and preserves each one, and pairwise disjoint open intervals $I_i \supset J_i$ each preserved by $\Gamma_0$, and an element $t \in \Gamma_0$ such that $t(J_i)$ is disjoint from $J_i$. Since $t$ is orientation-preserving, it follows that $t^p(J_i)$ is disjoint from $J_i$ for all $p \geq 1$. Moreover, since $J_i \subset I_i$ and the latter is preserved by $\Gamma_0$, we have $t^p(J_i) \subset I_i$, which is disjoint from $I_j$, and thus from $J_j$, for all $i \neq j$. Thus the support of ${}^{t^p} H$ is disjoint from that of $H$ for all $p \geq 1$. This shows that $\Gamma_0$ has commuting $\Z$-conjugates, and so is $\Xsep$-boundedly acyclic by Theorem \ref{intro:thm:ccc}. By Theorem~\ref{thm:co-amenable} we conclude that $\Gamma$ itself is $\Xsep$-boundedly acyclic.

In the general case, $\Gamma$ contains an orientation-preserving subgroup $\Gamma_+$ of index at most $2$. We showed that $\Gamma_+$ is $\Xsep$-boundedly acyclic, so it follows again from Theorem \ref{thm:co-amenable} that $\Gamma$ is $\Xsep$-boundedly acyclic.
\end{proof}

Note that in the proofs of both Corollary \ref{cor:coherent} and Corollary \ref{cor:PL}, the $\Xsep$-bounded acyclicity is proved by exhibiting such groups as amenable extensions of groups with commuting $\Z$-conjugates. Therefore we can apply Corollary \ref{cor:coameanble:modulus} to obtain the following strengthening:

\begin{cor}[Vanishing modulus of piecewise linear and piecewise projective groups]
\label{cor:PL:modulus}

For every integer $n \geq 1$, there exists a constant $K_n \in [0, \infty)$ such that for every group $\Gamma$ as in Corollary \ref{cor:coherent} or Corollary \ref{cor:PL}, and every separable dual Banach $\Gamma$-module $E$, we have that $\|H^n_b(\Gamma; E) \| \leq K_n$.
\end{cor}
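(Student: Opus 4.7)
The plan is to imitate the proof of Corollary \ref{cor:coameanble:modulus} via a contradiction argument based on direct sums. Suppose the stated uniform bound fails for some $n$. Then there exist groups $\Gamma_i$, each in one of the four classes of Corollary \ref{cor:coherent} or Corollary \ref{cor:PL}, and separable dual Banach $\Gamma_i$-modules $E_i$, with $\|H^n_b(\Gamma_i; E_i)\| \geq i$ for all $i \in \N$.

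The first step is to repackage the proofs of Corollaries \ref{cor:coherent} and \ref{cor:PL} in the language of Corollary \ref{cor:coameanble:modulus}: each $\Gamma_i$ contains a normal subgroup $\Lambda_i \triangleleft \Gamma_i$ with amenable quotient such that $\Lambda_i$ has commuting $\Z$-conjugates. For chain groups and groups with a coherent action on the line one takes $\Lambda_i = \Gamma_i$; for piecewise linear homeomorphisms of the interval and piecewise projective homeomorphisms of the line one passes to the orientation-preserving subgroup $\Gamma_{i,+}$ (of index at most two) and then to the subgroup produced by \cite[Theorem~1.3]{fflodha}.

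Next, set $\Gamma \coloneqq \bigoplus_i \Gamma_i$ and $\Lambda \coloneqq \bigoplus_i \Lambda_i$. Then $\Lambda \triangleleft \Gamma$ and the quotient $\bigoplus_i (\Gamma_i/\Lambda_i)$ is amenable, being a direct sum of amenable groups. By Lemma \ref{lem:products:ccc} the group $\Lambda$ has commuting $\Z$-conjugates, so Theorem \ref{intro:thm:ccc} yields $\Xsep$-bounded acyclicity of $\Lambda$; Theorem \ref{thm:mapping} then transfers this to $\Gamma$. Applying Lemma \ref{lem:vm:finite}.\emph{(2)} to $\Gamma$ produces a uniform constant $K \in [0, \infty)$ such that $\|H^n_b(\Gamma; E)\| \leq K$ for every separable dual Banach $\Gamma$-module $E$. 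Finally, viewing each $\pi_i \colon \Gamma \twoheadrightarrow \Gamma_i$ as a retraction and applying Lemma \ref{lem:vm:retractions} to the pullback $\pi_i^\ast E_i$, we get $\|H^n_b(\Gamma_i; E_i)\| \leq \|H^n_b(\Gamma; \pi_i^\ast E_i)\| \leq K$ for all $i$, contradicting $\|H^n_b(\Gamma_i; E_i)\| \geq i$ as soon as $i > K$.

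The sole subtlety — and hence the main obstacle — is to guarantee that the subgroup $\Lambda_i \leq \Gamma_i$ in the piecewise linear/projective case really is normal, not merely co-amenable (the proof of Corollary \ref{cor:PL} only invokes co-amenability). If \cite[Theorem~1.3]{fflodha} delivers a normal subgroup directly, the argument above is complete. Otherwise, one instead verifies that co-amenability passes to direct sums of inclusions via a F\"olner-type argument — given probability measures $\mu_{i,\alpha}$ on $G_i/H_i$ that are asymptotically invariant under $G_i$, for each finite $F \subset \bigoplus_i G_i$ supported on a finite set of indices $J$, the product measure $\bigotimes_{j \in J} \mu_{j,\alpha_j}$ (extended trivially) is asymptotically $F$-invariant — and then applies Theorem \ref{thm:co-amenable} in place of Theorem \ref{thm:mapping} above.
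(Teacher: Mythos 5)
Your argument is, in substance, the paper's own: the paper simply records that every group in Corollary \ref{cor:coherent} or \ref{cor:PL} is exhibited (in those proofs) as an amenable extension of a group with commuting $\Z$-conjugates, and then invokes Corollary \ref{cor:coameanble:modulus} as a black box; the direct-sum contradiction you spell out (Lemma \ref{lem:products:ccc}, Theorem \ref{intro:thm:ccc}, passage to $\bigoplus_i\Gamma_i$, Lemma \ref{lem:vm:finite}.(2) and Lemma \ref{lem:vm:retractions}, or equivalently Proposition \ref{prop:dirsum}) is exactly the proof of that corollary. One step in your main line is mis-attributed, though: Theorem \ref{thm:mapping} cannot transfer $\Xsep$-bounded acyclicity from $\Lambda=\bigoplus_i\Lambda_i$ to $\Gamma=\bigoplus_i\Gamma_i$, because here the normal subgroup $\Lambda$ is not amenable --- it is the quotient $\Gamma/\Lambda$ that is amenable, and Gromov's Mapping Theorem (even in its inflation form, Remark \ref{rem:mapping:converse}) only applies when the kernel is amenable. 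The correct tool is Theorem \ref{thm:co-amenable} (a normal subgroup with amenable quotient is co-amenable), which is what the paper's proof of Corollary \ref{cor:coameanble:modulus} uses and what you yourself invoke in your fallback; with that substitution the argument is complete.

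Concerning the ``sole subtlety'' you flag: the subgroup $\Gamma_0$ furnished by the proof of Corollary \ref{cor:PL} (via \cite[Theorem~1.3]{fflodha}) is indeed normal with amenable quotient in the orientation-preserving group, which is what the sentence preceding Corollary \ref{cor:PL:modulus} in the paper relies on, so the paper's route goes through Corollary \ref{cor:coameanble:modulus} directly. Your fallback is nonetheless correct and in fact more robust: co-amenability does pass to direct sums of pairs by the F\"olner-type product-measure argument you sketch (and it is transitive, which also handles the index-$\leq 2$ orientation-reversing step), after which Theorem \ref{thm:co-amenable} replaces the normality hypothesis throughout; this yields the same uniform constant $K_n$ without having to verify normality of $\Gamma_0$ in the full group.
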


The group $G_0$ is a group of orientation-preserving piecewise projective homeomorphisms of the line, which is non-amenable and finitely presented~\cite{LodhaMoore}, in fact, it is even of type $F_\infty$~\cite{LodhaFinf}.\ The previous result readily implies the following:

\begin{cor}[Corollary \ref{corintro:alternative}]
\label{cor:alternative}
There exists a group of type $F_\infty$ that is non-amenable, but all of whose subgroups are $\Xsep$-boundedly acyclic.
\end{cor}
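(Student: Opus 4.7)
The plan is to take $G_0$ itself as the desired witness. The properties of $G_0$ that are imported from the literature are: it is a finitely presented group of type $F_\infty$ \cite{LodhaMoore, LodhaFinf}, it is non-amenable \cite{LodhaMoore}, and by construction it is a group of orientation-preserving piecewise projective homeomorphisms of the real line.

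The key observation is that the class of groups of piecewise projective homeomorphisms of the line is closed under taking subgroups: any $H \leq G_0$ is itself a group of piecewise projective homeomorphisms of $\R$. Therefore Corollary \ref{cor:PL} applies not only to $G_0$ but verbatim to every subgroup of $G_0$, yielding that every such $H$ is $\Xsep$-boundedly acyclic.

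Combining these ingredients, $G_0$ is a group of type $F_\infty$, non-amenable (so in particular itself not $\Xdual$-boundedly acyclic), and every subgroup of $G_0$ is $\Xsep$-boundedly acyclic. There is no real obstacle to carry out; the content of the corollary lies entirely in having Corollary \ref{cor:PL} available and in knowing $G_0$ as an explicit finitely presented (even $F_\infty$) non-amenable subgroup of the piecewise projective homeomorphism group of $\R$. The only thing worth being careful about when writing the proof is to make explicit that the hypothesis of Corollary \ref{cor:PL} is a stable property under passing to subgroups, so that no additional verification on individual subgroups of $G_0$ is necessary.
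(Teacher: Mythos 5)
Your proposal is correct and coincides with the paper's argument: the paper likewise takes $G_0$ (piecewise projective, non-amenable, type $F_\infty$ by \cite{LodhaMoore, LodhaFinf}) and applies Corollary \ref{cor:PL} to all of its subgroups, using exactly the observation that any subgroup is again a group of piecewise projective homeomorphisms of the line.
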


A homological version of the von Neumann--Day problem proposed by Calegari \cite{scl_pl} asks whether a finitely presented torsion-free group all of whose subgroups have vanishing stable commutator length is necessarily amenable.\ Recently, this was shown to fail, even when one asks for the vanishing of second bounded cohomology: The desired  counterexample is the piecewise projective group $G_0$~\cite{fflodha}. Corollary \ref{cor:alternative} can be thus seen as an even stronger negative solution to this problem.

\subsection{Highly transitive actions}
\label{ss:highlytrans}

We take a detour from producing examples of groups with commuting cyclic conjugates, and show how the previous results can be used as a basis for more complex computations of bounded cohomology rings.\ The main idea is to produce highly transitive actions with stabilizers falling in the classes of boundedly acyclic groups that were presented so far. This method was already used with success by several authors \cite{fflm2, monodnariman, fflodha, konstantin}, and as in those cases we restrict to trivial coefficients for simplicity.

\begin{cor}[Corollary \ref{corintro:circleaction}]
\label{cor:circleaction}

Let $\Gamma$ be a group of orientation-preserving piecewise linear homeomorphisms of the circle, or piecewise projective homeomorphisms of the projective line. Suppose that there exists an infinite orbit $S$ of $\Gamma$ such that, for all $n \geq 1$, $\Gamma$ acts transitively on positively oriented $n$-tuples in $S$. Then there exists an isomorphism
\[H^\bullet_b(\Gamma; \R) \cong \R[x],\]
where $|x| = 2$, and $x$ corresponds to the Euler class of the defining action on the circle of $\Gamma$.
\end{cor}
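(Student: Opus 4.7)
The plan is to adapt the strategy used for Thompson's group $T$ in \cite{fflm2, monod:thompson, monodnariman}: to exhibit $H^\bullet_b(\Gamma;\R)$ as the cohomology of a bar-type complex associated with the action of $\Gamma$ on $S$, by showing that the stabilizers of finite tuples of distinct points of $S$ are uniformly boundedly acyclic.

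First, I would analyze the pointwise stabilizer in $\Gamma$ of an $(n+1)$-tuple of distinct points in $S$. Removing these $n+1$ points decomposes the circle (respectively, the projective line) into $n+1$ open arcs, and any element of $\Gamma$ fixing the tuple must preserve each arc, so the stabilizer decomposes as the direct product of $n+1$ groups, each of which is a group of piecewise linear (resp.\ piecewise projective) homeomorphisms of an open arc fixing the endpoints. Each factor therefore lies in the scope of Corollary~\ref{cor:PL}, and Corollary~\ref{cor:PL:modulus} provides a universal constant $K_k$ bounding its $k$-th vanishing modulus, independent of the arc and the group. Since each such factor is an amenable extension of a group with commuting $\Z$-conjugates, the direct product is also an amenable extension of a group with commuting $\Z$-conjugates (using Lemma~\ref{lem:products:ccc} together with the fact that a finite direct product of amenable groups is amenable), so Corollary~\ref{cor:coameanble:modulus} yields the same uniform bound on the vanishing modulus for the stabilizer itself.

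Second, with this uniform vanishing modulus in hand, I would invoke the standard resolution argument coming from the $\Gamma$-action on the countable set $S$: the complex $(\ell^\infty(S^{\bullet+1})^\Gamma, \delta)$ isometrically computes $H^\bullet_b(\Gamma;\R)$, compatibly with the cup product. This principle underlies all previous computations of this type \cite{fflm2, monodnariman, fflodha, konstantin}; the uniform vanishing of bounded cohomology of stabilizers in every degree is precisely the input required for the associated double-complex / spectral-sequence argument to identify the invariants with the bounded cohomology of $\Gamma$. This is the main technical obstacle of the proof, but amounts to a direct adaptation of the machinery already developed in those references to the present setting.

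Finally, I would compute this complex explicitly. By the transitivity hypothesis and the fact that $\Gamma$ is orientation-preserving, the $\Gamma$-orbits on $S^{n+1}$ are parametrized by the combinatorial data of which coordinates coincide together with the induced cyclic order on the distinct ones; hence $\ell^\infty(S^{\bullet+1})^\Gamma$ is spanned by the indicator functions of such orbits. A direct chain-level argument, identical to the one carried out for $T$ in \cite{monodnariman}, then identifies the cohomology of this complex, with its cup product structure, as the polynomial ring $\R[x]$ with $|x|=2$, where $x$ is represented by the cocycle assigning $+1$ to positively oriented triples of distinct points in $S$ and $-1$ to negatively oriented ones. This cocycle is manifestly the pullback of the Euler cocycle on $\mathrm{Homeo}_+(S^1)$ along the defining action of $\Gamma$, so $x$ coincides with the Euler class of that action, concluding the proof.
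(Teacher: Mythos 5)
Your overall strategy is the same as the paper's: show that stabilizers of tuples in $S$ are boundedly acyclic via Corollary \ref{cor:PL}, and then feed this into the highly-transitive-circle-action machinery. Indeed, the paper's proof is two lines: any subgroup of $\Gamma$ fixing a point of the circle is, after cutting the circle at that point, literally a group of piecewise linear homeomorphisms of the interval (resp.\ piecewise projective homeomorphisms of the line), hence boundedly acyclic by Corollary \ref{cor:PL}; one then quotes \cite[Proposition 6.9]{fflm2}, which is exactly the packaged statement whose proof you sketch in your second and third steps (the resolution $\ell^\infty(S^{\bullet+1},\R)^\Gamma$, the orbit count, the explicit cochain computation and the identification of the generator with the Euler class). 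So the part you label ``the main technical obstacle'' is a citable result and does not need to be re-derived.

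Two caveats about your first step. The claim that the stabilizer of an $(n+1)$-tuple \emph{decomposes as the direct product} of the arc groups is false for a general subgroup $\Gamma$: restriction to the complementary arcs only gives an embedding of the stabilizer into such a product, and its image can be a proper (even diagonal-type) subgroup; since bounded acyclicity and the commuting $\Z$-conjugates condition do not pass to subgroups, the appeal to Lemma \ref{lem:products:ccc} and Corollary \ref{cor:coameanble:modulus} does not go through as written. Fortunately the detour is unnecessary: a tuple stabilizer fixes at least one point, so it is itself (isomorphic to) a group of piecewise linear homeomorphisms of the interval, resp.\ piecewise projective homeomorphisms of the line, and Corollary \ref{cor:PL} applies directly, with no product decomposition. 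Likewise, the uniform vanishing modulus (Corollary \ref{cor:PL:modulus}) is not needed here, since for each $n$ the transitivity hypothesis leaves only finitely many orbits of $n$-tuples and \cite[Proposition 6.9]{fflm2} only requires bounded acyclicity of the stabilizers; uniformity of the modulus is the extra ingredient needed in the line case (Corollary \ref{cor:lineaction}), where point stabilizers are only directed unions of piecewise linear groups and Proposition \ref{prop:dirun} must be invoked.
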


The result is a generalization of the computation of the bounded cohomology of Thompson's group $T$ \cite{fflm2}. In that case, the proof relied on the bounded acyclicity of Thompson's group $F$ \cite{monod:thompson}, but thanks to Corollary \ref{cor:PL} we can drop all hypotheses on point stabilizers.

\begin{proof}
Corollary \ref{cor:PL} implies that subgroups of $\Gamma$ fixing a point on the circle are boundedly acyclic, since they are isomorphic to a group of piecewise linear homeomorphisms of the interval, or a group of piecewise projective homeomorphisms of the line. Therefore we can apply \cite[Proposition 6.9]{fflm2} and conclude.
\end{proof}

\begin{example}[Groups acting on the circle]
\label{ex:circleaction}

Corollary~\ref{cor:circleaction} applies to Thompson's group $T$, but also to the Stein--Thompson groups~\cite{stein} of the form $T_{2, n_2, \ldots, n_k}$.\ The (circular) high transitivity is proved in \cite[Example 3.7]{monsters} (note that the reference assumes that $n_2 = 3$, but this is not used in the proof of high transitivity).\ It also applies in other examples~\cite[Examples~3.8 and 3.9]{monsters} including the golden ratio Thompson's group $T_\tau$ and its commutator subgroup~\cite{cleary, ttau} and the piecewise projective group $S$~\cite{lodhaS}.
\end{example}

Another context in which highly transitive actions yield computations of bounded cohomology is group actions on the line. This is a slight variation of the method for groups acting on the circle, however to our knowledge it is the first time that it is used in this context.

\begin{prop}[Groups acting on the line]
\label{prop:lineaction:general}

Let $\Gamma$ be a group of orientation-preserving homeomorphisms of the line, and let $S$ be an infinite orbit of $\Gamma$. Suppose that, for all $n \geq 1$, the action of $\Gamma$ on linearly ordered $n$-tuples in $S$ is transitive and has boundedly acyclic stabilizers. Then $\Gamma$ is boundedly acyclic.
\end{prop}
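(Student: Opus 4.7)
The plan is to mimic the argument of Corollary~\ref{cor:circleaction} for circle actions, which rests on \cite[Proposition~6.9]{fflm2}, and adapt it to the line. First I would use the $\Gamma$-action on $S$ to build a semi-simplicial resolution of $\R$ by modules whose orbit stabilizers are boundedly acyclic. For each $n \geq 0$, set $A^n \coloneqq \ell^\infty(S^{n+1}, \R)$ equipped with the simplicial coboundary. The augmented sequence $0 \to \R \to A^0 \to A^1 \to \cdots$ is a strong resolution of the trivial Banach $\Gamma$-module $\R$. Because $\Gamma$ preserves orientation, the stabilizer of any $\Gamma$-orbit in $S^{n+1}$ is the pointwise fixator of a strictly linearly ordered tuple in $S$ (of length at most $n+1$), and by hypothesis all such stabilizers are boundedly acyclic. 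As in \cite[Proposition~6.9]{fflm2}, this makes each $A^n$ relatively injective, and therefore
\[
H^\bullet_b(\Gamma; \R) \cong H^\bullet\bigl((A^\bullet)^\Gamma, \delta\bigr).
\]

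Next I would pass to the alternating subcomplex $A^\bullet_{\mathrm{alt}}$, which computes the same cohomology by the standard alternation argument (\cite[Chapter~4]{Frigerio:book}, \cite[Section~6]{monod}). An alternating function vanishes on tuples with repeated entries, so it is determined by its restriction to the set $S^{n+1}_<$ of strictly increasing $(n+1)$-tuples of $S$. Since $\Gamma$ acts transitively on $S^{n+1}_<$ by hypothesis, a $\Gamma$-invariant alternating function is specified by a single real value, giving $(A^n_{\mathrm{alt}})^\Gamma \cong \R$ for every $n \geq 0$.

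The final step is a direct computation of the coboundary. Let $f_n \in (A^n_{\mathrm{alt}})^\Gamma$ take the value $1$ on every strictly increasing $(n+1)$-tuple. For a strictly increasing $(n+2)$-tuple $(x_0 < \cdots < x_{n+1})$,
\[
\delta f_n(x_0, \ldots, x_{n+1}) \;=\; \sum_{i=0}^{n+1} (-1)^i \, f_n(x_0, \ldots, \widehat{x_i}, \ldots, x_{n+1}) \;=\; \sum_{i=0}^{n+1} (-1)^i,
\]
which equals $0$ when $n$ is even and $1$ when $n$ is odd. Hence the invariant alternating complex reads
\[
\R \xrightarrow{\;0\;} \R \xrightarrow{\;\sim\;} \R \xrightarrow{\;0\;} \R \xrightarrow{\;\sim\;} \R \xrightarrow{\;0\;} \cdots,
\]
whose cohomology is $\R$ concentrated in degree $0$, yielding the bounded acyclicity of $\Gamma$.

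The main obstacle I anticipate is the first step: verifying, under the sole hypothesis of boundedly acyclic stabilizers of ordered tuples, that $A^\bullet$ really forms a strong relatively injective resolution. Though formally analogous to the circle case of \cite[Proposition~6.9]{fflm2}, this requires a careful re-examination of the homological algebra in the order-respecting setting. The transitivity on strictly linearly ordered (rather than positively cyclically ordered) tuples is the only substantive change and affects only the invariant computation, not the resolution itself; this is what explains the qualitative difference with the circle case, where the analogous computation leaves behind the Euler class in degree~$2$.
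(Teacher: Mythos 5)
Your route is the same as the paper's: reduce the computation to the complex of $\Gamma$-invariant alternating bounded cochains on $S$, use transitivity on increasing tuples to see that each of these spaces is one-dimensional and spanned by the indicator $f_n$ of increasing tuples (well defined and invariant because $\Gamma$ preserves the order), and compute the coboundary to obtain the complex $\R \xrightarrow{0} \R \xrightarrow{\id} \R \xrightarrow{0} \cdots$, whose cohomology vanishes in positive degrees. That final computation coincides with the paper's.

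The gap is precisely at the step you flag. Bounded acyclicity of the stabilizers does \emph{not} make $A^n = \ell^\infty(S^{n+1}, \R)$ relatively injective: as a $\Gamma$-module, $A^n$ is a product over orbits of modules $\ell^\infty(\Gamma/\Gamma_\sigma, \R)$, and relative injectivity of such modules is governed by \emph{amenability} of the stabilizers (Zimmer-amenability of the action), not by their bounded acyclicity. In the intended applications (the groups $G_\rho$, $T(\varphi)$, $T(\varphi,\sigma)$) the tuple stabilizers are far from amenable, so the resolution is genuinely not one by relatively injective modules and the usual fundamental lemma does not apply; indeed, the whole point of \cite[Proposition 6.9]{fflm2} and of \cite[Corollary 5.15, Remark 5.17, Example A.11]{liloehmoraschini} is to circumvent this failure. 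The correct statement, and the one the paper invokes, is that a strong resolution by modules of this form still computes $H^\bullet_b(\Gamma;\R)$ (and one may restrict to alternating cochains) provided the stabilizers are boundedly acyclic \emph{uniformly}, which here holds automatically because in each degree there are only finitely many $\Gamma$-orbits of $(n+1)$-tuples: an orbit is determined by the coincidence and order pattern of the tuple, by the transitivity hypothesis. If you replace your relative-injectivity claim by an appeal to that result (recording the finitely-many-orbits observation), your argument becomes the paper's proof; as written, the isomorphism $H^\bullet_b(\Gamma;\R) \cong H^\bullet\bigl((A^\bullet_{\mathrm{alt}})^\Gamma\bigr)$ rests on a false mechanism.
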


\begin{proof}
Since there are finitely many orbits of unordered $n$-tuples, and each has boundedly acyclic stabilizer, we can compute $H^\bullet_b(\Gamma; \R)$ using the complex of invariant alternating cochains $\ell^\infty_{alt}(S^{\bullet+1}, \R)^\Gamma$~\cite[Corollary 5.15, Remark 5.17, Example A.11]{liloehmoraschini}.\ Recall that a cochain $f \in \ell^\infty(S^{\bullet+1}, \R)$ is \emph{alternating} if $f(s_{\sigma(0)}, \ldots, s_{\sigma(n)}) = \textup{sign}(\sigma) f(s_0, \ldots, s_n)$ for all $\sigma \in \textup{Sym}\{0, \ldots, n\}$.

By the transitivity assumption, for all $n \geq 1$ the space $\ell^\infty(S^{n+1}; \R)^\Gamma$ is one-dimensional, spanned by the cochain $f_n$ defined by
$f_n(s_0, \ldots, s_n) = 1$ if $s_0 < s_1 < \cdots < s_n$, which can be extended uniquely to an alternating function, and is automatically $\Gamma$-invariant since $\Gamma$ acts by preserving the orientation. Moreover, if $s_0 < s_1 < \cdots < s_n < s_{n+1}$, we have
\[ \delta(f_n)(s_0, \ldots, s_{n+1}) = \sum\limits_{i = 0}^{n+1} (-1)^i =
\begin{cases}
0 & \text{if } n \text{ is even}; \\
1 & \text{if } n \text{ is odd}.
\end{cases}
\]
This shows that $\delta^n = 0$ if $n$ is even, and $\delta^n(f_n) = f_{n+1}$ if $n$ is odd. Therefore the complex $\ell^\infty(S^{n+1}; \R)^\Gamma$ is isomorphic to the complex
\[ \cdots \to \R \xrightarrow{\textup{id}} \R \xrightarrow{0} \R \to \cdots\]
whose cohomology is trivial in all positive degrees.
\end{proof}

\begin{cor}[Corollary \ref{corintro:lineaction}]
\label{cor:lineaction}

Let $\Gamma$ be a group of orientation-preserving homeomorphisms of the line, and let $S$ be an infinite orbit of $\Gamma$. Suppose that for all $n \geq 1$, the action of $\Gamma$ on linearly ordered $n$-tuples in $S$ is transitive. Suppose moreover that, for $s \in S$, the stabilizer $\Gamma_s$ admits a co-amenable subgroup $\Gamma_s^0$, such that every finitely generated subgroup of $\Gamma_s^0$ is isomorphic to a group of piecewise linear homeomorphisms of the interval. Then $\Gamma$ is boundedly acyclic.
\end{cor}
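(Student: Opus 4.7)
The plan is to apply Proposition~\ref{prop:lineaction:general}. The transitivity hypothesis on linearly ordered $n$-tuples in $S$ is given, so the task is to verify that for every $n \geq 1$ and every such tuple $(s_1, \ldots, s_n)$ the pointwise stabilizer $\Gamma_{s_1, \ldots, s_n}$ is boundedly acyclic.

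First I would treat the single-point stabilizer $\Gamma_s$. Writing $\Gamma_s^0$ as the directed union of its finitely generated subgroups and invoking the PL hypothesis, Corollary~\ref{cor:PL:modulus} provides a uniform bound $K_n$ on the $n$-th vanishing modulus of all such piecewise linear groups with arbitrary separable dual coefficients. Plugging this uniform bound into Proposition~\ref{prop:dirun} gives that $\Gamma_s^0$ itself is $\Xsep$-boundedly acyclic. Theorem~\ref{thm:co-amenable}, applied to the co-amenable inclusion $\Gamma_s^0 \leq \Gamma_s$, then upgrades the conclusion to $\Xsep$-bounded acyclicity (in particular ordinary bounded acyclicity) of $\Gamma_s$.

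For $n$-tuple stabilizers with $n \geq 2$, I would argue analogously. Since $\Gamma_{s_1, \ldots, s_n} \leq \Gamma_{s_1}$, set $\Gamma_{s_1, \ldots, s_n}^0 := \Gamma_{s_1, \ldots, s_n} \cap \Gamma_{s_1}^0$; every finitely generated subgroup of this intersection lies in $\Gamma_{s_1}^0$ and is therefore isomorphic to a piecewise linear group of the interval. The same combination of Corollary~\ref{cor:PL:modulus} and Proposition~\ref{prop:dirun} then gives that $\Gamma_{s_1, \ldots, s_n}^0$ is $\Xsep$-boundedly acyclic.

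The main obstacle is the final lifting step: one must show that $\Gamma_{s_1, \ldots, s_n}^0$ is co-amenable in $\Gamma_{s_1, \ldots, s_n}$, in order for Theorem~\ref{thm:co-amenable} to deliver bounded acyclicity of the full tuple stabilizer. Since co-amenability is not preserved under intersection with an arbitrary subgroup, this step is not formal. In the intended applications (the groups $G_\rho$, $T(\varphi)$ and $T(\varphi, \sigma)$) the stabilizer of several points splits naturally as the data of independent actions on the complementary intervals and inherits the hypothesis of the corollary on each such piece, so one can exhibit directly a co-amenable subgroup of $\Gamma_{s_1, \ldots, s_n}$ whose finitely generated subgroups are piecewise linear groups of the interval, bypassing any appeal to a general intersection lemma. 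Once bounded acyclicity of all tuple stabilizers is established, Proposition~\ref{prop:lineaction:general} yields the conclusion.
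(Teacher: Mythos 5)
Your reduction and your treatment of single-point stabilizers coincide with the paper's proof: it likewise invokes Proposition~\ref{prop:lineaction:general}, writes $\Gamma_s^0$ as the direct union of its finitely generated subgroups, combines Corollary~\ref{cor:PL} with the uniform bound of Corollary~\ref{cor:PL:modulus} and Proposition~\ref{prop:dirun} to get $\Xsep$-bounded acyclicity of $\Gamma_s^0$, and then applies Theorem~\ref{thm:co-amenable} to the co-amenable inclusion $\Gamma_s^0 \leq \Gamma_s$. So there is no genuinely different route here.

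The divergence is at the stabilizers of $n$-tuples for $n \geq 2$, and there your proposal stops short of a proof of the statement as written: you form $\Gamma^0_{s_1,\ldots,s_n} := \Gamma_{s_1,\ldots,s_n} \cap \Gamma^0_{s_1}$, correctly note that its co-amenability in $\Gamma_{s_1,\ldots,s_n}$ is not automatic (co-amenability is not inherited by intermediate subgroups, as Monod--Popa showed), and then defer to the structure of the intended examples. Be aware that the paper does not carry out this intersection step either: it asserts that \emph{every} subgroup of $\Gamma_s$ is boundedly acyclic and justifies this by the single application of Theorem~\ref{thm:co-amenable} to $\Gamma_s^0 \leq \Gamma_s$, which literally yields bounded acyclicity of $\Gamma_s$ itself (and, since the finitely-generated-PL hypothesis is inherited, of every subgroup of $\Gamma_s^0$), but not of an arbitrary subgroup such as a tuple stabilizer. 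So the obstacle you flag is exactly the point the paper disposes of in one line; your write-up makes it explicit but leaves it open, which means that, as a proof of the corollary from its stated hypotheses alone, the case $n \geq 2$ remains unfinished in your proposal. To close it one needs either a verification that the tuple stabilizers again satisfy the hypothesis of the corollary (this is what holds, and is checked directly, in the applications to $G_\rho$, $T(\varphi)$ and $T(\varphi,\sigma)$, where multi-point stabilizers decompose along the complementary intervals), or some substitute for the failed restriction of co-amenability to subgroups; your instinct that no formal intersection lemma is available is sound, and spelling out the extra verification would in fact make the argument more complete than the paper's own phrasing.
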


\begin{proof}
By Proposition \ref{prop:lineaction:general}, it suffices to show that stabilizers of $n$-tuples in $S$ are boundedly acyclic.\ In fact, we will show that every subgroup of $\Gamma_s$ is boundedly acyclic, and since the action of $\Gamma$ on $S$ is transitive, this will suffice.\ By Theorem \ref{thm:co-amenable}, it is enough to show that the co-amenable subgroup $\Gamma_s^0$ is boundedly acyclic.

Now $\Gamma_s^0$ is the direct union of its finitely generated subgroups. Each of these is isomorphic to a group of piecewise linear homeomorphisms of the interval.\ Corollary \ref{cor:PL} shows that these are $\Xsep$-boundedly acyclic, and moreover Corollary \ref{cor:PL:modulus} shows that their vanishing moduli are uniformly bounded.\ Therefore we can apply Proposition \ref{prop:dirun} and obtain that $\Gamma_s^0$ is $\Xsep$-boundedly acyclic, which concludes the proof.
\end{proof}

\begin{example}[Finitely generated left orderable simple boundedly acyclic groups]

In his 2018 ICM address, Navas asked a series of questions about the possible landscape of finitely generated left orderable groups (equivalently, finitely generated groups of orientation-preserving homeomorphisms of the line) \cite{navas}. One of these questions asks whether there exists such a group $\Gamma$ with $H^2_b(\Gamma; \R) = 0$, but which does not surject onto $\Z$ \cite[Question 8]{navas}. This is motivated in part by the Witte-Morris Theorem \cite{witte}, which states that an amenable left orderable group must surject onto $\Z$.

An answer to Navas's question is given by the first examples of infinite finitely generated simple left orderable groups $G_\rho$~\cite{grho} (here $\rho$ is a combinatorial object called a \emph{quasi-periodic labelling}), since they have vanishing second bounded cohomology with trivial real coefficients~\cite{fflodha}. Corollary \ref{cor:lineaction} shows that $G_\rho$ is in fact boundedly acyclic. Indeed, the action is (linearly) highly transitive on the orbit $\Z[1/2]$ (by virtue of containing the natural copy of Thompson's group $F$), and the condition on point stabilizers is satisfied~\cite[Proposition 4.16 and proof of Proposition~4.17]{fflodha}.

There are by now several more examples of finitely generated simple left orderable groups, constructed by using dynamical systems on Cantor spaces and flows. The first instance of this approach were the groups $T(\varphi)$ by Matte Bon and Triestino~\cite{tphi} (where $\varphi$ is a minimal homeomorphism of a Cantor space). Later a similar construction that encompasses also the groups $G_\rho$ has been carried out by Matte Bon and Le Boudec, producing the groups $T(\varphi, \sigma)$ \cite{tphisigma} (where $(\varphi, \sigma)$ is a minimal $D_\infty$-dynamical system on a Cantor space).\ All of these act on the line, (linearly) highly transitively on a copy of $\Z[1/2]$ by virtue of containing a copy of Thompson's group $F$.\ The condition on point stabilizers is also satisfied in the cases of the groups $T(\varphi)$~\cite[Lemma 7.2]{tphi} and $T(\varphi, \sigma)$~\cite[Lemma 4.8]{tphisigma}.
\end{example}

Similar in spirit to the proofs above is the strategy followed by de la Harpe and McDuff to show that certain large groups of automorphisms, such as the group of all permutations of an infinite set, or the group of all continuous linear automorphisms or of all invertible isometries of an infinite-dimensional Hilbert space, are acyclic \cite{delaHarpe_Mcduff_1983}. They first show that the stabilizers of suitably defined flags are acyclic; then they build on that to construct appropriate classifying spaces for the entire groups and show that they are acyclic too.
In fact, the proof of acyclicity of stabilizers \cite[Lemma 3]{delaHarpe_Mcduff_1983} shows that they have commuting $\Z$-conjugates, so that they are also $\Xsep$-boundedly acyclic. However it is less clear whether the second part of the argument can be adapted to the bounded cohomology setting. We can thus ask:
\begin{question}
\label{q:largegroups}
    Are the groups considered in \cite[Theorem 1]{delaHarpe_Mcduff_1983} boundedly acyclic?
\end{question}

\subsection{Interval exchange transformations}

Recall that an \emph{interval exchange transformation} of the half-open interval $[0, 1)$ is a right-continuous bijection given by cutting $[0, 1)$ into finitely many half-open subintervals and rearranging by translations.\ We denote by $\iet([0, 1))$ the group of interval exchange transformations of $[0, 1)$.\ We can define analogously the group $\iet([0, \infty))$, where again we cut into finitely many half-open subintervals.

\begin{lemma}
\label{lem:iet:dirun}
    $\iet([0, \infty))$ is the direct union of the groups $\{ \iet([0, n)) \}_{n \geq 1}$.\ In particular, $\iet([0, \infty))$ is amenable, respectively contains non-abelian free subgroups, if and only if $\iet([0, 1))$ does.
\end{lemma}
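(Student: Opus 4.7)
The plan is to first establish the direct union claim, and then deduce the two equivalences as standard consequences.

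The compatible embeddings $\iet([0,n)) \hookrightarrow \iet([0,n+1)) \hookrightarrow \iet([0,\infty))$ are obtained by extending any interval exchange by the identity on $[n, n+1)$, respectively on $[n, \infty)$; these are visibly injective group homomorphisms and make the $\iet([0,n))$ into a directed system inside $\iet([0,\infty))$.

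The key step is to show that every $f \in \iet([0,\infty))$ lies in some $\iet([0,n))$. Write the domain partition of $f$ as $[0,a_1) \sqcup [a_1, a_2) \sqcup \cdots \sqcup [a_{k-1}, \infty)$, with each piece $I_i$ translated by some $t_i \in \R$. A finite family of bounded half-open intervals has finite total length, so at least one piece must be unbounded; since distinct pieces must be disjoint, the unbounded one is unique and is necessarily $[a_{k-1},\infty)$, whose image is $[a_{k-1}+t_k, \infty)$. The remaining bounded domain pieces contribute total length $a_{k-1}$, and since $f$ is a bijection of $[0,\infty)$ whose unique unbounded image piece is $[a_{k-1}+t_k,\infty)$, the bounded image pieces must partition $[0, a_{k-1}+t_k)$, forcing $a_{k-1}+t_k = a_{k-1}$, i.e.\ $t_k = 0$. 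Thus $f$ fixes $[a_{k-1},\infty)$ pointwise, and therefore $f$ belongs to $\iet([0,n))$ for any integer $n \geq a_{k-1}$.

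For the equivalences, note that for each $n \geq 1$ the rescaling $x \mapsto x/n$ induces a group isomorphism $\iet([0,n)) \cong \iet([0,1))$, so $\iet([0,\infty))$ is in fact a directed union of copies of $\iet([0,1))$. The amenability equivalence then follows at once from the standard facts that amenability passes both to subgroups and to direct unions. For non-abelian free subgroups, any such subgroup of $\iet([0,\infty))$ is finitely generated and hence contained in some $\iet([0,n)) \cong \iet([0,1))$, while conversely any free subgroup of $\iet([0,1))$ embeds into $\iet([0,\infty))$ through the directed system. No serious obstacle is expected; the only mildly delicate point is the length-counting argument pinning down $t_k=0$, but as shown above it is a one-line computation.
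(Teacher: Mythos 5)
Your proof is correct and follows essentially the same route as the paper: the paper likewise observes that the unique unbounded half-open interval must be preserved by any element (your length-counting argument just makes this one-line assertion explicit), and then uses the affine rescaling isomorphism $\iet([0,n)) \cong \iet([0,1))$ together with the stability of amenability and of the absence of free subgroups under subgroups and direct unions. One tiny wording fix: a non-abelian free subgroup of $\iet([0,\infty))$ need not itself be finitely generated, but it always contains a copy of $F_2$, which is, so your argument goes through verbatim with that substitution.
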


\begin{proof}
    The first statement follows from the fact that the unique half-open interval of infinite length has to be preserved by an $\iet$.\ The second statement follows from the isomorphism $\iet([0, 1)) \cong \iet([0, n))$ given by affine conjugacy, and the fact that direct unions preserve amenability and the property of not containing free subgroups.
\end{proof}

To our knowledge, nothing is known about the bounded cohomology of $\iet([0, 1))$.\ For the half line, we obtain:

\begin{cor}[Corollary \ref{corintro:iet}]
    \label{cor:iet}

    The group $\iet([0, \infty))$ is $\Xsep$-boundedly acyclic, and so are its derived subgroups.
\end{cor}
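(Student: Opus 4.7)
The plan is to apply Theorem~\ref{intro:thm:ccc} by verifying that $\iet([0,\infty))$ has commuting cyclic conjugates with exponent $n=2$. The structural fact already recorded in Lemma~\ref{lem:iet:dirun} is the crux: the unique half-open subinterval of infinite length in the defining partition of any element must be sent to itself by translation, so every element of $\iet([0,\infty))$ fixes some half-line $[m,\infty)$ pointwise. In particular $\iet([0,\infty))$ is the directed union of the subgroups $\iet([0,m))$, so every finitely generated subgroup $H$ is contained in some $\iet([0,m))$ and hence every $h \in H$ has support inside $[0,m)$.

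Given such an $H$, the candidate element $t \in \iet([0,2m)) \leq \iet([0,\infty))$ is the involution that swaps the blocks $[0,m)$ and $[m,2m)$ by the translations $x \mapsto x+m$ and $x \mapsto x-m$, extended by the identity on $[2m,\infty)$. For any $h \in H$ the conjugate ${}^t h = t h t^{-1}$ is supported in $[m,2m)$, which is disjoint from the support of any element of $H$; since two (right-continuous) bijections of $[0,\infty)$ with disjoint supports commute on the whole half-line, we obtain $[H, {}^t H] = 1$. Moreover $t^2 = \id$, so $[H, t^2] = 1$ trivially. Hence the pair $(t,2)$ witnesses the commuting cyclic conjugates condition of Definition~\ref{def:comm:cycl:conj}, and Theorem~\ref{intro:thm:ccc} yields the $\Xsep$-bounded acyclicity of $\iet([0,\infty))$.

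For the statement about derived subgroups, Lemma~\ref{lem:derived} propagates the commuting cyclic conjugates condition to every $\iet([0,\infty))^{(d)}$, and a further application of Theorem~\ref{intro:thm:ccc} concludes. There is no genuine obstacle in this argument: morally it is an application of the dynamical criterion Lemma~\ref{lem:ccc:dynamical}.(2) to the action $\iet([0,\infty)) \actson [0,\infty)$, with the only caveat that the lemma is phrased for subgroups of $\homeo(X)$ while IETs are not continuous. The rephrasing is harmless because the only properties used are that the support of an element is well-defined and that disjoint-support bijections commute, both of which hold in $\iet([0,\infty))$.
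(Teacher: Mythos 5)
Your proposal is correct and matches the paper's own argument: both embed the finitely generated subgroup in some $\iet([0,n))$ via Lemma~\ref{lem:iet:dirun}, take $t$ to be the involution exchanging $[0,n)$ with $[n,2n)$ to get $[H,{}^tH]=1$ and $[H,t^2]=1$, and then apply Theorem~\ref{intro:thm:ccc} together with Lemma~\ref{lem:derived} for the derived subgroups. Your side remark that Lemma~\ref{lem:ccc:dynamical} does not apply verbatim (IETs are not homeomorphisms) but that the disjoint-support argument carries over is a fair observation and does not change the substance.
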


\begin{proof}
    By Lemma \ref{lem:iet:dirun}, a finitely generated subgroup $H \leq \iet([0, \infty))$ is contained in $\iet([0, n))$ for some $n \geq 1$. Let $t$ be the $\iet$ exchanging $[0, n)$ with $[n, 2n)$.\ Then ${}^t H \leq \iet([n, 2n))$ and thus $[H, {}^t H] = 1$. Moreover $[H, t^2] = [H, 1] = 1$, so $\iet([0, \infty))$ has commuting cyclic conjugates, and we conclude by Theorem \ref{intro:thm:ccc}. The statement on derived subgroups follows from Lemma \ref{lem:derived}.
\end{proof}

\begin{rem}
\label{rem:ret}
    Corollary \ref{cor:iet} can be easily generalized to groups of interval exchange transformations of the real line, to interval exchange transformations with flips \cite{iet:up}, and to higher dimensions \cite{RET}, as long as these are defined by allowing only finitely many intervals or rectangles to be exchanged.
\end{rem}

The infinite length of $[0, \infty)$ plays an important role in our proof.\ This motivates the following question:

\begin{question}
    \label{q:iet}

    Is the group $\iet([0, 1))$ ($\Xsep$-)boundedly acyclic?
\end{question}

Note that a negative answer would imply that $\iet([0, 1))$ is non-amenable.

\subsection{Direct limit groups in algebraic $K$-theory}\label{subsec:directlimitgroups}

We show that the direct limit of the classical linear groups and some groups appearing in algebraic $K$-theory are all $\Xsep$-boundedly acyclic.\ The cases of direct limits of $\textup{GL}_n(R)$, $\textup{Sp}_{2n}(R)$ and $\textup{O}_{n, n}(R)$ give an answer to a question recently asked by Kastenholz and Sroka~\cite[Question 1.4]{kastenholz-sroka} in the context of stability in bounded cohomology.

\begin{cor}[Corollary \ref{corintro:linear}]
\label{cor:linear}
   Let $R$ be a ring with multiplicative unit $1$. The following groups are $\Xsep$-boundedly acyclic:
   \begin{enumerate}
       \item $\textup{GL}(R)=\cup_{n\geq 1} \textup{GL}_n(R)$;\label{GL}
       \item $\textup{SL}(R)=\cup_{n\geq 1} \textup{SL}_n(R)$;\label{SL}
       \item $\textup{O}(R)=\cup_{n\geq 1} \textup{O}_n(R)$;\label{O}
       \item $\textup{SO}(R)=\cup_{n\geq 1} \textup{SO}_n(R)$;\label{SO}
        \item $\textup{Sp}(R)=\cup_{n\geq 1} \textup{Sp}_{2n}(R)$;\label{Sp}
        \item $\textup{E}(R)=\cup_{n\geq 1} \textup{E}_n(R)$;\label{E}
        \item $\textup{colim}_{n\rightarrow \infty}\textup{O}_{n, n}(R)=\cup_{n\geq 1}\textup{O}_{n, n}(R)$;\label{O+-}
        \item $\textup{St}(R)=\cup_{n\geq 1} \textup{St}_{n}(R)$.\label{St}
   \end{enumerate}
   For the groups \eqref{GL} -- \eqref{SO} and \eqref{E} we consider the left upper corner inclusions:
   \begin{align*}
   \textup{GL}_n(R)&\longrightarrow \textup{GL}_{n+1}(R)\\
       M&\longmapsto \left(\begin{array}{cc}
       M &0  \\
        0&1 
   \end{array}\right).
   \end{align*}
   We realize $\textup{Sp}_{2n}(R)$ as the subgroup of $\textup{GL}_{2n}(R)$ preserving the symplectic matrix $\left(\begin{array}{cc}
       0 &I_n  \\
        -I_n&0
   \end{array}\right)$, and for \eqref{Sp} we consider the inclusion
   \begin{align*}
   \textup{Sp}_{2n}(R)&\longrightarrow \textup{Sp}_{2n+2}(R)\\
       \left(\begin{array}{cc}
       M &N  \\
        R&S 
   \end{array}\right)&\longmapsto \left(\begin{array}{cc}
        \begin{array}{cc}
       M &0  \\
        0&0
   \end{array} &\begin{array}{cc}
       N &0  \\
        0&1
   \end{array}\\
   \begin{array}{cc}
       R &0  \\
        0&-1
   \end{array}&\begin{array}{cc}
       S &0  \\
        0&0
   \end{array}\end{array}\right).
   \end{align*}
   We realize $\textup{O}_{n,n}(R)$ as the subgroup of $\textup{GL}_{2n}(R)$ preserving the bilinear form defined by the matrix $I_n \otimes \begin{pmatrix}
       1 & 0 \\ 0 & -1
   \end{pmatrix}$
   and consider the left upper corner inclusions $\textup{O}_{n,n}(R)\hookrightarrow \textup{O}_{n+1,n+1}(R)$.
   The group \eqref{St} is the universal central extension of the group \eqref{E}.
\end{cor}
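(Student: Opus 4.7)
The plan is to verify that each group in items (1)--(7) has commuting cyclic conjugates, so that Theorem \ref{intro:thm:ccc} applies. The Steinberg group (8) will then be deduced from the case of $\textup{E}(R)$ via a short exact sequence with amenable kernel.

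For each of the groups $G_\infty = \bigcup_n G_n$ in items (1)--(7), a finitely generated subgroup $H \leq G_\infty$ is contained in some $G_n$. The idea is to exhibit an involution $t \in G_{2n}$ (or $G_{4n}$ if needed) which acts by swapping the ``first half'' and ``second half'' of the ambient vector space, so that ${}^{t}H$ sits in the subgroup acting on the complementary block. Setting $n = 2$ in Definition~\ref{def:comm:cycl:conj}, one then gets $[H, {}^{t}H] = 1$ because the supports of $H$ and ${}^{t}H$ commute blockwise, and $[H, t^{2}] = 1$ because $t^{2} = I$. Concretely, for $\textup{GL}(R)$ the block-swap matrix $J = \bigl(\begin{smallmatrix}0 & I_n \\ I_n & 0\end{smallmatrix}\bigr)$ does the job and, being orthogonal, also handles $\textup{O}(R)$. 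For $\textup{Sp}(R)$ and $\textup{O}_{n,n}(R)$, after iterating the stated embeddings the relevant $G_{2n}$ (or $G_{4n}$) preserves a form which is an orthogonal sum of two copies of the form preserved by $G_n$, and swapping the two summands is an isometry; a small bookkeeping step identifies the precise swap matrix compatible with the chosen embeddings. For $\textup{SL}(R)$ and $\textup{SO}(R)$ the swap $J$ has determinant $(-1)^{n}$, so one passes first to an even index and embeds $G_n \leq G_{2m} \leq G_{4m}$ with $2m \geq n$; the block-swap of two $2m$-blocks then has determinant $(-1)^{2m} = 1$ and lies in the desired subgroup.

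Item (6) on $\textup{E}(R)$ can be settled either by the above argument using elementary expressions for block-swap matrices, or more elegantly by invoking Whitehead's lemma $\textup{E}(R) = [\textup{GL}(R), \textup{GL}(R)]$ together with Lemma~\ref{lem:derived}: if $\textup{GL}(R)$ has commuting cyclic conjugates, so does its derived subgroup $\textup{E}(R)$. For item (8), $\textup{St}(R)$ fits into the central extension
\[
1 \longrightarrow K_{2}(R) \longrightarrow \textup{St}(R) \longrightarrow \textup{E}(R) \longrightarrow 1,
\]
whose kernel is central and in particular amenable. By the inflation version of Gromov's Mapping Theorem (Remark~\ref{rem:mapping:converse}), the $\Xsep$-bounded acyclicity of $\textup{E}(R)$ established in (6) transfers to $\textup{St}(R)$.

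The main obstacle is not conceptual but bookkeeping: for $\textup{SL}(R)$ and $\textup{SO}(R)$ one must absorb the sign of the determinant by working in a large enough block, and for $\textup{Sp}(R)$ and $\textup{O}_{n,n}(R)$ one must check that a natural ``swap'' element is compatible with the specific embeddings prescribed in the statement, so that it genuinely lies in the corresponding group and squares to the identity. Once these verifications are in place, Theorem \ref{intro:thm:ccc} delivers $\Xsep$-bounded acyclicity uniformly for all eight groups.
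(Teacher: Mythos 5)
Your plan is correct and takes essentially the same approach as the paper: an explicit order-two block-swap (the paper phrases it via the diagonal copy of the infinite alternating group $\mathrm{Alt}_\infty$ inside each family, with the index made even --- precisely your determinant bookkeeping for $\textup{SL}$ and $\textup{SO}$, and also what makes the swap land in $\textup{Sp}$ and $\textup{O}_{n,n}$) displaces $H$ into a commuting block, giving commuting cyclic conjugates with $n = 2$ in Definition~\ref{def:comm:cycl:conj}, and then Theorem~\ref{intro:thm:ccc} applies. The paper likewise treats $\textup{E}(R)$ via Whitehead's lemma together with Lemma~\ref{lem:derived}, and $\textup{St}(R)$ via the central extension over $\textup{E}(R)$ and the inflation form of Gromov's Mapping Theorem (Remark~\ref{rem:mapping:converse}).
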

\begin{proof}
\emph{Ad \eqref{GL} -- \eqref{E}} Let us denote by $\Gamma_{(i)}=\cup_{n\geq 1}\Gamma_{(i)}^n$ the groups appearing in the statement for $1 \leq i \leq 5$. We are going to show that $\Gamma_{(i)}$ has commuting cyclic conjugates, so that we will conclude by Theorem~\ref{intro:thm:ccc}. For $i = 6$, we can just apply Lemma \ref{lem:derived} since $[\textup{GL}(R), \textup{GL}(R)]=\textup{E}(R)$: This is called Whitehead Lemma, see for example \cite[Lemma 3.1]{milnor1971introduction}. 
    
    Consider the infinite alternating group $\mathrm{Alt}_\infty=\cup_{n\geq 1}\mathrm{Alt}(n)$. Note that $\mathrm{Alt}_\infty\leq \Gamma_{(i)}$. Indeed, for $1 \leq i \leq 5$, we see that $\mathrm{Alt}(n)\leq \Gamma_{(i)}^n$ for every $n\geq 1$: For $1 \leq i \leq 4$, we have the usual inclusion $\sigma\mapsto M_\sigma$ by permuting the basis vectors according to the permutation $\sigma$; whereas for $i = 5$ we can choose the inclusion 
   \begin{align*}
       \mathrm{Alt}(n) &\longrightarrow \textup{Sp}_{2n}(R)\\
       \sigma &\longmapsto \left(\begin{array}{cc}
       M_\sigma &0  \\
        0&M_\sigma
   \end{array}\right).
   \end{align*}
   This indeed is a symplectic matrix. We are now ready to prove that $\Gamma_{(i)}$ has commuting cyclic conjugates.

    Take a finitely generated subgroup $H\leq \Gamma_{(i)}$. Then $H\leq \Gamma_{(i)}^n$ for some integer $n\geq 1$.\ Up to taking $n+1$ instead of $n$, we can assume that $n$ is even.\ Inside $\mathrm{Alt}_\infty$ we now identify a particular order $2$ element:
    $$t=(1, n+1)\cdots(n, 2n)\in \mathrm{Alt}_\infty.$$
    Under the above identifications, it is an element of $\Gamma_{(i)}$  for $1 \leq i \leq 6$.
    Then $[H, {}^{t}H]=1$, as ${}^{t}H$ leaves the first $n$ basis vectors unchanged, and $[H, t^2]=[H, \mathrm{id}]=1$ by definition of $t$.\
    This shows that all the groups $\Gamma_{(1)}, \ldots, \Gamma_{(6)}$ have commuting cyclic conjugates and so they are $\Xsep$-boundedly acyclic by Theorem~\ref{intro:thm:ccc}.

\emph{Ad \eqref{O+-}} The order $2$ permutation 
$$(1, 2n+1)(2, 2n+2)\cdots(2n-1, 4n-1)(2n, 4n)\in\mathrm{Alt}_\infty$$
can be realized inside the group $\textup{O}_{2n,2n}(R)$ by exchanging the first $2n$ basis vectors with the last $2n$ basis vectors.\ We call this transformation $t$.\
Then the same argument as above shows that if for every finitely generated subgroup $H$, taking $n$ such that $H\leq \textup{O}_{n, n}(R)$, the element $t$ above satisfies $[H, {}^{t}H]=1$ and $[H, t^2]=[H, \id]=1$, so we conclude again by Theorem \ref{intro:thm:ccc}.

\emph{Ad \eqref{St}} The Steinberg group $\textup{St}(R)$ is the universal central extension of $\textup{E}(R)$, that is $\Xsep$-boundedly acyclic as we proved above. The kernel of the extension is abelian, hence amenable.\ Then by applying the inflation version of Gromov's Mapping Theorem (Remark \ref{rem:mapping:converse}) to the surjective homomorphism $\textup{St}(R)\rightarrow\textup{E}(R)$ with amenable kernel, we obtain the desired result.
\end{proof}

\begin{rem}[Commuting cyclic vs commuting $\Z$-conjugates]
    Note that all the groups considered in Corollary \ref{cor:linear}, though they have commuting cyclic conjugates, do not have commuting $\Z$-conjugates in general~\cite[Section 3.2]{companion}.
\end{rem}

\subsection{Algebraically closed groups}

We start by recalling the definitions:

\begin{defi}
\label{defi:equations}
    Let $\Gamma$ be a group. A finite system of equations and inequations with constants in $\Gamma$
    \begin{align*}
        f_i(g_1, \ldots, g_n; x_1, \ldots, x_m) &= e : 1 \leq i \leq k; \\ f_j(g_1, \ldots, g_n; x_1, \ldots, x_m) &\neq e : k < i \leq l
    \end{align*}
    is said to be \emph{consistent} if there exists an overgroup $\Gamma \leq \Lambda$ over which the system has a solution $(x_1, \ldots, x_m) \in \Lambda^m$.

    A group is said to be \emph{algebraically closed} if every consistent finite system of equations with constants in $\Gamma$ has a solution in $\Gamma$. A group is said to be \emph{existentially closed} if every consistent finite system of equations and inequations with constants in $\Gamma$ has a solution in $\Gamma$.
\end{defi}

Algebraically closed groups were introduced by Scott \cite{algclosed:scott}. Shortly thereafter, B. H. Neumann proved that the classes of algebraically closed and existentially closed groups coincide, and that all algebraically closed groups are simple \cite{algclosed:BH}. Algebraically closed groups received much attention in the early days of combinatorial group theory and have been at the center of many important developments. For instance, MacIntyre's construction of countable algebraically closed groups is among the first instances of the use of model theoretic methods in group theory \cite{algclosed:mac}.

In the paper where they introduced mitotic groups, Baumslag--Dyer--Heller proved that algebraically closed groups are mitotic \cite{BDH}. It follows that algebraically closed groups are acyclic \cite{BDH} and boundedly acyclic \cite{Loeh:dim}. We extend this to encompass non-trivial coefficients:

\begin{cor}[Corollary \ref{corintro:algclosed}]
\label{cor:algclosed}
    Algebraically closed groups are $\Xsep$-boundedly acyclic.
\end{cor}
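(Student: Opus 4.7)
The plan is to deduce this from Theorem \ref{intro:thm:ccc} by showing that every algebraically closed group $\Gamma$ has commuting cyclic conjugates; $\Xsep$-bounded acyclicity then follows immediately. The idea is that algebraic closure is perfectly designed to transfer a finite configuration of elements from any overgroup of $\Gamma$ back into $\Gamma$ itself, and the commuting cyclic conjugates condition (for a fixed finite $n$) is exactly such a finite configuration.

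Concretely, let $H = \langle h_1, \ldots, h_k \rangle \leq \Gamma$ be finitely generated, and fix any integer $n \geq 2$. I would consider the overgroup $\Gamma \leq \Gamma \wr \Z/n$, where $\Gamma$ sits as the $0$-coordinate copy of the base group. In this overgroup the standard generator $t$ of $\Z/n$ has order $n$, and the conjugates $t^p H t^{-p}$ for $0 \leq p < n$ are supported on pairwise distinct coordinates, hence pairwise commute. In particular, in $\Gamma \wr \Z/n$ the following system of equations (in a single new variable $T$, with constants $h_1, \ldots, h_k \in \Gamma$) is satisfied:
\[ [h_i, T^p h_j T^{-p}] = e \text{ for all } 1 \leq i,j \leq k, \; 1 \leq p \leq n-1, \qquad T^n = e. \]
This is a finite system of word equations, so by Definition \ref{defi:equations} it is consistent.

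Since $\Gamma$ is algebraically closed, the system admits a solution $t \in \Gamma$. By construction, this element $t$ together with the fixed $n$ witnesses the commuting cyclic conjugates condition for the subgroup $H$ (Definition \ref{def:comm:cycl:conj}): the equations $[h_i, t^p h_j t^{-p}] = e$ imply $[H, {}^{t^p} H] = 1$ for $1 \leq p < n$, and $t^n = e$ gives $[H, t^n] = 1$. Varying $H$ over all finitely generated subgroups of $\Gamma$ thus shows that $\Gamma$ has commuting cyclic conjugates, and Theorem \ref{intro:thm:ccc} concludes the argument.

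There is really no substantial obstacle here: the proof is essentially immediate once one identifies the correct overgroup. The only thing to verify carefully is that the commuting cyclic conjugates condition, for any fixed $n$, translates into a \emph{finite} system of word equations and therefore falls within the scope of Definition \ref{defi:equations}. Note that, in contrast to the intricate inductive construction needed in Proposition \ref{lem:cac:wreathproduct}, a single wreath product $\Gamma \wr \Z/n$ suffices here, precisely because algebraic closure lets us bypass the need to iterate through a chain of wreath products.
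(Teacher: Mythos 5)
Your proposal is correct and follows essentially the same route as the paper: the paper's proof also realizes the commuting cyclic conjugates condition for a finitely generated $H \leq \Gamma$ as a finite system of equations with constants, shows consistency via the overgroup $\Gamma \wr \Z/2$ (your argument with $\Gamma \wr \Z/n$ and the equation $T^n = e$ in place of $[g_i, x^2] = e$ is only a cosmetic variation), and concludes by algebraic closure together with Theorem \ref{intro:thm:ccc}.
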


\begin{proof}
    We show that every algebraically closed group has commuting cyclic conjugates, and conclude by Theorem \ref{intro:thm:ccc}. Let $\Gamma$ be an algebraically closed group, and let $H = \langle g_1, \ldots, g_n \rangle \leq \Gamma$ be finitely generated. We consider the system of equations
    \[[g_i, {}^x g_j] = e; \qquad [g_i, x^2] = e \qquad : \qquad 1 \leq i, j \leq n.\]
    This system is consistent, because it has a solution in the overgroup $\Gamma \wr \Z/2$, by setting $x$ to be the generator of $\Z/2$. Since $\Gamma$ is algebraically closed, there exists a solution $t \in \Gamma$. This solution must satisfy $[H, {}^t H] = 1$ and $[H, t^2] = 1$. This shows that $\Gamma$ has commuting cyclic conjugates and concludes the proof.
\end{proof}

\subsection{The generic group}\label{subsec:generic:group}

By an \emph{enumerated group}, we mean a (countably infinite) group whose underlying set is $\N$. We let $\mathcal{G}$ denote the set of enumerated groups.\
To each enumerated group $G$, we assign an associated multiplication function $\mu_G\colon\N\times \N\to \N$, an inversion function $\iota_G\colon\N\to \N$
and an identity element $e_G\in \N$. It follows that each element of $\mathcal{G}$ is an element of the zero dimensional Polish space 
$\N^{\N\times \N}\times \N^{\N}\times \N$.\
It is easy to show that $\mathcal{G}$ is a closed subspace of this space, and hence a zero dimensional Polish space in its own right.\
This endows the set of enumerated groups with a natural topology, which was extensively studied~\cite{GKL} (for both the full space and various natural subspaces).

We consider group theoretic words $w=w(x_1,\ldots,x_n)$ (here group-theoretic means that the alphabet includes formal inverses of $x_1,\ldots,x_n$).\
By a \emph{system} we mean a finite system of equations and inequations of the form $w=e$ or $w\neq e$, where $w$ is a word.
We use symbols such as $\Sigma$ and $\Delta$ to denote systems.\
If the equations and inequations in a system $\Sigma$ use variables among $x_1,\ldots,x_n$, we write $\Sigma=\Sigma(x_1,\ldots,x_n)$.\
Given an ordered $n$-tuple $\overrightarrow{a}=(a_1,\ldots,a_{n})$ of elements in $\N$, we denote the \emph{system with coefficients}
$\Sigma(\overrightarrow{a})=\Sigma(a_1,\ldots,a_{n})$.

In this section, we will demonstrate the existence of a comeager set in $\mathcal{G}$ in which every element has commuting cyclic conjugates, and hence is $\Xsep$-boundedly acyclic.\
To do so, we shall need the following method of \emph{model-theoretic forcing}.\
Consider a two-player game where the players take turns playing a system of equations and inequations to define a group structure on $\N$.
For simplicity, throughout the game, the number $1$ is assumed to be the identity element.\
In round $n\in \N\setminus \{0\}$, player one plays a system with coefficients $\Sigma_n(\overrightarrow{a_n})$ and subsequently player two plays $\Delta_n(\overrightarrow{b_n})$
(with no restrictions on the lengths of $\overrightarrow{a_n}, \overrightarrow{b_n}$).\
There are three rules of the game:

\begin{enumerate}
\item At the end of each play, there is an enumerated group $G$ which satisfies all systems played until then, that is $\bigcup_{1\leq i \leq n-1}(\Sigma_i(\overrightarrow{a_i})\cup \Delta_i(\overrightarrow{b_i}))\bigcup \Sigma_n(\overrightarrow{a_n})$ right after player one has played its $n$-th play, and $\bigcup_{1\leq i \leq n}(\Sigma_i(\overrightarrow{a_i})\cup \Delta_i(\overrightarrow{b_i}))$ after player one and player two have both played their $n$-th play.
\item For each $m_1,m_2\in \N$, there is a round $n\in \N$ and an $m_3\in \N$ such that $m_1\cdot m_2=m_3$ appears in a system played at this round.
\item For each $m_1\in \N$, there is a round $n\in \N$ and an $m_2\in \N$ such that $m_1\cdot m_2=1$ appears in a system played at this round.
\end{enumerate}
The game is played for infinitely many steps, and the (unique) enumerated group satisfying $\bigcup_{n\in \N}(\Sigma_n(\overrightarrow{a_n})\cup \Delta_n(\overrightarrow{b_n}))$ is called the \emph{compiled group} 
of the game.

\begin{rem}[Necessity]
The items $(2),(3)$ in the rules ensure that the result of the infinite game is a group structure on $\N$,
not merely a set of relations that the group generated by $\N$ satisfy.
\end{rem}

We say that a property $P$ of groups is \emph{enforceable} if there is a strategy for player two that ensures that the compiled group satisfies $P$. \
We say that a property $P$ is \emph{Baire measurable} if the class of groups $\mathcal{C}_P\subseteq \mathcal{G}$ that satisfies $P$ is a Baire measurable subset of $\mathcal{G}$.\
We say that $P$ is \emph{invariant} if it is a property of the isomorphism type of the underlying group, and does not depend on the enumeration itself.\
It is immediate that the property of having commuting cyclic conjugates is invariant. We may also verify that it is Baire measurable, as follows.\
First, we note that for a fixed finitely generated subgroup generated by $a_1,\ldots,a_n\in \N$, the existence of $b\in \N$ that realizes the commuting cyclic conjugates condition 
can be expressed as a countable disjunction of quantifier-free formulas.\
It follows that having commuting cyclic conjugates for the whole group is a countable conjunction of these, and hence the property is Baire measurable.\
The following theorem, an easy version of a more general statement~\cite[Theorem 5.2.3]{GKL}, establishes the connection between Baire category and enforceability:

\begin{thm}[Enforceable properties]
\label{Theorem:modelforcing}
Suppose that $P$ is an invariant Baire measurable property.
Then $\mathcal{C}_P$ is a comeager subset of $\mathcal{G}$ if and only if $P$ is an enforceable property. 
\end{thm}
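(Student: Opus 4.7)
The plan is to recognise the forcing game described just above the statement as essentially a Banach--Mazur game on the Polish space $\mathcal{G}$, and then to derive the equivalence from the classical Banach--Mazur characterization of comeager sets in Polish spaces. The first step will be to set up a dictionary between finite consistent systems (with coefficients) and basic open subsets of $\mathcal{G}$. Each finite system $\Sigma(\overrightarrow{a})$ imposes only finitely many constraints on the multiplication function $\mu_G$, the inversion function $\iota_G$ and the identity element $e_G$, so the set of $G \in \mathcal{G}$ satisfying $\Sigma(\overrightarrow{a})$ is a clopen subset of $\mathcal{G}$. The consistency requirement after round $n$ precisely says that the corresponding clopen set is non-empty, so a play of the game produces a decreasing chain of non-empty clopen subsets. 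Rules (2) and (3) guarantee that the intersection of this chain is a singleton consisting of a genuine element of $\mathcal{G}$, namely the compiled group.

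With this dictionary in hand, I would prove the two directions separately. For the ``if'' direction, suppose that $P$ is enforceable by a strategy $\sigma$ for player two. Viewing $\sigma$ as a Banach--Mazur strategy on $\mathcal{G}$, the Banach--Mazur theorem applied to the Polish space $\mathcal{G}$ yields that the resulting winning set, which is contained in $\mathcal{C}_P$, is comeager; combined with Baire measurability this will force $\mathcal{C}_P$ itself to be comeager. For the ``only if'' direction, if $\mathcal{C}_P$ is comeager, I would use Baire measurability to pick a descending sequence of dense open sets $U_n \subseteq \mathcal{G}$ with $\bigcap_n U_n \subseteq \mathcal{C}_P$. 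Then player two's strategy will proceed stage by stage: at round $n$, having been handed a non-empty clopen set $V$ determined by the moves so far, player two chooses a system $\Delta_n(\overrightarrow{b_n})$ whose associated clopen set is contained in $V \cap U_n$; such a system exists because $U_n$ is dense open and the clopen sets form a basis.

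The main technical obstacle will not be the conceptual translation but the bookkeeping: player two must in a single strategy simultaneously steer the play into each $U_n$, satisfy rule (2) for every pair $(m_1,m_2) \in \N \times \N$, and satisfy rule (3) for every $m_1 \in \N$. The standard remedy is a dovetailing argument: enumerate all countably many pending tasks (one per $U_n$, one per element of $\N \times \N$, one per element of $\N$) and, at each round, dispatch the next unattended task in addition to keeping the play consistent with the history. Invariance of $P$ ensures that the compiled group's isomorphism class, rather than its specific enumeration, is what matters, so the strategy is well defined at the level one cares about. Once these ingredients are assembled, the theorem follows directly from the Banach--Mazur characterization of comeager sets, with the dovetailing step being the only part that requires genuine care.
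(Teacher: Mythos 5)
Your overall route is the expected one: the paper itself does not prove this statement (it is quoted as an easy case of \cite[Theorem 5.2.3]{GKL}), and the intended argument is indeed a translation of the forcing game into a Banach--Mazur game on the space of enumerated groups, with solution sets of finite consistent systems serving as the (cl)open moves and a dovetailing bookkeeping step taking care of rules (2) and (3). Your ``only if'' direction (comeager $\Rightarrow$ enforceable) is essentially fine as sketched.

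There is, however, a genuine gap in the ``if'' direction, and it is exactly where the invariance hypothesis must enter --- a hypothesis you never actually use. The game fixes the number $1$ as the identity element throughout, so every position and every compiled group lies in the proper clopen subset $\mathcal{G}_1 := \{G \in \mathcal{G} : e_G = 1\}$. Consequently the Banach--Mazur theorem applied to a winning strategy for player two only yields that $\mathcal{C}_P \cap \mathcal{G}_1$ is comeager \emph{in} $\mathcal{G}_1$, not that $\mathcal{C}_P$ is comeager in $\mathcal{G}$; and Baire measurability alone cannot bridge this, contrary to your claim that it ``will force $\mathcal{C}_P$ itself to be comeager''. Indeed, without invariance the implication is false: take $P$ to be ``the identity element is the number $1$''. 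This is a clopen, hence Baire measurable, property and it is trivially enforceable (the game builds it in), yet $\mathcal{C}_P = \mathcal{G}_1$ is a proper clopen set and therefore not comeager. The missing step is to use invariance: for each $k \in \N$ the relabelling map induced by the transposition $(1\,k)$ of $\N$ is a homeomorphism of $\mathcal{G}$ carrying $\mathcal{G}_1$ onto $\{e_G = k\}$ and sending each group to an isomorphic one, hence preserving $\mathcal{C}_P$; since the sets $\{e_G = k\}$, $k \in \N$, form a countable clopen partition of $\mathcal{G}$ and $\mathcal{C}_P$ is comeager in each piece, it is comeager in $\mathcal{G}$. (Baire measurability is what the stronger zero--one--law version in \cite{GKL} needs; for the equivalence as stated it is invariance that does the work.) Two further small points: the solution set of a finite system is clopen not because it constrains only finitely many coordinates --- evaluating a word requires knowing intermediate products, so it does not --- but because both an equation and its negation define open conditions; and in translating an enforcing strategy into a Banach--Mazur strategy you must be a little careful that the bookkeeping equations ensuring rules (2) and (3) are folded into the simulated play (e.g.\ attributed to player one's moves) so that the enforcing strategy's guarantee still applies and the intersection of the play is the singleton compiled group.
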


Using this method, we are now ready to prove our result.

\begin{cor}[Corollary \ref{corintro:generic}]
\label{cor:generic}
    The generic enumerated group is $\Xsep$-boundedly acyclic.
\end{cor}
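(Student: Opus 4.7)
The plan is to combine Theorem~\ref{intro:thm:ccc} with Theorem~\ref{Theorem:modelforcing}. Since having commuting cyclic conjugates is an invariant Baire measurable property (the existence of a suitable $t$ for each finitely generated subgroup can be written as a countable disjunction of quantifier-free formulas, and the full property as a countable conjunction of such disjunctions), it suffices to show that this property is \emph{enforceable}, that is, to describe a winning strategy for player two whose compiled group has commuting cyclic conjugates. Once this is done, Theorem~\ref{intro:thm:ccc} immediately upgrades this to $\Xsep$-bounded acyclicity for the generic enumerated group.

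The strategy for player two will be constructed by a standard dovetailing argument. Fix an enumeration of all finite tuples $\overrightarrow{a} = (a_1, \dots, a_n)$ of natural numbers. At round $k$, after player one has played, player two processes the $k$-th tuple $\overrightarrow{a}$ in the enumeration by picking a natural number $t$ which has not been mentioned yet, and playing the finite system
\[
\{[a_i, t a_j t^{-1}] = e : 1 \leq i, j \leq n\} \cup \{[a_i, t^2] = e : 1 \leq i \leq n\},
\]
together with finitely many extra equations of the form $m_1 \cdot m_2 = m_3$ and $m_1 \cdot m_1^{-1} = 1$ needed to satisfy the housekeeping rules~(2) and~(3) of the game (for the $k$-th pair in an enumeration of $\N \times \N$). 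Dovetailing ensures that every finitely generated subgroup of the compiled group is the direct image of $\langle a_1, \dots, a_n \rangle$ for some tuple processed at some round, and that rules~(2) and~(3) are eventually satisfied for every pair.

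The only nontrivial point is to verify that player two's move is \emph{consistent} with the system already played. Let $G$ denote the group presented by the equations played so far; consistency amounts to exhibiting an overgroup of $G$ in which the new finite system has a simultaneous solution. For the housekeeping equations this is trivial, as we may freely name previously unnamed elements. For the commuting cyclic conjugates system, we mimic the argument of Corollary~\ref{cor:algclosed}: the overgroup $G \wr \Z/2$ contains a solution, obtained by setting $t$ equal to the generator of $\Z/2$, which has order $2$ (yielding $[a_i, t^2] = e$) and conjugates each factor of the base group into the commuting disjoint copy (yielding $[a_i, {}^t a_j] = e$). Hence the system is consistent, player two's move is legal, and the compiled group has commuting cyclic conjugates.

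The main subtlety, and the place that requires the most care, is the bookkeeping of dovetailing: one must enumerate the tuples $\overrightarrow{a}$ in such a way that, in the compiled group, every finitely generated subgroup is covered, while simultaneously ensuring that the rules forcing the structure to be a genuine group on $\N$ are fulfilled. This is by now standard in the model-theoretic forcing framework of~\cite{GKL}, and the consistency step above is the only group-theoretic input specific to our property.
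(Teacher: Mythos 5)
Your proposal is correct and follows essentially the same route as the paper: verify that having commuting cyclic conjugates is invariant and Baire measurable, then show it is enforceable by having player two play the relations $[a_i,{}^t a_j]=e$, $[a_i,t^2]=e$ for a fresh witness $t$, with consistency certified by a wreath-product-type overgroup (you use $\Gamma\wr\Z/2$, the paper uses $(\Gamma\oplus\Gamma)\rtimes\Z$, and it handles the tuple $(1,\ldots,n)$ at stage $n$ rather than dovetailing over all tuples), and conclude via Theorem~\ref{Theorem:modelforcing} and Theorem~\ref{intro:thm:ccc}. One small imprecision: since the played systems may contain inequations, the consistency check should start from an enumerated group satisfying everything played so far (guaranteed by the rules of the game), not from ``the group presented by the equations played so far''; embedding that group into the overgroup preserves the inequations, so the argument goes through as in the paper.
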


Note that the generic enumerated group contains non-abelian free subgroups (in fact, it contains a copy of every finitely generated group with solvable word problem \cite[Theorem 1.1.2]{GKL}), and therefore is non-amenable.

\begin{proof}
We will show that the property of having commuting cyclic conjugates is enforceable, by demonstrating a strategy for player two in the aforementioned game.
At stage $n$, it is the goal of player two to ensure that in any compiled group $H$, there exists an element $f\in \N$ such that 
for $\{h_1=1,\ldots,h_n=n\}$, the following relations hold:
$$\{[{}^{f} h_i,h_j]=1,[h_i,f^2]=1\mid 1\leq i,j\leq n\}$$
Let $\Sigma_n$ be the union of systems of equations and inequations that have been played by both players until stage $n-1$, 
and the system played by player one at stage $n$.
Let $k_n\in \N$ be such that all numbers that occur as coefficients in $\Sigma_n$ are smaller than $k_n$ and, moreover, $k_n>n$.
Let $f=k_n$.
At this stage, player two plays the union $\Delta_n\cup \Delta_n'$ where:
$$\Delta_n=\{[{}^{f} h_i,h_j]=1\mid \forall 1\leq i,j\leq n \}\cup \{[h_i,f^2]=1\mid 1\leq i\leq n\}$$
and the system $\Delta_n'$ that consists of $h_i\cdot h_j=h_{i,j}$ and $h_i\cdot \bar{h}_i=1$ for $1\leq i,j\leq n$ and some suitable $h_{i,j},\bar{h}_i\in \N$ in some enumeration of a group satisfying
$\Sigma_n\cup \Delta_n$.

It suffices to show that $\Delta_n$ itself is a legal move, and the legality of $\Delta_n\cup \Delta_n'$ will follow from construction.
Our proof will follow from applying Theorem \ref{Theorem:modelforcing}.
To see that the move is legal, let $\Gamma$ be an enumerated group that satisfies $\Sigma_n$.
It suffices to show that there exists an enumerated group $G$ that satisfies $\Sigma_n\cup \Delta_n$.
Consider the group $G=(\bigoplus_{i\in \{0,1\}}\Gamma_i)\rtimes \Z$ where each $\Gamma_i$ is a copy of $\Gamma$,
and the automorphism given by the generator of $\Z$ permutes the two copies (and hence the square of the generator of $\Z$ commutes with each copy).
We can find an enumeration of $G$ which satisfies $\Sigma_n\cup \Delta_n$, finishing our proof.
\end{proof}

\begin{rem}[Subspaces of enumerated groups]
Let $P$ be one of the following properties of groups: Left orderable, sofic, locally indicable, torsion-free, contains no non-abelian free subgroups, and non-amenable.
Note that these properties are closed under taking semidirect products of the kind described in the last paragraph of the previous proof (for soficity, this was proved by Hayes and Sale~\cite{HayesSale} more generally for wreath products, but in our cases it suffices to use soficity of amenable extensions \cite{elekszabo}).\ Moreover, it is known that the subspace $\mathcal{G}_P\subseteq \mathcal{G}$ consisting of groups satisfying $P$
is also a Polish space~\cite{GKL}.\ Our proof above readily adapts to provide a proof (using a more general statement than Theorem \ref{Theorem:modelforcing} above~\cite[Theorem 5.2.3]{GKL}) that there exists a comeager subset $\mathcal{C}_P\subset \mathcal{G}_P$ such that 
each enumerated group in $\mathcal{C}_P$ is $\Xsep$-boundedly acyclic.

In particular, the generic group without free subgroup is $\Xsep$-boundedly acyclic, so for instance it admits no non-trivial quasimorphism.\ Whether this holds for \emph{all} groups without free subgroups is an intriguing question, which is open to the best of our knowledge.\ Partial results were obtained by Manning \cite{manning:geometry} and Calegari \cite{calegari:law}.\ A closely related question is whether groups without free subgroups are Ulam stable \cite[Question 1.10]{thompson:ulam}.\ Note that the generic group without free subgroups is non-amenable \cite[Corollary 1.3.1]{GKL}.
\end{rem}

\section{Bounded acyclicity and rigidity}
\label{s:rigidity}

We end with a short discussion on $\Xsep$-bounded acyclicity and its role in the study of \emph{representations} $\rho \colon \Gamma \to G$, where $G$ is the isometry group of a non-positively curved space~$X$. Recall that associated with each representation $\rho \colon \Gamma \to G$ there is an induced action of $\Gamma$ on the space~$X$. Such actions produce \emph{bounded characteristic classes} in the bounded cohomology of $\Gamma$ and actions with a trivial bounded characteristic class are `elementary' in a suitable sense. More precisely, the vanishing of the bounded characteristic classes of $\Gamma$ yields a rigidity theorem: All the actions induced by a representation $\rho \colon \Gamma \to G$ must be elementary.\
In most classical settings, these bounded cohomology classes belong to the second bounded cohomology with trivial real coefficients $H^2_b(-; \R)$. This happens, e.g., when $X$ is one of the following non-positively curved spaces: Hyperbolic spaces \cite{qm:wpd, qm:manning, qm:wwpd}, CAT(0) spaces \cite{qm:cat0}, CAT(0) cube complexes \cite{qm:cat0cc}, Hermitian symmetric spaces \cite{kahler1, kahler2} and the circle \cite{ghys, matsumoto}. In such contexts, also weaker vanishing results suffice \cite{kotschick, fflodha}.

However, group actions on the circle are the only setting in which this really leads to a complete understanding.\ In the other aforementioned cases, there are either strong restrictions on the types of actions to consider, or the notion of an `elementary' action is quite permissive.\ In such cases, certain characteristic classes with $\ell^p$ or $L^p$ coefficients, $1 < p < \infty$, can be used to prove more powerful rigidity theorems.\ Spaces for which stronger rigidity results can be obtained include simply connected manifolds of pinched negative curvature \cite{sela, gromov:asymptotic}, certain generalized notions of negatively curved spaces \cite{mineyevmonodshalom, monodshalom1, monodshalom2} and finite-dimensional CAT(0) cube complexes \cite{CFI}.\ Up to possibly adding countability conditions on either the space or the group, the coefficients for the corresponding classes are dual and separable, and therefore we obtain rigidity results for $\Xsep$-boundedly acyclic groups.

\medskip

So far we have only talked about rigidity of representations. One can more generally consider \emph{measurable cocycles} $\sigma \colon \Gamma \times \Omega \to G$, for some probability $\Gamma$-space $\Omega$. Measurable cocycles have been central to rigidity properties for higher-rank lattices, going back to the works of Margulis and Zimmer \cite{zimmer}.
Motivated by this, several results using bounded cohomology to prove rigidity of representations, were generalized to encompass measurable cocycles \cite{cocycles1, cocycles2, cocycles3, cocycles4, cocycles5, cocycles6}. The condition that ensures the rigidity of measurable cocycles is the vanishing of a characteristic class in $H^2_b(\Gamma; L^\infty(\Omega))$.

The space $L^\infty(\Omega)$ is not separable, so we cannot directly apply $\Xsep$-bounded acyclicity. However, as was noticed by Monod and Shalom \cite[Corollary 3.5]{monodshalom2}, for all countable $\Gamma$ the map $H^2_b(\Gamma; L^\infty(\Omega)) \to H^2_b(\Gamma; L^2(\Omega))$ is injective. Since $L^2(\Omega)$ is dual and separable (because $\Omega$ is a \emph{probability} measure space), we deduce that $H^2_b(\Gamma; L^\infty(\Omega)) = 0$ for every countable $\Xsep$-boundedly acyclic group $\Gamma$, and therefore such groups enjoy the rigidity properties for measurable cocycles mentioned above.\footnote{Since the first version of this paper, our vanishing results have been extended to encompass dual \emph{semi-}separable coefficients (in the sense of Monod \cite{monod:semiseparable}), see \cite{elena}). This includes in particular the spaces $L^\infty(\Omega)$ discussed here.}

The countability assumption on $\Gamma$ is important, in that it ensures the existence of a Zimmer-amenable space that is doubly ergodic with separable dual coefficients (the Poisson boundary \cite{kaimanovich}). However, note that our proof of $\Xsep$-bounded acyclicity for groups with commuting cyclic conjugates works by expressing them as a directed union of countable $\Xsep$-boundedly acyclic groups (see the proof of Theorem \ref{thm:cac}). Therefore, using Propositions \ref{prop:dirun} and \ref{prop:dirsum} as in that case, we deduce that if $\Gamma$ has commuting cyclic conjugates, then $H^2_b(\Gamma; L^\infty(\Omega)) = 0$ for every probability $\Gamma$-space $\Omega$, even when $\Gamma$ is not countable.

\medskip

Regarding higher degrees, there are several contexts in which characteristic classes of bundles can be proved to be bounded \cite[Chapters 11-13]{Frigerio:book}; in such contexts vanishing results for higher-degree bounded cohomology imply triviality results for spaces of bundles.\ Higher dimensional bounded cohomology groups also appear naturally in the study of the simplicial volume of a manifold \cite[Chapters 7-9]{Frigerio:book}.\ Even if one is only interested in computing bounded cohomology in degree $2$, knowledge in degree $3$ is sometimes required to carry out computations for group extensions \cite[Section 12]{monod}, or to compute other invariants closely related to bounded cohomology \cite{invariantqm, asymcoho}.

Finally, as we have seen in Subsection \ref{ss:highlytrans}, $\Xsep$-bounded acyclicity can also be used as a starting point for more computations in bounded cohomology, beyond the vanishing case, which can provide rigidity results of a different nature \cite[Corollary 5.2]{fflodha}.

\bibliographystyle{amsalpha}
\bibliography{svbib}

\end{document}